\newtheorem{Thm}{Theorem}[section]
\newtheorem{Cor}[Thm]{Corollary}
\newtheorem{Prop}[Thm]{Proposition}
\newtheorem{Lem}[Thm]{Lemma}
\newtheorem*{Thm*}{Theorem}
\theoremstyle{definition}
\newtheorem{Def}[Thm]{Definition}
\newtheorem{Ex}[Thm]{Example}
\theoremstyle{remark}
\DeclareMathOperator{\maks}{\mathsf{max}}
\DeclareMathOperator{\mini}{\mathsf{min}}
\DeclareMathOperator{\module}{\mathsf{mod}}
\DeclareMathOperator{\jirr}{\mathsf{j-irr}}
\DeclareMathOperator{\inv}{\mathsf{inv}}
\DeclareMathOperator{\cov}{\mathsf{cov}}
\DeclareMathOperator{\des}{\mathsf{des}}
\DeclareMathOperator{\sttilt}{\mathsf{s\tau-tilt}}
\DeclareMathOperator{\stitilt}{\mathsf{s\tau^{--1}-tilt}}
\DeclareMathOperator{\torf}{\mathsf{torf}}
\DeclareMathOperator{\add}{\mathsf{add}}
\DeclareMathOperator{\Fac}{\mathsf{Fac}}
\DeclareMathOperator{\Filt}{\mathsf{Filt}}
\DeclareMathOperator{\Sub}{\mathsf{Sub}}
\DeclareMathOperator{\itirigid}{\mathsf{i\tau^{--1}-rigid}}
\DeclareMathOperator{\brick}{\mathsf{brick}}
\DeclareMathOperator{\sbrick}{\mathsf{sbrick}}
\DeclareMathOperator{\Hom}{\mathsf{Hom}}
\DeclareMathOperator{\End}{\mathsf{End}}
\DeclareMathOperator{\soc}{\mathsf{soc}}
\DeclareMathOperator{\Ker}{\mathsf{Ker}}
\newcommand{\bbA}{\mathbb{A}}
\newcommand{\bbD}{\mathbb{D}}
\newcommand{\Z}{\mathbb{Z}}
\newcommand{\calC}{\mathcal{C}}
\newcommand{\calF}{\mathcal{F}}
\newcommand{\rmb}{\mathrm{b}}
\newcommand{\sfD}{\mathsf{D}}
\newcommand{\sfF}{\mathsf{F}}
\newcommand{\ang}[1]{\langle #1 \rangle}
\newcommand{\pmone}{\begin{smallmatrix}  1 \\ -1 \end{smallmatrix}}
\newcommand{\mpone}{\begin{smallmatrix} -1 \\  1 \end{smallmatrix}}
\newcommand{\xpone}{\begin{smallmatrix} \phantom{-1} \\  1 \end{smallmatrix}}
\newcommand{\xmone}{\begin{smallmatrix}  \phantom{1} \\ -1 \end{smallmatrix}}
\renewcommand{\max}{\maks}
\renewcommand{\min}{\mini}
\renewcommand{\mod}{\module}
\renewcommand{\Gamma}{\varGamma}
\renewcommand{\Delta}{\varDelta}
\renewcommand{\epsilon}{\varepsilon}
\renewcommand{\Lambda}{\varLambda}
\renewcommand{\Pi}{\varPi}
\renewcommand{\Omega}{\varOmega}
\newcommand{\rtwo}{\textcolor{red}{2}}
\newcommand{\rthr}{\textcolor{red}{3}}
\newcommand{\rfou}{\textcolor{red}{4}}
\newcommand{\rfiv}{\textcolor{red}{5}}
\newcommand{\rsix}{\textcolor{red}{6}}
\newcommand{\rmtwo}{\textcolor{red}{-2}}
\newcommand{\rmthr}{\textcolor{red}{-3}}
\newcommand{\rmfou}{\textcolor{red}{-4}}
\newcommand{\rmfiv}{\textcolor{red}{-5}}
\newcommand{\rmsix}{\textcolor{red}{-6}}
\newcommand{\btwo}{\textcolor{blue}{2}}
\newcommand{\bthr}{\textcolor{blue}{3}}
\newcommand{\bfou}{\textcolor{blue}{4}}
\newcommand{\bfiv}{\textcolor{blue}{5}}
\newcommand{\bmtwo}{\textcolor{blue}{-2}}
\newcommand{\bmthr}{\textcolor{blue}{-3}}
\newcommand{\bmfou}{\textcolor{blue}{-4}}
\newcommand{\bmfiv}{\textcolor{blue}{-5}}
\newcommand{\ptwo}{\textcolor{blue}{(2)}}
\newcommand{\pthr}{\textcolor{blue}{(3)}}
\newcommand{\pfou}{\textcolor{blue}{(4)}}
\newcommand{\pfiv}{\textcolor{blue}{(5)}}
\title[Bricks over preprojective algebras]{Bricks over preprojective algebras and \\
join-irreducible elements in Coxeter groups}
\date{\today}
\author{Sota Asai}
\address{Graduate School of Mathematics, Nagoya University, Furo-cho,
Chikusa-ku, Nagoya-shi, Aichi-ken, 464-8602, Japan}
\email{m14001v@math.nagoya-u.ac.jp}
\keywords{bricks; $\tau$-tilting theory; preprojective algebras; Coxeter groups;
lattices; canonical join representations}
\subjclass[2010]{16G10 (primary), 05E15, 06B05, 20F55 (secondary)}
\begin{document}

\ytableausetup{centertableaux}

\begin{abstract}
A (semi)brick over an algebra $A$ is a module $S$ such that 
the endomorphism ring $\End_A(S)$ is a (product of) division algebra.
For each Dynkin diagram $\Delta$, 
there is a bijection from the Coxeter group $W$ of type $\Delta$ 
to the set of semibricks over the preprojective algebra $\Pi$ of type $\Delta$, 
which is restricted to a bijection 
from the set of join-irreducible elements of $W$ to the set of bricks over $\Pi$.
This paper is devoted to giving an explicit description of these bijections 
in the case $\Delta=\bbA_n$ or $\bbD_n$. 
First, for each join-irreducible element $w \in W$, 
we describe the corresponding brick $S(w)$ in terms of 
``Young diagram-like'' notation.  
Next, we determine the canonical join representation $w=\bigvee_{i=1}^m w_i$
of an arbitrary element $w \in W$ based on Reading's work, 
and prove that $\bigoplus_{i=1}^n S(w_i)$ is the semibrick corresponding to $w$.
\end{abstract}

\maketitle

\tableofcontents

\setcounter{section}{-1}

\section{Introduction}

The representation theory of \textit{preprojective algebras} $\Pi$ of Dynkin type $\Delta$ 
has been developed by investigating their relationship 
with the \textit{Coxeter groups} $W=W(\Delta)$ associated to $\Delta$.
In particular, 
the ideal $I(w)$ of $\Pi$ associated to each element $w \in W$
introduced by \cite{IR,BIRS} plays an important role.
It is also useful to study cluster algebras.
For example, see \cite{AM,AIRT,BKT,GLS,Kimura,Mizuno2,ORT,Thomas}.

The Coxeter group $W$ is finite,
and there is \textit{the right weak order} $\le$ on $W$.
Then, the partially ordered set $(W,\le)$ is a \textit{lattice} \cite{BB}, 
that is, $W$ admits the two binary operations, 
the \textit{join} $x \vee y$ and the \textit{meet} $x \wedge y$ for any $x,y \in W$.

In our study, we efficiently use 
\textit{join-irreducible} elements in a lattice $L$.
We write $\jirr L$ for the set of join-irreducible elements in $L$.
Reading \cite{Reading} 
introduced the important notion of \textit{canonical join representations},
that is, ``minimum'' join-irreducible elements $u_1,u_2,\ldots,u_m \in \jirr L$ satisfying
$x=\bigvee_{i=1}^m u_i$ for a given element $x \in L$.

Any element in a Coxeter group of Dynkin type has a unique canonical join representation,
since the Coxeter group is a \textit{semidistributive} lattice,
see \cite{IRRT} for the detail. 
One of the aims of this paper is to show that 
the canonical join representations of the elements in the Coxeter group $W$
are strongly related to the representation theory of $\Pi$.
We will explain the detail later in this section.

Some of our results also hold in a more general setting.
Let $A$ be a finite-dimension algebra over a field $K$.
We write $\torf A$ for the set of \textit{torsion-free classes} 
in the category $\mod A$ of finite-dimensional $A$-modules. 
There is a natural partial order $\subset$ defined by inclusion relations,
and then, the partially ordered set $(\torf A, \subset)$ is also a lattice.

In the rest, we assume that $A$ is $\tau$-tilting finite,
that is, $\torf A$ is a finite set.
There are many bijections between $\torf A$ and many important objects 
in $\mod A$ or in its bounded derived category $\sfD^\rmb(\mod A)$
\cite{AIR,Asai,BY,KY,MS}.
In particular, 
we have a bijection $\sfF$ from the set $\sbrick A$ of \textit{semibricks} in $\mod A$
to $\torf A$, where $\sfF(S)$ is defined as the minimum torsion-free class
containing a semibrick $S$.
Here, a semibrick $S$ is defined as a module in $\mod A$
which admits a decomposition $S=\bigoplus_{i=1}^s S_i$
with $\End_A(S_i)$ a division $K$-algebra (that is, $S_i$ is a \textit{brick})
and with $\Hom_A(S_i,S_j)=0$ for $i \ne j$.
The sets $\torf A$ and $\sbrick A$ have bijections from
the set $\stitilt A$ of \textit{support $\tau^{-1}$-tilting $A$-modules}
satisfying the following commutative diagram \cite{AIR,Asai}:
\begin{align}\label{Eq_comm_intro}
\begin{xy}
(-40,  0) *+{\stitilt A} = "0",
(  0,  0) *+{\torf A} = "1",
( 40,  0) *+{\sbrick A} = "3",
(-40, -8) = "4",
( 40, -8) = "5",
\ar^{\Sub} "0"; "1"
\ar_{\sfF} "3"; "1"
\ar@{-} "0"; "4"
\ar@{-}_{M \mapsto \soc_{\End_A(M)} M} "4"; "5" 
\ar "5"; "3"
\end{xy}.
\end{align}
Moreover, the bijection $\sfF$ is restricted to a bijection from 
the set $\brick A$ of bricks in $\mod A$ to $\jirr (\torf A)$,
and we have the following commutative diagram of bijections:
\begin{align}\label{Eq_indec_comm_intro}
\begin{xy}
(-40,  0) *+{\itirigid A} = "0",
(  0,  0) *+{\jirr (\torf A)} = "1",
( 40,  0) *+{\brick A} = "3",
(-40, -8) = "4",
( 40, -8) = "5",
\ar^{\Sub} "0"; "1"
\ar_{\sfF} "3"; "1"
\ar@{-} "0"; "4"
\ar@{-}_{M \mapsto \soc_{\End_A(M)} M} "4"; "5" 
\ar "5"; "3"
\end{xy}.
\end{align}
Here, $\itirigid A$ denotes 
the set of indecomposable $\tau^{-1}$-rigid modules in $\mod A$.

As the first step, 
we will show that the canonical join representation of a torsion-free class
is given by the decomposition of the corresponding semibrick as a direct sum of bricks. 
This fact is independently obtained also in \cite{BCZ}.

\begin{Thm}[Theorem \ref{Thm_decompose}]\label{Thm_decompose_intro}
Let $\calF \in \torf A$,
take the unique semibrick $S \in \sbrick A$ satisfying $\calF=\sfF(S)$,
and decompose $S$ as $\bigoplus_{i=1}^m S_i$ with $S_i \in \brick A$.
Then the representation $\calF=\bigvee_{i=1}^m \sfF(S_i)$ is 
the canonical join representation.
\end{Thm}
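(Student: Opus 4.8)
The plan is to split the statement into two parts: that $\calF=\bigvee_{i=1}^m\sfF(S_i)$ is a join representation at all, and that it is the canonical one. The first part is essentially formal. By definition $\sfF(S_i)$ is the smallest torsion-free class containing $S_i$; since $\calF=\sfF(S)$ contains the summand $S_i$ of $S$, we get $\sfF(S_i)\subseteq\calF$ for each $i$, and hence $\bigvee_{i=1}^m\sfF(S_i)\subseteq\calF$. For the reverse inclusion, note that the join $\bigvee_{i=1}^m\sfF(S_i)$ is a torsion-free class containing every $S_i$; being closed under extensions it contains $S=\bigoplus_{i=1}^m S_i$, and therefore it contains the smallest torsion-free class $\sfF(S)=\calF$ containing $S$. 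Thus $\calF=\bigvee_{i=1}^m\sfF(S_i)$. Furthermore each summand gives a join-irreducible: as $S_i\in\brick A$, the commutative diagram \eqref{Eq_indec_comm_intro} yields $\sfF(S_i)\in\jirr(\torf A)$.

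The substance of the theorem is canonicity. First I would record that $(\torf A,\subseteq)$ is a finite semidistributive lattice: it is finite because $A$ is $\tau$-tilting finite, and semidistributive by \cite{IRRT}. In such a lattice the canonical join representation of an element $\calF$ is controlled by its lower covers in the sense of \cite{Reading}: for every cover $\mathcal{G}\lessdot\calF$ there is a unique smallest torsion-free class $j(\mathcal{G},\calF)$ with $j(\mathcal{G},\calF)\subseteq\calF$ but $j(\mathcal{G},\calF)\not\subseteq\mathcal{G}$, this $j(\mathcal{G},\calF)$ is join-irreducible, and the canonical join representation is $\calF=\bigvee_{\mathcal{G}\lessdot\calF}j(\mathcal{G},\calF)$. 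Hence it suffices to match the lower covers $\mathcal{G}$ of $\calF$ bijectively with the bricks $S_i$ so that $j(\mathcal{G},\calF)=\sfF(S_i)$ for matched pairs.

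For this matching I would use the brick labeling of the Hasse quiver of $\torf A$: each cover $\mathcal{G}\lessdot\calF$ carries a brick label $B_{\mathcal{G}}\in\calF\setminus\mathcal{G}$, the assignment $\mathcal{G}\mapsto B_{\mathcal{G}}$ is injective on the lower covers of a fixed $\calF$, and---this is the decisive input---the set of labels occurring on the lower covers of $\calF$ is exactly $\{S_1,\dots,S_m\}$, the family of bricks of the semibrick $S=\sfF^{-1}(\calF)$. Granting this, match each lower cover $\mathcal{G}$ with its label $S_i=B_{\mathcal{G}}$. Then $\sfF(S_i)\subseteq\calF$ and, since $S_i\in\sfF(S_i)$ while $S_i\notin\mathcal{G}$, also $\sfF(S_i)\not\subseteq\mathcal{G}$, so minimality of $j(\mathcal{G},\calF)$ gives $j(\mathcal{G},\calF)\subseteq\sfF(S_i)$. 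The reverse inclusion $\sfF(S_i)\subseteq j(\mathcal{G},\calF)$---equivalently, that the order-theoretic join-irreducible attached to the cover is $\sfF$ of the brick labeling it---is the remaining point, and once it is in hand the cover formula becomes $\calF=\bigvee_{i=1}^m\sfF(S_i)$, the desired canonical join representation.

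The main obstacle is precisely the alignment of two a priori different labelings of the lower covers of $\calF$: the purely order-theoretic one $\mathcal{G}\mapsto j(\mathcal{G},\calF)$ coming from \cite{Reading}, and the module-theoretic brick labeling $\mathcal{G}\mapsto B_{\mathcal{G}}$. Reducing both to a common description is where the representation theory is really used---concretely, one must show that the brick labels on the lower covers of $\calF$ are exactly the summands of the semibrick $\sfF^{-1}(\calF)$, and that the join-irreducible $j(\mathcal{G},\calF)$ is $\sfF$ of that same brick. Both hinge on a workable description of $\sfF(S_i)$ as the closure of $S_i$ under subobjects and extensions, together with the $\Hom$-orthogonality $\Hom_A(S_i,S_{i'})=0$ for $i\ne i'$ built into the notion of semibrick; the lattice formalism itself is comparatively routine once this bridge is in place.
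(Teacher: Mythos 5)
Your first paragraph is correct and matches the paper's step (a): $\calF=\bigvee_{i=1}^m\sfF(S_i)$ follows formally from minimality of $\sfF(S)$, and each $\sfF(S_i)$ is join-irreducible. But the canonicity argument is a plan rather than a proof: everything is funneled through the two claims you explicitly defer, namely that the brick labels on the lower covers of $\calF$ are exactly $S_1,\dots,S_m$, and that the order-theoretic joinand $j(\mathcal{G},\calF)$ attached to a lower cover equals $\sfF$ of its brick label. These are not auxiliary technicalities; they are the entire representation-theoretic content of the theorem (they are essentially what is proved in \cite[Proposition 3.2.5]{BCZ} and in \cite{DIRRT}, where the brick-labeling machinery is developed), so ``granting this'' grants the conclusion. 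Your half-step $j(\mathcal{G},\calF)\subseteq\sfF(S_i)$ is fine, but the reverse inclusion amounts to showing that \emph{every} torsion-free class $z\subseteq\calF$ with $z\vee\mathcal{G}=\calF$ contains $S_i$, which is exactly the minimality condition you set out to prove. There is also a citation slip: semidistributivity of $\torf A$ for a general $\tau$-tilting finite $A$ is not in \cite{IRRT} (which concerns weak order on Weyl groups via preprojective algebras); within this paper's bibliography that input belongs to \cite{DIRRT}/\cite{BCZ}. As written, then, the proposal has a genuine gap at its decisive step, and you even lean on a lattice-theoretic framework whose hypotheses you have not justified.

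The paper shows the detour through semidistributivity, covers, and labelings is unnecessary: it verifies conditions (a)--(c) of Definition \ref{Def_can_join} directly, and neither (b) nor (c) requires any lattice theory beyond the definition. Irredundancy (b): if $I\subsetneq[1,m]$ and $j\notin I$, then $S_j\in\sfF(\{S_i\}_{i\in I})=\bigvee_{i\in I}\sfF(S_i)$ would force $\Hom_A(S_j,S_i)\neq 0$ for some $i\in I$ (a nonzero module of $\Filt(\Sub\calC)$ admits a nonzero map to $\add\calC$ via its top filtration layer), contradicting the Hom-orthogonality built into the semibrick. Minimality (c): given any representation $\calF=\bigvee_{j=1}^{m'}\calF_j$, each $S_i$ lies in $\sfF(\bigcup_j\calF_j)$, hence admits a nonzero map $f\colon S_i\to S'$ for some semibrick $S'$ with $\calF_j=\sfF(S')$; the key input is \cite[Lemma 1.7]{Asai}, which says $f$ is injective because $S_i$ is a direct summand of $S$ and both $S_i,S'$ lie in $\calF=\sfF(S)$, whence $S_i\in\Sub S'\subseteq\calF_j$ and $\sfF(S_i)\subseteq\calF_j$. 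This injectivity lemma is precisely the ``bridge'' your outline identifies as missing; with it in hand, the direct verification you dismissed as difficult is in fact short, while completing your route would require proving the brick-labeling theorem, which is strictly more work than the statement at hand.
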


For the preprojective algebra $\Pi$, 
Mizuno \cite{Mizuno} proved that the two lattices 
$(W,\le)$ and $(\torf \Pi,\subset)$ are isomorphic by the correspondence
$w \mapsto \Sub (\Pi/I(w))$
and that $\Pi/I(w)$ is a support $\tau^{-1}$-tilting $\Pi$-module.
Therefore, we obtain a bijection $S(\bullet) \colon W \to \sbrick \Pi$
given by $S(w):=\soc_{\End_\Pi(\Pi/I(w))}(\Pi/I(w))$.
The main aim of this paper is to describe the semibrick $S(w)$ for each element $w \in W$
as a quiver representation in the case $\Delta = \bbA_n$ or $\bbD_n$:
\begin{align*}
\bbA_n \colon & 
\begin{xy}
( 0,  0) *+{1} = "1",
(12,  0) *+{2} = "2",
(24,  0) *+{3} = "3",
(36,  0) *+{\cdots} = "4",
(48,  0) *+{n} = "5",
\ar@{-} "1";"2"
\ar@{-} "2";"3"
\ar@{-} "3";"4"
\ar@{-} "4";"5"
\end{xy}, & 
\bbD_n \colon &
\begin{xy}
( 0,  5) *+{1}  = "1",
( 0, -5) *+{-1} = "0",
(12,  0) *+{2}  = "2",
(24,  0) *+{3}  = "3",
(36,  0) *+{\cdots} = "4",
(52,  0) *+{n\!-\!1}= "5",
\ar@{-} "1";"2"
\ar@{-} "0";"2"
\ar@{-} "2";"3"
\ar@{-} "3";"4"
\ar@{-} "4";"5"
\end{xy}.
\end{align*}
If $\Delta=\bbA_n$, then $W$ is the symmetric group $\mathfrak{S}_{n+1}$,
and if $\Delta=\bbD_n$, then $W$ is the subgroup of the automorphism group 
on the set $\{\pm 1,\pm 2,\ldots,\pm n\}$ consisting of all elements $w$
such that $w(-i)=-w(i)$ holds for each $i$ and that $\#\{ i>0 \mid w(i)<0 \}$ is even.
Thus, we can express every $w \in W$ in the form $(w(1),w(2),\ldots,w(m))$,
and our description of the semibrick $S(w)$ is constructed by this expression.

Mizuno's isomorphism $W \to \torf \Pi$ of lattices is restricted to a bijection
$\jirr W \to \jirr(\torf \Pi)$ between the join-irreducible elements, so
we also obtain a bijection $S(\bullet) \colon \jirr W \to \brick \Pi$.
By \cite{IRRT} (types $\bbA_n$ and $\bbD_n$) and 
\cite{Demonet} (type $\mathbb{E}_n$, calculated by a computer program), 
the cardinality of each set is
\begin{align*}
\begin{cases}
2^{n+1}-n-2 & (\Delta=\bbA_n) \\
3^n-n \cdot 2^{n-1}-n-1 & (\Delta=\bbD_n) \\
1272 & (\Delta=\mathbb{E}_6) \\
17635 & (\Delta=\mathbb{E}_7) \\
881752 & (\Delta=\mathbb{E}_8)
\end{cases}.
\end{align*}
Moreover, we immediately obtain the following property from 
Theorem \ref{Thm_decompose_intro}.

\begin{Cor}[Corollary \ref{Cor_decompose_Coxeter}]
Let $w \in W$ and take $w_1,w_2,\ldots,w_m \in \jirr W$ 
such that $S(w)=\bigoplus_{i=1}^m S(w_i)$.
Then, $w=\bigvee_{i=1}^m w_i$ holds, 
and it is the canonical join representation of $w$ in $W$.
\end{Cor}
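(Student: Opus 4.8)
The plan is to reduce the statement to Theorem \ref{Thm_decompose_intro} by transporting everything through Mizuno's lattice isomorphism. First I would record the key compatibility between the two bijections $S(\bullet)$ and $\sfF$. Since $M=\Pi/I(w)$ is a support $\tau^{-1}$-tilting module by Mizuno's theorem, the commutative diagram \eqref{Eq_comm_intro} applied to $M$ gives
\[
\sfF(S(w)) = \sfF\bigl(\soc_{\End_\Pi(M)} M\bigr) = \Sub(M) = \Sub(\Pi/I(w)).
\]
Writing $\phi \colon W \to \torf \Pi$ for Mizuno's isomorphism $w \mapsto \Sub(\Pi/I(w))$, this reads $\sfF(S(w)) = \phi(w)$, and the same identity holds with $w$ replaced by any $w_i$. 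In particular, $S(w)$ is exactly the unique semibrick whose image under $\sfF$ is the torsion-free class $\calF := \phi(w)$, so the hypotheses of Theorem \ref{Thm_decompose_intro} are met for this $\calF$.

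Next I would apply Theorem \ref{Thm_decompose_intro} directly to $\calF = \phi(w)$. Decomposing $S(w) = \bigoplus_{i=1}^m S(w_i)$ into bricks forces each $w_i \in \jirr W$, because the restriction of $S(\bullet)$ in \eqref{Eq_indec_comm_intro} matches $\brick \Pi$ with $\jirr W$ via $\phi$. The theorem then yields that
\[
\calF = \bigvee_{i=1}^m \sfF(S(w_i)) = \bigvee_{i=1}^m \phi(w_i)
\]
is the canonical join representation of $\calF$ in $\torf \Pi$. Finally, since $\phi$ is an isomorphism of lattices, $\phi^{-1}$ preserves arbitrary joins, and thus
\[
w = \phi^{-1}(\calF) = \bigvee_{i=1}^m \phi^{-1}\bigl(\phi(w_i)\bigr) = \bigvee_{i=1}^m w_i.
\]

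The only point deserving a word of justification is that a lattice isomorphism carries canonical join representations to canonical join representations; this is immediate, because the defining minimality condition for a canonical join representation is phrased purely in terms of the order and the join operation, both of which $\phi$ and $\phi^{-1}$ preserve. Consequently $w = \bigvee_{i=1}^m w_i$ is the canonical join representation of $w$ in $W$, as claimed. I do not expect any serious obstacle here: the corollary is essentially the transport of Theorem \ref{Thm_decompose_intro} along Mizuno's isomorphism, and the only care required is the bookkeeping identity $\sfF(S(w)) = \phi(w)$ together with the observation that join-irreducibility and canonical join representations are lattice-theoretic invariants preserved by $\phi$.
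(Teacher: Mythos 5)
Your proposal is correct and takes essentially the same route as the paper: the paper derives the corollary ``immediately'' from Theorem \ref{Thm_decompose} via Mizuno's lattice isomorphism $(W,\le) \cong (\torf \Pi, \subset)$, which is exactly your reduction. The details you make explicit --- the bookkeeping identity $\sfF(S(w)) = \Sub(\Pi/I(w))$ coming from Proposition \ref{Prop_comm}, the matching of bricks with join-irreducible elements, and the observation that canonical join representations are preserved under lattice isomorphisms --- are precisely what the paper's one-line proof leaves implicit.
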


In this paper, we will give a description of the semibrick $S(w)$ 
by the following two steps:
\begin{itemize}
\item[(a)]
we find the canonical join representation $\bigvee_{i=1}^m w_i$ of $w$; and
\item[(b)]
we explicitly describe the brick $S(w_i)$ for each $w_i \in \jirr W$.
\end{itemize}
There is a combinatorial ``Young diagram-like''
description by Iyama--Reading--Reiten--Thomas \cite{IRRT} of 
$J(w):=(\Pi/I(w))e_l$ for $w \in \jirr W$ in the case $\Delta$ is $\bbA_n$ or $\bbD_n$,
where $l$ is the unique \textit{descent} of $w \in W$. 
In this setting, $S(w)=\soc_{\End_\Pi (J(w))}J(w)$ follows.

For example, let $w:=(2,5,8,1,3,4,6,7,9) \in W(\bbA_8)$ and
$w':=(6,9,-7,-4,1,2,3,5,8) \in W(\bbD_9)$.
Then, 
\begin{align*}
J(w)&=\begin{ytableau} 
3  &  2  &  1  \\
4  &  3  \\
5  &  4  \\
6  \\
7  \\
\end{ytableau}, &
S(w)&=\begin{ytableau} 
\none  &  2  &  1  \\
\none  &  3  \\
5      &  4  \\
6      \\
7      \\
\end{ytableau}, \\
J(w')&=\begin{ytableau} 
2      & \mpone & -2     & -3     & \bmfou & \bmfiv & -6     \\
3      & 2      & \pmone & \bmtwo & \bmthr \\
4      & 3      & \btwo  & \xpone \\
5      & \bfou  & \bthr  & 2      \\
6      & \bfiv  & 4      & 3\\
7      & 6      & 5      \\
8
\end{ytableau}, &
S(w')&=\begin{ytableau} 
\none  & \none  & \none  & \none  & \pfou  & \pfiv  & -6     \\
\none  & \none  & \xmone & \ptwo  & \pthr  \\
\none  & \none  & \ptwo  & \xpone \\
\none  & \pfou  & \pthr  & 2      \\
\none  & \pfiv  & 4      & 3\\
7      & 6      & 5      \\
8
\end{ytableau}
\end{align*}

Here, for each module $M$ above, 
each square $\begin{ytableau}i\end{ytableau}$ in the figure for $M$
denotes a one-dimensional subspace of $e_i M$ if $i \ge -1$;
and of $e_{|i|} M$ if $i \le -2$.
As $K$-vector spaces, 
$M$ is the direct sum of these one-dimensional subspaces.
In the figure for $S(w')$, for each $i=2,3,4,5$, 
the two squares $\begin{ytableau}\textcolor{blue}{(i)}\end{ytableau}$ 
together denote an element 
in the two-dimensional vector space corresponding to the two squares 
$\begin{ytableau}\textcolor{blue}{i}\end{ytableau}$ 
and $\begin{ytableau}\textcolor{blue}{-i}\end{ytableau}$ in the figure for $J(w')$.

Now, we will give a combinatorial description of the brick $S(w)$ for each $w \in W$.

For $\Delta=\bbA_n$, the following assertion holds.

\begin{Thm}[Theorem \ref{Thm_brick_A}, Corollary \ref{Cor_brick_abbr_A}]
Let $w \in W(\bbA_n)$ with its unique descent $l$.
Then, the brick $S(w)$ is given as follows.
\begin{itemize}
\item Set $R:=w([l+1,n+1])$, $a:=w(l)$, $b:=w(l+1)$, and $V:=[b,a-1]$.
\item The brick $S(w)$ has a $K$-basis $(\ang{i})_{i \in V}$,
where $\ang{i}$ belongs to $e_iS(w)$.
\item For each $i \in V$, place a symbol $i$ denoting 
the $K$-vector subspace $K\ang{i}$.
\item For each $i \in V \setminus \{\max V\}$,
we write exactly one arrow between $i$ and $i+1$,
where the orientation is $i \to i+1$ if $i+1 \in R$ and 
$i \leftarrow i+1$ if $i+1 \notin R$.
\end{itemize}
\end{Thm}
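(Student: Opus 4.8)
\emph{The plan is to compute the socle} $S(w)=\soc_{\End_\Pi(J(w))}(J(w))$ directly from the explicit Young-diagram-like presentation of $J(w)$ given by Iyama--Reading--Reiten--Thomas. From that presentation, $J(w)$ has, as a representation of the doubled quiver of $\bbA_n$, a $K$-basis indexed by the squares of a Young-diagram-like region; each square carries a vertex label in $V=[b,a-1]$, the label decreasing by one along each rightward step and increasing by one along each downward step, and the positions of the ``turns'' of the region are governed by $R=w([l+1,n+1])$. The arrows $i\to i+1$ and $i+1\to i$ of $\Pi$ act by passing between horizontally and vertically adjacent squares. Since $J(w)$ is indecomposable, $E:=\End_\Pi(J(w))$ is local, so $\rad E$ consists of the non-invertible endomorphisms and $S(w)=\{x\in J(w):\rad E\cdot x=0\}$; if $\rad E$ is principal as a left ideal, say $\rad E=E\phi$, then $S(w)=\Ker\phi$.

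The first key step is to produce the distinguished endomorphism $\phi\in E$ sending each square to its neighbour one step to the southeast (and to $0$ when no such square exists). Because a southeast step leaves the vertex label unchanged, I would verify, using the mesh relations of $\Pi$, that $\phi$ is a well-defined $\Pi$-endomorphism; it is visibly nilpotent and non-invertible, hence $\phi\in\rad E$. The next step is to show $\phi$ generates the whole radical: every $\Pi$-endomorphism preserves the vertex grading, and the only label-preserving translations of the diagram are the iterated southeast shifts, so the target is $E=K[\phi]/(\phi^k)$, whence $\rad E=E\phi$ and therefore $S(w)=\Ker\phi$.

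It then remains to read off $\Ker\phi$ as a representation. The kernel is spanned exactly by the squares having no southeast neighbour, i.e.\ the squares lying on the southeast boundary staircase of the region; one checks that this staircase meets each label $i\in V$ exactly once, which yields the basis $(\ang{i})_{i\in V}$ with $\ang{i}\in e_iS(w)$ and identifies the support as $V=[b,a-1]$. Walking along the staircase from the label $b$ at its top-right end to the label $a-1$ at its bottom-left end, each step from $i$ to $i+1$ is either a downward step, a vertical adjacency giving the arrow $i\to i+1$, or a leftward step, a horizontal adjacency giving the arrow $i\leftarrow i+1$. The final step is to translate IRRT's shape data into the statement that a downward step occurs at $i\to i+1$ precisely when $i+1\in R$, which is the asserted orientation rule; that $S(w)$ is a brick is automatic, since it is the image of $J(w)$ under the bijection of \eqref{Eq_indec_comm_intro}.

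The main obstacle I anticipate is the second step: proving that $\phi$ generates all of $\rad E$, i.e.\ that $E=K[\phi]/(\phi^k)$ and that no further non-invertible endomorphism contributes a smaller kernel. This requires controlling the endomorphisms on those labels $i$ occurring with multiplicity greater than one in $J(w)$ and showing that compatibility with the surrounding arrows forces every endomorphism to be a polynomial in $\phi$. A secondary, purely combinatorial difficulty is the last step: extracting from IRRT's shape description the exact equivalence ``downward step at $i\to i+1 \iff i+1\in R$'', for which I would argue directly that the downward steps of the boundary staircase occur precisely at the labels belonging to $R$.
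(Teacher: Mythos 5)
Your proposal is correct and follows essentially the same route as the paper: your southeast-shift endomorphism $\phi$ is exactly the map $f=(\cdot\, p(l,l+1,l))$ of Lemma \ref{Lem_soc_A}, your boundary-staircase squares are the set $\Gamma_1(w)$ of Lemmas \ref{Lem_soc_A} and \ref{Lem_vertex_A}, and your final translation of the shape data into the orientation rule via $R$ is the case check the paper performs with Lemma \ref{Lem_rigid_A}. The one obstacle you flag---that $\phi$ generates $\rad \End_\Pi(J(w))$---is dispatched in the paper by the observation that every non-isomorphism factors as $hf$, which holds because $J(w)=(\Pi/I(w))e_l$ is cyclic with generator $e_l$ and $e_l(\Pi/I(w))e_l$ is spanned by the powers of $p(l,l+1,l)$, so your stronger claim $\End_\Pi(J(w))\cong K[\phi]/(\phi^k)$ is indeed true and your argument closes.
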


In this procedure, the brick $S(w)$ appearing in the example above is expressed as 
\begin{align*}
1 \leftarrow 2 \rightarrow 3 \rightarrow 4 \leftarrow 5 \rightarrow 6 \rightarrow 7.
\end{align*}

For $\Delta=\bbD_n$, the brick $S(w)$ is obtained from the following procedure.

\begin{Thm}[Theorem \ref{Thm_brick_D}, Corollary \ref{Cor_brick_abbr_D}]
Let $w \in W(\bbD_n)$ with its unique descent $l$.
Then, the brick $S(w)$ is given as follows.
\begin{itemize}
\item Set $R:=w([|l|+1,n])$, $a:=w(l)$, $b:=w(|l|+1)$, and 
\begin{align*}
& r:=\max \{ k \ge 0 \mid [1,k] \subset \pm R \}, \quad
c:=\begin{cases}
w^{-1}(|w(1)|)& (r \ge 1) \\
1 & (r=0)
\end{cases}, \\
& (V_-,V_+) :=
\begin{cases}
(\emptyset, [b,a-1]) & (b \ge 2) \\
(\emptyset, \{c\} \cup [2,a-1]) & (b = \pm 1) \\
([b+1,-2] \cup \{-c\}, \{c\} \cup [2,a-1]) & (b \le -2)
\end{cases}, \quad
V := V_+ \amalg V_-.
\end{align*}
\item The brick $S(w)$ has a $K$-basis $(\ang{i})_{i \in V}$,
where $\ang{i}$ belongs to $e_iS(w)$ if $i \ge -1$,
and $e_{|i|}S(w)$ if $i \le -2$.
\item For each $i \in V$, place a symbol $i$ denoting 
the $K$-vector subspace $K\ang{i}$.
\item We write the following arrows.
\begin{itemize}
\item[(i)]
For each $i \in V_+ \setminus \{\max V_+\}$,
draw an arrow $i \to |i|+1$ if $|i|+1 \in R$; and $i \leftarrow |i|+1$ otherwise.
\item[(ii)]
For each $i \in V_- \setminus \{\min V_-\}$,
draw an arrow $i \leftarrow -(|i|+1)$ if $-(|i|+1) \in R$; 
and $i \to -(|i|+1)$ otherwise.
\item[(iii)]
If $r \ge 1$,
for each $i \in V_-$ with $|i| \le r$,
draw an arrow $-i \leftarrow -(|i|+1)$ if $|i|+1 \in R$; 
and $i \to |i|+1$ otherwise.
\item[(iv)]
If $r=0$,
draw an arrow $-c \leftarrow 2$ if $c \leftarrow 2$ exists in \textup{(i)},  
and draw an arrow $c \to -2$ if $-c \to -2$ exists in \textup{(ii)}.
\end{itemize}
\end{itemize}
\end{Thm}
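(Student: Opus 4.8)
The plan is to read off $S(w)=\soc_{\End_\Pi(J(w))}J(w)$ directly from the ``Young diagram-like'' model of $J(w)=(\Pi/I(w))e_l$ of \cite{IRRT}, so that the entire statement becomes a computation of how the endomorphism ring acts on that model. First I would fix an explicit $K$-basis of $J(w)$ indexed by the squares of the diagram and record, for every arrow of the doubled quiver of type $\bbD_n$, the structure constants describing how it moves one square to an adjacent one. Along the type-$\bbA$ tail (vertices $2,3,\dots,n-1$) these constants are exactly those of the symmetric-group case, and adjacency in the diagram corresponds to a single acting arrow. The genuinely new phenomenon is localised at the fork $1,-1,2$: a square of $e_2J(w)$ can receive contributions simultaneously from $e_1J(w)$ and $e_{-1}J(w)$, and it is precisely this two-dimensionality that the statement encodes through the paired squares $i,-i$ and their diagonal elements. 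Pinning down these fork structure constants is the technical backbone of the argument.

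Next I would compute $\End_\Pi(J(w))$. Because $J(w)$ is cyclic, generated by the image of $e_l$, every endomorphism is induced by right multiplication by an element of the corner algebra $e_l(\Pi/I(w))e_l$, giving $\End_\Pi(J(w))\cong(e_l(\Pi/I(w))e_l)^{\mathrm{op}}$, with a basis given by the images of the finitely many paths from $l$ to $l$ surviving in $\Pi/I(w)$. Geometrically these endomorphisms act on the diagram by diagonal shifts, and $\rad\End_\Pi(J(w))$ corresponds to the positive-length loops. The socle $\soc_{\End_\Pi(J(w))}J(w)$ is then the annihilator of this radical, i.e. the collection of squares that every nontrivial shift pushes off the diagram. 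I would show that this annihilator meets each $e_iJ(w)$ in at most a line and that the supporting vertices are exactly the set $V=V_+\amalg V_-$ of the statement: the split $V_\pm$ reflects whether the relevant loop crosses the fork an odd or even number of times, while the parameter $r=\max\{k\ge 0\mid [1,k]\subset\pm R\}$ and the distinguished vertex $c$ (equal to $w^{-1}(|w(1)|)$ when $r\ge 1$) measure how deep the two branches agree and hence how far the folding reaches.

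Having determined which squares survive, I would compute the induced $\Pi$-module structure on $S(w)$ and match it arrow-by-arrow with rules (i)--(iv). The orientation rule --- draw $i\to|i|+1$ precisely when $|i|+1\in R$ --- falls out of comparing the two mutually inverse arrows of the doubled quiver on neighbouring surviving squares: only one of them acts nontrivially, and membership in $R$ decides which. Rules (i) and (ii) are then the restriction of the type-$\bbA$ computation to the two branches $V_+$ and $V_-$, whereas (iii) and (iv) are the fork-local statements and must be verified directly in the three cases $b\ge 2$, $b=\pm 1$, $b\le -2$ of the definition of $(V_-,V_+)$, using the structure constants from the first step; the dichotomy $r\ge 1$ versus $r=0$ distinguishing (iii) from (iv) corresponds to whether $c$ lies strictly inside the shared region or at its boundary. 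Assembling the two branches with the fork then yields the global diagram, and since $S(w)$ is already known to be a brick via the bijection \eqref{Eq_indec_comm_intro}, no separate verification of its endomorphism ring is needed.

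The main obstacle is the fork. Everything along the $\bbA$-type tail is a transcription of the symmetric-group computation, but at $1,-1,2$ the endomorphism ring no longer acts by a plain shift: the socle can fold one branch onto the other, which is the origin both of the diagonal two-square elements and of the delicate clauses (iii) and (iv). I expect essentially all of the casework to lie in verifying that the annihilator of $\rad\End_\Pi(J(w))$ restricts near the fork to exactly the claimed one-dimensional diagonal subspaces and produces precisely the prescribed fork arrows; once this local analysis is complete, gluing it to the two tails and reading off the global shape $V=V_+\amalg V_-$ should be routine.
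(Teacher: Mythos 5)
Your proposal follows essentially the same route as the paper's own proof: the paper likewise fixes the explicit path basis of $J(w)$ from the IRRT diagram and computes $S(w)$ as the annihilator of the radical of $\End_\Pi(J(w))$ acting by corner-path multiplications --- concretely as $\Ker\bigl(\cdot\, p(l,3,l)\bigr)$ when $l=\pm 1$ and as $\Ker f_1 \cap \Ker f_2$ for $f_1=(\cdot\, p_\epsilon(l,l+1,l))$, $f_2=(\cdot\, p_\epsilon(-l,l,l))$ when $l\ne\pm 1$ --- and then verifies rules (i)--(iv) arrow by arrow, with the fork casework governed by the relation $p_{-\epsilon}(i,j,l)=-p_\epsilon(i,j,l)+p_\epsilon(|i|,j+|i|-1,l)$, which produces exactly your diagonal two-square socle vectors. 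One small correction to your outline: the socle can meet a single vertex space $e_jJ(w)$ in \emph{two} lines (for $b\le -2$ both $\ang{i}$ and $\ang{-i}$ lie in $e_{|i|}S(w)$), so ``at most a line in each $e_iJ(w)$'' should read ``one line for each index $i\in V$''; with that adjustment, and noting that the paper's organizing split is $l=\pm 1$ versus $l\ne\pm 1$ (forced by the two different IRRT models of $J(w)$) rather than tail versus fork, your plan matches the published argument.
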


This theorem gives the following expression of the brick $S(w')$ in the example above 
obtained in the theorem:

\begin{align*}
\begin{xy}
(  0, 6) *+{-1} = "1-",
( 12, 6) *+{\textcolor{blue}{-2}} = "2-",
( 24, 6) *+{\textcolor{blue}{-3}} = "3-",
( 36, 6) *+{\textcolor{blue}{-4}} = "4-",
( 48, 6) *+{\textcolor{blue}{-5}} = "5-",
( 60, 6) *+{-6} = "6-",
(  0,-6) *+{ 1} = "1+",
( 12,-6) *+{ 2} = "2+",
( 24,-6) *+{ 3} = "3+",
( 36,-6) *+{ 4} = "4+",
( 48,-6) *+{ 5} = "5+",
( 60,-6) *+{ 6} = "6+",
( 72,-6) *+{ 7} = "7+",
( 84,-6) *+{ 8} = "8+",
\ar "1+";"2+" \ar "1-";"2-" \ar "2-";"1+"
\ar "2+";"3+" \ar "2-";"3-" \ar "3-";"2+"
\ar "4+";"3+" \ar "4-";"3-" \ar "3-";"4+"
\ar "4+";"5+" \ar "4-";"5-" \ar "5-";"4+"
\ar "6+";"5+" \ar "5-";"6-" \ar "5-";"6+"
\ar "7+";"6+"
\ar "7+";"8+"
\end{xy}.
\end{align*}
 
Then, the step (b) is done. 
Consequently, 
we obtain that the bricks over the preprojective algebra of type $\bbA_n$ is 
a module over some path algebra of type $\bbA_n$.
On the other hand, 
the preprojective algebra of type $\bbD_n$ does not have the corresponding property.

Finally, we consider an arbitrary element $w \in W$.
In Propositions \ref{Prop_decompose_A} and \ref{Prop_decompose_D},
we will explicitly determine 
the canonical join representation $\bigvee_{i=1}^m w_i$ of $w \in W$
by using the characterization of canonical join representations in Coxeter groups
given by Reading \cite{Reading}.
Then, in Theorems \ref{Thm_sbrick_A} and \ref{Thm_sbrick_D},
we explicitly write down the semibrick $S(w)=\bigoplus_{i=1}^m S(w_i)$
by using the description of bricks.
This is what we desire in this paper.

For example, let $\Delta:=\bbA_8$ and $w:=(4,9,3,6,2,8,5,1,7)$.
Then, its canonical join representation is $w_2 \vee w_4 \vee w_6 \vee w_7$, where
\begin{align*}
w_2 := (1,2,4,9,3,5,6,7,8), \quad
w_4 := (1,3,4,6,2,5,7,8,9), \\
w_6 := (1,2,3,4,6,8,5,7,9), \quad
w_7 := (2,3,4,5,1,6,7,8,9).
\end{align*}
Thus, the semibrick $S(w)$ is the direct sum of the following bricks:
\begin{align*}
S(w_2) &= \phantom{1 \rightarrow 2 \rightarrow {}} 
3 \leftarrow 4 \rightarrow 5 \rightarrow 6 \rightarrow 7 \rightarrow 8, \\
S(w_4) &= \phantom{1 \rightarrow {}}
2 \leftarrow 3 \leftarrow 4 \rightarrow 5
\phantom{{} \leftarrow 6 \leftarrow 7 \leftarrow 8}, \\
S(w_6) &= \phantom{1 \rightarrow 2 \rightarrow 3 \rightarrow 4 \rightarrow{}}
5 \leftarrow 6 \rightarrow 7
\phantom{{} \leftarrow 8}, \\
S(w_7) &= 1 \leftarrow 2 \leftarrow 3 \leftarrow 4
\phantom{{} \leftarrow 5 \leftarrow 6 \leftarrow 7 \leftarrow 8}. 
\end{align*}

\subsection{Notation}

The composition of two maps $f \colon X \to Y$ and $g \colon Y \to Z$ is denoted by $gf$.

We define the multiplication on the automorphism group on a finite set $X$ by
$(\sigma \tau)(i) := \sigma(\tau(i))$ for $i \in X$.
For $a,b \in X$, 
the notation $(a \quad b)$ means the transposition which exchanges $a$ and $b$.

For integers $a,b \in \Z$,
we define $[a,b]:=\{ i \in \Z \mid a \le i \le b \}$.
For a set $X \subset \Z$, we set 
$-X:=\{-i \mid i \in \Z \}$ and $\pm X:=X \cup (-X)$.

Throughout this paper, $K$ is a field and $A$ is a finite-dimensional $K$-algebra.
Unless otherwise stated, $A$-modules are finite-dimensional left $A$-modules, and
we write $\mod A$ for the category of finite-dimensional left $A$-modules.
Let $M \in \mod A$, 
and decompose $M$ as $M \cong \bigoplus_{i=1}^m M_i^{\oplus l_i}$
with $M_i \not \cong M_j$ for $i \ne j$ and with $l_i \ge 1$ for each $i$.
Then, we define the number $|M|:=m$,
and we say that $M$ is \textit{basic} if $l_i=1$ for any $i$.
We set the multiplication on
the endomorphism algebra $\End_A(M)$ as $g \cdot f := gf$.
Thus, $M$ is also a left $\End_A(M)$-module by $fx := f(x)$
for $f \in \End_A(M)$ and $x \in M$.

For a quiver $Q$, the composition of the two arrows
$\alpha \colon i \to j$ and $\beta \colon j \to k$ in $Q$ is denoted by $\alpha\beta$,
which is a path from $i$ to $k$.

\section{General observations of $\tau$-tilting finite algebras}\label{Sec_general}

In this section, we observe some general properties holding for
$\tau$-tilting finite algebras $A$ over a field $K$.

\subsection{Lattices}\label{Subsec_lattice}

First, we recall the notion of lattices.

\begin{Def}
Let $(L,\le)$ be a partially ordered set.
\begin{itemize}
\item[(1)]
For $x,y,z \in L$,
the element $z$ is called the \textit{meet} of $x$ and $y$
if $z$ is the maximum element satisfying $z \le x$ and $z \le y$.
In this case, $z$ is denoted by $x \wedge y$.
\item[(2)]
For $x,y,z \in L$,
the element $z$ is called the \textit{join} of $x$ and $y$
if $z$ is the minimum element satisfying $z \ge x$ and $z \ge y$.
In this case, $z$ is denoted by $x \vee y$.
\item[(3)]
The set $L$ is called a \textit{lattice}
if $L$ admits the meet $x \wedge y$ and the join $x \vee y$ for any $x,y \in L$.
\item[(4)]
The set $L$ is called a \textit{finite lattice}
if $L$ is a finite set and a lattice.
\end{itemize}
\end{Def}

The operations join and meet clearly satisfy the associative relations,
so we may use the expressions $x \wedge y \wedge z$ and $x \vee y \vee z$.
If $L \ne \emptyset$ is a finite lattice,
there exist the maximum element $\max L$ and the minimum element $\min L$.
In this case,
we define $\bigwedge_{x \in \emptyset} x := \max L$ and 
$\bigvee_{x \in \emptyset} x := \min L$.

Later in this paper, we will consider the decomposition of 
an element in a lattice with respect to the operation join,
so we recall the notion of join-irreducible elements.

\begin{Def}
Let $L$ be a lattice.
An element $x \in L$ is called a \textit{join-irreducible} element
if the following conditions hold:
\begin{itemize}
\item $x$ is not the minimum element of $L$; and 
\item for any $y,z \in L$, if $x=y \vee z$, then $y=x$ or $z=x$.
\end{itemize}
We write $\jirr L$ for the set of join-irreducible elements in $W$. 
\end{Def}

We remark that $x \in \jirr L$ is equivalent to that 
there exists a unique maximal element of the set $\{y \in W \mid y<x\}$
if $L$ is a finite lattice.
This fails if we drop the assumption that $L$ is finite \cite[Remark 3.1.2]{BCZ}.

\subsection{Torsion-free classes}\label{Subsec_tors_free}

Let $A$ be a finite-dimensional algebra.

A full subcategory $\calF$ of $\mod A$ is called a \textit{torsion-free class} in $\mod A$
if $\calF$ is closed under submodules and extensions,
and we write $\torf A$ for the set of torsion-free classes in $\mod A$.
For a full subcategory $\calC \subset \mod A$, we define
\begin{align*}
\add \calC &:= \{ M \in \mod A \mid \text{$M$ is a direct summand of
$\textstyle \bigoplus_{i=1}^s C_i$ for some $C_1,C_2,\ldots,C_s \in \calC$}\}, \\
\Filt \calC &:= \{ M \in \mod A \mid \text{there exists 
$0 = M_0 \subset M_1 \subset \cdots \subset M_l = M$ 
with $M_i/M_{i-1} \in \add \calC$}\}, \\
\Sub \calC &:= \{ M \in \mod A \mid 
\text{$M$ is a submodule of some object in $\add \calC$} \}, \\
\sfF(\calC) &:= \Filt(\Sub \calC).
\end{align*}
Then $\sfF(\calC)$ is the smallest torsion-free class containing $\calC$.

The set $\torf A$ has a natural partial order defined by inclusions,
and then, the partially ordered set $(\torf A,\subset)$ is a finite lattice
with $\calF_1 \wedge \calF_2 = \calF_1 \cap \calF_2$ and 
$\calF_1 \vee \calF_2 = \sfF(\calF_1 \cup \calF_2)$.
The notion of \textit{torsion classes} is dually defined.

A torsion-free class in $\mod A$ is 
not necessarily functorially finite in $\mod A$.
Demonet--Iyama--Jasso \cite{DIJ} introduced the notion of 
\textit{$\tau$-tilting finiteness}, which is equivalent to that $\torf A$ is a finite set.
In their paper, they proved that $A$ is $\tau$-tilting finite 
if and only if every torsion-free class is functorially finite.
In the rest, $A$ is assumed to be $\tau$-tilting finite.

Functorially finite torsion-free classes are strongly connected with
support $\tau^{-1}$-tilting $A$-modules, 
which were introduced by Adachi--Iyama--Reiten \cite{AIR}.
They proved that the set $\torf A$ has a bijection from the set $\stitilt A$ of
support $\tau^{-1}$-tilting $A$-modules.

Let $M \in \mod A$ and $I$ be an injective $A$-module in $\mod A$.
Then, $M$ is called a \textit{$\tau^{-1}$-rigid module} if $\Hom_A(\tau^{-1}M,M)=0$,
and the pair $(M,I)$ is called a \textit{$\tau^{-1}$-rigid pair}
if $M$ is $\tau^{-1}$-rigid and $\Hom_A(M,I)=0$.
If a $\tau^{-1}$-rigid pair $(M,I)$ satisfies $|M|+|I|=|A|$,
the pair $(M,I)$ is called a \textit{support $\tau^{-1}$-tilting pair}, and
an $A$-module $M$ is called a \textit{support $\tau^{-1}$-tilting module}
if there exists some injective module $I$ 
such that $(M,I)$ is a support $\tau$-tilting pair.
We write $\stitilt A$ for the set of basic support $\tau^{-1}$-tilting modules in $\mod A$.
The notion of \textit{support $\tau$-tilting modules} is dually defined.

If $M$ is $\tau^{-1}$-rigid, then the full subcategory
$\Sub M$ is a torsion-free class \cite{AS}.
Adachi--Iyama--Reiten proved that 
this correspondence $\stitilt A \ni M \mapsto \Sub M \in \torf A$ is a bijection.

\begin{Prop}\label{Prop_stitilt_torf}\cite[Theorem 2.7]{AIR}
The correspondence $\stitilt A \ni M \mapsto \Sub M \in \torf A$ is a bijection.
\end{Prop}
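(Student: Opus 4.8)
The plan is to produce an explicit two-sided inverse of the map $M \mapsto \Sub M$ using relative injective objects. For a torsion-free class $\calF$, call $X \in \calF$ \emph{$\mathsf{Ext}$-injective in $\calF$} if $\mathsf{Ext}^1_A(F,X)=0$ for every $F \in \calF$, and let $E(\calF)$ be the direct sum of a representative of each isomorphism class of indecomposable $\mathsf{Ext}$-injective object of $\calF$. The candidate inverse is $\calF \mapsto E(\calF)$, and the whole proof reduces to showing that $M \mapsto \Sub M$ and $\calF \mapsto E(\calF)$ are mutually inverse.

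The first key step is to identify the $\mathsf{Ext}$-injectives of $\Sub M$ for a $\tau^{-1}$-rigid module $M$: I claim they are exactly $\add M$. That every summand of $M$ is $\mathsf{Ext}$-injective in $\Sub M$ is the content of \cite{AS}, and follows from the Auslander--Reiten duality $\mathsf{Ext}^1_A(F,M)\cong \sfD\,\underline{\Hom}_A(\tau^{-1}M,F)$ together with the observation that any $F\in\Sub M$ embeds into some $M^{\oplus n}$, so that $\Hom_A(\tau^{-1}M,M)=0$ forces $\Hom_A(\tau^{-1}M,F)=0$, whence the stable Hom and the $\mathsf{Ext}$ vanish. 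Granting the reverse inclusion (no other indecomposables are $\mathsf{Ext}$-injective), injectivity of $M\mapsto\Sub M$ is immediate: if $\Sub M=\Sub M'$, then $\add M=\add E(\Sub M)=\add M'$, hence $M\cong M'$ since both are basic. In particular $E(\Sub M)\cong M$.

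For surjectivity, fix an arbitrary $\calF\in\torf A$. Because $A$ is $\tau$-tilting finite, $\calF$ is functorially finite \cite{DIJ}; this is what guarantees that $\calF$ has enough $\mathsf{Ext}$-injectives, in the sense that every object of $\calF$ embeds into an object of $\add E(\calF)$. Hence $\calF\subseteq\Sub E(\calF)$, while the reverse inclusion is clear because $E(\calF)\in\calF$ and $\calF$ is closed under submodules; therefore $\Sub E(\calF)=\calF$. It then remains to check that $M:=E(\calF)$ is genuinely a support $\tau^{-1}$-tilting module, and its $\tau^{-1}$-rigidity again follows from the Auslander--Reiten computation applied to the $\mathsf{Ext}$-injective summands.

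The main obstacle is the numerical condition $|M|+|I|=|A|$ required for $M$ to be support $\tau^{-1}$-tilting --- equivalently, that a functorially finite torsion-free class has exactly the expected number of indecomposable $\mathsf{Ext}$-injectives. This is the deep part of \cite[Theorem 2.7]{AIR}, established there by analysing minimal right $\calF$-approximations and the mutation theory of support $\tau^{-1}$-tilting pairs. A cleaner route that avoids redoing this counting is to reduce to the torsion-class version by duality: the functor $\sfD=\Hom_K(-,K)\colon\mod A\to\mod A^{\mathrm{op}}$ is a contravariant equivalence exchanging $\tau$ with $\tau^{-1}$, $\Sub$ with $\Fac$, and torsion-free classes with torsion classes, so it identifies $M\mapsto\Sub M$ over $A$ with $N\mapsto\Fac N$ over the ($\tau$-tilting finite) algebra $A^{\mathrm{op}}$, reducing the statement to the support $\tau$-tilting form of the Adachi--Iyama--Reiten theorem.
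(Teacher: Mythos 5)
Your proposal is correct and ultimately takes the same route as the paper: the paper gives no argument of its own for this proposition (it is quoted verbatim from \cite[Theorem 2.7]{AIR}), and the only fully rigorous step in your writeup is exactly the duality reduction via $\sfD$ exchanging $\Sub$ with $\Fac$ and $\tau^{-1}$ with $\tau$, which amounts to invoking that same theorem in its support $\tau$-tilting form. One caveat on your interim sketch: the claim that the $\mathsf{Ext}$-injectives of $\Sub M$ are exactly $\add M$ for every $\tau^{-1}$-rigid $M$ is false as stated --- for $A$ the path algebra of the quiver $1 \to 2$ and $M=P_1$ (projective-injective, hence $\tau^{-1}$-rigid but not support $\tau^{-1}$-tilting), the $\mathsf{Ext}$-injectives of $\Sub M$ form $\add(P_1 \oplus S_2) \supsetneq \add M$ --- it holds only for support $\tau^{-1}$-tilting $M$, which is all you actually use and is again part of the cited theorem.
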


Thus, we induce a partial order $\le$ on the set $\stitilt A$ 
from inclusion relations on $\torf A$;
namely, $M \le N$ holds if and only if $\Sub M \subset \Sub N$.
Then, $(\stitilt A, \le)$ is clearly a lattice. 

\subsection{Semibricks}\label{Subsec_gen_semibrick}

We assume that $A$ is $\tau$-tilting finite as in the previous subsection.

The aim of this paper is to investigate bricks and semibricks over preprojective algebras.
These notions are defined as follows.

\begin{Def}
Let $S$ be an $A$-module.
\begin{itemize}
\item[(1)] 
The module $S$ is called a \textit{brick} if
the endomorphism ring $\End_A(S)$ is a division ring.
We write $\brick A$ for the set of bricks.
\item[(2)] 
The module $S$ is called a \textit{semibrick} if
$S$ is decomposed as the direct sum $\bigoplus_{i=1}^m S_i$ of bricks
$S_1,S_2,\ldots,S_m \in \brick A$ satisfying $\Hom_A(S_i,S_j)=0$ if $i \ne j$.
We write $\sbrick A$ for the set of semibricks in $\mod A$.
\end{itemize}
\end{Def}

The notion of semibricks is 
originally defined as sets of Hom-orthogonal bricks in \cite{Asai},
but it does not matter here, since $A$ is assumed to be $\tau$-tilting finite
\cite[Corollary 1.10]{Asai}.
Then, \cite[Proposition 1.9]{Asai} tells us that 
there is a bijection $\sfF \colon \sbrick A \to \torf A$
taking the minimum torsion-free class $\sfF(S)$ containing each semibrick $S$.
Moreover, it satisfies the property below. 

\begin{Prop}\cite[Proposition 1.9]{Asai}\label{Prop_comm}
We have the following commutative diagram of bijections:
\begin{align*}
\begin{xy}
(-40,  0) *+{\stitilt A} = "0",
(  0,  0) *+{\torf A} = "1",
( 40,  0) *+{\sbrick A} = "3",
(-40, -8) = "4",
( 40, -8) = "5",
\ar^{\Sub} "0"; "1"
\ar_{\sfF} "3"; "1"
\ar@{-} "0"; "4"
\ar@{-}_{M \mapsto \soc_{\End_A(M)} M} "4"; "5" 
\ar "5"; "3"
\end{xy}.
\end{align*}
\end{Prop}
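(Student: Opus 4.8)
The plan is to reduce the statement to a single module-theoretic identity and then to exploit the $(A,\End_A(M))$-bimodule structure of a support $\tau^{-1}$-tilting module. Fix $M\in\stitilt A$ and write $E:=\End_A(M)$. Since $\Sub\colon\stitilt A\to\torf A$ is a bijection (Proposition~\ref{Prop_stitilt_torf}) and $\sfF\colon\sbrick A\to\torf A$ is a bijection, the composite $\sfF^{-1}\circ\Sub$ is already a bijection $\stitilt A\to\sbrick A$. Hence the entire content of the proposition is the equality
\[
\sfF\bigl(\soc_{E}M\bigr)=\Sub M,
\]
which in particular presupposes that $\soc_{E}M$ is a semibrick. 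Once this is proved for every $M$, the bottom map of the diagram is forced to coincide with $\sfF^{-1}\circ\Sub$, so it is a bijection and the triangle commutes.

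The easy half is immediate. Because every $f\in E$ is $A$-linear, the operators $m\mapsto am$ ($a\in A$) commute with the $E$-action on $M$, so $M$ is an $(A,E)$-bimodule and each of its $E$-submodules is automatically an $A$-submodule. In particular $\soc_{E}M$ is an $A$-submodule of $M$, whence $\soc_{E}M\in\Sub M$ and so $\sfF(\soc_{E}M)\subseteq\Sub M$. To see that $\soc_{E}M$ is a semibrick I would decompose $M=\bigoplus_{i=1}^{n}M_{i}$ into indecomposables, so that $E$ is basic with simple modules indexed by the $M_{i}$, and show that $\soc_{E}M$ is multiplicity-free, say $\soc_{E}M=\bigoplus_{i=1}^{n}S_{i}$ with $S_{i}$ the $A$-submodule realising the $E$-simple of type $i$. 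That each $S_{i}$ is a brick and that $\Hom_{A}(S_{i},S_{j})=0$ for $i\neq j$ is the dual, under the standard duality $\mod A\cong\mod A^{\mathrm{op}}$, of the familiar description of the semibrick $N/\rad_{\End_A(N)}N$ attached to a support $\tau$-tilting module $N$; the division-ring property of $\End_{A}(S_{i})$ and the vanishing of the cross-homomorphisms both issue from the $\tau^{-1}$-rigidity of $M$.

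It then remains to prove the reverse inclusion $\Sub M\subseteq\sfF(\soc_{E}M)$, for which it suffices to show $M\in\Filt\bigl(\Sub(\soc_{E}M)\bigr)$. Here I would use the $E$-socle series
\[
0=M^{(0)}\subset M^{(1)}\subset\cdots\subset M^{(k)}=M,\qquad M^{(j)}/M^{(j-1)}=\soc_{E}\bigl(M/M^{(j-1)}\bigr),
\]
whose layers are semisimple as $E$-modules. The crucial point is that, as an $A$-module, every semisimple $E$-subquotient of $M$ is a direct sum of copies of the bricks $S_{i}$; granting this, each layer lies in $\add(\soc_{E}M)\subseteq\Sub(\soc_{E}M)$, and since $\sfF(\soc_{E}M)$ is closed under extensions we obtain $M\in\sfF(\soc_{E}M)$, hence $\Sub M\subseteq\sfF(\soc_{E}M)$.

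The main obstacle is the structural claim underlying both the semibrick property and this last inclusion: that the $A$-module structure of an $E$-simple (sub)quotient of $M$ is uniformly the brick $S_{i}$ determined by its type $i$, together with the brick and Hom-orthogonality statements for the family $(S_{i})_{i}$. This is exactly the place where the $\tau^{-1}$-rigidity of $M$ enters in an essential way, rather than merely the bimodule formalism, and it is the technical heart of the argument (as carried out in \cite{Asai}).
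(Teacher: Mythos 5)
First, a point of comparison: the paper does not prove Proposition \ref{Prop_comm} at all --- it is imported verbatim from \cite[Proposition 1.9]{Asai} --- so there is no internal argument to match, and a blind proof has to reprove the cited result. Your reduction is the right frame: since $\Sub$ and $\sfF$ are known bijections, everything comes down to $\sfF(\soc_E M)=\Sub M$ with $E:=\End_A(M)$, together with the semibrick property of $\soc_E M$. One small repairable slip at the start: it is false that every $E$-submodule of $M$ is an $A$-submodule (take $A$ the path algebra of $1\to 2$ and $M=P_1$, so $E=K$ and every subspace is an $E$-submodule); what is true is that each $a\cdot(-)$ is an $E$-linear endomorphism of $M$, so the socle --- and every term of the $E$-socle series --- is preserved because these are fully invariant submodules.

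The genuine gap is that the structural claims carrying your argument are false, over and above the part you explicitly defer to \cite{Asai}. (a) $\soc_E M$ is not of the form $\bigoplus_{i=1}^{n}S_i$ with one brick per indecomposable summand $M_i$, nor is it multiplicity-free over $E$: by the paper's own Proposition \ref{Prop_indec_comm}, a module $M\in\stitilt_1 A$ may have many indecomposable summands while $\soc_E M$ is a single brick. Concretely, for $A=K(1\to 2)$ and $M=P_1\oplus S_2\in\stitilt A$, the algebra $E$ is again a path algebra of type $\bbA_2$, and $\soc_E M=P_1$ is one brick (not two summands) which, as an $E$-module, is isotypic of multiplicity two. (b) The same example kills your mechanism for the reverse inclusion: the second $E$-socle layer is $M/\soc_E M\cong S_2$, which does \emph{not} lie in $\add(\soc_E M)=\add P_1$; it lies only in $\Sub P_1$. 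So ``each layer lies in $\add(\soc_E M)$'' is wrong, and the statement you would actually need --- that each layer lies in $\sfF(\soc_E M)$ --- is essentially the inclusion $\Sub M\subset\sfF(\soc_E M)$ being proved; deriving it from the $\tau^{-1}$-rigidity of $M$ is exactly the content of \cite[Proposition 1.9]{Asai} that your sketch leaves untouched. In short, as a citation the statement needs no proof, but as a proof your proposal both defers the technical heart and, where it does commit to concrete structure, commits to structure that a two-dimensional example already refutes.
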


Now, we set $\itirigid A$ as the set of indecomposable $\tau^{-1}$-rigid $A$-modules
in $\mod A$.
Then, we also have another commutative diagram.

\begin{Prop}\label{Prop_indec_comm}
We have the following commutative diagram of bijections:
\begin{align*}
\begin{xy}
(-40,  0) *+{\itirigid A} = "0",
(  0,  0) *+{\jirr (\torf A)} = "1",
( 40,  0) *+{\brick A} = "3",
(-40, -8) = "4",
( 40, -8) = "5",
\ar^{\Sub} "0"; "1"
\ar_{\sfF} "3"; "1"
\ar@{-} "0"; "4"
\ar@{-}_{M \mapsto \soc_{\End_A(M)} M} "4"; "5" 
\ar "5"; "3"
\end{xy}.
\end{align*}
\end{Prop}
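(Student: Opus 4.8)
The plan is to obtain the three maps of Proposition \ref{Prop_indec_comm} from those of Proposition \ref{Prop_comm}: the arrow $\sfF$ is literally the restriction of $\sfF\colon\sbrick A\to\torf A$ to $\brick A\subset\sbrick A$, while $\Sub$ and the socle map are given by the same formulas as before but now evaluated on indecomposable $\tau^{-1}$-rigid modules (which need not themselves be support $\tau^{-1}$-tilting). Since the diagram of Proposition \ref{Prop_comm} commutes and all its arrows are bijections, the statement reduces to two points: first, that $\sfF$ carries $\brick A$ bijectively onto $\jirr(\torf A)$---a characterization of which semibricks yield join-irreducible torsion-free classes; and second, that $\Sub$ and the socle map send $\itirigid A$ bijectively onto $\jirr(\torf A)$ and $\brick A$ compatibly with the first correspondence. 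The heart of the matter is the first point.

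For that point, recall that the join in $\torf A$ is $\calF_1\vee\calF_2=\sfF(\calF_1\cup\calF_2)$ and that $\sfF\colon\sbrick A\to\torf A$ is a bijection. If a semibrick decomposes as $S=\bigoplus_{i=1}^m S_i$ with $m\ge 2$, then $\sfF(S)$ is the smallest torsion-free class containing all the $S_i$, so $\sfF(S)=\bigvee_{i=1}^m\sfF(S_i)$, and injectivity of $\sfF$ forces $\sfF(S_i)\subsetneq\sfF(S)$ for each $i$; hence $\sfF(S)$ is a join of strictly smaller elements and is \emph{not} join-irreducible. Conversely, when $S$ is a brick I must show $\sfF(S)$ is join-irreducible. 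The clean route is to invoke Theorem \ref{Thm_decompose}, which says that $\sfF(S)=\bigvee_{i=1}^m\sfF(S_i)$ is the canonical join representation of $\sfF(S)$, together with the standard fact that an element of a finite lattice is join-irreducible exactly when its canonical join representation is the singleton consisting of itself; then $\sfF(S)$ is join-irreducible iff $m=1$, i.e. iff $S\in\brick A$. This yields the restricted bijection $\sfF\colon\brick A\to\jirr(\torf A)$.

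It remains to treat the two arrows out of $\itirigid A$. By \cite{AIR} the indecomposable $\tau^{-1}$-rigid modules are exactly the indecomposable direct summands of support $\tau^{-1}$-tilting modules, so each $N\in\itirigid A$ is a summand of some $M\in\stitilt A$. For such an $M$ the semibrick $\soc_{\End_A(M)}M$ has exactly $|M|$ brick summands, in bijection with the $|M|$ indecomposable summands of $M$; this refines the socle bijection of Proposition \ref{Prop_comm} to a summand-by-summand assignment, which I claim is $N\mapsto\soc_{\End_A(N)}N$ and gives a bijection $\itirigid A\to\brick A$. Together with the analogue $\Sub N=\sfF(\soc_{\End_A(N)}N)$ of the commutativity relation of Proposition \ref{Prop_comm}, valid because $\Sub N\in\torf A$ for any $\tau^{-1}$-rigid $N$, this simultaneously shows that $\soc_{\End_A(N)}N$ is a brick and that $\Sub N$ is join-irreducible; with the bijection of the previous paragraph it follows that $\Sub\colon\itirigid A\to\jirr(\torf A)$ is a bijection and the triangle commutes.

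I expect the real work to lie in two places. The main obstacle is the converse half of the second paragraph, namely that a brick $S$ produces a \emph{join-irreducible} torsion-free class $\sfF(S)$. Invoking Theorem \ref{Thm_decompose} settles this immediately, but if one wants to avoid it---to exclude any circularity, since that theorem may itself rest on the present proposition---then one must argue directly from the finer lattice structure of $\torf A$, using its semidistributivity and the brick labeling of the Hasse quiver to show that $\sfF(S)$ has a unique lower cover. The secondary obstacle is the promised refinement of the socle bijection to indecomposable summands: one has to verify that the brick $\soc_{\End_A(N)}N$ attached to $N\in\itirigid A$ is independent of the ambient support $\tau^{-1}$-tilting module chosen to contain $N$, and that non-isomorphic $N$ yield non-isomorphic bricks.
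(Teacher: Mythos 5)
Your handling of the restriction $\sfF \colon \brick A \to \jirr(\torf A)$ is correct, and it is genuinely different from the paper's argument: you work entirely on the semibrick side, deducing the claim from Theorem \ref{Thm_decompose} together with the standard fact that an element of a finite lattice possessing a canonical join representation is join-irreducible exactly when that representation is the singleton consisting of the element itself. The paper never argues this way; it characterizes join-irreducibility of $\Sub M$ through mutation theory on $\stitilt A$ (\cite[Example 3.5]{DIJ}) and only then transports the condition to semibricks via \cite[Proposition 1.13]{Asai}. Your circularity worry is unfounded: the paper's proof of Theorem \ref{Thm_decompose} uses only Proposition \ref{Prop_comm} and \cite[Lemma 1.7]{Asai}, not the present proposition, so your order of deduction is legitimate and arguably cleaner on this half.

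The second half, however, rests on a false claim. It is not true that for $M \in \stitilt A$ the semibrick $\soc_{\End_A(M)}M$ has exactly $|M|$ brick summands in bijection with the indecomposable summands of $M$: by Theorem \ref{Thm_decompose} the number of brick summands equals the number of joinands in the canonical join representation of $\Sub M$, which is in general smaller than $|M|$. Concretely, let $A$ be the path algebra of type $\bbA_2$, let $L$ be the projective--injective module of length two and $S$ its simple projective socle; then $M = L \oplus S \in \stitilt A$ has $|M|=2$, yet $\Sub M = \add\{L,S\}$ has the unique lower cover $\add S$, is join-irreducible, and $\soc_{\End_A(M)}M \cong L$ is a \emph{single} brick. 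So your summand-by-summand refinement of the socle bijection does not exist, and with it collapses your construction of $\itirigid A \to \brick A$. Likewise, the identity $\Sub N = \sfF(\soc_{\End_A(N)}N)$ for $N \in \itirigid A$ does not follow from the mere observation that $\Sub N \in \torf A$; it is the substance of \cite[Theorem 1.3]{Asai}, which the paper invokes for exactly this purpose. The two verifications you defer as ``secondary obstacles'' (independence of the ambient support $\tau^{-1}$-tilting module, and injectivity on $\itirigid A$) are not residual checks but the core of the statement, and the paper resolves them by a detour you do not have: it introduces the set $\stitilt_1 A$ of modules with exactly one downward mutation, shows $\Sub$ restricts to a bijection $\stitilt_1 A \to \jirr(\torf A)$ by \cite[Example 3.5]{DIJ}, identifies $\stitilt_1 A$ with $\itirigid A$ via the unique indecomposable summand $M_1$ satisfying $\Sub M_1 = \Sub M$ (\cite[Definition-Proposition 2.28]{AIR} and Proposition \ref{Prop_stitilt_torf}), and identifies $\stitilt_1 A$ with $\brick A$ via \cite[Proposition 1.13]{Asai}, with the socle formula on indecomposables supplied by \cite[Theorem 1.3]{Asai}. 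Without these inputs (or equivalents), your triangle out of $\itirigid A$ remains unproved.
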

	
\begin{proof}
For any $\calF \in \torf A$, the join-irreducibility of $\calF$ is equivalent to that 
there exists a unique maximal element in the set 
$\{\calF' \in \torf A \mid \calF' \subsetneq \calF\}$, since $\torf A$ is a finite lattice.
From \cite[Example 3.5]{DIJ}, the latter condition holds if and only if 
the corresponding $M \in \stitilt A$ has exactly one mutation $M'$ satisfying
$\Sub M' \subsetneq \Sub M$.
We here write $\stitilt_1 A$ for the set of such $M \in \stitilt A$.
Then, we have a bijection $\Sub \colon \stitilt_1 A \to \jirr(\torf A)$.

For any $M \in \stitilt A$,
the condition $M \in \stitilt_1 A$ is equivalent to that
there uniquely exists an indecomposable direct summand $M_1$ of $M$
such that $\Sub M_1=\Sub M$ \cite[Definition-Proposition 2.28]{AIR}.
The correspondence $\stitilt_1 A \ni M \mapsto M_1 \in \itirigid A$ is a bijection
by Proposition \ref{Prop_stitilt_torf}.
Thus, we have a bijection
$\itirigid A \leftarrow \stitilt_1 A \xrightarrow{\Sub} \jirr(\torf A)$,
and it coincides with $\Sub \colon \itirigid A \to \jirr(\torf A)$.

By using \cite[Proposition 1.13]{Asai} again, for any $M \in \stitilt A$,
the condition $M \in \stitilt_1 A$ is also equivalent to that
the corresponding semibrick $S$ is actually a brick.
Thus, the bijection $\stitilt A \to \sbrick A$ is restricted to 
a bijection $\stitilt_1 A \to \brick A$.
By composing it to the bijection $\stitilt_1 A \ni M \mapsto M_1 \in \itirigid A$,
we have a bijection $\itirigid A \to \brick A$,
and it is also given by the formula $M \mapsto \soc_{\End_A(M)} M$
by \cite[Theorem 1.3]{Asai}.

From these observations and Proposition \ref{Prop_comm}, 
we can obtain the desired commutative diagram of bijections.
\end{proof}

\subsection{Canonical join representations}\label{Subsec_gen_can_join}

Now that the bijection $\sfF \colon \sbrick A \to \torf A$ 
is restricted to a bijection $\brick A \to \jirr (\torf A)$,
the following natural question occurs:
\begin{quote}
Let $\calF \in \torf A$,
take the unique semibrick $S \in \sbrick A$ satisfying $\calF=\sfF(S)$,
and decompose $S$ as $\bigoplus_{i=1}^m S_i$ with $S_i \in \brick A$.
Then what is the relationship between $\sfF(S) \in \torf A$ and 
$\sfF(S_1),\sfF(S_2),\ldots,\sfF(S_m) \in \jirr(\torf A)$?
\end{quote}

Clearly, $\sfF(S)=\bigvee_{i=1}^m \sfF(S_i)$ holds,
since $\sfF(S)$ is the minimum torsion-free class containing all $\sfF(S_i)$.
Actually, this will turn out to be a canonical join representation.
Here, the notion of canonical join representations 
was introduced by Reading \cite{Reading},
and defined as below.

\begin{Def}\label{Def_can_join}
Let $L$ be a finite lattice, $x \in L$, and $U \subset L$.
Then, we say that $U$ is a \textit{canonical join representation} if
\begin{itemize}
\item[(a)] $x=\bigvee_{u \in U} u$ holds; and
\item[(b)] for any proper subset $U' \subsetneq U$,
the join $\bigvee_{u \in U'} u$ never coincides with $x$; and
\item[(c)] if $V \subset L$ satisfies the properties (a) and (b), then, 
for every $u \in U$, there exists $v \in V$ such that $u \le v$.
\end{itemize}
In this case, we also say $x=\bigvee_{u \in U} u$ is a canonical join representation.
\end{Def}

If $x \in L$ has a canonical join representation $U$, 
then we can easily check that 
it is the unique canonical join representation for each $x \in L$,
and that $U$ is a subset of $\jirr L$.
The existence of canonical join representations is not guaranteed for
general finite lattices.
In the case that $L=\torf A$, 
every $\calF \in \torf A$ has a canonical join representation
given by the indecomposable decomposition of semibricks.

\begin{Thm}\label{Thm_decompose}
Let $\calF \in \torf A$,
take the unique semibrick $S \in \sbrick A$ satisfying $\calF=\sfF(S)$,
and decompose $S$ as $\bigoplus_{i=1}^m S_i$ with $S_i \in \brick A$.
Then the representation $\calF=\bigvee_{i=1}^m \sfF(S_i)$ is 
the canonical join representation.
\end{Thm}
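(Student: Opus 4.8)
The plan is to verify the three defining conditions (a), (b), (c) of a canonical join representation in Definition \ref{Def_can_join} for the set $U := \{\sfF(S_1),\dots,\sfF(S_m)\}$; throughout I write $\calF_i := \sfF(S_i)$, which lies in $\jirr(\torf A)$ by Proposition \ref{Prop_indec_comm} since each $S_i$ is a brick. Condition (a) is immediate and is already recorded in the text: as $\sfF(S_i)$ is the smallest torsion-free class containing $S_i$, the join $\bigvee_{i=1}^m \calF_i$ is the smallest torsion-free class containing every $S_i$, hence the smallest one containing $S = \bigoplus_{i=1}^m S_i$, which is $\sfF(S) = \calF$.

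For condition (b) I would use the injectivity of $\sfF$. For any subset $I \subset \{1,\dots,m\}$ the join $\bigvee_{i \in I} \calF_i$ is again the smallest torsion-free class containing $\bigoplus_{i \in I} S_i$, namely $\sfF(\bigoplus_{i\in I} S_i)$. Now $\bigoplus_{i\in I} S_i$ is a direct summand of the semibrick $S$, so it is itself a semibrick. If $I \subsetneq \{1,\dots,m\}$, then $\bigoplus_{i \in I} S_i \not\cong S$ by the Krull--Schmidt theorem, and since $\sfF \colon \sbrick A \to \torf A$ is a bijection we obtain $\bigvee_{i \in I} \calF_i = \sfF(\bigoplus_{i \in I}S_i) \ne \sfF(S) = \calF$. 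This is exactly (b).

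Condition (c) is the heart of the matter. Let $V = \{\mathcal{G}_1,\dots,\mathcal{G}_p\} \subset \torf A$ satisfy (a) and (b) for $\calF$; I must show that each $\calF_i$ lies below some $\mathcal{G}_j$. Since $\calF_i = \sfF(S_i)$ is the smallest torsion-free class containing $S_i$, the relation $\calF_i \subset \mathcal{G}_j$ is equivalent to the membership $S_i \in \mathcal{G}_j$, so (c) amounts to: for each $i$ there is a $j$ with $S_i \in \mathcal{G}_j$. Fix $i$ and put $\calF^{[i]} := \bigvee_{k\ne i}\calF_k = \sfF(\bigoplus_{k \ne i} S_k)$, which satisfies $\calF^{[i]} \subsetneq \calF$ by (b). I would then establish two facts: (I) $\calF^{[i]}$ is a lower cover of $\calF$ in $\torf A$; and (II) $\calF_i$ is the smallest torsion-free class $\mathcal{X}$ with $\mathcal{X} \vee \calF^{[i]} = \calF$. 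Granting these the argument finishes quickly: since $\bigvee_j \mathcal{G}_j = \calF \not\subset \calF^{[i]}$, some $\mathcal{G}_{j_0}$ has $\mathcal{G}_{j_0} \not\subset \calF^{[i]}$; then $\calF^{[i]} \subsetneq \mathcal{G}_{j_0}\vee\calF^{[i]} \subset \calF$, and $\calF^{[i]} \lessdot \calF$ from (I) forces $\mathcal{G}_{j_0}\vee\calF^{[i]} = \calF$; finally (II) gives $\calF_i \subset \mathcal{G}_{j_0}$, as required.

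The main obstacle is proving (I) and (II), which together say that deleting the single brick $S_i$ from the semibrick $S$ drops $\calF$ by exactly one covering relation whose canonical label is $\calF_i$. For this I would work on the module side: realize $\calF = \Sub M$ for $M \in \stitilt A$ via Proposition \ref{Prop_stitilt_torf}, and use the mutation theory of \cite{AIR} together with the description of $\soc_{\End_A(M)}M$ underlying Propositions \ref{Prop_comm} and \ref{Prop_indec_comm} to match the summands $S_i$ of the semibrick with the lower covers of $\calF$. The well-definedness of the label in (II) relies on the join-semidistributivity of $\torf A$ (equivalently, the brick labelling of its covering relations), which guarantees that $\{\mathcal{X} : \mathcal{X}\vee\calF^{[i]} = \calF\}$ has a unique minimal element; this fact must be invoked here. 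The truly delicate point is identifying that minimal element with $\calF_i = \sfF(S_i)$ --- equivalently, ruling out $\calF_i^{*}\vee\calF^{[i]} = \calF$ for the unique lower cover $\calF_i^{*}$ of the join-irreducible $\calF_i$ --- which is precisely where the structure of the bijection $\sfF$ enters in an essential way.
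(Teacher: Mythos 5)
Your verifications of conditions (a) and (b) are correct. For (b) you take a slightly different route from the paper: you observe that $\bigvee_{i\in I}\sfF(S_i)=\sfF(\bigoplus_{i\in I}S_i)$, that $\bigoplus_{i\in I}S_i$ is again a semibrick, and you invoke the injectivity of the bijection $\sfF\colon \sbrick A \to \torf A$ together with Krull--Schmidt; the paper instead argues directly that $S_j \notin \sfF(\{S_i\}_{i\in I})$ for $j\notin I$ because $\Hom_A(S_j,S_i)=0$. Both arguments are valid.

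For condition (c), however, there is a genuine gap, and your intermediate claim (I) is in fact false. Let $A$ be the path algebra of the $\bbA_2$ quiver $1 \to 2$, let $S_1,S_2$ be the simples and $P$ the indecomposable of length two, oriented so that $\soc P = S_2$. Then $S = S_1\oplus S_2$ is the semibrick with $\sfF(S)=\mod A$, and $\calF^{[1]}=\sfF(S_2)=\add S_2$; but $\add S_2 \subsetneq \add(S_2\oplus P)=\sfF(P) \subsetneq \mod A$, so $\calF^{[1]}$ is \emph{not} a lower cover of $\calF$. In general, deleting one canonical joinand produces an element lying weakly \emph{below} the corresponding lower cover, not the lower cover itself. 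With (I) gone, your deduction that $\mathcal{G}_{j_0}\vee\calF^{[i]}=\calF$ collapses; and (II), which you yourself flag as ``the truly delicate point,'' is never proved but only attributed to semidistributivity, mutation theory, and the brick labelling --- so the heart of the theorem remains unestablished, and repairing your route would essentially amount to redeveloping the covering-label theory of \cite{BCZ,DIRRT} rather than proving the statement. The paper's actual proof of (c) is far more direct and needs none of this machinery: given any join representation $\calF=\bigvee_j \calF_j$, each brick $S_i$ lies in $\calF=\sfF(\bigcup_j \calF_j)$, so $\Hom_A(S_i,\calF_j)\ne 0$ for some $j$; writing $\calF_j=\sfF(S')$ for a semibrick $S'$, any nonzero map $f\colon S_i \to S'$ is \emph{injective} by \cite[Lemma 1.7]{Asai}, because $S_i,S'\in\sfF(S)$ and $S_i$ is a direct summand of the semibrick $S$; hence $S_i \in \Sub S' \subset \calF_j$ and $\sfF(S_i)\subset\calF_j$. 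That injectivity lemma is the single representation-theoretic input your outline is missing.
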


\begin{proof}
We have seen the property (a): $\calF=\sfF(S)=\bigvee_{i=1}^m \sfF(S_i)$.

We show the property (b). 
Let $I$ be a proper subset of $[1,m]$.
Take $j \in [1,m] \setminus I$.
Then, the brick $S_j$ cannot belong to 
$\sfF(\{S_i\}_{i \in I})=\sfF(\bigcup_{i \in I} \sfF(S_i))=\bigvee_{i \in I}\sfF(S_i)$, 
since $\Hom_A(S_j,S_i)=0$ holds for each $i \in I$.
This implies that $\sfF(S) \ne \bigvee_{i \in I}\sfF(S_i)$.

Next, we show the property (c).
Let $\calF_1,\ldots,\calF_{m'} \in \torf A$ satisfy 
$\calF=\bigvee_{j=1}^{m'} \calF_j$ and the property (a).
For each $i \in [1,m]$, the brick $S_i$ belongs to $\sfF(S)=\calF$,
which coincides with $\bigvee_{j=1}^{m'} \calF_j=\sfF(\bigcup_{j=1}^{m'} \calF_j)$.
Thus, there must exist some $j \in [1,m']$ such that 
$\Hom_A(S_i,\calF_j) \ne 0$.
We take a semibrick $S'$ such that $\calF_j=\sfF(S')$,
then there exists a nonzero homomorphism $f \colon S_i \to S'$.
By \cite[Lemma 1.7]{Asai}, $f$ is injective,
since $S_i,S' \in \calF=\sfF(S)$ and $S_i$ is a direct summand of $S$.
This implies that $\sfF(S_i) \subset \calF_j$.
\end{proof}

In particular, 
the partially ordered set $\torf A$ admits a canonical join representation
for any $\calF \in \torf A$.

The notion of canonical join representations is defined in a fully combinatorial way,
but decomposing semibricks into direct sums of bricks is 
a purely representation-theoritic problem.
These two are related by Theorem \ref{Thm_decompose}.

The relationship between semibricks and torsion classes are independently discussed by 
Barnard--Carroll--Zhu \cite{BCZ} and Demonet--Iyama--Reiten--Reading--Thomas \cite{DIRRT}
in the setting that the algebra $A$ is not necessarily $\tau$-tilting finite.
In particular, our Theorem \ref{Thm_decompose} is
generalized in \cite[Proposition 3.2.5]{BCZ}.

\section{Preliminaries for preprojective algebras}\label{Sec_Pre}

In this section, we recall some properties on Coxeter groups and 
preprojective algebras of Dynkin type.

\subsection{Coxeter groups}\label{Subsec_Coxeter}

Coxeter groups of Dynkin type are strongly 
related to the corresponding preprojective algebras.
In this subsection, we state the definition of Coxeter groups of Dynkin type,
and prepare some basic terms on combinatorics of Coxeter groups.
For more information, see \cite{BB}.

Let $\Delta$ be a Dynkin diagram whose vertices set is $\Delta_0$.
Then, the \textit{Coxeter group} $W$ for $\Delta$ is the group
defined by the generators $\{s_i \mid i \in \Delta_0\}$ and the relations
\begin{itemize}
\item $s_i^2=1$ for each $i$;
\item $s_is_j=s_js_i$ if there is no edge between $i$ and $j$ in $\Delta$; and
\item $s_is_js_i=s_js_is_j$ if there is exactly one edge between $i$ and $j$ in $\Delta$.
\end{itemize}
It is well-known that the Coxeter group $W$ associated to a Dynkin diagram 
$\Delta$ is a finite group.

Each element $w \in W$ has the minimum number $l$
such that $w$ can be written as a product $s_{i_1} s_{i_2} \cdots s_{i_l}$
of $l$ generators.
Such number is called the \textit{length} of $w$, and is denoted by $l(w)$.
If $l=l(w)$ and $w=s_{i_1} s_{i_2} \cdots s_{i_l}$,
then $s_{i_1} s_{i_2} \cdots s_{i_l}$
is called a \textit{reduced expression} of $w$, which is not necessarily unique. 

If an element $w \in W$ has the maximum length among the elements of $W$,
then $w$ is called a \textit{longest element} of $W$.
Actually, such an element uniquely exists, and it is often denoted by $w_0$.

We can consider several partial orders on the Coxeter group $W$,
but in this paper, we only use the \textit{right weak order}:
for $w,w' \in W$, the inequality $w \le w'$ holds if and only if $l(w')=l(w)+l(w^{-1}w')$.
Then, the poset $(W,\le)$ is a lattice.

We write $\jirr W$ for the set of join-irreducible elements 
of the partially ordered set $(W,\le)$.
For $w \in W$, the minimal elements of the set $\{w' \in W \mid w'<w \}$ 
are $ws_i$ for all $i \in \Delta_0$ satisfying $l(w)>l(ws_i)$.
Therefore, $w \in W$ is join-irreducible if and only if 
there uniquely exists $i \in \Delta_0$ such that $l(w)>l(ws_i)$.
In this case, we say that $w$ is a join-irreducible element of \textit{type} $i$. 

When we consider the right weak order of the Coxeter group, 
the notion of \textit{inversions} is useful.
We call an element $t \in W$ a \textit{reflection} of $W$
if there exist some $w \in W$ and $i \in \Delta_0$ satisfying $t=ws_iw^{-1}$.
Fix $w \in W$, then a reflection $t$ of $W$ is called an \textit{inversion}
if $l(tw) < l(w)$, and the set of inversions of $w$ is denoted by $\inv(w)$.
It is well-known that, for two elements $w,w' \in W$, the inequality $w \le w'$ holds 
if and only if $\inv(w) \subset \inv(w')$.

\subsection{Bijections}\label{Subsec_bij}

Now that the preparation on Coxeter groups of Dynkin type is done,
let us see how they are related to the corresponding preprojective algebras.

We quickly recall the definition of preprojective algebras of Dynkin type.
Let $\Delta$ be a Dynkin diagram.
We define the double quiver $Q$ for $\Delta$, 
that is, the set $Q_0$ of vertices of $Q$ is $\Delta_0$,
and the set $Q_1$ of arrows of $Q$ consists of  
$i \to j$ and $j \to i$ for each edge between $i$ and $j$ of $\Delta$.
For each arrow $\alpha \colon i \to j$ in $Q_1$,
we write $\alpha^*$ for the reversed arrow $j \to i$.
There is a subset $Q'_1 \subset Q_1$ such that, for each $\alpha \in Q_1$,
the condition $\alpha \in Q'_1$ holds if and only if $\alpha^* \notin Q'_1$.
Then, the preprojective algebra $\Pi$ corresponding to $\Delta$ is given by
$KQ/\langle \sum_{\alpha \in Q'_1} (\alpha \alpha^* - \alpha^* \alpha) \rangle$.
Here, the choice of the subset $Q'_1$ is not unique in general, but 
$\Pi$ is uniquely defined up to isomorphisms, since $\Delta$ is Dynkin.
For each vertex $i \in Q_0$, we write $e_i$ for the idempotent of $\Pi$ 
corresponding to the vertex $i$.

Let $\Pi$ be the preprojective algebra of Dynkin type $\Delta$,
and set $I_i:=\Pi(1-e_i)\Pi$, which is a maximal ideal of $\Pi$.
We write $\langle I_i \mid i \in \Delta_0 \rangle$ for the set of ideals of the form 
$I_{i_1} I_{i_2} \cdots I_{i_k}$.

There is an important ideal $I(w)$ of $\Pi$
associated to each element $w$ of the Coxeter group $W$ for $\Delta$.
The ideal $I(w)$ is defined as follows:
take a reduced expression of $w=s_{i_1} s_{i_2} \cdots s_{i_k}$
and set $I(w):=I_{i_1} I_{i_2} \cdots I_{i_k}$.
Clearly, $I(w)$ belongs to the set $\langle I_i \mid i \in \Delta_0 \rangle$.

By \cite[Theorem 2.14]{Mizuno},
$I(w)$ does not depend on the choice of a reduced expression of $w$,
and the well-defined correspondence $w \mapsto I(w)$ gives
a bijection $W \to \langle I_i \mid i \in \Delta_0 \rangle$.
We remark that a similar bijection exists for a preprojective algebra of 
non-Dynkin type, see \cite[Theorem III.1.9]{BIRS}.

Moreover, Mizuno proved 
the set $\langle I_i \mid i \in \Delta_0 \rangle$ coincides with 
the set $\sttilt \Pi$ of support $\tau$-tilting $\Pi$-modules.
He also proved that the bijection $W \ni w \mapsto I(w) \in \sttilt \Pi$ is
an isomorphism $(W,\le) \to (\sttilt \Pi, \ge)$ of lattices
\cite[Theorem 2.30]{Mizuno}.

In our convention, we need the dual version of this isomorphism. 
The torsion-free class corresponding to the torsion class $\Fac I(w)$ is
$\Sub (\Pi/I(w))$,
and it follows from Mizuno's isomorphism and \cite[Proposition 6.4]{ORT} that 
the module $\Pi/I(w)$ is a support $\tau^{-1}$-tilting module.
Thus, we obtain the following isomorphism of lattices.

\begin{Prop}\label{Prop_W_stitilt}
There exists an isomorphism 
$(W,\le) \to (\stitilt \Pi,\le)$ of lattices given by $w \mapsto \Pi/I(w)$.
\end{Prop}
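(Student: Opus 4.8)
The plan is to obtain the isomorphism by dualizing Mizuno's isomorphism $(W,\le) \to (\sttilt \Pi,\ge)$, $w \mapsto I(w)$, through the classical correspondence between torsion classes and torsion-free classes. First I would note that $\Pi$ is $\tau$-tilting finite: Mizuno's bijection $W \to \sttilt \Pi$ \cite{Mizuno} realizes $\sttilt \Pi$ as a finite set, so every torsion class and every torsion-free class of $\mod \Pi$ is functorially finite. Consequently $\Fac$ gives a bijection from $\sttilt \Pi$ onto the set of all torsion classes, and Proposition \ref{Prop_stitilt_torf} gives a bijection $\Sub \colon \stitilt \Pi \to \torf \Pi$. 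The bridge between the two sides, already recorded in the paragraph preceding the statement, is that the torsion pair $(\Fac I(w), \Sub(\Pi/I(w)))$ pairs the torsion class $\Fac I(w)$ with the torsion-free class $\Sub(\Pi/I(w))$, and that $\Pi/I(w) \in \stitilt \Pi$ by \cite[Proposition 6.4]{ORT}.

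With this in hand, I would exhibit $w \mapsto \Pi/I(w)$ as a composite of bijections
\[
W \xrightarrow{\ w \mapsto I(w)\ } \sttilt \Pi \xrightarrow{\ \Fac\ } \ftors \Pi \longrightarrow \torf \Pi \xrightarrow{\ \Sub^{-1}\ } \stitilt \Pi,
\]
where the unlabeled arrow is the order-reversing bijection sending each torsion class to the torsion-free class of its associated torsion pair. Because the torsion-free class attached to $\Fac I(w)$ is exactly $\Sub(\Pi/I(w))$ and $\Pi/I(w)$ lies in $\stitilt \Pi$, applying $\Sub^{-1}$ returns $\Pi/I(w)$; hence the composite sends $w$ to $\Pi/I(w)$, and, being a composite of bijections, it is itself a bijection $W \to \stitilt \Pi$.

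It then remains to verify that this bijection is an isomorphism of posets, which I would do by tracing inequalities along the chain. Mizuno's isomorphism records $w \le w' \iff \Fac I(w) \supseteq \Fac I(w')$; the torsion/torsion-free correspondence is order-reversing, so $\Fac I(w) \supseteq \Fac I(w') \iff \Sub(\Pi/I(w)) \subseteq \Sub(\Pi/I(w'))$; and by the definition of the order on $\stitilt \Pi$ the latter is equivalent to $\Pi/I(w) \le \Pi/I(w')$. Thus $w \le w' \iff \Pi/I(w) \le \Pi/I(w')$, so the bijection and its inverse are both order-preserving. Since an order isomorphism between lattices automatically preserves meets and joins, it is an isomorphism of lattices, as required.

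The only delicate point is the bookkeeping of order directions: Mizuno's map reverses the inclusion order on $\sttilt \Pi$ (this is the force of the ``$\ge$''), and the torsion/torsion-free duality reverses order a second time, so one must confirm that these two reversals compose to give the order-\emph{preserving} statement claimed. Beyond this matching of conventions there is no substantive representation-theoretic obstacle: the content rests entirely on the cited results of Mizuno \cite{Mizuno} and \cite{ORT} together with the general $\tau$-tilting bijections of Section \ref{Sec_general}.
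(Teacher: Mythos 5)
Your proposal is correct and follows essentially the same route as the paper, which proves this proposition in the paragraph preceding its statement: dualize Mizuno's isomorphism $(W,\le)\to(\sttilt\Pi,\ge)$ through the torsion/torsion-free correspondence, identify the torsion-free class of $\Fac I(w)$ as $\Sub(\Pi/I(w))$, and invoke \cite[Proposition 6.4]{ORT} to see $\Pi/I(w)\in\stitilt\Pi$. Your more explicit bookkeeping of the two order reversals and of the chain of bijections just spells out what the paper leaves implicit.
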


In this map, the longest element $w_0 \in W$ corresponds to 
the injective cogenerator $\Pi$, 
and the identity element $\mathrm{id}_W$ corresponds to 0.

Since the Coxeter group $W$ for the Dynkin diagram $\Delta$ is a finite group, 
$\Pi$ is $\tau$-tilting finite.
Therefore, we obtain the following bijections 
from Propositions \ref{Prop_comm}, \ref{Prop_indec_comm}, and \ref{Prop_W_stitilt}.

\begin{Prop}\label{Prop_W_sbrick}
There exists a bijection $S(\bullet) \colon W \to \sbrick \Pi$ defined by the formula 
$S(w):=\soc_{\End(\Pi/I(w))} (\Pi/I(w))$.
As a restriction, we have another bijection 
$S(\bullet) \colon \jirr W \to \brick \Pi$.
\end{Prop}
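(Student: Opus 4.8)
The plan is to exhibit $S(\bullet)$ as a composite of maps that are already known to be bijections, and then to read off the restriction from the two commutative diagrams. First I would observe that, by Proposition \ref{Prop_W_stitilt}, the assignment $w \mapsto \Pi/I(w)$ is a bijection (in fact a lattice isomorphism) $W \to \stitilt \Pi$, and that by Proposition \ref{Prop_comm} the soc map $M \mapsto \soc_{\End_\Pi(M)} M$ is a bijection $\stitilt \Pi \to \sbrick \Pi$. The formula $S(w) = \soc_{\End_\Pi(\Pi/I(w))}(\Pi/I(w))$ is exactly the composite of these two, so $S(\bullet) \colon W \to \sbrick \Pi$ is a bijection. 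Moreover, the commutativity in Proposition \ref{Prop_comm}, applied to $M = \Pi/I(w)$, yields
\[
\sfF(S(w)) = \Sub(\Pi/I(w)),
\]
a formula I will use to handle the restriction.

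For the restriction statement, the key point is that a lattice isomorphism sends join-irreducible elements to join-irreducible elements. The order on $\stitilt \Pi$ is defined by transporting the inclusion order along $\Sub$, so $\Sub \colon (\stitilt \Pi, \le) \to (\torf \Pi, \subset)$ is a lattice isomorphism; composing with Proposition \ref{Prop_W_stitilt} gives a lattice isomorphism $W \to \torf \Pi$, $w \mapsto \Sub(\Pi/I(w))$. Consequently
\[
w \in \jirr W \iff \Sub(\Pi/I(w)) \in \jirr(\torf \Pi).
\]

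Finally I would combine this with Proposition \ref{Prop_indec_comm}, which states that the bijection $\sfF \colon \sbrick \Pi \to \torf \Pi$ restricts to a bijection $\brick \Pi \to \jirr(\torf \Pi)$. Since $\sfF$ is injective and carries $\brick \Pi$ onto $\jirr(\torf \Pi)$, a semibrick $T$ lies in $\brick \Pi$ if and only if $\sfF(T) \in \jirr(\torf \Pi)$. Applying this to $T = S(w)$ and using $\sfF(S(w)) = \Sub(\Pi/I(w))$, I obtain
\[
S(w) \in \brick \Pi \iff \Sub(\Pi/I(w)) \in \jirr(\torf \Pi) \iff w \in \jirr W.
\]
Hence $S(\bullet)$ restricts to a bijection $\jirr W \to \brick \Pi$, as claimed.

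I do not expect any genuine obstacle here: every arrow in sight is an already-established bijection or lattice isomorphism, and the argument is a diagram chase. The only step demanding care is the bookkeeping that makes the two commutative diagrams compatible — namely checking that the soc-formula defining $S(\bullet)$ is literally the one whose brick-valued restriction is recorded in Proposition \ref{Prop_indec_comm} — but this compatibility is exactly what those two propositions assert, so the chase closes.
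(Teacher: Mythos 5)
Your proposal is correct and is essentially the paper's own argument: the paper gives no separate proof, stating only that the proposition follows by combining Propositions \ref{Prop_comm}, \ref{Prop_indec_comm}, and \ref{Prop_W_stitilt}, which is precisely the diagram chase you carry out. Your explicit verification of the biconditional $S(w) \in \brick \Pi \iff \Sub(\Pi/I(w)) \in \jirr(\torf \Pi) \iff w \in \jirr W$ (using injectivity of $\sfF$ together with surjectivity of its restriction) just spells out the bookkeeping the paper leaves implicit.
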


The aim of this paper is to describe the semibrick $S(w)$ for each $w \in W$ explicitly.

Since the partially ordered sets $(W,\le)$ and $(\torf A, \subset)$ are isomorphic, 
we obtain the following property immediately from Theorem \ref{Thm_decompose}.

\begin{Cor}\label{Cor_decompose_Coxeter}
Let $w \in W$ and take $w_1,w_2,\ldots,w_m \in \jirr W$ 
such that $S(w)=\bigoplus_{i=1}^m S(w_i)$.
Then, $w=\bigvee_{i=1}^m w_i$ holds, 
and it is the canonical join representation of $w$ in $W$.
\end{Cor}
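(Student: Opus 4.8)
The plan is to transfer Theorem~\ref{Thm_decompose} across the lattice isomorphism of Proposition~\ref{Prop_W_stitilt} together with the bijection of Proposition~\ref{Prop_W_sbrick}, so that essentially no new work is required. First I would recall that Proposition~\ref{Prop_W_stitilt} gives an isomorphism of lattices $\varphi \colon (W,\le) \to (\stitilt \Pi, \le)$, $w \mapsto \Pi/I(w)$, and that composing with the bijection $\Sub \colon \stitilt \Pi \to \torf \Pi$ of Proposition~\ref{Prop_stitilt_torf} yields a lattice isomorphism $\psi \colon (W,\le) \to (\torf \Pi, \subset)$ with $\psi(w) = \Sub(\Pi/I(w))$. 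Since $\psi$ is an isomorphism of lattices, it preserves joins, so the join-decomposition problem in $W$ is literally the join-decomposition problem in $\torf \Pi$.

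The key observation is that the bijection $S(\bullet) \colon W \to \sbrick \Pi$ of Proposition~\ref{Prop_W_sbrick} is built precisely so that $\sfF(S(w)) = \psi(w)$; indeed $S(w) = \soc_{\End(\Pi/I(w))}(\Pi/I(w))$ is by the commutative diagram of Proposition~\ref{Prop_comm} the semibrick with $\sfF(S(w)) = \Sub(\Pi/I(w)) = \psi(w)$. Now suppose $S(w) = \bigoplus_{i=1}^m S(w_i)$ is the decomposition into bricks, with each $S(w_i) \in \brick \Pi$ and hence each $w_i \in \jirr W$ by the restricted bijection. Applying Theorem~\ref{Thm_decompose} to $\calF := \psi(w)$ with this brick decomposition shows that $\psi(w) = \bigvee_{i=1}^m \sfF(S(w_i)) = \bigvee_{i=1}^m \psi(w_i)$ is the canonical join representation in $\torf \Pi$.

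It remains to pull this back along $\psi^{-1}$. Since $\psi$ is a lattice isomorphism it preserves joins, so $\psi(w) = \bigvee_{i=1}^m \psi(w_i)$ gives $w = \bigvee_{i=1}^m w_i$ in $W$. For the canonical property, I would note that the defining conditions (a), (b), (c) of Definition~\ref{Def_can_join} are stated purely in terms of the lattice operation $\vee$ and the order $\le$, both of which are preserved by the isomorphism $\psi$ (and its inverse); hence a canonical join representation in $\torf \Pi$ transports to a canonical join representation in $W$. Therefore $w = \bigvee_{i=1}^m w_i$ is the canonical join representation of $w$, as claimed.

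I do not expect a genuine obstacle here, since the statement is an immediate corollary; the only point requiring a word of care is the bookkeeping that $S(\bullet)$ and $\psi$ are compatible, i.e.\ that the brick summands $S(w_i)$ really correspond under $\sfF$ to the join-irreducibles $\psi(w_i) = \Sub(\Pi/I(w_i))$ appearing in the decomposition, which follows from the restricted bijection $\jirr W \to \brick \Pi$ of Proposition~\ref{Prop_W_sbrick} and the commutativity in Proposition~\ref{Prop_indec_comm}.
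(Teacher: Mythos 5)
Your proposal is correct and is essentially the paper's own argument: the paper derives the corollary ``immediately from Theorem \ref{Thm_decompose}'' via Mizuno's lattice isomorphism $W \cong \torf \Pi$, exactly as you do, including the observation that $\sfF(S(w)) = \Sub(\Pi/I(w))$ by the commutative diagram of Proposition \ref{Prop_comm} and that the conditions of Definition \ref{Def_can_join} are purely lattice-theoretic and hence transport along the isomorphism. No gaps; your ``bookkeeping'' point about compatibility of $S(\bullet)$ with the restricted bijection $\jirr W \to \brick \Pi$ is the only step needing care, and you handle it correctly.
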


We will explicitly determine the canonical join representation for each $w \in W$
in Section \ref{Sec_semibrick}.
It is a purely combinatorial problem. 

Then, the remained task is 
to describe the brick $S(w)$ for each join-irreducible element $w \in \jirr W$.
For this purpose, 
we use the following bijection by Iyama--Reading--Reiten--Thomas \cite{IRRT}.

\begin{Prop}\label{Prop_jirrW_itirigid}\cite[Theorem 4.1]{IRRT}
For each $w \in \jirr W$ of type $l$,
we set a module $J(w):=(\Pi/I(w))e_l$, which is a direct summand of $\Pi/I(w)$.
Then $\Sub J(w)=\Sub (\Pi/I(w))$ holds, 
and this induces a bijection $J(\bullet) \colon \jirr W \to \itirigid \Pi$.
\end{Prop}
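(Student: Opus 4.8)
The plan is to reduce everything to the general $\tau$-tilting machinery assembled in Proposition~\ref{Prop_indec_comm} together with Mizuno's lattice isomorphism of Proposition~\ref{Prop_W_stitilt}, and then to pin down the distinguished indecomposable summand by a direct computation with the ideals $I(w)$.

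First I would record the elementary structural facts. Since $1=\sum_{i\in\Delta_0}e_i$ is a decomposition into orthogonal idempotents and $I(w)$ is a two-sided ideal, we have $\Pi/I(w)=\bigoplus_{i\in\Delta_0}(\Pi/I(w))e_i$ as left $\Pi$-modules, so $J(w)=(\Pi/I(w))e_l$ is automatically a direct summand. As a nonzero quotient of the indecomposable projective $\Pi e_l$ it has simple top, hence is indecomposable. Next, via Proposition~\ref{Prop_W_stitilt} the join-irreducible element $w\in\jirr W$ of type $l$ corresponds to a join-irreducible element $\Pi/I(w)$ of $(\stitilt\Pi,\le)$, whose unique lower cover is $\Pi/I(ws_l)$ (because the unique lower cover of $w$ is $ws_l$, as $l$ is its unique descent). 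Writing $M:=\Pi/I(w)$ and $M':=\Pi/I(ws_l)$, the proof of Proposition~\ref{Prop_indec_comm} shows $M\in\stitilt_1\Pi$, i.e.\ there is a \emph{unique} indecomposable summand $M_1$ of $M$ with $\Sub M_1=\Sub M$, and this $M_1$ is the element of $\itirigid\Pi$ attached to $\Sub M\in\jirr(\torf\Pi)$. It then remains to prove $M_1=J(w)$.

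The key step, and the main obstacle, is to show that $M$ and its lower cover $M'$ agree on every summand except the one at vertex $l$. Since $l(ws_l)<l(w)$, concatenating reduced expressions gives $I(w)=I(ws_l)I_l$ with $I_l=\Pi(1-e_l)\Pi$. For $i\ne l$ one has $e_i(1-e_l)e_i=e_i$, so $e_i\in I_le_i$ and therefore $I_le_i=\Pi e_i$; multiplying on the left by the two-sided ideal $I(ws_l)$ yields
\[
I(w)e_i=I(ws_l)I_le_i=I(ws_l)\Pi e_i=I(ws_l)e_i\qquad(i\ne l).
\]
Hence $(\Pi/I(w))e_i=(\Pi/I(ws_l))e_i$ for all $i\ne l$, so I may write $M=N\oplus J(w)$ and $M'=N\oplus(\Pi/I(ws_l))e_l$ with a common summand $N:=\bigoplus_{i\ne l}(\Pi/I(w))e_i$. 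Because $I(w)\subseteq I(ws_l)$ there is a surjection $J(w)\twoheadrightarrow(\Pi/I(ws_l))e_l$, which must be proper (otherwise $I(w)=I(ws_l)$, contradicting $w\ne ws_l$); so the two modules genuinely differ at vertex $l$ and $J(w)\ne0$.

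Finally I would identify the distinguished summand. If the summand $M_1$ from Proposition~\ref{Prop_indec_comm} were one of the indecomposable summands of the common part $N$, then $M_1$ would also be a summand of $M'$, giving $\Sub M'\supseteq\Sub M_1=\Sub M$; this contradicts $\Sub M'\subsetneq\Sub M$, which holds because $M'$ is a strict lower cover of $M$. Therefore $M_1=J(w)$, which yields both $\Sub J(w)=\Sub M=\Sub(\Pi/I(w))$ and $J(w)\in\itirigid\Pi$. For bijectivity, Mizuno's isomorphism restricts to a bijection from $\jirr W$ onto $\{\Pi/I(w)\mid w\in\jirr W\}=\stitilt_1\Pi$, and by the proof of Proposition~\ref{Prop_indec_comm} the assignment $M\mapsto M_1$ is a bijection $\stitilt_1\Pi\to\itirigid\Pi$; composing them gives precisely $w\mapsto J(w)$, so $J(\bullet)\colon\jirr W\to\itirigid\Pi$ is a bijection. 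The only genuinely delicate point is the localization-type identity $I(w)e_i=I(ws_l)e_i$ for $i\ne l$, which guarantees that passing to the unique lower cover changes exactly the vertex-$l$ summand; once that is in hand, the identification of $M_1$ is formal.
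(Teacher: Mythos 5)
Your proof is correct, and it necessarily takes a different route from the paper, because the paper offers no proof of this proposition at all: it is imported verbatim from \cite[Theorem 4.1]{IRRT}. What you have done is reconstruct the statement from ingredients this paper does assemble --- Mizuno's lattice isomorphism (Proposition \ref{Prop_W_stitilt}) and the apparatus in the proof of Proposition \ref{Prop_indec_comm} (the set $\stitilt_1 \Pi$, the unique summand $M_1$ with $\Sub M_1 = \Sub M$, and the bijection $\stitilt_1 \Pi \to \itirigid \Pi$) --- plus one genuinely new computation: the identity $I(w)e_i = I(ws_l)I_l e_i = I(ws_l)e_i$ for $i \ne l$, which shows that $M:=\Pi/I(w)$ and its unique lower cover $M':=\Pi/I(ws_l)$ have literally the same summands at all vertices other than the unique descent $l$. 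That identity is the crux, and your verification is right: $e_i = e_i(1-e_l)e_i \in I_l$ gives $I_l e_i = \Pi e_i$, and $I(w)=I(ws_l)I_l$ follows from concatenating a reduced expression of $ws_l$ with $s_l$. The endgame is also sound: $J(w)$ is a nonzero quotient of the indecomposable projective $\Pi e_l$, hence has simple top and is indecomposable; any indecomposable summand of the common part $N$ is a summand of $M'$, so it cannot be $M_1$ (else $\Sub M = \Sub M_1 \subset \Sub M' \subsetneq \Sub M$); Krull--Schmidt then forces $M_1 \cong J(w)$, and bijectivity follows by composing the restriction of Mizuno's isomorphism $\jirr W \to \stitilt_1 \Pi$ (lattice isomorphisms preserve join-irreducibles, and $\stitilt_1 \Pi$ is exactly the set of join-irreducibles of $(\stitilt \Pi,\le)$ by the proof of Proposition \ref{Prop_indec_comm}) with $M \mapsto M_1$. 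This comparison-with-the-lower-cover mechanism is in the same spirit as the source proof in \cite{IRRT}, but your version is more self-contained relative to this paper, using only what is already quoted here from \cite{AIR}, \cite{DIJ} and \cite{Mizuno}.

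Two small points of hygiene. First, you invoke the simple-top argument for indecomposability of $J(w)$ one paragraph before you establish $J(w) \ne 0$; the order should be reversed, though there is no circularity, since your nonvanishing argument (if $J(w)=0$ then $I(w)e_l = \Pi e_l = I(ws_l)e_l$, which combined with the equalities at $i \ne l$ gives $I(w)=I(ws_l)$, contradicting the injectivity of $w \mapsto I(w)$) makes no use of indecomposability. Second, the Krull--Schmidt step should note explicitly that $J(w)$ is not isomorphic to a summand of $N$ either; but your contradiction already covers that case (such an isomorphism would again make $M_1$ a summand of $M'$), so in particular you do not need to appeal to basicness of $\Pi/I(w)$.
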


Thus, by Proposition \ref{Prop_indec_comm}, we obtain the following formula.

\begin{Prop}\label{Prop_jirrW_brick}
Let $w \in \jirr W$ be of type $l$, and set $J(w):=(\Pi/I(w))e_l$.
Then, the brick $S(w)$ is equal to $\soc_{\End_\Pi(J(w))}J(w)$.
\end{Prop}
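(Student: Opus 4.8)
The plan is to read off $S(w)$ directly from the chain of bijections already assembled in the excerpt, so that the proof is essentially a one-line bookkeeping argument. By Proposition \ref{Prop_jirrW_itirigid}, the module $J(w)=(\Pi/I(w))e_l$ is an indecomposable $\tau^{-1}$-rigid module satisfying $\Sub J(w)=\Sub(\Pi/I(w))$; in particular it is the distinguished indecomposable direct summand of the support $\tau^{-1}$-tilting module $M:=\Pi/I(w)$ that controls the torsion-free class. This is exactly the setting of Proposition \ref{Prop_indec_comm}, whose proof identified the relevant summand $M_1$ of $M \in \stitilt_1\Pi$ with the object $J(w)\in\itirigid\Pi$ and showed that the induced bijection $\itirigid\Pi\to\brick\Pi$ is given by $N\mapsto\soc_{\End_\Pi(N)}N$.

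First I would record that $w\in\jirr W$ forces, via Proposition \ref{Prop_W_stitilt} and the lattice isomorphism $(W,\le)\to(\stitilt\Pi,\le)$, the corresponding module $M=\Pi/I(w)$ to lie in the subset $\stitilt_1\Pi$ introduced in the proof of Proposition \ref{Prop_indec_comm}; indeed $\jirr W\cong\jirr(\torf\Pi)$ under the composite isomorphism, and $\jirr(\torf\Pi)$ is in bijection with $\stitilt_1\Pi$. Next I would invoke Proposition \ref{Prop_jirrW_itirigid} to pin down the distinguished summand: it tells us that $J(w)$ is indecomposable $\tau^{-1}$-rigid with $\Sub J(w)=\Sub M$, which by \cite[Definition-Proposition 2.28]{AIR} (cited in the proof of Proposition \ref{Prop_indec_comm}) characterizes $J(w)$ as the unique summand $M_1$ of $M$ with $\Sub M_1=\Sub M$. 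Hence $J(w)$ is precisely the preimage of the join-irreducible torsion-free class $\Sub(\Pi/I(w))$ under the bijection $\itirigid\Pi\to\jirr(\torf\Pi)$ of Proposition \ref{Prop_indec_comm}.

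Finally, applying the formula $N\mapsto\soc_{\End_\Pi(N)}N$ from that same commutative diagram to $N=J(w)$ yields the brick $\soc_{\End_\Pi(J(w))}J(w)$, which must therefore equal the brick $S(w)\in\brick\Pi$ defined in Proposition \ref{Prop_W_sbrick} as $\soc_{\End(\Pi/I(w))}(\Pi/I(w))$. This gives the claimed identity $S(w)=\soc_{\End_\Pi(J(w))}J(w)$.

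I expect no genuine obstacle here: the entire content is already contained in Propositions \ref{Prop_indec_comm}, \ref{Prop_W_stitilt}, and \ref{Prop_jirrW_itirigid}, and the proof is a matter of matching the object $J(w)$ with the summand $M_1$ appearing in the proof of Proposition \ref{Prop_indec_comm}. The only point demanding mild care is verifying that the \emph{type} $l$ of the join-irreducible element $w$ (the unique descent) correctly indexes the idempotent $e_l$ so that $J(w)=(\Pi/I(w))e_l$ really is the distinguished indecomposable summand $M_1$ rather than some other summand; this is guaranteed by Proposition \ref{Prop_jirrW_itirigid}, so it suffices to cite it rather than re-derive the identification of the descent with the mutation direction.
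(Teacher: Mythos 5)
Your proposal is correct and follows essentially the same route as the paper, which states this proposition as an immediate consequence of Proposition \ref{Prop_jirrW_itirigid} combined with the commutative diagram of Proposition \ref{Prop_indec_comm}: since $\Sub J(w)=\Sub(\Pi/I(w))=\sfF(S(w))$, commutativity and the injectivity of $\sfF$ force $S(w)=\soc_{\End_\Pi(J(w))}J(w)$. Your additional bookkeeping through $\stitilt_1 \Pi$ and \cite[Definition-Proposition 2.28]{AIR} merely re-traces the proof of Proposition \ref{Prop_indec_comm} and is harmless.
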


Moreover, they have already given 
a combinatorial description of $J(w)$ for $\Delta=\bbA_n,\bbD_n$.
This will be cited in the following subsections.
By using this and Proposition \ref{Prop_indec_comm}, 
we will write down the explicit structure of the brick $S(w)$
for each $w \in \jirr W$ in Section \ref{Sec_brick}.

Now, we have recalled some properties 
holding for any preprojective algebra of Dynkin type.
In the next two subsections,
we will observe the preprojective algebras of type $\bbA_n$ and $\bbD_n$ in detail.

\subsection{Type $\bbA_n$}

Let $\Delta:=\bbA_n$ in this subsection.
The preprojective algebra $\Pi$ of type $\bbA_n$ is given by the following quiver
and relations:
\begin{align*}
& \begin{xy}
( 0,  0) *+{1} = "1",
(16,  0) *+{2} = "2",
(32,  0) *+{3} = "3",
(48,  0) *+{\cdots} = "4",
(64,  0) *+{n} = "5",
\ar^{\alpha_1} @<1mm> "1";"2"
\ar^{\beta_2} @<1mm> "2";"1"
\ar^{\alpha_2} @<1mm> "2";"3"
\ar^{\beta_3} @<1mm> "3";"2"
\ar^{\alpha_3} @<1mm> "3";"4"
\ar^{\beta_4} @<1mm> "4";"3"
\ar^{\alpha_{n-1}} @<1mm> "4";"5"
\ar^{\beta_n} @<1mm> "5";"4"
\end{xy}; \\
& \alpha_1\beta_2=0, \quad \alpha_i\beta_{i+1}=\beta_i\alpha_{i-1} \ (2 \le i \le n-1), \quad \beta_{n}\alpha_{n-1}=0.
\end{align*}

The Coxeter group $W$ of type $\bbA_n$ is isomorphic to 
the symmetric group $\mathfrak{S}_{n+1}$
by sending each $s_i$ to the transposition $(i \quad i+1)$.
We identify the Coxeter group with $\mathfrak{S}_{n+1}$ by this isomorphism,
and we express $w \in W$ as $(w(1),w(2),\ldots,w(n+1))$.

The reflections of $W$ are precisely 
the transpositions $(a \quad b)$ with $a,b \in [1,n+1]$ and $a>b$,
and the set $\inv(w)$ of inversions of $w \in W$ is 
\begin{align*}
\{ (a \quad b) \mid a,b \in [1,n+1],\ a>b, \ w^{-1}(a) < w^{-1}(b) \}.
\end{align*}

An element $w \in W$ is a join-irreducible element of type $l$
if and only if $l$ is the unique element in $[1,n]$ satisfying $w(l)>w(l+1)$.
In this case, we have $w(l) \ge 2$.

We set a basis of each indecomposable projective module $\Pi e_l$ as follows.
Let $i,j,l \in Q_0=[1,n]$ with $i \le j \ge l$.
We define a path $p(i,j,l)$ in $Q$ as 
\begin{align*}
p(i,j,l):=(\alpha_i \alpha_{i+1} \cdots \alpha_{j-1}) \cdot
(\beta_j \beta_{j-1} \cdots \beta_{l+1}).
\end{align*}
This is the shortest path starting from $i$, going through $j$, and ending at $l$.
As an element in $\Pi$, 
the path $p(i,j,l)$ is not zero in $\Pi$ if and only if $i \ge j-l+1$,
so set 
\begin{align*}
\Gamma[l]:=\{ (i,j) \in Q_0 \times Q_0 \mid j-l+1 \le i \le j \ge l \}.
\end{align*}

We obtain the following assertion from straightforward calculation.

\begin{Lem}\label{Lem_basis_A}
The set $\{p(i,j,l) \mid (i,j) \in \Gamma[l] \}$ forms a $K$-basis of $\Pi e_l$.
\end{Lem}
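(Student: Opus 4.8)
The plan is to exploit the fact that all defining relations of $\Pi$ are homogeneous in path length: the generators $\alpha_k,\beta_k$ have degree $1$, and each relation term $\alpha_1\beta_2$, $\alpha_i\beta_{i+1}-\beta_i\alpha_{i-1}$, $\beta_n\alpha_{n-1}$ is homogeneous of degree $2$. Hence $\Pi=\bigoplus_{d\ge 0}\Pi_d$ is graded by path length, and $p(i,j,l)$ is a homogeneous element of degree $2j-i-l$ lying in the summand $e_i\Pi e_l$. For fixed $i$ and $l$, the degree $2j-i-l$ is strictly increasing in $j$, so paths $p(i,j,l)$ with distinct $j$ sit in distinct graded pieces, while paths with distinct source $i$ sit in distinct summands $e_i\Pi$. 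Consequently, once we know which $p(i,j,l)$ are nonzero, their linear independence is automatic, and the whole statement reduces to two claims: (1) the paths $p(i,j,l)$ (over all $i\le j\ge l$) span $\Pi e_l$; and (2) $p(i,j,l)\ne 0$ if and only if $(i,j)\in\Gamma[l]$, i.e.\ $i\ge j-l+1$.

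For the spanning claim (1), I would argue by a straightening procedure on paths. Any element of $\Pi e_l$ is a $K$-combination of paths ending at $l$, and each such path is a word in the $\alpha_k$ and $\beta_k$. A word not already of the form $p(i,j,l)$ contains a \emph{descent-then-ascent} factor $\beta_k\alpha_{k-1}$ for some $k$; using $\beta_k\alpha_{k-1}=\alpha_k\beta_{k+1}$ for $2\le k\le n-1$ and $\beta_n\alpha_{n-1}=0$ for $k=n$, every such factor is rewritten into an ascent-then-descent factor or killed. Reading a path as a lattice path of heights and taking the area underneath it as a monovariant, each rewriting step strictly increases the area, which is bounded above since heights lie in $[1,n]$; hence the procedure terminates. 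A terminal word has no descent-then-ascent factor, so it is an ascent-run followed by a descent-run, that is, exactly some $p(i,j,l)$. Thus these paths span $\Pi e_l$.

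For the nonzero characterization (2), the \textbf{main obstacle}, I would treat the two directions separately. For vanishing, suppose $i\le j-l$; I would slide the single peak of $p(i,j,l)$ downward by repeatedly applying $\alpha_k\beta_{k+1}=\beta_k\alpha_{k-1}$, beginning at the top factor $\alpha_{j-1}\beta_j$, and show by induction on $j-l$ that a factor $\alpha_1\beta_2=0$ is eventually produced, so that $p(i,j,l)=0$; the hypothesis $i\le j-l$ is precisely what guarantees the base is too narrow to keep the peak off the floor, as in the base computations $p(1,3,2)=\alpha_1\beta_2\alpha_1=0$ and $p(1,4,3)=\alpha_1\beta_2\alpha_1\alpha_2=0$. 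For nonvanishing, suppose $(i,j)\in\Gamma[l]$; here $p(i,j,l)$ is already a straightened form that never touches the bottom boundary destructively, and I would confirm it is nonzero either by exhibiting an explicit representation of $\Pi$ on which it acts nontrivially, or, more economically, by a dimension count: the spanning result gives $\dim_K e_i\Pi e_l\le\#\{j\mid(i,j)\in\Gamma[l]\}=\min(i,l,n+1-i,n+1-l)$, and comparing with the classical value of $\dim_K e_i\Pi e_l$ for the preprojective algebra of type $\bbA_n$ forces every listed $p(i,j,l)$ to be nonzero. I expect the delicate point to be making the peak-sliding induction in the vanishing direction fully precise, since one must keep track of the growing tail of arrows created at each substitution and verify that the factor $\alpha_1\beta_2$ genuinely appears exactly under the inequality $i\le j-l$.
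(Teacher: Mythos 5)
The paper offers no proof of Lemma \ref{Lem_basis_A} at all: it is asserted to follow ``from straightforward calculation,'' and the nonvanishing criterion $p(i,j,l)\ne 0 \Leftrightarrow i\ge j-l+1$ is likewise stated without argument just before $\Gamma[l]$ is defined. So your proposal is not a rival to the paper's argument but a correct way of supplying the omitted one, and all three of its steps check out. The grading reduction is legitimate: the relations are homogeneous of degree $2$, $p(i,j,l)$ is homogeneous of degree $2j-i-l$ in $e_i\Pi e_l$, so independence does reduce to nonvanishing. The straightening works because an interior valley $\beta_k\alpha_{k-1}$ automatically has $2\le k\le n$, hence is either raised (area goes up by $2$, and is bounded since length is preserved) or killed by $\beta_n\alpha_{n-1}=0$; terminal words are unimodal, i.e.\ of the form $p(i,j,l)$. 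Your peak-sliding for the vanishing direction, which you rightly flag as the delicate point, does close up exactly as you predict: sliding the peak leftward yields the identity $p(i,j,l)=\beta_i\,p(i-1,j-1,l)$, valid whenever $i\ge 2$, $i\le j-1$ and $j\ge l+1$, and the hypothesis $i\le j-l$ guarantees these conditions and is preserved under $(i,j)\mapsto(i-1,j-1)$; so your induction on $j-l$ terminates at $i=1$, where one more slide puts the factor $\alpha_1\beta_2=0$ at the front, consistent with your base cases $p(1,3,2)$ and $p(1,4,3)$. The one place I would push back is your dimension-count option for nonvanishing: the ``classical value'' of $\dim_K e_i\Pi e_l$ in type $\bbA_n$ is essentially the content of this very lemma, so depending on the source cited this is circular. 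Your first option is both cheaper and self-contained: the diagram (\ref{eq_pattern_A}) visibly defines a $\Pi$-module $M$ with basis indexed by $\Gamma[l]$ (the preprojective relations reduce to commutativity of the squares together with vanishing across the border), $M$ is generated by its top corner and hence is a quotient of $\Pi e_l$; this gives $\dim_K \Pi e_l\ge\#\Gamma[l]$, while your spanning step combined with the vanishing step gives $\dim_K \Pi e_l\le\#\Gamma[l]$, and equality forces every $p(i,j,l)$ with $(i,j)\in\Gamma[l]$ to be nonzero (also note that your spanning bound on $\dim_K e_i\Pi e_l$ needs the vanishing statement, not spanning alone). With the explicit-module variant in place, your plan yields a complete and correct proof.
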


This basis allows us to express $\Pi e_l$ as
\begin{align}\label{eq_pattern_A}
\begin{xy}
( 0, 15) *+{l}      = "11",
(15, 15) *+{l-1}    = "12",
(30, 15) *+{\cdots} = "13",
(45, 15) *+{1}      = "14",
( 0,  5) *+{l+1}    = "21",
(15,  5) *+{l}      = "22",
(30,  5) *+{\cdots} = "23",
(45,  5) *+{2}      = "24",
( 0, -5) *+{\vdots} = "31",
(15, -5) *+{\vdots} = "32",
(30, -5) *+{}       = "33",
(45, -5) *+{\vdots} = "34",
( 0,-15) *+{n}      = "41",
(15,-15) *+{n-1}    = "42",
(30,-15) *+{\cdots} = "43",
(45,-15) *+{n-l+1}  = "44",
\ar "11";"12" \ar "12";"13" \ar "13";"14"
\ar "11";"21" \ar "12";"22" \ar "14";"24"
\ar "21";"22" \ar "22";"23" \ar "23";"24"
\ar "21";"31" \ar "22";"32" \ar "24";"34"
\ar "31";"41" \ar "32";"42" \ar "34";"44"
\ar "41";"42" \ar "42";"43" \ar "43";"44"
\end{xy}.
\end{align}
Here, each number $i$ in the row starting at $j$ 
denotes a one-dimensional vector space $K p(i,j,l)$ with a basis $p(i,j,l)$,
and each arrow stands for the identity map $K \to K$
with respect to these bases.

In examples later, we sometimes write $\Pi e_l$ like a Young diagram
by enclosing each entry with a square and omitting arrows:
for example, if $n=8$ and $l=3$, then $\Pi e_l$ is denoted by
\begin{align}\label{eq_Young_A}
\begin{ytableau} 
3      & 2      & 1      \\
4      & 3      & 2      \\
5      & 4      & 3      \\
6      & 5      & 4      \\
7      & 6      & 5      \\
8      & 7      & 6     
\end{ytableau}.
\end{align}
We use similar notation for subfactor modules of $\Pi e_l$. 

Under this preparation, we recall the result of \cite{IRRT} for type $\bbA_n$.

\begin{Prop}\label{Prop_rigid_A}\cite[Theorem 6.1]{IRRT}
Let $w \in \jirr W$ be a join-irreducible element of type $l$.
Then the module $J(w) \in \itirigid \Pi$ is expressed as follows.
\begin{itemize}
\item Consider the diagram \textup{(\ref{eq_pattern_A})}.
\item For each $j \in [l,n]$, in the row starting at $j$,
keep the entries $i$ satisfying $i \ge w(j+1)$ and delete the others.
\end{itemize}
\end{Prop}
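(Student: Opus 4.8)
The plan is to reduce the statement to the determination of the subspace $I(w)e_l$ inside $\Pi e_l$. By Lemma \ref{Lem_basis_A} the module $\Pi e_l$ has the basis $\{p(i,j,l) \mid (i,j) \in \Gamma[l]\}$, and $J(w)=(\Pi/I(w))e_l \cong \Pi e_l/I(w)e_l$. Let $N$ be the $K$-span of the vectors $p(i,j,l)$ with $i<w(j+1)$. Then the assertion is precisely the equality $I(w)e_l=N$: granting it, the surviving basis of $J(w)$ consists of the $p(i,j,l)$ with $i \ge w(j+1)$, which is exactly the prescribed shape. So everything comes down to identifying the submodule $I(w)e_l$.

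First I would dispose of the purely combinatorial half, namely that $N$ is genuinely a submodule of $\Pi e_l$, so that the cells $i \ge w(j+1)$ really do span a quotient module. Reading the arrows of (\ref{eq_pattern_A}) as the maps $p(i,j,l) \mapsto p(i-1,j,l)$ and $p(i,j,l) \mapsto p(i+1,j+1,l)$, the span $N$ is trivially closed under the first family, and is closed under the second because $w$ has its unique descent at $l$, so that $w(j+1)<w(j+2)$ for every $j \ge l$; hence $i<w(j+1)$ forces $i+1 \le w(j+1) < w(j+2)$. Therefore $N$ is a submodule, and the shape in the statement is at least a well-defined quotient of $\Pi e_l$.

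The substantial step is the equality $I(w)e_l=N$, which I would attack by induction on the length $l(w)$. Writing $w=vs_l$ with $v:=ws_l$ (every reduced word of $w$ ends in $s_l$, since $l$ is the unique right descent of $w$), we have $I(w)=I(v)I_l$ and hence $I(w)e_l=I(v)(I_le_l)$. The elementary computation $I_ie_l=\Pi e_l$ for $i\ne l$ — which holds because $e_l=e_l(1-e_i)\in I_i$ — shows that a trailing factor $I_i$ with $i\ne l$ acts trivially on $e_l$, so in computing $I(u)e_l$ for a general $u$ one may delete the trailing non-$l$ generators of a reduced word; this reduces the bookkeeping to the positions of $s_l$. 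Tracking how the passage from $v$ to $w=vs_l$ moves the cut-off in row $j$ from $v(j+1)$ to $w(j+1)$ should yield both inclusions. An alternative finish is a dimension count: compute $\dim_K e_m(\Pi/I(w))e_l$ by an independent method and match it with $\#\{\,j \mid m \ge w(j+1),\ (m,j)\in\Gamma[l]\,\}$, then combine with the single inclusion $I(w)e_l\subseteq N$ (equivalently, $\Pi e_l/N$ is a quotient of $J(w)$) to promote this to equality.

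I expect the identification $I(w)e_l=N$ to be the main obstacle. Controlling the ideal product $I_{i_1}\cdots I_{i_k}$ is delicate, and the natural induction is complicated by the fact that $v=ws_l$ need not be join-irreducible — its descent may migrate away from $l$ — so the inductive hypothesis must either be phrased for a broader class of elements or be replaced by the dimension-count argument. The real subtlety is to match the threshold $i=w(j+1)$ in each individual row $j$, rather than merely recovering the correct total dimension; this is exactly where the single-descent structure of $w$, and not just its inversion count, has to be used.
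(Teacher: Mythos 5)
Note first that the paper contains no proof of this statement: Proposition \ref{Prop_rigid_A} is imported verbatim from \cite[Theorem 6.1]{IRRT} and used as a black box, so the comparison must be against what a complete argument would require. Your reformulation is correct as far as it goes. By Lemma \ref{Lem_basis_A} one has $J(w) \cong \Pi e_l / I(w)e_l$, the proposition is indeed equivalent to $I(w)e_l = N$ with $N$ the span of the $p(i,j,l)$ satisfying $i < w(j+1)$, and your check that $N$ is a submodule is sound: the arrow actions send $p(i,j,l)$ to $p(i-1,j,l)$ and $p(i+1,j+1,l)$, and the second map respects the thresholds because $j \ge l$ and the unique descent of $w$ is at $l$, so $w(j+1) < w(j+2)$. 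The elementary facts you invoke for the induction are also right: every reduced word of $w$ ends in $s_l$, and $I_i e_l = \Pi e_l$ for $i \ne l$ since $e_l = (1-e_i)e_l \in I_i$.

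But the proof stops exactly where it would have to begin, and you say so yourself. Showing that $N$ is a submodule establishes neither inclusion of $I(w)e_l = N$; it only shows the asserted picture defines \emph{some} $\Pi$-module, not that this module is $J(w)$. The inductive scheme $I(w)e_l = I(v)(I_l e_l)$ with $v = ws_l$ founders on the difficulty you identify: $v$ is generally not join-irreducible, so the hypothesis as formulated does not apply, and you never supply the needed reformulation — a description of $I(u)e_l$ (equivalently of all $e_m I(u) e_l$) for \emph{arbitrary} $u \in W$, controlled for instance by the inversion set of $u$, from which the join-irreducible case would follow by specialization. The fallback of ``computing $\dim_K e_m(\Pi/I(w))e_l$ by an independent method'' is vacuous as written, since no such method is given; and even granted the dimensions, you would still need one of the two inclusions, which is also not proved. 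So the substantive content of the statement — matching the row-by-row cutoff $i = w(j+1)$ against the ideal $I(w)$ — remains entirely open in your write-up. What you have is a correct reduction plus a plausible plan; carrying it out would amount to reproving \cite[Theorem 6.1]{IRRT}, which is precisely why the paper cites it rather than proving it.
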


\subsection{Type $\bbD_n$}

Let $\Delta:=\bbD_n$ in this subsection.
The preprojective algebra $\Pi$ of type $\bbD_n$ is given by the following quiver
and relations:
\begin{align*}
& \begin{xy}
( 0, 12) *+{1}  = "1",
( 0,-12) *+{-1} = "0",
(16,  0) *+{2}  = "2",
(32,  0) *+{3}  = "3",
(48,  0) *+{\cdots} = "4",
(64,  0) *+{n-1}= "5",
\ar^{\alpha_1^+} @<1mm> "1";"2"
\ar^{\beta_2^+} @<1mm> "2";"1"
\ar^{\alpha_1^-} @<1mm> "0";"2"
\ar^{\beta_2^-} @<1mm> "2";"0"
\ar^{\alpha_2} @<1mm> "2";"3"
\ar^{\beta_3} @<1mm> "3";"2"
\ar^{\alpha_3} @<1mm> "3";"4"
\ar^{\beta_4} @<1mm> "4";"3"
\ar^{\alpha_{n-2}} @<1mm> "4";"5"
\ar^{\beta_{n-1}} @<1mm> "5";"4"
\end{xy}; \\
& \alpha_1^+\beta_2^+=0, \quad \alpha_1^-\beta_2^-=0, \quad 
\alpha_2\beta_3=\beta_2^+\alpha_1^+ + \beta_2^-\alpha_1^-, \\
& \alpha_i\beta_{i+1}=\beta_i\alpha_{i-1} \ (3 \le i \le n-2), \quad \beta_{n-1}\alpha_{n-2}=0.
\end{align*}
To avoid complicated notation, we set
$\alpha_1:=\alpha_1^+ + \alpha_1^-$ and $\beta_2:=\beta_2^+ + \beta_2^-$.

The Coxeter group $W$ of type $\bbD_n$
is isomorphic to the group consisting of all automorphisms $w$
on the set $\pm [1,n]$ satisfying the following conditions:
\begin{itemize}
\item $w(-i)=-w(i)$ holds for each $i \in [1,n]$; and
\item the number of elements in $\{ i \in [1,n] \mid w(i)<0 \}$ is even.
\end{itemize}
Here, $s_i \in W$ is sent to
$(-1 \quad 2)(-2 \quad 1)$ if $i=-1$; and 
$(-i \quad {-(i+1)})(i \quad i+1)$ if $i \ne -1$.
We identify $W$ with the group above by this isomorphism.
Since $w(-i)=-w(i)$ holds, we express $w \in W$ as $(w(1),w(2),\ldots,w(n))$.

The reflections of $W$ are precisely 
the elements of the form
$({-a} \quad {-b})(a \quad b)$ with $a,b \in \pm [1,n]$ and $a>|b|$,
and the set $\inv(w)$ of inversions of $w \in W$ is 
\begin{align*}
\{ ({-a} \quad {-b})(a \quad b) \mid 
a,b \in \pm [1,n], \ a>|b|, \ w^{-1}(a)<w^{-1}(b) \}.
\end{align*}

An element $w \in W$ is a join-irreducible element of type $l$
if and only if $l$ is the unique element in $\{-1\} \cup [1,n-1]=Q_0$ 
such that $w(l)>w(|l|+1)$ holds.

We set two bases of each indecomposable projective module $\Pi e_l$ as follows.
We divide the argument by whether $l = \pm 1$ or not. 

We consider the case $l = \pm 1$ first.
Let $i,j \in Q_0=\{-1\} \cup [1,n-1]$ with $i \le j \ne -l$.
We define a path $p(i,j,l)$ by
\begin{align*}
p(i,j,\pm 1):=\begin{cases}
(\alpha_i \alpha_{i+1} \cdots \alpha_{j-1}) \cdot
(\beta_j \beta_{j-1} \cdots \beta_3) \beta_2^\pm & (i \ge 2) \\
\alpha_1^+ p(2,j,\pm1) & (i=1) \\
\alpha_1^- p(2,j,\pm1) & (i=-1)
\end{cases}.
\end{align*}
This is a shortest path starting from $i$, going through $j$, and ending at $l$.
As an element in $\Pi$, 
the path $p(i,j,l)$ is not zero in $\Pi$ if and only if $i \ne (-1)^j l$,
so set 
\begin{align*}
\Gamma[l]:=\{ (i,j) \in Q_0 \times Q_0 \mid (-1)^j l \ne i \le j \ne -l \}.
\end{align*}

We obtain the following assertion from straightforward calculation.

\begin{Lem}\label{Lem_basis_D_1}
The set $\{p(i,j,l) \mid (i,j) \in \Gamma[l] \}$ 
forms a $K$-basis of $\Pi e_l$.
\end{Lem}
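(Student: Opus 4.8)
The plan is to view $\Pi e_l$ as a graded module and split the statement into a spanning claim and a (non)vanishing claim, using the grading to make linear independence automatic. Since every defining relation of $\Pi$ is homogeneous in path length, $\Pi$ is positively graded and $\Pi e_l=\bigoplus_{i,\ell}(e_i\Pi e_l)_\ell$ decomposes according to the source vertex $i$ and the length $\ell$. A direct computation gives that $p(i,j,l)$ has length $2j-i-1$ for $i\ge 2$ and length $2j-2$ for $i=\pm 1$; in either case, for a \emph{fixed} source $i$ the length determines $j$ uniquely (the sources $1$ and $-1$ being distinguished even when they share a length). Hence each bidegree $(i,\ell)$ contains at most one hook, so once the hooks are known to span, any linear dependence among $\{p(i,j,l)\}_{(i,j)\in\Gamma[l]}$ splits into bidegrees and forces the coefficients to vanish, provided each such hook is nonzero. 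This reduces the lemma to: (i) the hooks span $\Pi e_l$; (ii) $p(i,j,l)=0$ if and only if $i=(-1)^jl$.

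For (i), I would argue as in Lemma~\ref{Lem_basis_A}, reading a path that ends at $l$ as a sequence of up-steps $\alpha_k$ and down-steps $\beta_k$. On the segments staying at vertices $\ge 3$, the relation $\alpha_i\beta_{i+1}=\beta_i\alpha_{i-1}$ acts as a bubble-sort move: it rewrites an out-of-order pair $\beta_i\alpha_{i-1}$ (down then up) as $\alpha_i\beta_{i+1}$ (up then down), strictly decreasing the number of descents preceding an ascent, while a descent--ascent forced up to the boundary relation $\beta_{n-1}\alpha_{n-2}=0$ simply kills the term. By induction, every path reduces modulo the relations to a scalar multiple of a single hook or to $0$. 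Combined with the vanishing half of (ii), this shows the hooks indexed by $\Gamma[l]$ already span $\Pi e_l$.

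The genuinely $\bbD_n$-specific work --- and where I expect the main obstacle --- is the behaviour at the branch vertices $1,-1,2$, where the generic sorting move is unavailable. There one must instead use the mesh relation $\alpha_2\beta_3=\beta_2^+\alpha_1^++\beta_2^-\alpha_1^-$, which couples the two branches, together with the zero relations $\alpha_1^+\beta_2^+=0$ and $\alpha_1^-\beta_2^-=0$. Tracking how a path enters and leaves heights $1,-1,2$ is precisely what produces the parity constraint: when $i=(-1)^jl$, carrying the reduction of $p(i,j,l)$ down to the branch lands it on one of the forbidden compositions $\alpha_1^\pm\beta_2^\pm$, yielding the vanishing direction of (ii). Making this bookkeeping clean, so that each path reduces to exactly one hook and no spurious cross-terms survive, is the delicate part.

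Finally, for the nonvanishing direction of (ii) I would sidestep a fragile hand computation inside $\Pi$ and use a dimension count. Step (i) and the vanishing half give $\dim_K\Pi e_l\le|\Gamma[l]|$, so it suffices to know $\dim_K\Pi e_l=|\Gamma[l]|$ from the known structure of the indecomposable projectives over the preprojective algebra of type $\bbD_n$. The matching dimensions then force $\{p(i,j,l)\}_{(i,j)\in\Gamma[l]}$ to be a basis, so in particular every such hook is nonzero. Alternatively, one can realize $\Pi e_l$ explicitly as a representation of the double quiver and read off that each $p(i,j,l)$ with $(i,j)\in\Gamma[l]$ acts nontrivially; this more hands-on route matches the diagrammatic descriptions used later in the paper.
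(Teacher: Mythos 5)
The paper disposes of Lemma \ref{Lem_basis_D_1} (like its type-$\bbA_n$ counterpart, Lemma \ref{Lem_basis_A}) with the single sentence that it follows from straightforward calculation, so there is no written proof to match yours against; judged on its own terms, your plan is correct and is essentially a careful organization of exactly that calculation. Two structural remarks. First, your grading argument and your dimension count are partly redundant: once the hooks span and the vanishing half of (ii) is proved (that $p(i,j,l)=0$ whenever $i=(-1)^{j}l$), the remaining spanning set is indexed by $\Gamma[l]$, which has exactly $n(n-1)/2$ elements, so the classical equality $\dim_K \Pi e_l = n(n-1)/2$ alone already forces a basis without any appeal to bidegrees; conversely, the bidegree argument reduces everything to nonvanishing of each individual hook, which you again obtain only from the same dimension count. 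Second, and more importantly, note that the counting cannot substitute for the vanishing half: since each bidegree $(i,\ell)$ contains exactly one hook (good or bad according to the parity constraint), the identity ``$\dim_K\Pi e_l$ equals the number of nonzero hooks'' tells you only that $n-2$ hooks die, not \emph{which} ones, so the branch bookkeeping you flag as delicate is genuinely unavoidable --- one really must push each bad hook down to a forbidden factor $\alpha_1^{\pm}\beta_2^{\pm}$ via the mesh relation, e.g.\ for $n=4$ one finds $p(-1,3,1)=\alpha_1^{-}\beta_2^{+}\alpha_1^{+}\beta_2^{+}=0$ while $p(1,3,1)=\alpha_1^{+}\beta_2^{-}\alpha_1^{-}\beta_2^{+}\neq 0$, and this alternation is precisely the condition $i\neq(-1)^{j}l$. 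Your external input $\dim_K\Pi e_l=n(n-1)/2$ is legitimate and classical (for instance from $\Pi e_l\cong\bigoplus_{m\ge 0}\tau^{-m}P_l$ over a fixed orientation); alternatively, your last suggestion can be tightened into a self-contained argument avoiding that fact: build the representation of diagram \textup{(\ref{eq_pattern_D_1})}, verify the relations and that it is cyclic, generated in degree $0$ at $l$, which yields a surjection from $\Pi e_l$ sending the good hooks to distinct basis vectors; combined with your spanning step this shows the surjection is an isomorphism and proves the lemma in one stroke.
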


This basis allows us to express $\Pi e_l$ as
\begin{align}\label{eq_pattern_D_1}
\begin{xy}
( 0, 20) *+{l}           = "11",
( 0, 10) *+{2}           = "21",
(15, 10) *+{-l}          = "22",
( 0,  0) *+{\vdots}      = "31",
(15,  0) *+{\vdots}      = "32",
( 0,-10) *+{n-2}         = "41",
(15,-10) *+{n-3}         = "42",
(30,-10) *+{\cdots}      = "43",
(45,-10) *+{(-1)^{n-3}l} = "44",
( 0,-20) *+{n-1}         = "51",
(15,-20) *+{n-2}         = "52",
(30,-20) *+{\cdots}      = "53",
(45,-20) *+{2}           = "54",
(60,-20) *+{(-1)^{n-2}l} = "55"
\ar "11";"21"
\ar "21";"22" 
\ar "21";"31" \ar "22";"32"
\ar "31";"41" \ar "32";"42" 
\ar "41";"42" \ar "42";"43" \ar "43";"44"
\ar "41";"51" \ar "42";"52" \ar "44";"54"
\ar "51";"52" \ar "52";"53" \ar "53";"54" \ar "54";"55"
\end{xy}.
\end{align}
Here, each number $i$ in the row starting at $j$ 
denotes a one-dimensional vector space $K p(i,j,l)$ with a basis $p(i,j,l)$,
and each arrow stands for the identity map $K \to K$
with respect to these bases.

If we use the ``Young diagram-like'' notation as (\ref{eq_Young_A})
for the case $n=9$ and $l=1$, then $\Pi e_l$ is denoted by
\begin{align}\label{eq_Young_D_1}
\begin{ytableau} 
1  \\
2  &  -1 \\
3  &  2  &  1  \\
4  &  3  &  2  &  -1 \\
5  &  4  &  3  &  2  &  1  \\
6  &  5  &  4  &  3  &  2  &  -1 \\
7  &  6  &  5  &  4  &  3  &  2  &  1  \\
8  &  7  &  6  &  5  &  4  &  3  &  2  &  -1 \\  
\end{ytableau}.
\end{align}

The indecomposable $\tau^{-1}$-rigid module $J(w)$ 
for $w \in \jirr W$ of type $l = \pm 1$ is given as follows.

\begin{Prop}\label{Prop_rigid_D_1}\cite[Theorem 6.5]{IRRT}
Let $w \in \jirr W$ be a join-irreducible element of type $l = \pm 1$.
Then the module $J(w) \in \itirigid \Pi$ is expressed as follows.
\begin{itemize}
\item Consider the diagram \textup{(\ref{eq_pattern_D_1})}.
\item For each $j \in \{l\} \cup [2,n-1]$, in the row starting at $j$,
keep the entries $i$ satisfying $i \ge w(|j|+1)$ and delete the others.
\end{itemize}
\end{Prop}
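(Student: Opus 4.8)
Recall that $J(w)=(\Pi/I(w))e_l=\Pi e_l/I(w)e_l$, and that $I(w)e_l$ is a left $\Pi$-submodule of $\Pi e_l$ because $I(w)$ is a two-sided ideal. By Lemma~\ref{Lem_basis_D_1} the space $\Pi e_l$ has the explicit $K$-basis $\{p(i,j,l)\mid (i,j)\in\Gamma[l]\}$, drawn as the rows of diagram~(\ref{eq_pattern_D_1}), and its $\Pi$-module structure is encoded by the arrows of that diagram. So the assertion is equivalent to the claim that $I(w)e_l$ is spanned exactly by the basis vectors $p(i,j,l)$ with $i<w(|j|+1)$. By the diagram automorphism of $\bbD_n$ exchanging the two fork leaves $1$ and $-1$, which induces compatible automorphisms of $W$ and of $\Pi$, it suffices to treat $l=1$. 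I would then split the argument into two comparisons of equal-dimensional spaces: (i) a membership statement, that every $p(i,j,l)$ with $i<w(|j|+1)$ lies in $I(w)$; and (ii) a dimension count, that $\dim_K J(w)$ equals the number of boxes $(i,j)\in\Gamma[l]$ with $i\ge w(|j|+1)$. Since (i) gives the containment $I(w)e_l\supseteq \langle p(i,j,l)\mid i<w(|j|+1)\rangle$, the two statements together force equality and identify the quotient with the claimed set of surviving boxes.

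\textbf{Dimension count.} For (ii) I would invoke the standard fact that the class of $\Pi/I(w)$ in the Grothendieck group is the sum of the positive roots attached to the inversions in $\inv(w)$. Using the explicit list of reflections and inversions for type $\bbD_n$ recalled above, the quantity $\dim_K e_m(\Pi/I(w))e_l$ becomes a count of inversions of a prescribed shape relative to the fixed vertices $m$ and $l$; carrying out this count vertex by vertex and comparing with the number of surviving entries labelled $m$ in diagram~(\ref{eq_pattern_D_1}) yields the required equality. The thresholds $w(2),w(3),\dots,w(n)$ appearing as $w(|j|+1)$ over the rows $j\in\{l\}\cup[2,n-1]$ are precisely the data recording these inversions, which is what makes the two counts agree.

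\textbf{Membership and the main obstacle.} For (i) I would establish a combinatorial criterion for when a path $p(i,j,l)$ lies in the product ideal $I(w)=I_{i_1}\cdots I_{i_k}$ associated with a reduced expression of $w$, and check that this criterion is exactly $i<w(|j|+1)$. In type $\bbA_n$ the analogous criterion decouples along the rows of the diagram, and the argument essentially reduces to Proposition~\ref{Prop_rigid_A}. The main obstacle in type $\bbD_n$ is the fork at vertex $2$: the defining relation $\alpha_2\beta_3=\beta_2^+\alpha_1^++\beta_2^-\alpha_1^-$ couples the two branches meeting at $2$, so the boxes labelled $1$ and $-1$, together with the alternating signs $(-1)^j l$ built into $\Gamma[l]$, can no longer be handled one row at a time. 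Concretely, the delicate point is to verify that left multiplication by the arrows near the fork carries precisely the intended basis vectors $p(\pm 1,j,l)$ into $I(w)$ and creates no unexpected linear relations among them; this local analysis at the fork, absent in the type $\bbA_n$ case, is where the bulk of the computation lies. Once it is settled, combining membership with the dimension count completes the identification of $J(w)$.
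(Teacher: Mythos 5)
First, note that the paper does not prove Proposition~\ref{Prop_rigid_D_1} at all: it is imported verbatim from \cite[Theorem 6.5]{IRRT}, so there is no internal proof to compare your sketch against, and nothing in this paper's machinery (Lemmas~\ref{Lem_rigid_D_1} ff.\ are \emph{consequences} of the proposition, restating it) can be used to certify your argument. Judged on its own terms, your proposal has a genuine gap: its two load-bearing steps are announced but not carried out. For the membership step~(i) you say the local analysis at the fork ``is where the bulk of the computation lies'' and defer it (``once it is settled\ldots''), but that analysis --- tracking how the relation $\alpha_2\beta_3=\beta_2^+\alpha_1^++\beta_2^-\alpha_1^-$ and the alternating entries $(-1)^jl$ interact with the generators of $I(w)=I_{i_1}\cdots I_{i_k}$ --- is precisely the content of the theorem in type $\bbD_n$; a proof that postpones it has proved nothing beyond the (easy) type-$\bbA_n$-style bookkeeping.

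The dimension count~(ii) has a concrete flaw as formulated. The filtration of $\Pi/I(w)$ by layers, whose dimension vectors are the positive roots attached to $\inv(w)$ (via \cite[Theorem 2.7]{AIRT}), computes the class of the \emph{whole} left module $\Pi/I(w)$ in the Grothendieck group, i.e.\ $\dim_K e_m(\Pi/I(w))$ summed over all idempotent columns. What you need is the column-restricted quantity $\dim_K e_m(\Pi/I(w))e_l=\dim_K e_m J(w)$, and the left-module class does not see the right action of $e_l$: the sum-of-roots identity gives no control over how the dimension distributes among the summands $(\Pi/I(w))e_j$. So either you must produce a bimodule (or right-module) refinement of the layer filtration compatible with multiplication by $e_l$, or you must analyze $I(w)e_l$ directly inside the basis of Lemma~\ref{Lem_basis_D_1} --- which is essentially what \cite{IRRT} do, by explicit computation with the ideals along reduced words, rather than by a root count. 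Your overall architecture (reduction to $l=1$ by the diagram automorphism, containment plus dimension comparison) is a reasonable skeleton, but as it stands both pillars are missing, and one of them rests on an identity that does not hold at the stated level of generality.
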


Next, we consider the case $l \ge 2$.
Let $i \in \pm Q_0 = \pm [1,n-1]$ and 
$j \in Q_0=\{-1\} \cup [1,n-1]$ with $i \le j \ge l$.
Set $t:=(-1)^{j-l+1}$.
We define two paths $p_1(i,j,l)$ and $p_{-1}(i,j,l)$ in $Q$ by
\begin{align*}
p_\epsilon(i,j,l):=\begin{cases}
(\alpha_i \alpha_{i-1} \cdots \alpha_{j-1}) \cdot
(\beta_j \beta_{j-1} \cdots \beta_{l+1}) & (i \ge 2) \\
\alpha_1^+ p_\epsilon(2,j,l) & (i=1) \\
\alpha_1^- p_\epsilon(2,j,l) & (i=-1) \\
\beta_2 p_\epsilon(\epsilon t,j,l) & (i=-2) \\
(\beta_{-i} \beta_{-i-1} \cdots \beta_3) p_\epsilon(-2,j,l) 
& (i \le -3)
\end{cases}.
\end{align*}
This is a shortest path 
\begin{itemize}
\item starting from $i$, going through $j$, and ending at $l$ if $i \ge -1$; and
\item starting from $|i|$, going through $\epsilon t$ and then $j$, 
and ending at $l$ if $i \le -2$.
\end{itemize}
As an element in $\Pi$, 
the path $p_\epsilon(i,j,l)$ is not zero in $\Pi$ if and only if $i \ge j-(n-1)-l$,
so set 
\begin{align*}
\Gamma[l]:=\{ (i,j) \in \pm Q_0 \times Q_0 \mid j-(n-1)-l \le i \le j \ge l \}.
\end{align*}

We obtain the following assertion from straightforward calculation.

\begin{Lem}\label{Lem_basis_D_2}
Let $\epsilon=\pm 1$.
Then the set $\{p_\epsilon(i,j,l) \mid (i,j) \in \Gamma[l] \}$ 
forms a $K$-basis of $\Pi e_l$.
\end{Lem}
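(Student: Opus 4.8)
The plan is to prove Lemma \ref{Lem_basis_D_2} by the same direct method that underlies Lemmas \ref{Lem_basis_A} and \ref{Lem_basis_D_1}: exhibit the family $p_\epsilon(i,j,l)$ as the complete list of nonzero ``shortest hook'' paths ending at $l$, show that this family spans $\Pi e_l$, and then match its cardinality against $\dim_K \Pi e_l$ to upgrade spanning to a basis. Throughout I fix $\epsilon = \pm 1$ and $l \ge 2$. First I would verify the nonvanishing claim, namely that $p_\epsilon(i,j,l) \ne 0$ in $\Pi$ precisely when $i \ge j-(n-1)-l$, which is the defining inequality of $\Gamma[l]$. The only relations that can produce zero are $\alpha_1^+\beta_2^+=0$ and $\alpha_1^-\beta_2^-=0$ at the branch node, and $\beta_{n-1}\alpha_{n-2}=0$ at the far end of the arm. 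Unfolding the recursion, a path of the prescribed hook shape survives exactly as long as it does not run off the end of the arm $\{3,\dots,n-1\}$, and translating that length bound yields the stated inequality; this identifies $\Gamma[l]$ with the set of indices giving nonzero paths.

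Next I would establish spanning by using the relations of $\Pi$ as rewriting rules that bring an arbitrary path from $l$ to any vertex into the canonical hook form $p_\epsilon(i,j,l)$. Along the linear arm, the mesh relations $\alpha_i\beta_{i+1}=\beta_i\alpha_{i-1}$ for $3 \le i \le n-2$ let one push every ``valley'' of the form $\beta\alpha$ toward a single turning vertex $j$, so that any path is rewritten as one ascending segment followed by one descending segment, i.e.\ a single turn. At the branch node the relation $\alpha_2\beta_3=\beta_2^+\alpha_1^+ + \beta_2^-\alpha_1^-$ resolves every passage through vertex $2$ into the two branch contributions and, read in reverse, re-expresses a path that has entered a branch in terms of the chosen sign $\epsilon$. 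The alternating sign $t=(-1)^{j-l+1}$ appearing in the clause $\beta_2\,p_\epsilon(\epsilon t, j, l)$ is exactly the record of how many times this mesh relation is invoked between $j$ and $l$. Hence the $p_\epsilon(i,j,l)$ with $(i,j)\in\Gamma[l]$ span $\Pi e_l$.

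Finally, to promote spanning to a basis I would count. The cardinality of $\Gamma[l]$ is immediate from its definition, and it equals $\dim_K \Pi e_l$, which is standard for the preprojective algebra of type $\bbD_n$ and is consistent with the $l=\pm 1$ computation already recorded in Lemma \ref{Lem_basis_D_1} (summing $\dim_K \Pi e_l$ over all $l$ recovers $\dim_K \Pi$). A spanning set whose size equals the dimension is automatically a basis, which closes the argument for each fixed $\epsilon$; the two choices of $\epsilon$ then produce two distinct bases of the same space, related through the branch relation. Alternatively, independence can be checked directly: paths with different source $|i|$ or different turning point $j$ lie in separating components $e_{|i|}\Pi e_l$ or differ in length, so no nontrivial relation can occur.

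The main obstacle is the sign-and-parity bookkeeping at the two-branch vertex. Everything on the linear arm $\{3,\dots,n-1\}$ is identical to the reduction already used in type $\bbA$ (Lemma \ref{Lem_basis_A}), and carries no new difficulty. The genuinely new content, and the only place where ``straightforward calculation'' conceals real care, is tracking through the recursive clause $\beta_2\,p_\epsilon(\epsilon t, j, l)$ which of the two branches ($1$ or $-1$) a rewritten path is routed through, and verifying that the single relation $\alpha_2\beta_3=\beta_2^+\alpha_1^+ + \beta_2^-\alpha_1^-$ forces precisely the claimed value $t=(-1)^{j-l+1}$.
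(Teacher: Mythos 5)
The paper records no proof of Lemma \ref{Lem_basis_D_2} beyond the phrase ``straightforward calculation,'' so your proposal must be judged against the direct verification that phrase gestures at. Your main skeleton is sound and is essentially that verification: mesh-straightening an arbitrary path ending at $l$ into the hook normal forms $p_\epsilon(i,j,l)$ gives spanning (correctly allowing linear combinations once the branch relation $\alpha_2\beta_3=\beta_2^+\alpha_1^+ + \beta_2^-\alpha_1^-$ is invoked), the count $\#\Gamma[l]=(n-l)(n+l-1)$ is immediate from the definition, and matching it against $\dim_K \Pi e_l$ upgrades spanning to a basis; one can even sanity-check the counts, e.g.\ for $\bbD_4$ they sum to $6+6+10+6=28=\dim_K\Pi$.

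Two subsidiary steps, however, are genuinely flawed as written. First, the nonvanishing argument: vanishing in $\Pi$ means membership in the two-sided ideal generated by the mesh relations, and this cannot be decided by inspecting which monomial relations ($\alpha_1^\pm\beta_2^\pm=0$, $\beta_{n-1}\alpha_{n-2}=0$) could literally be applied — already in type $\bbA_n$ the paths $p(i,j,l)$ with $i\le j-l$ vanish although no monomial relation occurs as a subword; the vanishing is forced by chains of mesh rewritings. In your architecture this lapse happens to be harmless only because the final count makes nonvanishing a corollary rather than an input; the clean repair, which also removes the unproved appeal to a ``standard'' value of $\dim_K\Pi e_l$, is to check that the diagram (\ref{eq_pattern_D_2}) actually defines a $\Pi$-module (verify the relations at vertex $2$ and at $n-1$, including the arrows labelled $-1$), note that it is generated by its top and hence is a quotient of $\Pi e_l$, and observe that $p_\epsilon(i,j,l)$ sends the generator to the $(i,j)$ basis vector; this yields $\dim_K\Pi e_l\ge\#\Gamma[l]$, which with spanning proves the lemma self-containedly. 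Second, your ``alternative'' independence check is simply false: for $i\le -2$ (with $j+|i|-1\le n-1$, e.g.\ $i=-2$, $j\le n-2$) the elements $p_\epsilon(i,j,l)$ and $p_\epsilon(|i|,\,j+|i|-1,\,l)$ have the same source vertex $|i|$, the same target $l$, and the same length $2j+|i|-l-2$, so they are separated neither by idempotent components nor by degree — the paper's own identity (\ref{eq_path_lin_comb}) takes place precisely inside such a shared graded piece of $e_{|i|}\Pi e_l$. Keep the spanning-plus-count skeleton (or better, the module construction), and delete both the monomial-relations justification of nonvanishing and the degree-separation independence argument.
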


Each basis above allows us to express $\Pi e_l$ as
{\small\begin{align}\label{eq_pattern_D_2}
\begin{xy}
(  0, 15) *+{l}                = "11",
( 15, 15) *+{l\!-\!1}          = "12",
( 27, 15) *+{\cdots}           = "13",
( 39, 15) *+{2}                = "14",
( 54, 17) *+{-\epsilon}        = "15a",
( 54, 13) *+{\epsilon}         = "15b",
( 69, 15) *+{-2}               = "16",
( 81, 15) *+{\cdots}           = "17",
( 93, 15) *+{-m}               = "18",
(108, 15) *+{-m\!-\!1}         = "19",
(120, 15) *+{\cdots}           = "20",
(132, 15) *+{-n\!+\!2}         = "21",
(147, 15) *+{-n\!+\!1}         = "22",
(  0,  5) *+{l\!+\!1}          = "31",
( 15,  5) *+{l}                = "32",
( 27,  5) *+{\cdots}           = "33",
( 39,  5) *+{3}                = "34",
( 54,  5) *+{2}                = "35",
( 69,  7) *+{\epsilon}         = "36a",
( 69,  3) *+{-\epsilon}        = "36b",
( 81,  5) *+{\cdots}           = "37",
( 93,  5) *+{-m\!+\!1}         = "38",
(108,  5) *+{-m}               = "39",
(120,  5) *+{\cdots}           = "40",
(132,  5) *+{-n\!+\!3}         = "41",
(147,  5) *+{-n\!+\!2}         = "42",
(  0, -5) *+{\vdots}           = "51",
( 15, -5) *+{\vdots}           = "52",
( 39, -5) *+{\vdots}           = "54",
( 54, -5) *+{\vdots}           = "55",
( 69, -5) *+{\vdots}           = "56",
( 93, -5) *+{\vdots}           = "58",
(108, -5) *+{\vdots}           = "59",
(132, -5) *+{\vdots}           = "61",
(147, -5) *+{\vdots}           = "62",
(  0,-15) *+{n\!-\!1}          = "71",
( 15,-15) *+{n\!-\!2}          = "72",
( 27,-15) *+{\cdots}           = "73",
( 39,-15) *+{m\!+\!1}          = "74",
( 54,-15) *+{m}                = "75",
( 69,-15) *+{m\!-\!1}          = "76",
( 81,-15) *+{\cdots}           = "77",
( 93,-13) *+{-\epsilon t}      = "78a",
( 93,-17) *+{\epsilon t}       = "78b",
(108,-15) *+{-2}               = "79",
(120,-15) *+{\cdots}           = "80",
(132,-15) *+{-l\!+\!1}         = "81",
(147,-15) *+{-l}               = "82",
\ar "11";"12" \ar "12";"13" \ar "13";"14" 
\ar "14";"15a" \ar "14";"15b" \ar "15a";"16" \ar_{-1} "15b";"16"
\ar "16";"17" \ar "17";"18" \ar "18";"19" \ar "19";"20" \ar "20";"21" \ar "21";"22"
\ar "11";"31" \ar "12";"32" \ar "14";"34" \ar "15b";"35"
\ar "16";"36a" \ar "18";"38" \ar "19";"39" \ar "21";"41" \ar "22";"42" 
\ar "31";"32" \ar "32";"33" \ar "33";"34" \ar "34";"35" 
\ar "35";"36a" \ar "35";"36b" \ar "36a";"37" \ar_{-1} "36b";"37"
\ar "37";"38" \ar "38";"39" \ar "39";"40" \ar "40";"41" \ar "41";"42"
\ar "31";"51" \ar "32";"52" \ar "34";"54" \ar "35";"55" \ar "36b";"56" 
\ar "38";"58" \ar "39";"59" \ar "41";"61" \ar "42";"62" 
\ar "51";"71" \ar "52";"72" \ar "54";"74" \ar "55";"75" \ar "56";"76" \ar "58";"78a" 
\ar "59";"79" \ar "61";"81" \ar "62";"82" 
\ar "71";"72" \ar "72";"73" \ar "73";"74" \ar "74";"75" \ar "75";"76" \ar "76";"77"
\ar "77";"78a" \ar "77";"78b" \ar "78a";"79" \ar_{-1} "78b";"79" 
\ar "79";"80" \ar "80";"81" \ar "81";"82"
\end{xy}, \end{align}}where $m:=n-l$, $t=(-1)^{m-1}$.
Here, each number $i$ in the row starting at $j$ 
denotes a one-dimensional vector space $K p(i,j,l)$ with a basis $p(i,j,l)$.
Each arrow with the label ``$-1$'' stands for the map $K \ni x \mapsto -x \in K$,
and each of the other arrows stands for the identity map $K \to K$,
with respect to these bases.

If we use the ``Young diagram-like'' notation as (\ref{eq_Young_A})
for the case $n=9$, $l=2$, and $\epsilon=1$, then $\Pi e_l$ is denoted by
\begin{align}\label{eq_Young_D_2}
\begin{ytableau} 
2      & \mpone & -2     & -3     & -4     & -5     & -6     & -7     & -8     \\
3      & 2      & \pmone & -2     & -3     & -4     & -5     & -6     & -7     \\
4      & 3      & 2      & \mpone & -2     & -3     & -4     & -5     & -6     \\
5      & 4      & 3      & 2      & \pmone & -2     & -3     & -4     & -5     \\
6      & 5      & 4      & 3      & 2      & \mpone & -2     & -3     & -4     \\
7      & 6      & 5      & 4      & 3      & 2      & \pmone & -2     & -3     \\
8      & 7      & 6      & 5      & 4      & 3      & 2      & \mpone & -2
\end{ytableau}.
\end{align}
We use similar notation for subfactor modules of $\Pi e_l$. 

The indecomposable $\tau^{-1}$-rigid module $J(w)$ 
for $w \in \jirr W$ of type $l \ne \pm 1$ is given as follows.

\begin{Prop}\label{Prop_rigid_D_2}\cite[Theorem 6.12]{IRRT}
Let $w \in \jirr W$ be a join-irreducible element of type $l \ne \pm 1$.
If $w(l+1) \le 1$, 
then set 
\begin{align*}
m:=\max\{ k \in [l+1,n] \mid w(k) \le 1\}, \quad
\epsilon:=\begin{cases}
(-1)^{m-(l+1)}     & (w(m) \le -2) \\
(-1)^{m-(l+1)}w(m) & (w(m) = \pm 1)
\end{cases};
\end{align*}
otherwise, set $\epsilon := 1$.
Then the module $J(w) \in \itirigid \Pi$ is expressed as follows.
\begin{itemize}
\item Consider the diagram \textup{(\ref{eq_pattern_D_2})}.
\item For each $j \in [l,n-1]$, in the row starting at $j$,
keep the entries $i$ satisfying
\begin{align*}
\begin{cases}
i \ge w(j+1) & (w(j+1) \ge 2) \\
i \ge 2 \quad \textup{or} \quad i=w(j+1) & (w(j+1)=\pm 1) \\
i \ge w(j+1)+1 & (w(j+1) \le -2)
\end{cases}
\end{align*}
and delete the others.
\end{itemize}
\end{Prop}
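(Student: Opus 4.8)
The plan is to determine the kernel of the canonical surjection $\Pi e_l \twoheadrightarrow J(w)$, that is, the left submodule $I(w)e_l$ of $\Pi e_l$, and to prove that it is spanned by exactly those basis paths $p_\epsilon(i,j,l)$ of Lemma \ref{Lem_basis_D_2} that the stated rule deletes. First I would record the elementary observation that, for any $\Pi$-module $M$ and any vertex $i$, the submodule $I_iM=\Pi(1-e_i)M$ is the kernel of the projection of $M$ onto its largest $S_i$-isotypic quotient. Writing a reduced expression $w=s_{i_1}\cdots s_{i_k}$ and using $I(w)\Pi e_l=I_{i_1}\cdots I_{i_k}\Pi e_l$, this says that $I(w)e_l$ is obtained from $\Pi e_l$ by successively peeling off the top layers supported at $i_k$, then $i_{k-1}$, and so on down to $i_1$.

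Second, I would run an induction on the length $l(w)$ via the recursion $I(s_iw)=I_iI(w)$, which holds whenever $l(s_iw)=l(w)+1$. Here the inductive hypothesis must describe $(\Pi/I(w))e_l$ for an arbitrary $w$, not only a join-irreducible one, so I would first formulate the row-by-row rule for all $w$ and only then specialise. At each step the short exact sequence $0\to I(w)e_l/I_iI(w)e_l \to \Pi e_l/I_iI(w)e_l \to \Pi e_l/I(w)e_l \to 0$ exhibits the enlarged module as an extension of the previous one by an $S_i$-isotypic submodule, so the combinatorial task is to locate the new boxes labelled $i$ within the diagram (\ref{eq_pattern_D_2}). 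I expect the cut in the row starting at $j$ to be governed precisely by the single value $w(j+1)$, in complete parallel with the type $\bbA_n$ result (Proposition \ref{Prop_rigid_A}) and the $l=\pm1$ result (Proposition \ref{Prop_rigid_D_1}); the three regimes $w(j+1)\ge 2$, $w(j+1)=\pm1$ and $w(j+1)\le -2$ should correspond to whether the relevant path stays in the positive arm, reaches the fork vertex, or crosses into the negative arm.

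The hard part will be the fork vertex $2$, where the two arms through $1$ and $-1$ meet and the relation $\alpha_2\beta_3=\beta_2^+\alpha_1^++\beta_2^-\alpha_1^-$ couples the two basis families $p_{+1}$ and $p_{-1}$ of Lemma \ref{Lem_basis_D_2}; this coupling is the origin of the sign $\epsilon$ and of the two-dimensional fibre over vertex $2$, and it is exactly what distinguishes $\bbD_n$ from $\bbA_n$. The crux is to show that the auxiliary data $m=\max\{k\in[l+1,n]\mid w(k)\le 1\}$ together with the sign of $w(m)$ single out which path through the fork survives, and that the surviving space at vertex $2$ is always a coordinate line in one of the two bases rather than a genuinely diagonal combination; keeping the $-1$-labelled arrows of (\ref{eq_pattern_D_2}) consistent through the peeling is where the signs must be tracked with care. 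Finally I would pin the module down by an independent computation of its dimension vector — for instance against the inversion set $\inv(w)$ — and invoke the indecomposability and $\tau^{-1}$-rigidity furnished by Proposition \ref{Prop_jirrW_itirigid}, which together with the known submodule structure of $\Pi e_l$ leave no ambiguity, yielding the stated description.
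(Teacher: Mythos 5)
You should first be aware that the paper contains no proof of this proposition at all: it is imported verbatim from \cite[Theorem 6.12]{IRRT}, so there is no internal argument to compare yours against, and a genuine proof would amount to redoing the work of \cite{IRRT}. Judged on its own terms, your proposal is a strategy outline rather than a proof, and the gaps sit exactly where the content of the theorem lies. Your preliminary observations are correct: $I_iM=\Pi(1-e_i)M$ is the kernel of the projection onto the largest $S_i$-isotypic quotient, the recursion $I(s_iw)=I_iI(w)$ for $l(s_iw)=l(w)+1$ holds, and the short exact sequence you write does exhibit each step as adding an $S_i$-isotypic layer. But everything after that is announced rather than argued: you say you \emph{expect} the cut in the row starting at $j$ to be governed by the single value $w(j+1)$ with the three regimes, and that the \emph{crux is to show} that $m$ and the sign of $w(m)$ single out the surviving path through the fork. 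These are precisely the assertions of the proposition, and your write-up supplies no mechanism for establishing them. Moreover, the induction you set up forces you to describe $(\Pi/I(w'))e_l$ for all intermediate $w'$ below $w$ in the weak order (you acknowledge this), but then the inductive statement is no longer a single three-regime row cut, and the peeling order depends on a choice of reduced word; closing the induction requires a closed-form description proved invariant under the commutation and braid relations, in particular at vertex $2$, where the relation $\alpha_2\beta_3=\beta_2^+\alpha_1^++\beta_2^-\alpha_1^-$ couples the two bases $p_{+1}$ and $p_{-1}$ of Lemma \ref{Lem_basis_D_2}. Nothing in the proposal addresses this well-definedness, which is the type-$\bbD_n$-specific heart of the matter.

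Your fallback step does not repair this. Pinning the module down ``by dimension vector plus indecomposability and $\tau^{-1}$-rigidity'' is circular as stated: Proposition \ref{Prop_jirrW_itirigid} attaches indecomposability and $\tau^{-1}$-rigidity to $J(w)=(\Pi/I(w))e_l$, not to your combinatorially described candidate, and those properties transfer to the candidate only \emph{after} the isomorphism you are trying to prove. Even granting rigidity, a dimension vector together with indecomposability does not in general determine a module up to isomorphism, and you cite no result that would make it do so here; in particular it could not distinguish the correct sign $\epsilon$ from the wrong one, since the two choices of basis give candidates with identical dimension vectors. So to complete the proof along your lines you would have to (i) formulate and prove the row-cutting rule for arbitrary elements in the relevant weak-order interval, stable under all reduced-word moves, and (ii) track the sign $\epsilon$ through each multiplication by $I_i$ at the fork --- that is, essentially reproduce the explicit computations of \cite{IRRT}, which is why the present paper simply cites them.
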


\section{Description of bricks}\label{Sec_brick}

In this section, 
we describe the bricks over the preprojective algebras $\Pi$ of Dynkin type 
$\Delta=\bbA_n,\bbD_n$.
For $w \in \jirr W$, we have obtained that the brick $S(w)$ 
is $\soc_{\End_\Pi(J(w))}J(w)$ in Proposition \ref{Prop_jirrW_brick},
and the module $J(w) \in \itirigid \Pi$ is combinatorially determined in 
Propositions \ref{Prop_rigid_A}, \ref{Prop_rigid_D_1}, and \ref{Prop_rigid_D_2}.

We remark that the bricks in $\mod \Pi$ coincide 
with the layers of $\Pi$ \cite[Theorem 1.2]{IRRT}.
Thus, the dimension vector of each brick in $\mod \Pi$ is a positive root 
by \cite[Theorem 2.7]{AIRT}. 
Here, a module $L$ in $\mod \Pi$ is called a \textit{layer} 
if there exist some $w \in W$ and some vertex $i$ in $\Delta$ 
such that $w < ws_i$ and $L \cong I(w)/I(ws_i)$ \cite[Section 2]{AIRT}.

\subsection{Type $\bbA_n$}

We state the result and give an example first.

\begin{Thm}\label{Thm_brick_A}
Let $w \in \jirr W$ be a join-irreducible element of type $l$.
Set  
\begin{align*}
R:=w([l+1,n+1]), \quad a:=w(l), \quad b=w(l+1), \quad V=[b,a-1].
\end{align*}
Then, the brick $S(w)$ is isomorphic to the $\Pi$-module $S'(w)$ defined as follows.
\begin{itemize}
\item[(a)]
The brick $S'(w)$ has a $K$-basis $(\ang{i})_{i \in V}$,
and if $j = i$, then $e_j \ang{i}=\ang{i}$; otherwise, $e_j \ang{i}=0$.
\item[(b)]
Let $i \in V$.
If $j \ne i-1$, then $\alpha_j \ang{i}=0$.
If $j \ne i+1$, then $\beta_j \ang{i}=0$.
\item[(c)]
If $i \in V \setminus \{\max V\}$, then
\begin{align*}
\alpha_i \ang{i+1} = \begin{cases}
\ang{i} & (i+1 \notin R) \\
0    & (i+1 \in R)    
\end{cases}, \quad 
\beta_{i+1} \ang{i} = \begin{cases}
0      & (i+1 \notin R) \\   
\ang{i+1} & (i+1 \in R)    
\end{cases}.  
\end{align*}
\end{itemize}
\end{Thm}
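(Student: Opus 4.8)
The plan is to compute the socle $\soc_E J(w)$ over the endomorphism ring $E:=\End_\Pi(J(w))$ directly, since $S(w)=\soc_E J(w)$ by Proposition \ref{Prop_jirrW_brick}. First I would record the shape of $J(w)$ from Proposition \ref{Prop_rigid_A}. As $l$ is the unique descent, the two runs $w(1)<\cdots<w(l)=a$ and $b=w(l+1)<\cdots<w(n+1)$ are increasing, so the thresholds $w(j+1)$ for $j\in[l,n]$ strictly increase, and in row $j$ of the diagram (\ref{eq_pattern_A}) the surviving entries are the $p(i,j,l)$ with $w(j+1)\le i\le j$. A counting argument shows that the support of $J(w)$ is exactly $V$: there are precisely $a$ values not exceeding $a$, occupying the positions $1,\dots,l$ and $l+1,\dots,a$, so every threshold is at least $b$ while no position $q\ge a+1$ satisfies $w(q)\le a$; hence no vertex outside $[b,a-1]$ carries a surviving entry, and every $i\in V$ does. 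For each $i\in V$ I put $j_i:=\max\{j\in[l,n]\mid (i,j)\in\Gamma[l],\ w(j+1)\le i\}$ and $\ang{i}:=p(i,j_i,l)$, the bottom-most entry at vertex $i$, and let $S'(w)$ be their $K$-span.

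Next I would verify that $S'(w)$ is a $\Pi$-submodule and read off (a)--(c) at the same time. Property (a) is the definition of the $\ang{i}$, and (b) holds because on a vertex-$i$ element only $\alpha_{i-1}$ and $\beta_{i+1}$ act nontrivially, with $\alpha_{i-1}p(i,j,l)=p(i-1,j,l)$ and $\beta_{i+1}p(i,j,l)=p(i+1,j+1,l)$. For (c) the essential input is the strict monotonicity of the thresholds. If $i+1\notin R$, then $i+1$ is never a threshold, so $j_{i+1}=j_i$, which gives $\alpha_i\ang{i+1}=p(i,j_i,l)=\ang{i}$ and $\beta_{i+1}\ang{i}=p(i+1,j_i+1,l)=0$. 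If $i+1\in R$, say $i+1=w(j_0+1)$, then monotonicity forces $j_i=j_0-1$ and $j_{i+1}=j_0$, whence $\beta_{i+1}\ang{i}=p(i+1,j_0,l)=\ang{i+1}$ and $\alpha_i\ang{i+1}=p(i,j_0,l)=0$. The extreme cases are consistent because $b=w(l+1)\in R$ and $a\notin R$. Thus every arrow maps a basis vector of $S'(w)$ into $S'(w)$, so $S'(w)$ is a submodule isomorphic to the module in the statement.

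Finally I would identify $S'(w)$ with the socle. Since $J(w)=(\Pi/I(w))e_l$ is cyclic on $\bar e_l$ with local endomorphism ring (as $J(w)$ is indecomposable) and simple top at vertex $l$, every $f\in\rad E$ satisfies $f(\bar e_l)\in\mathrm{span}\{p(l,j_0,l)\mid j_0>l\}$. Writing $\ang{i}=p(i,j_i,l)\,\bar e_l$, we get $f(\ang{i})=p(i,j_i,l)\cdot f(\bar e_l)$, and right multiplication by any $p(l,j_0,l)$ with $j_0>l$ strictly deepens a surviving entry, hence annihilates the bottom-most entry $\ang{i}$; therefore $S'(w)\subseteq\soc_E J(w)=S(w)$. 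On the other hand $S(w)$ is a brick, so its dimension vector is a positive root of $\bbA_n$ and thus thin with interval support, which is contained in the support of $J(w)$, namely $V$; as $S(w)$ contains the thin module $S'(w)$ whose support is all of $V$, the two modules coincide. The step I expect to be the main obstacle is the precise control of $E$: verifying that every radical endomorphism sends $\bar e_l$ into the span of the strictly lower entries $p(l,j_0,l)$, and that right multiplication by such entries strictly increases depth. Both demand a careful path computation in $\Pi$ using the mesh relations $\alpha_i\beta_{i+1}=\beta_i\alpha_{i-1}$.
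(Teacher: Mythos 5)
Your proposal is correct, and its combinatorial core is the same as the paper's: like Lemmas \ref{Lem_rigid_A}--\ref{Lem_vertex_A}, you take as basis the bottom-most surviving entries $p(i,j_i,l)$ (your $j_i$ is the paper's $\rho(i)$, and your basis is indexed by the paper's $\Gamma_1(w)$), you establish that the support of $J(w)$ is exactly $V$ by the same counting, and you verify (a)--(c) from the monotonicity of the thresholds $w(j+1)$ via the identities $\alpha_i\,p(i+1,j,l)=p(i,j,l)$ and $\beta_{i+1}\,p(i,j,l)=p(i+1,j+1,l)$. Where you genuinely diverge is the socle identification. The paper notes that every non-isomorphism of $J(w)$ factors through the single endomorphism $f=(\cdot\, p(l,l+1,l))$, so $S(w)=\Ker f$ exactly (Lemma \ref{Lem_soc_A}(1)), and then computes $\Ker f$ on the nose: $f$ shifts every entry down one row, so $\Ker f$ has basis $\Gamma_1(w)$ (Lemma \ref{Lem_soc_A}(2),(3)). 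You instead prove only one inclusion intrinsically --- every $g\in\rad\End_\Pi(J(w))$ sends $\bar{e}_l$ into the span of the $p(l,j_0,l)$ with $j_0>l$, and right multiplication by such paths strictly deepens entries, killing the bottom row, whence $S'(w)\subseteq\soc_E J(w)$ --- and you close the reverse inclusion with the external fact, recorded at the start of Section \ref{Sec_brick} from \cite{IRRT} and \cite{AIRT}, that bricks over $\Pi$ have positive roots as dimension vectors, hence are thin in type $\bbA_n$, forcing $\dim_K S(w)\le\#V=\dim_K S'(w)$. The trade-off: the paper's route is self-contained within its path calculus and yields the clean kernel description of the socle as a byproduct, whereas yours avoids both the factorization claim $g=hf$ and the exact computation of $\Ker f$, at the price of importing the layer/root theorem. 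Two small points need the care you already flag, and at exactly the level the paper itself leaves tacit: the identity $p(i,j,l)\,p(l,j_0,l)=p(i,j+j_0-l,l)$ for general $j_0>l$ (the paper only uses $j_0=l+1$) follows by iterating the mesh relations, and your step ``$i+1\notin R$ implies $j_{i+1}=j_i$'' tacitly uses that the constraint $j\le i+l-1$ from $\Gamma[l]$ is never binding at the bottom row; this holds because $w(i+l+1)\le i$ would force $i+1$ distinct values of $R$ in $[1,i]$, the same count underlying Lemma \ref{Lem_vertex_A}, so neither point is a gap relative to the paper's own standard of detail.
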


\begin{Ex}\label{Ex_brick_A}
Let $n:=8$ and $w=(2,5,8,1,3,4,6,7,9)$.
Then, we have $l=3$, $a=8$, $b=1$, and $V=[1,7]$.
The module $S(w)$ has a $K$-basis $\ang{1},\ang{2},\ldots,\ang{7}$ and
its structure as a $\Pi$-module can be written as 
\begin{align*}
\ang{1} \xleftarrow{\alpha_1} \ang{2} \xrightarrow{\beta_3} \ang{3} 
\xrightarrow{\beta_4} \ang{4} \xleftarrow{\alpha_4} \ang{5} 
\xrightarrow{\beta_6} \ang{6} \xrightarrow{\beta_7} \ang{7}.
\end{align*}

In an abbreviated form, the brick $S(w)$ is denoted by
\begin{align}\label{eq_abbr_A}
1 \leftarrow 2 \rightarrow 3 \rightarrow 4 \leftarrow 5 \rightarrow 6 \rightarrow 7.
\end{align}

If we use the notation as (\ref{eq_Young_A}),
then by Proposition \ref{Prop_rigid_A},
the module $J(w)$ and the ``position'' of a submodule $S(w)$ in $J(w)$ are 
described as follows:
\begin{align*}
J(w)=\begin{ytableau} 
3  &  2  &  1  \\
4  &  3  \\
5  &  4  \\
6  \\
7  \\
\end{ytableau}, \quad
S(w)=\begin{ytableau} 
\none  &  2  &  1  \\
\none  &  3  \\
5      &  4  \\
6      \\
7      \\
\end{ytableau}.
\end{align*}
\end{Ex}

If we use such abbreviated expressions of bricks as (\ref{eq_abbr_A}),
then the theorem can be restated as follows.

\begin{Cor}\label{Cor_brick_abbr_A}
Let $w \in \jirr W$ be a join-irreducible element of type $l$,
and use the setting of Theorem \ref{Thm_brick_A}.
We express the brick $S(w)$ in the following abbreviation rules.
\begin{itemize}
\item For each $i \in V$, the $K$-vector subspace $K\ang{i}$ is denoted by the symbol $i$.
\item If the action of some $\gamma \in Q_1$ on $S(w)$ induces a nonzero $K$-linear map
$K\ang{i} \to K\ang{j}$, then we draw an arrow from the symbol $i$ to the symbol $j$.
\end{itemize}
Then, for each $i \in V \setminus \{\max V\}$,
there exists exactly one arrow between $i$ and $i+1$,
and the orientation is $i \to i+1$ if $i+1 \in R$ and 
$i \leftarrow i+1$ if $i+1 \notin R$.
\end{Cor}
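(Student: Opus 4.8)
The plan is to read off the abbreviated diagram directly from the explicit module structure recorded in Theorem~\ref{Thm_brick_A}(a)--(c); nothing further about $\soc_{\End_\Pi(J(w))} J(w)$ is needed, since that identification is already subsumed in the isomorphism $S(w) \cong S'(w)$ established there. Throughout I would work with $S'(w)$ and its basis $(\ang{i})_{i \in V}$, so that the abbreviation rules of the corollary apply verbatim.

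First I would pin down which arrows of $Q_1$ can produce a nonzero induced map out of a fixed symbol $i$. By Theorem~\ref{Thm_brick_A}(b), $\alpha_j \ang{i}=0$ unless $j=i-1$ and $\beta_j \ang{i}=0$ unless $j=i+1$; moreover, by part~(a) the vector $\ang{i}$ is concentrated at vertex $i$, so $\alpha_{i-1}\ang{i}$ lies in $K\ang{i-1}$ and $\beta_{i+1}\ang{i}$ lies in $K\ang{i+1}$ (using part~(c) for the target). Since every arrow of $Q$ changes the vertex, no induced map $K\ang{i}\to K\ang{i}$ can occur. Hence the only candidates for a nonzero $K$-linear map out of $K\ang{i}$ are $K\ang{i}\to K\ang{i-1}$ (via $\alpha_{i-1}$) and $K\ang{i}\to K\ang{i+1}$ (via $\beta_{i+1}$). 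In particular every arrow of the abbreviated diagram joins consecutive symbols, so it suffices to analyse, for each $i \in V \setminus \{\max V\}$, the pair $\{i,i+1\}$.

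Specialising this to the pair $\{i,i+1\}$, there are exactly two candidate arrows: the map $K\ang{i+1}\to K\ang{i}$ induced by $\alpha_i$, which when nonzero is drawn $i \leftarrow i+1$, and the map $K\ang{i}\to K\ang{i+1}$ induced by $\beta_{i+1}$, which when nonzero is drawn $i \to i+1$. The one point requiring care is that the drawn arrow follows the direction of the induced \emph{linear map}, which is opposite to that of the quiver arrow inducing it; I would state this explicitly to avoid any direction confusion. Now Theorem~\ref{Thm_brick_A}(c) evaluates both candidates at once: if $i+1 \notin R$ then $\alpha_i \ang{i+1}=\ang{i}\ne 0$ while $\beta_{i+1}\ang{i}=0$, and if $i+1 \in R$ then $\alpha_i \ang{i+1}=0$ while $\beta_{i+1}\ang{i}=\ang{i+1}\ne 0$. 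In either case exactly one of the two candidate maps is nonzero, giving exactly one arrow between $i$ and $i+1$, oriented $i \leftarrow i+1$ when $i+1 \notin R$ and $i \to i+1$ when $i+1 \in R$, as claimed.

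There is essentially no genuine obstacle here: the corollary is a transcription of Theorem~\ref{Thm_brick_A} into diagrammatic form, and the only substantive bookkeeping is (i) using part~(b) to confine all arrows to consecutive pairs, and (ii) tracking that the drawn arrow records the direction of the linear map rather than of the quiver arrow. I would close by combining these two points to reconstruct exactly the linear $\bbA$-type diagram of Example~\ref{Ex_brick_A}: no arrows between non-consecutive symbols, and precisely one arrow per consecutive pair with the stated orientation.
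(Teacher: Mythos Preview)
Your proposal is correct and mirrors exactly what the paper does: it states Corollary~\ref{Cor_brick_abbr_A} as a direct restatement of Theorem~\ref{Thm_brick_A} without a separate proof, and your argument simply unpacks parts (a)--(c) of that theorem to justify the transcription. Your attention to the direction convention (the drawn arrow follows the induced linear map, hence is opposite to the quiver arrow) is the only point worth spelling out, and you handle it correctly.
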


It is easy to see that there exists some path algebra $A$ of type $\bbA_n$ 
such that the brick $S(w)$ is an $A$-module,
and that any 2-cycle in $Q$ annihilates all the bricks in $\Pi$.
Let $I$ be the ideal of $\Pi$ generated by all the 2-cycles in $Q$, 
then \cite[Corollary 5.20]{DIRRT} implies that 
$\torf \Pi \cong \torf (\Pi/I)$ as lattices.
Thus, there is an isomorphism from $W$ to $\torf(\Pi/I)$ as lattices
by Propositions \ref{Prop_comm} and \ref{Prop_W_stitilt}.
The relationship between $W$ and $\Pi/I$ is investigated 
from another point of view in \cite[Section 4]{BCZ}.

In order to show our result, we restate Proposition \ref{Prop_rigid_A} as follows.

\begin{Lem}\label{Lem_rigid_A}
Let $w \in \jirr W$ be a join-irreducible element of type $l$.
\begin{itemize}
\item[(1)] 
Assume $(i,j) \in \Gamma[l]$. 
Then $p(i,j,l) \notin I(w)$ holds if and only if $i \ge w(j+1)$.
\item[(2)]
Define $\Gamma(w) \subset \Gamma[l]$ as the subset consisting of
the elements $(i,j) \in \Gamma[l]$ with $p(i,j,l) \notin I(w)$.
Then the set $\{p(i,j,l) \mid (i,j) \in \Gamma(w)\}$ induces a $K$-basis of $J(w)$.
\end{itemize}
\end{Lem}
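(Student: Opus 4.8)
The plan is to deduce both parts directly from Proposition \ref{Prop_rigid_A}, treating the lemma as a translation of its combinatorial ``keep/delete'' rule into membership in the ideal $I(w)$. First I would record the basic identifications. Since $p(i,j,l)=p(i,j,l)e_l$ ends at the vertex $l$, it lies in $\Pi e_l$; moreover $I(w)\cap\Pi e_l=I(w)e_l$, because any $x\in I(w)\cap\Pi e_l$ satisfies $x=xe_l\in I(w)e_l$, and the reverse inclusion is clear. Hence $J(w)=(\Pi/I(w))e_l$ is canonically isomorphic to $\Pi e_l/I(w)e_l$, with structure map the natural surjection $\pi\colon\Pi e_l\to J(w)$ of kernel $I(w)e_l$. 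In particular, for $(i,j)\in\Gamma[l]$ the condition $p(i,j,l)\in I(w)$ is equivalent to $\pi(p(i,j,l))=0$.

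Next I would invoke Lemma \ref{Lem_basis_A}, which says that $\{p(i,j,l)\mid(i,j)\in\Gamma[l]\}$ is a $K$-basis of $\Pi e_l$, displayed by the diagram (\ref{eq_pattern_A}) with these vectors as entries. Proposition \ref{Prop_rigid_A} then asserts that, under $\pi$, the entries with $i\ge w(j+1)$ survive and form a basis of $J(w)$, while the entries with $i<w(j+1)$ are deleted, i.e.\ sent to $0$. The consequence I want to extract is that $I(w)e_l=\ker\pi$ is the \emph{coordinate subspace} of $\Pi e_l$ spanned by exactly those basis vectors $p(i,j,l)$ with $i<w(j+1)$; this is the one point that must be read off with care, since it is what makes $I(w)$-membership decidable basis vector by basis vector rather than only up to linear combinations.

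With this in hand, part (1) is immediate: because the $p(i,j,l)$ are linearly independent, a single basis vector lies in the coordinate subspace $I(w)e_l$ if and only if it is one of its spanning vectors, i.e.\ if and only if $i<w(j+1)$; equivalently $p(i,j,l)\notin I(w)$ exactly when $i\ge w(j+1)$. Part (2) then follows by comparing definitions: by part (1) the set $\Gamma(w)=\{(i,j)\in\Gamma[l]\mid p(i,j,l)\notin I(w)\}$ coincides with $\{(i,j)\in\Gamma[l]\mid i\ge w(j+1)\}$, which is precisely the index set of the surviving entries, and Proposition \ref{Prop_rigid_A} already guarantees that their images form a basis of $J(w)$.

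The main thing to be careful about — the only content beyond bookkeeping — is the verification that $I(w)e_l$ really is a coordinate subspace for the basis $\{p(i,j,l)\}$, equivalently that the ``deletion'' in Proposition \ref{Prop_rigid_A} removes individual basis vectors rather than merely reducing the dimension. I expect this to be transparent from the fact that $\Pi e_l$ and $J(w)$ are presented by the same diagram (\ref{eq_pattern_A}) with arrows the identity maps, so that passing to $J(w)$ literally erases the deleted boxes; a dimension count, using that $\dim_K J(w)$ equals the number of surviving entries, then confirms that the surviving images are linearly independent and that no further collapsing occurs. I would also double-check the harmless identity $I(w)\cap\Pi e_l=I(w)e_l$ noted above, which is what lets me pass freely between $p(i,j,l)\in I(w)$ and $p(i,j,l)\in\ker\pi$.
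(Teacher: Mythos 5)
Your proposal is correct and matches the paper's approach: the paper gives no separate proof, introducing Lemma \ref{Lem_rigid_A} with the words ``we restate Proposition \ref{Prop_rigid_A} as follows,'' i.e.\ it is exactly the translation of the keep/delete rule into $I(w)$-membership that you carry out. Your explicit bookkeeping (the identity $I(w)\cap\Pi e_l=I(w)e_l$ and the observation that $I(w)e_l$ is the coordinate subspace spanned by the deleted basis vectors of Lemma \ref{Lem_basis_A}) just spells out what the paper leaves implicit.
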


To express $S(w)$, we define the following set for $k \ge 1$:
\begin{align*}
\Gamma_k(w):=\{ (i,j) \in \Gamma(w) \mid 
\min \{x \ge 1 \mid (i,j+x) \notin \Gamma(w) \} = k \}.
\end{align*}
It is easy to see that $\Gamma(w)$ is the disjoint union of the $\Gamma_k(w)$'s.
Moreover, we extend the definition of the path $p(i,j,l)$ to
$\tilde{\Gamma}[l]:=\{ (i,j) \in Q_0 \times \Z \mid i \le j \ge l \}$
by setting $p(i,j,l):=0$ if $j \ge n+1$,
and define $w(k):=k$ if $k \ge n+2$.

\begin{Lem}\label{Lem_soc_A}
Let $w \in \jirr W$ be a join-irreducible element of type $l$.
Consider the endomorphism $f:=(\cdot p(l,l+1,l)) \colon J(w) \to J(w)$.
\begin{itemize}
\item[(1)]
We have $S(w)=\Ker f$.
\item[(2)]
Let $(i,j) \in \Gamma(w)$. 
Then $p(i,j,l) \in \Ker f$ holds if and only if $(i,j) \in \Gamma_1(w)$.
\item[(3)]
The set $\{p(i,j,l) \mid (i,j) \in \Gamma_1(w) \}$ induces a $K$-basis of $\Ker f$.
\end{itemize}
\end{Lem}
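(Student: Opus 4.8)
The plan is to pin down the endomorphism algebra $\End_\Pi(J(w))$ and show that the single map $f$ generates its radical, so that the socle appearing in Proposition~\ref{Prop_jirrW_brick} collapses to $\Ker f$; parts (2) and (3) then describe this kernel combinatorially. Everything is driven by how $f$ acts on the basis $\{p(i,j,l)\mid(i,j)\in\Gamma(w)\}$ of $J(w)$ furnished by Lemma~\ref{Lem_rigid_A}.

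The technical core, which I would prove first, is the shift formula $p(i,j,l)\cdot p(l,l+1,l)=p(i,j+1,l)$ in $\Pi e_l$ for every $(i,j)\in\Gamma[l]$, where the right-hand side is read as $0$ whenever $(i,j+1)\notin\Gamma[l]$ or $j+1\ge n+1$. Since $p(l,l+1,l)=\alpha_l\beta_{l+1}$, one proves this by repeatedly applying the defining relations in the form $\beta_{k+1}\alpha_k=\alpha_{k+1}\beta_{k+2}$ to push the appended cycle upward along $p(i,j,l)$ until it merges into the $\alpha$-part and produces $p(i,j+1,l)$; the boundary relation $\beta_n\alpha_{n-1}=0$ yields the vanishing at $j=n$, and the small-$i$ vanishing is exactly the statement $(i,j+1)\notin\Gamma[l]$ of Lemma~\ref{Lem_basis_A}. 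Reducing modulo $I(w)$ gives $f(p(i,j,l))=p(i,j+1,l)$ inside $J(w)$, now $0$ precisely when $(i,j+1)\notin\Gamma(w)$. I expect this index bookkeeping to be the main obstacle, though it is routine once the propagation pattern is identified.

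Granting the shift formula, (2) and (3) are immediate. A basis vector $p(i,j,l)$ lies in $\Ker f$ iff $p(i,j+1,l)=0$ in $J(w)$, i.e. $(i,j+1)\notin\Gamma(w)$, which is exactly the defining condition $(i,j)\in\Gamma_1(w)$; and since $f$ carries each basis vector either to another basis vector or to $0$, with distinct basis vectors having distinct nonzero images, $\Ker f$ is spanned precisely by $\{p(i,j,l)\mid(i,j)\in\Gamma_1(w)\}$.

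For (1) I would exploit that $J(w)=\Pi e_l/I(w)e_l$ is cyclic, generated by $\bar e_l=p(l,l,l)$, so each $g\in\End_\Pi(J(w))$ is determined by $g(\bar e_l)\in e_lJ(w)$. The space $e_lJ(w)$ has basis the vectors $p(l,j,l)$ with $(l,j)\in\Gamma(w)$, and the shift formula identifies these with the nonzero powers $f^{\,j-l}(\bar e_l)$; hence $g(\bar e_l)$, and therefore $g$ itself, is a polynomial in $f$. Thus $\End_\Pi(J(w))=K[f]$, and as $f$ is nilpotent this is $K[t]/(t^N)$ for some $N$, a local algebra with radical $(f)$. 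Consequently $S(w)=\soc_{\End_\Pi(J(w))}J(w)=\{x\in J(w)\mid fx=0\}=\Ker f$, which is (1). The only point needing care is that $e_lJ(w)$ really is spanned by the claimed $p(l,j,l)$, which is read off from the descriptions of $\Gamma[l]$ and $\Gamma(w)$.
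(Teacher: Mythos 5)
Your proposal is correct and follows essentially the same route as the paper: the shift formula $p(i,j,l)\,p(l,l+1,l)=p(i,j+1,l)$ drives parts (2) and (3) exactly as in the paper's proof, and your part (1) rests on the same fact the paper uses, namely that every non-isomorphism $g\in\End_\Pi(J(w))$ factors as $g=hf$ (your $g=\bigl(\sum_k c_k f^{k-1}\bigr)f$). The only difference is that where the paper declares this factorization ``clear,'' you substantiate it by computing $\End_\Pi(J(w))=K[f]\cong K[t]/(t^N)$ from the cyclic generator $\bar e_l$ and the basis of $e_l J(w)$ --- a welcome explicitation, not a different method.
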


\begin{proof}
(1)
For every nonisomorphic endomorphism $g \colon J(w) \to J(w)$,
it is clear that there exists $h \colon J(w) \to J(w)$ such that $g=hf$.
Thus, $S(w)=\Ker f$ holds.

(2)
As an element in $\Pi$, we have 
$f(p(i,j,l))=p(i,j,l) p(l,l+1,l)=p(i,j+1,l)$.
Then Lemma \ref{Lem_rigid_A} implies the assertion.

(3)
From Lemma \ref{Lem_rigid_A},
recall that the set $\{p(i,j,l) \mid (i,j) \in \Gamma(w)\}$ induces a basis of $J(w)$,
so this set is linearly independent in $J(w)$.

Thus, the set $\{ p(i,j,l) \mid (i,j) \in \Gamma_1(w)\}$ 
is linearly independent in $J(w)$, and is contained in $\Ker f$ by (2).

On the other hand, in the proof of (2), we got $f(p(i,j,l))=p(i,j+1,l)$. 
If $(i,j) \in \Gamma(w) \setminus \Gamma_1(w)$,
then $(i,j+1,l) \in \Gamma(w)$.
The set
$\{ p(i,j+1,l) \mid (i,j) \in \Gamma(w) \setminus \Gamma_1(w)\}$
is linearly independent in $J(w)$.
Thus, the set $\{ p(i,j,l) \mid (i,j) \in \Gamma_1(w)\}$ 
generates $\Ker f$ as a $K$-vector space in $J(w)$.

Therefore, we conclude that the set $\{ p(i,j,l) \mid (i,j) \in \Gamma_1(w)\}$
induces a $K$-basis of $\Ker f$.
\end{proof}

\begin{Lem}\label{Lem_vertex_A}
Let $w \in \jirr W$ be a join-irreducible element of type $l$,
and define $V$ as in Theorem \ref{Thm_brick_A}.
Then, there exists a bijection $\Gamma_1(w) \to V$ given by $(i,j) \mapsto i$.
\end{Lem}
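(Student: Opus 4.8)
The plan is to analyze, for each fixed value $i$, the ``diagonal'' $D_i := \{ j \mid (i,j) \in \Gamma(w) \}$ and to prove two things: that each $D_i$ is a (possibly empty) interval of integers, and that $D_i \ne \emptyset$ holds exactly when $i \in V$. Granting these, the lemma follows at once. Indeed, by the definition of $\Gamma_1(w)$, a pair $(i,j) \in \Gamma(w)$ lies in $\Gamma_1(w)$ precisely when $(i,j+1) \notin \Gamma(w)$; once $D_i$ is known to be an interval, this says exactly that $j = \max D_i$. Hence each $i$ with $D_i \ne \emptyset$ occurs in exactly one pair of $\Gamma_1(w)$, and $(i,j) \mapsto i$ restricts to a bijection from $\Gamma_1(w)$ onto $\{ i \mid D_i \ne \emptyset \}$.

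First I would establish the contiguity of $D_i$. By Lemma \ref{Lem_rigid_A}(1), $(i,j) \in \Gamma(w)$ iff $(i,j) \in \Gamma[l]$ and $i \ge w(j+1)$, and the defining inequalities of $\Gamma[l]$ force $j \ge \max(i,l) \ge l$, so the index $j+1$ always lies in $[l+1,n+1]$. Since $w$ has its unique descent at $l$, it is strictly increasing on $[l+1,n+1]$; thus $w(j+1)$ is strictly increasing in $j$ over the admissible range $j \in [\max(i,l), \min(i+l-1,n)]$. Consequently the inequality $i \ge w(j+1)$ cuts out an initial segment of that range, so $D_i$ is an interval (whose least element, when nonempty, is $\max(i,l)$). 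This already gives the description of $\Gamma_1(w)$ above, hence injectivity.

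It remains to identify $\{ i \mid D_i \ne \emptyset \}$ with $V = [b, a-1]$, which is the crux. Writing $F := w([1,l])$ and $R := w([l+1,n+1])$ for the two increasing runs, so that $F \sqcup R = [1,n+1]$, $\max F = a$, and $\min R = b$, I would first record the elementary bounds $a \ge l+1$ and $b \le l$: the first holds because $F \cup \{b\} \subseteq [1,a]$ has $l+1$ elements; the second because $[1,b-1] \subseteq F$, while $b = l+1$ would force $F = [1,l]$ and $a = l$, contradicting the first bound. Since $D_i$ is nonempty iff its least admissible index satisfies the inequality, $D_i \ne \emptyset$ is equivalent to $i \ge w(\max(i,l)+1)$. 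For $i \le l$ this reads $i \ge w(l+1) = b$, giving the interval $[b,l]$. For $i \ge l+1$ it reads $i \ge w(i+1)$, which I would convert into a statement about $a$ via the counting identity
\begin{align*}
w(k) < k \iff |F \cap [1,k-1]| \le l-1 \iff a \ge k \qquad (k \in [l+1,n+1]),
\end{align*}
valid because $w(k)$ is the $(k-l)$-th smallest element of $R$; applied with $k = i+1$ it shows that $i \ge w(i+1)$ is equivalent to $i \le a-1$, giving the interval $[l+1,a-1]$. Using $b \le l \le a-1$, the union of the two intervals is exactly $[b,a-1] = V$, which proves surjectivity and that the map takes values in $V$. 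The main obstacle is this last identification, and in particular the counting identity reducing the admissibility inequality $i \ge w(i+1)$ to the single parameter $a$; the contiguity step, by contrast, is a routine consequence of the single-descent shape of $w$.
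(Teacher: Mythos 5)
Your proposal is correct and takes essentially the same route as the paper: your counting identity reducing $i \ge w(i+1)$ to $i \le a-1$ is exactly the paper's opening remark that for $k \in [l+1,n+1]$ one has $w(k)=k$ if and only if $k>a$, and your interval description of the diagonals $D_i$ (so that $\Gamma_1(w)$ consists of the pairs $(i,\max D_i)$) is precisely the paper's ``clearly injective'' step together with its take-the-maximal-$j$ surjectivity argument. The only difference is expository: you make explicit the bounds $b \le l \le a-1$ and the nonemptiness of $D_i$ for $i \in V$, which the paper leaves to Lemma \ref{Lem_rigid_A} and the reader.
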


\begin{proof}
In the proof, we fully use the notation in Theorem \ref{Thm_brick_A}.

We first show the well-definedness of the map $\Gamma_1(w) \to V$.

We remark that, 
for $k \in [l+1,n+1]$, the condition $w(k)=k$ holds if and only if $k > a$,
and that this condition is also equivalent to $w(k)>a$.
Lemma \ref{Lem_rigid_A} and $(i,j) \in \Gamma(w)$ give $j \ge i \ge w(j+1)$.
Thus, $w(j+1) \le j$ holds, so we get $j+1 \le a$, or equivalently, $j<a$.
Therefore, we obtain $i \le j < a$.

On the other hand, Lemma \ref{Lem_rigid_A} and $(i,j) \in \Gamma(w)$ 
also imply $j \ge i \ge w(j+1) \ge w(l+1)=b$.

These imply that the map $\Gamma_1(w) \to V$ is well-defined.
It is clearly injective by Lemma \ref{Lem_rigid_A}.

We next prove that the map $\Gamma_1(w) \to V$ is also surjective.
Let $i \in V$.
Then $i<a$ holds, so there exists some $j \in [l,n]$ 
such that $(i,j) \in \Gamma(w)$ by Lemma \ref{Lem_rigid_A}.
Take the maximum $j$, then it is easy to obtain $(i,j) \in \Gamma_1(w)$
from Lemma \ref{Lem_rigid_A}.

Hence, the map $\Gamma_1(w) \to V$ is also surjective, and thus, bijective.
\end{proof}

Now, we show Theorem \ref{Thm_brick_A}.

\begin{proof}
By Lemma \ref{Lem_soc_A}, we can define a map $\rho \colon V \to Q_0$ as follows:
$\rho(i)$ is the unique element $j \in Q_0$ 
such that $(i,j) \in \Gamma_1(w)$.
Set $\ang{i}:=p(i,\rho(i),l)$ for each $i \in V$.
It suffices to show that $(\ang{i})_{i \in V}$ satisfies the properties (a), (b), and (c),
since the three properties are enough to define a $\Pi$-module.

First, $(\ang{i})_{i \in V}$ is a $K$-basis of $S(w)$ 
by Lemma \ref{Lem_vertex_A}, 
and $K\ang{i}$ is clearly a subspace of $e_i S(w)$.
Thus, the property (a) holds, and (b) follows from (a).

We begin the proof of (c).

Let $i \in V \setminus \{ \max V \}$ and set $j:=\rho(i+1)$.
Then,
\begin{align*}
\alpha_i \ang{i+1}=\alpha_i p(i+1,j,l)=p(i,j,l)=\begin{cases}
\ang{i} & (\text{if $i+1 \notin R$, since $(i,j) \in \Gamma_1(w)$}) \\
0    & (\text{if $i+1 \in R$, since $(i,j) \notin \Gamma(w)$})
\end{cases}.
\end{align*}

Next, let $i \in V \setminus \{ \max V \}$ and set $j:=\rho(i)$.
Then,
\begin{align*}
\beta_{i+1} \ang{i}&=\beta_{i+1} p(i,j,l)=p(i+1,j+1,l)\\
&=\begin{cases}
0      & (\text{if $i+1 \in R$, since $(i+1,j+1) \notin \Gamma(w)$}) \\
\ang{i+1} & (\text{if $i+1 \notin R$, since $(i+1,j+1) \in \Gamma_1(w)$})
\end{cases}.
\end{align*}

From these, we have the property (c).
\end{proof}

\subsection{Type $\bbD_n$}\label{subsec_brick_D}

We state the result and give some examples first.
Recall $\alpha_1=\alpha_1^+ + \alpha_1^-$ and $\beta_2=\beta_2^+ + \beta_2^-$.

\begin{Thm}\label{Thm_brick_D}
Let $w \in \jirr W$ be a join-irreducible element of type $l$.
Set  
\begin{align*}
& R:=w([|l|+1,n]), \quad a:=w(l), \quad b=w(|l|+1), \\
& r:=\max \{ k \ge 0 \mid [1,k] \subset \pm R \}, \quad
c:=\begin{cases}
w^{-1}(|w(1)|)& (r \ge 1) \\
1 & (r=0)
\end{cases}, \\
& (V_-,V_+) :=
\begin{cases}
(\emptyset, [b,a-1]) & (b \ge 2) \\
(\emptyset, \{c\} \cup [2,a-1]) & (b = \pm 1) \\
([b+1,-2] \cup \{-c\}, \{c\} \cup [2,a-1]) & (b \le -2)
\end{cases}, \quad
V := V_+ \amalg V_-.
\end{align*}
Then the brick $S(w)$ is isomorphic to the $\Pi$-module $S'(w)$ defined as follows.
\begin{itemize}
\item[(a)]
The brick $S'(w)$ has a $K$-basis $(\ang{i})_{i \in V}$,
and if $j=|i| \ge 2$ or $j = i \in \{\pm 1\}$, then $e_j \ang{i}=\ang{i}$;
otherwise $e_j \ang{i}=0$.
\item[(b)]
Let $i \in V$.
If $j \ne |i|-1$, then $\alpha_j \ang{i}=0$.
If $j \ne |i|+1$, then $\beta_j \ang{i}=0$.
\item[(c)] 
The remaining actions of arrows are given as follows,
where we set $\ang{j}:=0$ if $j \notin V$.
\begin{itemize}
\item[(i)]
For $i \in V_+ \setminus \{\max V_+\}$, 
we have $\alpha_{|i|} \ang{|i|+1}=\xi_i^+ \ang{i} + \xi_i^- \ang{-i}$, where
\begin{align*}
\xi_i^+ := \begin{cases}
1 & (|i|+1 \notin R) \\
0 & (|i|+1 \in R)
\end{cases}, \quad
\xi_i^- := \begin{cases}
1 & (|i|=1, \ r=0, \ 2 \notin R) \\
0 & (\textup{otherwise})
\end{cases}.
\end{align*}
\item[(ii)]
For $i \in V_+ \setminus \{\max V_+\}$, 
we have $\beta_{|i|+1} \ang{i}=\eta_i^+ \ang{|i|+1} + \eta_i^- \ang{-(|i|+1)}$, where
\begin{align*}
\eta_i^+ := \begin{cases}
1 & (|i|+1 \in R) \\
0 & (|i|+1 \notin R)
\end{cases}, \quad
\eta_i^- := \begin{cases}
-1 & (|i|=1, \ r=0, \ -2 \notin R) \\
0 & (\textup{otherwise})
\end{cases}.
\end{align*}
\item[(iii)]
For $i \in V_- \setminus \{\min V_-\}$, 
we have $\alpha_{|i|} \ang{-(|i|+1)}=\xi_i^+ \ang{-i} + \xi_i^- \ang{i}$, where
\begin{align*}
\xi_i^+ := \begin{cases}
1 & (|i| \le r, \ |i|+1 \in R) \\
0 & (\textup{otherwise})
\end{cases}, \quad
\xi_i^- := \begin{cases}
1 & (-(|i|+1) \in R) \\
0 & (-(|i|+1) \notin R)
\end{cases}.
\end{align*}
\item[(iv)]
For $i \in V_-$, 
we have $\beta_{|i|+1} \ang{i}=\eta_i^+ \ang{|i|+1} + \eta_i^- \ang{-(|i|+1)}$, 
where
\begin{align*}
\eta_i^+ := \begin{cases}
1 & (|i| \le r, \ |i|+1 \notin R) \\
0 & (\textup{otherwise})
\end{cases}, \quad
\eta_i^- := \begin{cases}
1  & (|i| \ne r, \ -(|i|+1) \notin R) \\
-1 & (|i|=r) \\
0  & (\textup{otherwise})
\end{cases}.
\end{align*}
%and $\ang{-(|i|+1)}:=0$ if $i=\min V_-$ (in this case, $\eta_i^-=0$).
\end{itemize}
\end{itemize}
\end{Thm}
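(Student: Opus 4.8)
The plan is to adapt the proof of Theorem \ref{Thm_brick_A} to type $\bbD_n$, treating the two cases $l=\pm 1$ and $l \ge 2$ separately, since $J(w)$ is then described by the different diagrams (\ref{eq_pattern_D_1}) and (\ref{eq_pattern_D_2}). As in Lemma \ref{Lem_rigid_A}, I would first restate Propositions \ref{Prop_rigid_D_1} and \ref{Prop_rigid_D_2} so as to single out the set $\Gamma(w) \subset \Gamma[l]$ of those pairs $(i,j)$ whose basis path of $\Pi e_l$ survives in $J(w)$, that is, does not lie in $I(w)$; by Lemmas \ref{Lem_basis_D_1} and \ref{Lem_basis_D_2} the corresponding paths then form a $K$-basis of $J(w)$.

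Since $\End_\Pi(J(w))$ is local (as $J(w)$ is indecomposable) and $S(w)=\soc_{\End_\Pi(J(w))}J(w)$ by Proposition \ref{Prop_jirrW_brick}, I would next identify $S(w)$ with the kernel $\Ker f$ of a single non-isomorphism $f$, exactly as in Lemma \ref{Lem_soc_A}(1). Here $f$ is right multiplication by the shortest closed path at the vertex $l$: for $l \ge 2$ this is the $2$-cycle $\alpha_l\beta_{l+1}$, whereas for $l=\pm 1$ the naive $2$-cycle vanishes because $\alpha_1^\pm\beta_2^\pm=0$, so one must instead use the shortest nonzero closed path running through the fork $1 \to 2 \to 3 \to 2 \to 1$. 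On basis elements $f$ increments the turning index $j$ (up to the sign carried by the labelled $-1$ arrows of (\ref{eq_pattern_D_2})), so, in analogy with Lemma \ref{Lem_soc_A}(2)--(3), the kernel basis is indexed by the subset $\Gamma_1(w)$ of pairs $(i,j) \in \Gamma(w)$ for which $(i,j+1) \notin \Gamma(w)$.

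The crucial step is the analogue of Lemma \ref{Lem_vertex_A}: constructing a bijection $\Gamma_1(w) \to V=V_+ \amalg V_-$ and reading off $a,b,r,c$ from it. When $b \ge 2$ the negative part $V_-$ is empty and the computation reduces to the type $\bbA_n$ analysis. When $b \le -2$ both signs of the first path-coordinate occur, and the two families of paths $p_{+1}(i,j,l)$ and $p_{-1}(i,j,l)$ from (\ref{eq_pattern_D_2}) must be matched respectively to $V_+$ and $V_-$; the integer $r=\max\{k \ge 0 \mid [1,k]\subset \pm R\}$ encodes how far down the fork the two branches remain identified in $S(w)$, and $c=w^{-1}(|w(1)|)$ pins down the single column where that identification takes place, so that $\ang{c}$ and $\ang{-c}$ arise from one path through the fork. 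I would then set $\ang{i}:=p_\epsilon(i,\rho(i),l)$, with $\epsilon$ chosen according to the branch and to Proposition \ref{Prop_rigid_D_2}, and check properties (a) and (b) at once as in type $\bbA_n$.

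Finally, the arrow actions (c)(i)--(iv) are verified by direct path composition---essentially $\alpha_{|i|}\cdot p_\epsilon(|i|+1,j,l)=p_\epsilon(|i|,j,l)$ and $\beta_{|i|+1}\cdot p_\epsilon(i,j,l)=p_\epsilon(i+1,j+1,l)$---combined with the fork relation $\alpha_2\beta_3=\beta_2^+\alpha_1^+ + \beta_2^-\alpha_1^-$ and the sign-$(-1)$ arrows of (\ref{eq_pattern_D_2}); these are exactly what produce the coefficients $\xi_i^\pm,\eta_i^\pm \in \{0,\pm 1\}$. I expect the main obstacle to be precisely this sign-and-folding bookkeeping around the fork: one must separate the subcases $|i|<r$, $|i|=r$, $|i|>r$ together with $r=0$ versus $r \ge 1$, and verify that the relation at vertex $2$ forces the coefficient $-1$ appearing in $\eta_i^-$ when $|i|=r$, and governs the additional arrows of (c)(iv) when $r=0$. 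Checking that every subcase matches the stated formulas is the delicate heart of the proof.
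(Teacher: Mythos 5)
Your skeleton (two cases by $l=\pm1$ versus $l\ge 2$, restating Propositions \ref{Prop_rigid_D_1} and \ref{Prop_rigid_D_2} to get $\Gamma(w)$, explicit kernel computations, a bijection onto $V$, then path-by-path verification of (c) with the fork relation and sign bookkeeping) matches the paper, but the decisive step---identifying the socle---has a genuine gap. For $l\ge 2$ you claim $S(w)=\Ker f$ for the \emph{single} endomorphism $f=(\cdot\,\alpha_l\beta_{l+1})$, justified by the locality of $\End_\Pi(J(w))$. Locality does not make the radical principal, and here it is not: $e_l\Pi e_l$ has two radical generators, the short cycle $p_\epsilon(l,l+1,l)$ and the long cycle $p_\epsilon(-l,l,l)$ through the fork, so the paper proves $S(w)=\Ker f_1\cap\Ker f_2$ (Lemma \ref{Lem_soc_D_2}(1)) and then cuts $\Gamma_1(w)$ (which indexes $\Ker f_1$) down to $\Lambda_1(w)=\{(i,j)\in\Gamma_1(w)\mid i\le a-1\}$; the type-$\bbA_n$ argument that forced $i<a$ automatically has no analogue here. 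Concretely, for $w=(-3,4,-5,1,2)\in\jirr W(\bbD_5)$ of type $l=2$ one has $a=4$ and $(4,4)\in\Gamma_1(w)$, so $\dim_K\Ker f_1=8$ while $\dim_K S(w)=\#\Lambda_1(w)=\#V=7$ (dimension vector $(1,1,2,2,1)$, a root); your claimed bijection $\Gamma_1(w)\to V$ fails, and the bound $i\le a-1$ defining $V_+$ cannot be seen from $f_1$ alone---it is exactly what $f_2$ detects.

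There is a second slip of the same kind in the case $l=\pm1$: your $f=(\cdot\,p(l,3,l))$ is the right map, but it shifts the turning index by \emph{two}, $f(p(i,j,l))=p(i,|j|+2,l)$, and the column-continuation in diagram (\ref{eq_pattern_D_1}) alternates the sign of $i$. Hence $\Ker f$ is indexed not by your $\{(i,j)\mid(i,j+1)\notin\Gamma(w)\}$ but by $\Gamma_1(w)\amalg\Gamma_2(w)$ with the twisted condition on $((-1)^x i,|j|+x)$ (Lemma \ref{Lem_soc_D_1}), and it is precisely the $\Gamma_2$-layer that produces $V_-$; with your indexing the negative half of the brick disappears. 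Finally, for $l\ge 2$ the passage between the two bases $p_\epsilon$ and $p_{-\epsilon}$---relation (\ref{eq_path_lin_comb}), the sets $\Omega(w)$ and $\Lambda_2(w)$, and the replacement of $p_{-\epsilon}(i,j,l)$ by $p_\epsilon(i,j,l)$ exactly when $|i|>r$ (Lemma \ref{Lem_soc_D_2}(5),(6))---is where $r$ enters the \emph{basis} of $S(w)$, not merely the arrow formulas; your proposal defers all of this to ``choose $\epsilon$ according to the branch,'' which is not yet an argument and is where the coefficients $\pm1$ in (c)(iii)--(iv) are actually decided.
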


The proof of the theorem given in later depends on 
whether the type $l$ of the join-irreducible element $w \in \jirr W$ is $\pm 1$ or not,
because the description of the indecomposable $\tau^{-1}$-rigid module $J(w)$ does so. 
The following examples show the difference of the calculation
of the brick $S(w)$ in these two cases.

\begin{Ex}\label{Ex_brick_D_1}
Let $n:=9$, $w:=(9,-7,-6,-4,-1,2,3,5,8)$.
Then we have $l=1$, $a=9$, $b=-7$, $r=8$, and $c=-1$.
Thus, $(V_-,V_+)=([-6,-2] \cup \{1\},\{-1\} \cup [2,8])$, 
and the desired brick $S(w)$ is written as
\begin{align*}
\begin{xy}
(  0, 8) *+{\ang{ 1}} = "1-",
( 16, 8) *+{\textcolor{red}{\ang{-2}}} = "2-",
( 32, 8) *+{\textcolor{red}{\ang{-3}}} = "3-",
( 48, 8) *+{\textcolor{red}{\ang{-4}}} = "4-",
( 64, 8) *+{\textcolor{red}{\ang{-5}}} = "5-",
( 80, 8) *+{\textcolor{red}{\ang{-6}}} = "6-",
(  0,-8) *+{\ang{-1}} = "1+",
( 16,-8) *+{\ang{ 2}} = "2+",
( 32,-8) *+{\ang{ 3}} = "3+",
( 48,-8) *+{\ang{ 4}} = "4+",
( 64,-8) *+{\ang{ 5}} = "5+",
( 80,-8) *+{\ang{ 6}} = "6+",
( 96,-8) *+{\ang{ 7}} = "7+",
(112,-8) *+{\ang{ 8}} = "8+",
\ar^{ \beta_2^-} "1+";"2+"
\ar^{ \beta_3}   "2+";"3+"
\ar_{\alpha_3}   "4+";"3+"
\ar^{ \beta_5}   "4+";"5+"
\ar_{\alpha_5}   "6+";"5+"
\ar_{\alpha_6}   "7+";"6+"
\ar^{ \beta_8}   "7+";"8+"
\ar^{ \beta_2^+} "1-";"2-" \ar_{\alpha_1^-} "2-";"1+"
\ar^{ \beta_3}   "2-";"3-" \ar_{\alpha_2  } "3-";"2+"
\ar_{\alpha_3}   "4-";"3-" \ar^{ \beta_4  } "3-";"4+"
\ar^{ \beta_5}   "4-";"5-" \ar_{\alpha_4  } "5-";"4+"
\ar_{\alpha_5}   "6-";"5-" \ar^{ \beta_6  } "5-";"6+"
                           \ar^{ \beta_7  } "6-";"7+"
\end{xy}.
\end{align*}

By omitting the labels of the arrows,
the brick $S(w)$ can be written in the following abbreviated way,
which is enough to determine $S(w)$ up to isomorphisms.

\begin{align}\label{eq_abbr_D_1}
\begin{xy}
(  0, 8) *+{ 1} = "1-",
( 16, 8) *+{\rmtwo} = "2-",
( 32, 8) *+{\rmthr} = "3-",
( 48, 8) *+{\rmfou} = "4-",
( 64, 8) *+{\rmfiv} = "5-",
( 80, 8) *+{\rmsix} = "6-",
(  0,-8) *+{-1} = "1+",
( 16,-8) *+{ 2} = "2+",
( 32,-8) *+{ 3} = "3+",
( 48,-8) *+{ 4} = "4+",
( 64,-8) *+{ 5} = "5+",
( 80,-8) *+{ 6} = "6+",
( 96,-8) *+{ 7} = "7+",
(112,-8) *+{ 8} = "8+",
\ar "1+";"2+"
\ar "2+";"3+"
\ar "4+";"3+"
\ar "4+";"5+"
\ar "6+";"5+"
\ar "7+";"6+"
\ar "7+";"8+"
\ar "1-";"2-" \ar "2-";"1+"
\ar "2-";"3-" \ar "3-";"2+"
\ar "4-";"3-" \ar "3-";"4+"
\ar "4-";"5-" \ar "5-";"4+"
\ar "6-";"5-" \ar "5-";"6+"
              \ar "6-";"7+"
\end{xy}
\end{align} 

We use the notation as (\ref{eq_Young_D_1}),
then by Proposition \ref{Prop_rigid_D_1},
the module $J(w)$ and the ``position'' of a submodule $S(w)$ in $J(w)$ are 
described as follows:
\begin{align*}
J(w)=\begin{ytableau} 
1  \\
2  &  -1 \\
3  &  2  &  1  \\
4  &  3  &  2  &  -1 \\
5  &  4  &  3  &  2  \\
6  &  5  &  4  &  3  \\
7  &  6  &  5  \\
8  \\  
\end{ytableau}, \quad
S(w)=\begin{ytableau} 
\none \\
\none \\
\none &  \none &  1     \\
\none &  \none &  \rtwo &  -1    \\
\none &  \rfou &  \rthr &  2     \\
\rsix &  \rfiv &  4     &  3     \\
7     &  6     &  5     \\
8     \\
\end{ytableau}.
\end{align*}

In the figure for $S(w)$, 
every square $\begin{ytableau}\textcolor{red}{i}\end{ytableau}$ with a red letter 
denotes $K\ang{-i}$, which is a subspace of $e_i S(w)$.
There are five such squares 
$\begin{ytableau}\rtwo\end{ytableau}, \begin{ytableau}\rthr\end{ytableau}, 
\begin{ytableau}\rfou\end{ytableau}, \begin{ytableau}\rfiv\end{ytableau}, 
\begin{ytableau}\rsix\end{ytableau}$.
Every other square $\begin{ytableau}i\end{ytableau}$ denotes $K\ang{i}$, 
and it is a subspace of $e_iS(w)$.
\end{Ex}

\begin{Ex}\label{Ex_brick_D_2}
Let $n:=9$, $w:=(-6,9,-7,-4,-1,2,3,5,8)$. 
Then we have $l=2$, $a=9$, $b=-7$, $r=5$, and $c=-1$.
Thus, $(V_-,V_+)=([-6,-2] \cup \{1\},\{-1\} \cup [2,8])$, 
and the desired brick $S(w)$ is written as 
\begin{align*}
\begin{xy}
(  0, 8) *+{\ang{ 1}} = "1-",
( 16, 8) *+{\textcolor{blue}{\ang{-2}}} = "2-",
( 32, 8) *+{\textcolor{blue}{\ang{-3}}} = "3-",
( 48, 8) *+{\textcolor{blue}{\ang{-4}}} = "4-",
( 64, 8) *+{\textcolor{blue}{\ang{-5}}} = "5-",
( 80, 8) *+{\ang{-6}} = "6-",
(  0,-8) *+{\ang{-1}} = "1+",
( 16,-8) *+{\ang{ 2}} = "2+",
( 32,-8) *+{\ang{ 3}} = "3+",
( 48,-8) *+{\ang{ 4}} = "4+",
( 64,-8) *+{\ang{ 5}} = "5+",
( 80,-8) *+{\ang{ 6}} = "6+",
( 96,-8) *+{\ang{ 7}} = "7+",
(112,-8) *+{\ang{ 8}} = "8+",
\ar^{ \beta_2^-} "1+";"2+"
\ar^{ \beta_3  } "2+";"3+"
\ar_{\alpha_3  } "4+";"3+"
\ar^{ \beta_5  } "4+";"5+"
\ar_{\alpha_5  } "6+";"5+"
\ar_{\alpha_6  } "7+";"6+"
\ar^{ \beta_8  } "7+";"8+"
\ar^{ \beta_2^+} "1-";"2-" \ar_{\alpha_1^-} "2-";"1+"
\ar^{ \beta_3  } "2-";"3-" \ar_{\alpha_2  } "3-";"2+"
\ar_{\alpha_3  } "4-";"3-" \ar^{ \beta_4  } "3-";"4+"
\ar^{ \beta_5  } "4-";"5-" \ar_{\alpha_4  } "5-";"4+"
\ar^{-\beta_6  } "5-";"6-" \ar^{ \beta_6  } "5-";"6+"
\end{xy}
\end{align*}

The brick $S(w)$ can be written in the following abbreviated way.

\begin{align}\label{eq_abbr_D_2}
\begin{xy}
(  0, 8) *+{ 1} = "1-",
( 16, 8) *+{\textcolor{blue}{-2}} = "2-",
( 32, 8) *+{\textcolor{blue}{-3}} = "3-",
( 48, 8) *+{\textcolor{blue}{-4}} = "4-",
( 64, 8) *+{\textcolor{blue}{-5}} = "5-",
( 80, 8) *+{-6} = "6-",
(  0,-8) *+{-1} = "1+",
( 16,-8) *+{ 2} = "2+",
( 32,-8) *+{ 3} = "3+",
( 48,-8) *+{ 4} = "4+",
( 64,-8) *+{ 5} = "5+",
( 80,-8) *+{ 6} = "6+",
( 96,-8) *+{ 7} = "7+",
(112,-8) *+{ 8} = "8+",
\ar "1+";"2+" \ar "1-";"2-" \ar "2-";"1+"
\ar "2+";"3+" \ar "2-";"3-" \ar "3-";"2+"
\ar "4+";"3+" \ar "4-";"3-" \ar "3-";"4+"
\ar "4+";"5+" \ar "4-";"5-" \ar "5-";"4+"
\ar "6+";"5+" \ar "5-";"6-" \ar "5-";"6+"
\ar "7+";"6+"
\ar "7+";"8+"
\end{xy}
\end{align} 

Now we use the notation as (\ref{eq_Young_D_2}),
then by Proposition \ref{Prop_rigid_D_2},
the module $J(w)$ and the ``position'' of a submodule $S(w)$ in $J(w)$ are 
described as follows:
\begin{align*}
J(w)=\begin{ytableau} 
2      & \pmone & -2     & -3     & \bmfou & \bmfiv & -6     \\
3      & 2      & \mpone & \bmtwo & \bmthr \\
4      & 3      & \btwo  & \xmone \\
5      & \bfou  & \bthr  & 2      \\
6      & \bfiv  & 4      & 3\\
7      & 6      & 5      \\
8
\end{ytableau}, \quad
S(w)=\begin{ytableau} 
\none  & \none  & \none  & \none  & \pfou  & \pfiv  & -6     \\
\none  & \none  & \xpone & \ptwo  & \pthr  \\
\none  & \none  & \ptwo  & \xmone \\
\none  & \pfou  & \pthr  & 2      \\
\none  & \pfiv  & 4      & 3\\
7      & 6      & 5      \\
8
\end{ytableau}.
\end{align*}

In the figure for $S(w)$, for each $i=2,3,4,5$, 
the two squares $\begin{ytableau}\textcolor{blue}{(i)}\end{ytableau}$  
together denote a certain one-dimensional subspace
of the two-dimensional vector space corresponding to the two squares 
$\begin{ytableau}\textcolor{blue}{i}\end{ytableau}$ 
and $\begin{ytableau}\textcolor{blue}{-i}\end{ytableau}$ in the figure for $J(w)$.
This one-dimensional vector space is actually $K\ang{-i}$, 
which is a subspace of $e_iS(w)$.
Every other square $\begin{ytableau}i\end{ytableau}$ 
in the figure for $S(w)$ denotes $K\ang{i}$,
which is a subspace of $e_{|i|}S(w)$ if $i \le -2$,
and of $e_iS(w)$ if $i \ge -1$.
\end{Ex}

We mainly use such abbreviated expressions of bricks 
as (\ref{eq_abbr_D_1}) and (\ref{eq_abbr_D_2}) in the rest.
Theorem \ref{Thm_brick_D} can be restated as follows by using the abbreviated expressions.

\begin{Cor}\label{Cor_brick_abbr_D}
Let $w \in \jirr W$ be a join-irreducible element of type $l$,
and use the setting of Theorem \ref{Thm_brick_D}.
We express the brick $S(w)$ in the same abbreviation rules as 
Corollary \ref{Cor_brick_abbr_A}.
Then, there are the following arrows, and no other arrows exist.
\begin{itemize}
\item[(i)]
For each $i \in V_+ \setminus \{\max V_+\}$,
there exists an arrow $i \to |i|+1$ if $|i|+1 \in R$; and $i \leftarrow |i|+1$ otherwise.
\item[(ii)]
For each $i \in V_- \setminus \{\min V_-\}$,
there exists an arrow $i \leftarrow -(|i|+1)$ if $-(|i|+1) \in R$; 
and $i \to -(|i|+1)$ otherwise.
\item[(iii)]
If $r \ge 1$, then for each $i \in V_-$ with $|i| \le r$,
there exists an arrow $-i \leftarrow -(|i|+1)$ if $|i|+1 \in R$;
and $i \to |i|+1$ otherwise.
\item[(iv)]
If $r=0$, then
there exists an arrow $-c \leftarrow 2$ if $c \leftarrow 2$ exists in \textup{(i)}, 
and an arrow $c \to -2$ if $-c \rightarrow -2$ exists in \textup{(ii)}.
\end{itemize}
\end{Cor}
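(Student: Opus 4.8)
The plan is to deduce Corollary \ref{Cor_brick_abbr_D} directly from the explicit module structure of $S(w)$ given in Theorem \ref{Thm_brick_D}, simply by recording which of the structure maps are nonzero. Recall the abbreviation rule from Corollary \ref{Cor_brick_abbr_A}: an arrow from the symbol $p$ to the symbol $q$ is drawn exactly when the action of some $\gamma \in Q_1$ induces a nonzero $K$-linear map $K\langle p\rangle \to K\langle q\rangle$. By part (b) of Theorem \ref{Thm_brick_D}, the only arrows of $Q$ that can act nontrivially on $\langle i\rangle$ are $\alpha_{|i|-1}$ and $\beta_{|i|+1}$, so every arrow of the diagram comes from one of the four families of identities in part (c), and its existence is equivalent to the non-vanishing of the associated coefficient $\xi_i^{\pm}$ or $\eta_i^{\pm}$. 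Thus the whole statement, including the clause that no other arrows exist, reduces to a termwise reading of (c).

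The first step is this translation. Going through Theorem \ref{Thm_brick_D}(c)(i)--(iv), I would match each coefficient with one arrow. In (c)(i) the term $\xi_i^{+}\langle i\rangle$ is nonzero iff $|i|+1\notin R$ and yields $i \leftarrow |i|+1$, while in (c)(ii) the term $\eta_i^{+}\langle|i|+1\rangle$ is nonzero iff $|i|+1\in R$ and yields $i \to |i|+1$; together these give precisely item (i) of the corollary. In (c)(iii) the term $\xi_i^{-}\langle i\rangle$ gives $i \leftarrow -(|i|+1)$ iff $-(|i|+1)\in R$, and in (c)(iv) the term $\eta_i^{-}\langle-(|i|+1)\rangle$ gives $i \to -(|i|+1)$; these are the within-$V_-$ arrows of item (ii). The terms $\xi_i^{+}$ in (c)(iii) and $\eta_i^{+}$ in (c)(iv), both conditioned on $|i|\le r$, produce the crossing arrows $-i \leftarrow -(|i|+1)$ and $i \to |i|+1$ of item (iii). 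Finally the terms $\xi_i^{-}$ in (c)(i) and $\eta_i^{-}$ in (c)(ii) are nonzero only when $|i|=1$ and $r=0$ (so $c=1$), and they give exactly the two extra arrows $-c \leftarrow 2$ and $c \to -2$ of item (iv). Since this exhausts all eight coefficient families, completeness (``no other arrows'') follows automatically.

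The one genuinely delicate point is the boundary case $|i|=r$ in (c)(iv), where $\eta_i^{-}=-1$ is forced to be nonzero \emph{irrespective} of whether $-(|i|+1)\in R$, whereas item (ii) predicts the arrow $i\to -(|i|+1)$ only under the condition $-(|i|+1)\notin R$. To reconcile these I would invoke the definition $r=\max\{k\ge 0 \mid [1,k]\subset \pm R\}$: by maximality $r+1\notin \pm R$, hence $-(r+1)=-(|i|+1)\notin R$ holds automatically, so the predicted arrow is indeed present and no spurious arrow is created. I expect this interplay between the sign $-1$ at the index $|i|=r$ and the combinatorial definition of $r$ to be the main obstacle, as it is the only place where the arrow pattern cannot simply be read off termwise. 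The remaining work---carefully separating the index ranges $V_+\setminus\{\max V_+\}$, $V_-\setminus\{\min V_-\}$, and $\{\,i\in V_- \mid |i|\le r\,\}$ so that each arrow is counted exactly once, and checking the $r=0$ bookkeeping that places both $c=1$ and $-1$ at vertex $1$---is routine once this boundary issue is settled.
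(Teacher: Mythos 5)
Your proposal is correct and takes essentially the same route as the paper, which proves the corollary by exactly this termwise translation of the coefficients $\xi_i^{\pm},\eta_i^{\pm}$ from Theorem \ref{Thm_brick_D} into arrows, compressing the whole check into the single remark that for $i \in [1,r]$ one has $-i \in R$ if and only if $i \notin R$. Your explicit handling of the boundary case $|i|=r$ via the maximality of $r$ (so $r+1 \notin \pm R$ and hence $-(|i|+1) \notin R$ automatically, reconciling $\eta_i^- = -1$ with item (ii)) is precisely the point that remark is meant to cover.
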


\begin{proof}
We remark that, 
for $i \in [1,r]$, the condition $-i \in R$ is equivalent to $i \notin R$.
Then, Theorem \ref{Thm_brick_D} yields the assertion.
\end{proof}

Unlike the case of type $\bbA_n$, for $w \in \jirr W$,
there may not exist a path algebra $A$ of type $\bbD_n$ 
such that the brick $S(w)$ is an $A$-module.
For example, the bricks obtained in Examples \ref{Ex_brick_D_1} and \ref{Ex_brick_D_2} 
cannot be modules over any path algebra of type $\bbD_n$, 
since the 2-cycle $\alpha_2 \beta_3$ annihilates none of them.
Our results imply that, if an element in $\Pi$ is the product of some two 2-cycles,
then it annihilates all the bricks in $\brick \Pi$.

We give more examples.

\begin{Ex}
In these examples, assume $n:=9$.
\begin{itemize}
\item[(1)]
Let $w:=(3,5,8,-7,-4,1,2,6,9)$. 
Then we have $l=3$, $a=8$, $b=-7$, $r=2$, and $c=1$.
Thus, $(V_-,V_+)=([-6,-1],[1,7])$, 
and the desired brick $S(w)$ is written as 
\begin{align*}
\begin{xy}
(  0, 6) *+{-1} = "1-",
( 12, 6) *+{-2} = "2-",
( 24, 6) *+{-3} = "3-",
( 36, 6) *+{-4} = "4-",
( 48, 6) *+{-5} = "5-",
( 60, 6) *+{-6} = "6-",
(  0,-6) *+{ 1} = "1+",
( 12,-6) *+{ 2} = "2+",
( 24,-6) *+{ 3} = "3+",
( 36,-6) *+{ 4} = "4+",
( 48,-6) *+{ 5} = "5+",
( 60,-6) *+{ 6} = "6+",
( 72,-6) *+{ 7} = "7+",
\ar "1+";"2+" \ar "1-";"2-" \ar "2-";"1+"
\ar "3+";"2+" \ar "2-";"3-" \ar "2-";"3+"
\ar "4+";"3+" \ar "4-";"3-"
\ar "5+";"4+" \ar "4-";"5-"
\ar "5+";"6+" \ar "5-";"6-"
\ar "7+";"6+"
\end{xy}.
\end{align*} 
\item[(2)]
Let $w:=(1,3,5,8,-7,-4,2,6,9)$. 
Then we have $l=4$, $a=8$, $b=-7$, $r=0$, and $c=1$.
Thus, $(V_-,V_+)=([-6,-1],[1,7])$, 
and the desired brick $S(w)$ is written as 
\begin{align*}
\begin{xy}
(  0, 6) *+{-1} = "1-",
( 12, 6) *+{-2} = "2-",
( 24, 6) *+{-3} = "3-",
( 36, 6) *+{-4} = "4-",
( 48, 6) *+{-5} = "5-",
( 60, 6) *+{-6} = "6-",
(  0,-6) *+{ 1} = "1+",
( 12,-6) *+{ 2} = "2+",
( 24,-6) *+{ 3} = "3+",
( 36,-6) *+{ 4} = "4+",
( 48,-6) *+{ 5} = "5+",
( 60,-6) *+{ 6} = "6+",
( 72,-6) *+{ 7} = "7+",
\ar "1+";"2+" \ar "1-";"2-" \ar "1+";"2-"
\ar "3+";"2+" \ar "2-";"3-"
\ar "4+";"3+" \ar "4-";"3-"
\ar "5+";"4+" \ar "4-";"5-"
\ar "5+";"6+" \ar "5-";"6-"
\ar "7+";"6+"
\end{xy}.
\end{align*} 
\item[(3)]
Let $w:=(1,2,3,5,8,-7,-4,6,9)$. 
Then we have $l=5$, $a=8$, $b=-7$, $r=0$, and $c=1$.
Thus, $(V_-,V_+)=([-6,-1],[1,7])$, 
and the desired brick $S(w)$ is written as 
\begin{align*}
\begin{xy}
(  0, 6) *+{-1} = "1-",
( 12, 6) *+{-2} = "2-",
( 24, 6) *+{-3} = "3-",
( 36, 6) *+{-4} = "4-",
( 48, 6) *+{-5} = "5-",
( 60, 6) *+{-6} = "6-",
(  0,-6) *+{ 1} = "1+",
( 12,-6) *+{ 2} = "2+",
( 24,-6) *+{ 3} = "3+",
( 36,-6) *+{ 4} = "4+",
( 48,-6) *+{ 5} = "5+",
( 60,-6) *+{ 6} = "6+",
( 72,-6) *+{ 7} = "7+",
\ar "2+";"1+" \ar "1-";"2-" \ar "1+";"2-" \ar "2+";"1-"
\ar "3+";"2+" \ar "2-";"3-"
\ar "4+";"3+" \ar "4-";"3-"
\ar "5+";"4+" \ar "4-";"5-"
\ar "5+";"6+" \ar "5-";"6-"
\ar "7+";"6+"
\end{xy}.
\end{align*} 
\end{itemize}
\end{Ex}

We also have the list of bricks in the case $\Delta=\bbD_5$ in Appendix.

Now, we start the proof of Theorem \ref{Thm_brick_D}.
We divide the argument by whether the type $l$ of $w \in \jirr W$ is $\pm 1$ or not.

We first assume that $l=\pm 1$.
We can restate Proposition \ref{Prop_rigid_D_1} as follows.

\begin{Lem}\label{Lem_rigid_D_1}
Let $w \in \jirr W$ be a join-irreducible element of type $l=\pm 1$.
\begin{itemize}
\item[(1)] 
Assume $(i,j) \in \Gamma[l]$. 
Then $p(i,j,l) \notin I(w)$ holds if and only if $i \ge w(|j|+1)$.
\item[(2)]
Consider the subset $\Gamma(w) \subset \Gamma[l]$ consisting of
the elements $(i,j) \in \Gamma[l]$ with $p(i,j,l) \notin I(w)$.
Then the set $\{p(i,j,l) \mid (i,j) \in \Gamma(w)\}$ induces a $K$-basis of $J(w)$.\end{itemize}
\end{Lem}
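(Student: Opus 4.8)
The plan is to treat this lemma purely as a translation of Proposition \ref{Prop_rigid_D_1} from the language of the ``Young diagram-like'' picture (\ref{eq_pattern_D_1}) into the language of the path basis of $\Pi e_l$. First I would recall that $J(w)=(\Pi/I(w))e_l=\Pi e_l/I(w)e_l$, so there is a canonical surjection $\pi\colon \Pi e_l \to J(w)$ with kernel $I(w)e_l$. Since each $p(i,j,l)$ already lies in $\Pi e_l$ (it equals $p(i,j,l)e_l$), the membership $p(i,j,l) \in I(w)$ is equivalent to $p(i,j,l) \in I(w)e_l$, hence to $\pi(p(i,j,l))=0$. Thus part (1) amounts to determining exactly which path basis elements are killed by $\pi$.

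Next I would set up the dictionary between the two descriptions. By Lemma \ref{Lem_basis_D_1} the elements $p(i,j,l)$ with $(i,j) \in \Gamma[l]$ form a $K$-basis of $\Pi e_l$, and these index precisely the boxes of the diagram (\ref{eq_pattern_D_1}): the box in the row starting at $j$ carrying the label $i$ corresponds to $\pi(p(i,j,l))$. Under this identification, Proposition \ref{Prop_rigid_D_1} asserts that $J(w)$ is obtained by keeping exactly the boxes with $i \ge w(|j|+1)$ and deleting the rest; read as a statement about the quotient $\pi$, this says the images $\pi(p(i,j,l))$ with $i \ge w(|j|+1)$ constitute a $K$-basis of $J(w)$, while $\pi(p(i,j,l))=0$ whenever $i<w(|j|+1)$. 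Combined with the previous paragraph, this yields $p(i,j,l) \notin I(w) \iff i \ge w(|j|+1)$, which is part (1). Part (2) is then immediate: by part (1) the index set $\Gamma(w)$ coincides with $\{(i,j)\in\Gamma[l] \mid i \ge w(|j|+1)\}$, and Proposition \ref{Prop_rigid_D_1} already says the corresponding images form a basis of $J(w)$.

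The one genuinely nontrivial point—and the step I would be most careful about—is establishing that every deleted path element actually lies in $I(w)$, rather than merely spanning a complement to the kept ones. Pure linear algebra does not force this, since a subspace can admit many complements; what rescues us is that Proposition \ref{Prop_rigid_D_1} records $J(w)$ together with its module structure, so its diagram presents $J(w)$ as the module spanned by the surviving boxes with the inherited arrows, and the absent boxes are genuinely zero in $J(w)$. A dimension count keeps this honest: $\dim_K I(w)e_l = \dim_K \Pi e_l - \dim_K J(w)$ equals the number of deleted boxes, so once those deleted path elements are known to vanish under $\pi$, they automatically span $I(w)e_l$. I would therefore phrase the proof so that the vanishing $\pi(p(i,j,l))=0$ for $i<w(|j|+1)$ is extracted directly from the diagrammatic meaning of ``delete'' in Proposition \ref{Prop_rigid_D_1}, after which both claims follow with no further computation.
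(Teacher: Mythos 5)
Your proposal is correct and takes essentially the same route as the paper: the paper gives no separate argument for Lemma~\ref{Lem_rigid_D_1}, introducing it only with ``we can restate Proposition~\ref{Prop_rigid_D_1} as follows,'' which is exactly the translation through the path basis of Lemma~\ref{Lem_basis_D_1} (boxes of the diagram~(\ref{eq_pattern_D_1}) $\leftrightarrow$ elements $p(i,j,l)$, and $p(i,j,l)\in I(w)\Leftrightarrow p(i,j,l)\in I(w)e_l=\Ker\pi$) that you carry out. Your explicit discussion of why ``deleted'' boxes genuinely lie in $I(w)e_l$ rather than merely spanning some complement, backed by the dimension count, is a sound way of making precise what the paper leaves implicit in citing \cite{IRRT}.
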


In this lemma, we can replace the condition $i \ge w(|j|+1)$ by $|i| \ge w(|j|+1)$ in (1),
since $w(m)=(-1)^{m-1}l$ holds for the number $m:=|w^{-1}(1)|$.

To express $S(w)$, we define the following set for $k \ge 1$:
\begin{align*}
\Gamma_k(w):=\{ (i,j) \in \Gamma(w) \mid 
\min \{x \ge 1 \mid ((-1)^x i,|j|+x) \notin \Gamma(w) \} = k \}.
\end{align*}
It is easy to see that $\Gamma(w)$ is the disjoint union of the $\Gamma_k(w)$'s.
Moreover, we extend the definition of the path $p(i,j,l)$ to
$\tilde{\Gamma}[l]:=\{ (i,j) \in Q_0 \times \Z \mid i \le j \ge l \}$
by setting $p(i,j,l):=0$ if $j \ge n$, and define $w(k):=k$ if $k \ge n+1$.
In Example \ref{Ex_brick_D_1}, the squares with black letters denote $\Gamma_1(w)$,
and the squares with red letters denote $\Gamma_2(w)$.

\begin{Lem}\label{Lem_soc_D_1}
Let $w \in \jirr W$ be a join-irreducible element of type $l=\pm 1$.
Consider the endomorphism 
$f:=(\cdot p(l,3,l)) \colon J(w) \to J(w)$.
\begin{itemize}
\item[(1)]
We have $S(w)=\Ker f$.
\item[(2)]
Let $(i,j) \in \Gamma(w)$. 
Then $p(i,j,l) \in \Ker f$ holds if and only 
if $(i,j) \in \Gamma_1(w) \amalg \Gamma_2(w)$.
\item[(3)]
The set 
$\{p(i,j,l) \mid (i,j) \in \Gamma_1(w) \amalg \Gamma_2(w) \}$ 
induces a $K$-basis of $\Ker f$.
\end{itemize}
\end{Lem}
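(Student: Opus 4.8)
The plan is to follow the template of Lemma \ref{Lem_soc_A} for type $\bbA_n$, with one structural change forced by the branch vertex of $\bbD_n$. In type $\bbA_n$ the endomorphism $(\cdot p(l,l+1,l))$ implemented a single downward shift $(i,j)\mapsto(i,j+1)$ of the staircase, and the kernel was exactly $\Gamma_1(w)$. Here the shortest loop at the vertex $l=\pm 1$ of length two, namely $\alpha_1^{\pm}\beta_2^{\pm}$, vanishes because of the relation $\alpha_1^{\pm}\beta_2^{\pm}=0$; the shortest nonzero loop is $p(l,3,l)$, of length four, going up two levels (through vertex $3$) and back. I therefore expect $f=(\cdot p(l,3,l))$ to realize a \emph{double} shift $(i,j)\mapsto(i,|j|+2)$, which is precisely why the kernel should be $\Gamma_1(w)\amalg\Gamma_2(w)$ rather than $\Gamma_1(w)$.

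For part (1), I would argue as in type $\bbA_n$. Since $J(w)$ is indecomposable, $\End_{\Pi}(J(w))\cong e_l(\Pi/I(w))e_l$ is a local ring, and I would first check that its radical is the principal ideal generated by the class of $p(l,3,l)$: the nonzero loops at $l$ in $\Pi/I(w)$ form the chain $p(l,3,l),p(l,5,l),\dots$, with $\rad^k$ generated by $p(l,2k+1,l)$. Consequently every non-isomorphism $g\colon J(w)\to J(w)$ factors as $g=hf$, so that $\Ker f\subseteq\Ker g$ for all such $g$ and hence $\Ker f\subseteq\soc_{\End_{\Pi}(J(w))}J(w)=S(w)$; the reverse inclusion is immediate because $f$ is itself a non-isomorphism. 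This gives $S(w)=\Ker f$.

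For part (2), the computational core, I would evaluate $f$ on the basis of Lemma \ref{Lem_rigid_D_1} by multiplying paths in $\Pi$, using the extended convention $p(i,j,l):=0$ for $j\ge n$. The claim is $f(p(i,j,l))=p(i,j,l)\,p(l,3,l)=p(i,|j|+2,l)$; note that for $l=\pm 1$ the diagram \textup{(\ref{eq_pattern_D_1})} carries no sign labels, so no sign should appear. Granting this, $p(i,j,l)\in\Ker f$ iff $(i,|j|+2)\notin\Gamma(w)$. To match this with $\Gamma_1(w)\amalg\Gamma_2(w)$ I would use that a join-irreducible $w$ of type $l$ is increasing on $[|l|+1,n]$, so the deletion threshold $w(|j|+x+1)$ is non-decreasing in $x$; hence once the shift orbit $((-1)^{x}i,|j|+x)$ leaves $\Gamma(w)$ it never returns. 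Therefore ``the orbit exits within two steps'' coincides with ``$(i,|j|+2)\notin\Gamma(w)$'', which is the defining condition for $(i,j)\in\Gamma_1(w)\amalg\Gamma_2(w)$.

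For part (3), I would conclude as in type $\bbA_n$. The displayed elements are linearly independent, being part of the basis of $J(w)$ from Lemma \ref{Lem_rigid_D_1}, and lie in $\Ker f$ by part (2). For $(i,j)\in\Gamma(w)\setminus(\Gamma_1(w)\amalg\Gamma_2(w))$ one has $(i,|j|+2)\in\Gamma(w)$, so the images $f(p(i,j,l))=p(i,|j|+2,l)$ are distinct, hence linearly independent, basis vectors; thus $f$ is injective on their span and $\Ker f$ is spanned exactly by $\{p(i,j,l)\mid(i,j)\in\Gamma_1(w)\amalg\Gamma_2(w)\}$. The hardest step will be the path computation in part (2): one must check that multiplying by $p(l,3,l)$, which passes through the branch relation $\alpha_2\beta_3=\beta_2^{+}\alpha_1^{+}+\beta_2^{-}\alpha_1^{-}$, collapses to the single path $p(i,|j|+2,l)$ with no surviving cross terms or signs, and that the boundary behavior in the branch column $i=\pm l$ is consistent with the parity condition defining $\Gamma[l]$.
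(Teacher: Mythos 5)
Your proposal is correct and follows essentially the same route as the paper, whose entire proof is ``similar argument to Lemma \ref{Lem_soc_A} works'' together with the identity $f(p(i,j,l))=p(i,j,l)\,p(l,3,l)=p(i,|j|+2,l)$ in $\Pi$ --- exactly your double-shift formula. Your elaborations (the factorization of every non-isomorphism through $f$ via the chain of loops $p(l,2k+1,l)$ generating the radical of the local endomorphism ring, and the monotonicity of the thresholds $w(|j|+x+1)$ from Lemma \ref{Lem_rigid_D_1} ensuring the orbit never re-enters $\Gamma(w)$, so that exit within two steps is equivalent to $(i,|j|+2)\notin\Gamma(w)$) are precisely the details the paper leaves implicit.
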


\begin{proof}
Similar argument to Lemma \ref{Lem_soc_A} works.
We remark that $f(p(i,j,l))=p(i,j,l) p(l,3,l)=p(i,|j|+2,l)$ hold in $\Pi$.
\end{proof}

\begin{Lem}\label{Lem_vertex_D_1}
Let $w \in \jirr W$ be a join-irreducible element of type $l=\pm 1$.
Define $V_+$ and $V_-$ as in Theorem \ref{Thm_brick_A}.
\begin{itemize}
\item[(1)]
There exists a bijection $\Gamma_1(w) \to V_+$ given by $(i,j) \mapsto i$.
\item[(2)]
There exists a bijection $\Gamma_2(w) \to V_-$ given by
$(i,j) \mapsto i$ if $|i|=1$; and $(i,j) \mapsto -i$ otherwise.
\end{itemize}
\end{Lem}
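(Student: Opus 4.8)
The plan is to follow the template of Lemma \ref{Lem_vertex_A}, now feeding in the description of $\Gamma(w)$ from Lemma \ref{Lem_rigid_D_1} together with the identification of $\Ker f$ with $\Gamma_1(w) \amalg \Gamma_2(w)$ from Lemma \ref{Lem_soc_D_1}. First I would organise $\Gamma(w)$ into maximal chains under the step increasing $|j|$ by one: for each fixed $i \ge 2$ the chain is the set of $j$ with $(i,j) \in \Gamma(w)$, an integer interval cut out by $i \le j$ and $i \ge w(j+1)$ (Lemma \ref{Lem_rigid_D_1}), whereas the two entries $i = \pm 1$ together form the single alternating fork chain $(1,1),(-1,2),(1,3),\dots$ (written for $l=1$; the case $l=-1$ is symmetric), which lies in $\Gamma(w)$ exactly while $w(j+1) \le 1$. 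Because $f$ advances two such steps (Lemma \ref{Lem_soc_D_1}), $\Ker f$ consists of the top two elements of every chain, so $\Gamma_1(w)$ is the set of chain tops and $\Gamma_2(w)$ the set of seconds, and both assertions reduce to reading off the indices $i$ of these tops and seconds.

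A preliminary observation streamlines the case analysis: for $l = \pm 1$ the unique descent forces $w(2) < w(3) < \dots < w(n)$, and since the value $\pm 1$ occurs among these unless it lies in position $1$, one checks that $b = w(2)$ can never exceed $1$, so only $b = \pm 1$ and $b \le -2$ actually occur. For part (1) I would then verify well-definedness, injectivity, and surjectivity exactly as in Lemma \ref{Lem_vertex_A}. A non-fork top $(i,j)$ obeys $w(j+1) \le i \le j$ and $i < w(j+2)$, and the monotonicity of $w$ on $[2,n]$ confines $i$ to $[2,a-1]$; the fork top sits in the last row with $w(j+1) \le 1$, and I must identify its index --- determined by the parity of that row --- with the element $c$ of Theorem \ref{Thm_brick_D}, so that every top lands in $V_+ = \{c\} \cup [2,a-1]$. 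Injectivity is immediate since distinct chains carry distinct labels, and for surjectivity I would, given $i \in V_+$, take the maximal $j$ with $(i,j) \in \Gamma(w)$ and confirm via Lemma \ref{Lem_rigid_D_1} that it is a chain top.

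For part (2) the same bookkeeping applies to the second element of each chain, the one new feature being the relabelling $(i,j) \mapsto -i$ for $|i| \ge 2$. After this sign change the non-fork seconds fill $[b+1,-2]$, their indices ranging over $[2,|b|-1]$ by the same monotonicity argument (now bounded by $|b| = |w(2)|$), while the fork second --- whose index is $-c$, since consecutive fork entries carry opposite signs --- supplies the remaining element $-c$ of $V_- = [b+1,-2] \cup \{-c\}$. When $b = \pm 1$ the preliminary observation shows that every chain has length one, so that $\Gamma_2(w) = \emptyset = V_-$ and the claim is vacuous.

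The main obstacle is the fork at $\pm 1$. Away from it the argument is a verbatim copy of the type $\bbA_n$ staircase of Lemma \ref{Lem_vertex_A}, but the alternating fork chain is what forces the piecewise shape of $(V_-,V_+)$ and couples the constant $c$ --- including, through the alternative $r \ge 1$ versus $r = 0$, its definition $c = w^{-1}(|w(1)|)$ --- to the exact exit row of that chain. Matching the parity sign $(-1)^{j-1}l$ of the fork top and second against the two surviving values of $b$, and confirming that $c$ and $-c$ are precisely the indices so produced, is the delicate bookkeeping I expect to consume most of the effort.
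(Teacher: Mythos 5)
Your proposal is correct and takes essentially the same approach as the paper's proof: both read $\Gamma(w)$ off Lemma \ref{Lem_rigid_D_1} as staircase chains (so that $\Gamma_1(w)$ consists of the chain tops and $\Gamma_2(w)$ of the penultimate entries), run the well-definedness/injectivity/maximal-$j$ surjectivity scheme of Lemma \ref{Lem_vertex_A}, and settle the alternating $\pm 1$ fork by the parity of the row where it exits, namely $|w^{-1}(1)|-1$, which is precisely the paper's claim that any fork entry lying in $\Gamma_1(w)$ has index $c$ (the paper also uses this claim a second time, to see that the maximal $j$ with $(c,j)\in\Gamma(w)$ is really the fork top rather than one step below an entry of index $-c$ --- a point your parity identification covers as well). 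Your preliminary observation --- that $b \le 1$ for $l=\pm 1$, so the case $b \ge 2$ is vacuous and $\Gamma_2(w)=\emptyset=V_-$ when $b=\pm 1$ --- is correct and merely streamlines what the paper handles implicitly by proving $i \ge b$ directly.
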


\begin{proof}
We use the notation in Theorem \ref{Thm_brick_D} in the proof.

(1)
We see the well-definedness of the map $\Gamma_1(w) \to V_+$.

We first show that every $(i,j) \in \Gamma_1(w)$ satisfies that $i < a$.
We remark that, 
for $k \in [2,n]$, the condition $w(k)=k$ holds if and only if $k > a$,
and that this condition is also equivalent to $w(k)>a$.
Lemma \ref{Lem_rigid_D_1} and $(i,j) \in \Gamma(w)$ give $j \ge i \ge w(|j|+1)$.
Thus, $w(|j|+1) \le j$ holds, so we get $|j|+1 \le a$, or equivalently, $|j|<a$.
Therefore, we obtain $i \le |j| < a$.

We also prove that, if $(i,j) \in \Gamma_1(w)$ and $|i|=1$, then $i=c$ (*).
In this case, 
Lemma \ref{Lem_rigid_D_1} and $(i,j) \in \Gamma_1(w)$ yield 
$w(|j|+2) \ge 2$ and $i \ge w(|j|+1)$.
Since $|w^{-1}(1)| \ge 2$, we have $w(|j|+1)=\pm1$.
Thus, $|j|+1=|w^{-1}(1)|$ holds;
hence, we have $i=(-1)^{|j|-1} l=(-1)^{|w^{-1}(1)|} l=c$.

Moreover, Lemma \ref{Lem_rigid_D_1} and $(i,j) \in \Gamma(w)$ imply 
$j \ge i \ge w(|j|+1) \ge w(2)=b$.

These imply that the map $\Gamma_1(w) \to V_+$ is well-defined.
By Lemma \ref{Lem_rigid_D_1}, it is clearly injective.

We next prove that the map $\Gamma_1(w) \to V_+$ is also surjective.
Let $i \in V_+$, then $|i|<a$ holds.
Thus, the first remark yields $w(|i|+1) < |i|+1$,
so there exists some $j \in \{l\} \cup [2,n-1]$ such that $(i,j) \in \Gamma(w)$.
Take the maximum $j$ among such $j$'s.

If $i \ge 2$, then $(i,j)$ belongs to $\Gamma_1(w)$ by Lemma \ref{Lem_rigid_D_1}.

If $i=c$, then $(i,j) \in \Gamma(w)$ and $(i,j+2) \notin \Gamma(w)$ hold.
On the other hand, we obtain $(-i,|j|+1) \notin \Gamma_1(w)$ from (*).
From these, $(i,j)$ must be in $\Gamma_1(w)$.

Therefore, $(i,j) \in \Gamma_1(w)$ holds, 
so the map $\Gamma_1(w) \to V_+$ is also surjective, and thus, bijective.

(2)
We can check the following properties.
\begin{itemize}
\item For $k \in [2,n]$, the condition $w(k) \ge k-1$ holds 
if and only if $k > |b|+1$,
and this condition is also equivalent to $w(k)>|b|$.
\item If $i \in V_-$, then $w(|i|+1)<|i|$ and $w(|i|+2)<|i|+2$ hold.
\end{itemize}
Then, the proof is similar to (1).
\end{proof}

Now, we show Theorem \ref{Thm_brick_D} in the case $l=\pm 1$.

\begin{proof}
By Lemma \ref{Lem_vertex_D_1}, we can define a map $\rho \colon V \to Q_0$ as follows.
\begin{itemize}
\item If $i \in V_+$, then $\rho(i)$ is the unique element $j \in Q_0$ 
such that $(i,j) \in \Gamma_1(w)$.
\item If $i \in V_-$ and $i=\pm 1$, then $\rho(i)$ is the unique element $j \in Q_0$ 
such that $(i,j) \in \Gamma_2(w)$.
\item If $i \in V_-$ and $i \ge 2$, then $\rho(i)$ is the unique element $j \in Q_0$ 
such that $(-i,j) \in \Gamma_2(w)$.
\end{itemize}
Set $\ang{i}:=p(i,\rho(i),l)$ for each $i \in V$.
It suffices to show that $(\ang{i})_{i \in V}$ satisfies the properties (a), (b), and (c),
since the three properties are enough to define a $\Pi$-module.

First, $(\ang{i})_{i \in V}$ is a $K$-basis of $S(w)$ 
by Lemma \ref{Lem_soc_D_1}, 
and $K\ang{i}$ is clearly a subspace of $e_i S(w)$ if $i \ge -1$; 
and of $e_{|i|} S(w)$ if $i \le -2$.
Thus, the property (a) has been proved,
and the property (b) follows from (a).

In the following observation, we fully use Lemma \ref{Lem_rigid_D_1}.

We begin the proof of (c)(i).
First, we assume $2 \in V_+$, and set $j:=\rho(2)$.
\begin{itemize}
\item If $2 \notin R$, then $w(j+1)=c$ and $w(j+2) \ge 3$ hold, 
so we have $(c,j) \in \Gamma_1(w)$ and $(-c,j) \notin \Gamma(w)$.
\item If $2 \in R$, then $w(j+1)=2$ holds, 
so we have $(c,j),(-c,j) \notin \Gamma(w)$.
\end{itemize}
These imply
\begin{align*}
\alpha_1 \ang{2}=\alpha_1 p(2,j,l)=p(c,j,l)+p(-c,j,l) 
=\begin{cases}
\ang{c} & (2 \notin R) \\
0 & (2 \in R)
\end{cases}.
\end{align*}
Second, let $i \in V_+ \setminus \{ \max V_+ \}$ and $i \ge 2$, and set $j:=\rho(i+1)$.
Then, 
\begin{align*}
\alpha_i \ang{i+1}=\alpha_i p(i+1,j,l)=p(i,j,l) 
=\begin{cases}
\ang{i} & (\text{if $i+1 \notin R$, since $(i,j) \in \Gamma_1(w)$)} \\
0 & (\text{if $i+1 \in R$, since $(i,j) \notin \Gamma(w)$)}
\end{cases}.
\end{align*}
Since $l = \pm 1$, we have $r \ge 1$.
Thus, we have proved (c)(i).

Next, we begin the proof of (c)(ii).
Let $i \in V_+ \setminus \{ \max V_+ \}$, and set $j:=\rho(i)$.
Then, 
\begin{align*}
\beta_{|i|+1} \ang{i}=\beta_{|i|+1} p(i,j,l)&=p(|i|+1,j+1,l) \\ 
&=\begin{cases}
0 & (\text{if $i+1 \notin R$, since $(|i|+1,j+1) \notin \Gamma(w)$)} \\
\ang{|i|+1} & (\text{if $i+1 \in R$, since $(|i|+1,j+1) \in \Gamma_1(w)$)}
\end{cases}.
\end{align*}
Since $r \ge 1$, this implies (c)(ii).

Before continuing the proof, we remark the following:
every $i \in V_-$ satisfies $|i| < r$, since $l = \pm 1$.
Thus, if $i \in V_-$, then $|i|+1 \notin R$ is equivalent to $-(|i|+1) \in R$.

We proceed to the proof of (c)(iii).
First, assume $-c \in V_- \setminus \{ \min V_- \}$, and set $j:=\rho(-2)$.
\begin{itemize}
\item If $-2 \notin R$, then $w(j+1)=c$ and $w(j+2)=2$ hold, 
so we have $(c,j) \in \Gamma_1(w)$ and $(-c,j) \notin \Gamma(w)$.
\item If $-2 \in R$, then $w(j+1) \le -2$ and $w(j+2)=c$ hold, 
so we have $(-c,j) \in \Gamma_1(w)$ and $(c,j) \notin \Gamma(w)$.
\end{itemize}
Thus,
\begin{align*}
\alpha_1 \ang{-2}=\alpha_1 p(2,j,l)=p(c,j,l)+p(-c,j,l) =\begin{cases}
\ang{c} & (-2 \notin R) \\
\ang{-c} & (-2 \in R)
\end{cases}.
\end{align*}
Second, let $i \in V_- \setminus \{ \min V_- \}$ and $|i| \ge 2$,
and set $j:=\rho(-(|i|+1))$.
Then, 
\begin{align*}
\alpha_{|i|} \ang{-(|i|+1)}&=\alpha_{|i|} p(|i|+1,j,l)=p(|i|,j,l) \\ 
&=\begin{cases}
\ang{|i|}=\ang{-i} & (\text{if $|i|+1 \in R$, since $(|i|,j) \in \Gamma_1(w)$)} \\
\ang{-|i|}=\ang{i} & (\text{if $|i|+1 \notin R$, since $(|i|,j) \in \Gamma_2(w)$)}
\end{cases}.
\end{align*}
These observations yield (c)(iii).

The remaining task is to check (c)(iv).
Assume $i \in V_-$, and set $j:=\rho(i)$.
Then, 
\begin{align*}
\beta_{|i|+1} \ang{i}&=\beta_{|i|+1} p(i,j,l)=p(|i|+1,j+1,l) \\ 
&=\begin{cases}
\ang{-(|i|+1)} & (\text{if $|i|+1 \in R$, since $(|i|+1,j+1) \in \Gamma_2(w)$)} \\
\ang{|i|+1} & (\text{if $|i|+1 \notin R$, since $(|i|+1,j+1) \in \Gamma_1(w)$)}
\end{cases}.
\end{align*}
Thus, we have obtained (c)(iv).

Now, all the desired properties have been proved.
\end{proof}
 
We next assume that the type $l$ is not $\pm 1$.
We can restate Proposition \ref{Prop_rigid_D_2} as follows.

\begin{Lem}\label{Lem_rigid_D_2}
Let $w \in \jirr W$ be a join-irreducible element of type $l \ne \pm 1$,
and set $c$ as in Theorem \ref{Thm_brick_D}.
If $w(l+1) \le 1$, 
then set $m:=\max\{ k \in [l+1,n] \mid w(k) \le 1\}$ 
and $\epsilon:=(-1)^{m-(l+1)}c$; otherwise, set $\epsilon := 1$.
\begin{itemize}
\item[(1)] 
Assume $(i,j) \in \Gamma[l]$. 
Then $p_\epsilon(i,j,l) \notin I(w)$ holds if and only if 
\begin{align*}
\begin{cases}
i \ge w(j+1) & (w(j+1) \ge 2) \\
i \ge 2 \quad \textup{or} \quad i=w(j+1) & (w(j+1)=\pm 1) \\
i \ge w(j+1)+1 & (w(j+1) \le -2)
\end{cases}.
\end{align*}
\item[(2)]
Consider the subset $\Gamma(w) \subset \Gamma[l]$ consisting of
the elements $(i,j) \in \Gamma[l]$ with $p_\epsilon(i,j,l) \notin I(w)$.
Then the set $\{p_\epsilon(i,j,l) \mid (i,j) \in \Gamma(w)\}$ induces 
a $K$-basis of $J(w)$.
\end{itemize}
\end{Lem}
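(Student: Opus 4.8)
The plan is to deduce the lemma from Proposition \ref{Prop_rigid_D_2} by translating its ``keep/delete'' recipe on the diagram (\ref{eq_pattern_D_2}) into statements about the paths $p_\epsilon(i,j,l)$ and the ideal $I(w)$. By Lemma \ref{Lem_basis_D_2}, for the fixed sign $\epsilon$ the entries ``$i$ in the row starting at $j$'' of (\ref{eq_pattern_D_2}) are indexed precisely by the pairs $(i,j)\in\Gamma[l]$, the entry corresponding to the basis vector $p_\epsilon(i,j,l)$ of $\Pi e_l$. Since $J(w)=(\Pi/I(w))e_l=\Pi e_l/I(w)e_l$, the image of $p_\epsilon(i,j,l)$ belongs to the induced basis of $J(w)$ exactly when $p_\epsilon(i,j,l)\notin I(w)$, that is, exactly when the entry is ``kept''. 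First I would record this dictionary, after which Proposition \ref{Prop_rigid_D_2} says precisely that $p_\epsilon(i,j,l)\notin I(w)$ if and only if the three-case condition on $w(j+1)$ holds; this is assertion (1). Assertion (2) is then immediate: the kept paths form part of the basis of $\Pi e_l$ provided by Lemma \ref{Lem_basis_D_2}, and by the proposition their images fill out $J(w)$, so they give a $K$-basis.

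The only real content is to check that the sign $\epsilon=(-1)^{m-(l+1)}c$ prescribed here is the same as the sign $\epsilon$ of Proposition \ref{Prop_rigid_D_2}; when $w(l+1)\ge 2$ both reduce to $\epsilon=1$, so I may assume $w(l+1)\le 1$. Concretely I must show that $(-1)^{m-(l+1)}c$ equals $(-1)^{m-(l+1)}$ when $w(m)\le -2$ and equals $(-1)^{m-(l+1)}w(m)$ when $w(m)=\pm 1$. To control $c$ I would first use that $w$ is join-irreducible of type $l\ge 2$, so $l$ is the unique descent: this forces the increasing runs $w(1)<\cdots<w(l)$ and $w(l+1)<\cdots<w(n)$, while the absence of a descent at $-1$ gives $w(1)+w(2)\ge 0$. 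Hence at most one of $w(1),\dots,w(l)$ is negative and, if so, it is $w(1)$, and then $w(2)>|w(1)|$; in every case $\min_{k\in[1,l]}|w(k)|=|w(1)|$. Because $w$ is a signed permutation, $\pm R=\pm[1,n]\setminus\pm\{|w(1)|,\dots,|w(l)|\}$, so the maximal $r$ with $[1,r]\subset\pm R$ satisfies $r+1=\min_{k\in[1,l]}|w(k)|=|w(1)|$. Consequently $c=w^{-1}(|w(1)|)\in\{1,-1\}$ is the sign of $w(1)$ whenever $r\ge 1$, and $c=1$ at the boundary $r=0$.

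It remains to match this sign against the datum $w(m)$. Since $m=\max\{k\in[l+1,n]\mid w(k)\le 1\}$ and the second run is increasing, the values of $w$ on $[l+1,n]$ that are $\le 1$ are exactly $w(l+1)<\cdots<w(m)$, so the location of the value $\pm 1$ and the number of negative entries in $[l+1,n]$ are governed by $w(m)$. I would argue case by case according to $w(m)=1$, $w(m)=-1$, and $w(m)\le -2$: in each case I determine whether the value $\pm 1$ sits at a positive or a negative position, and then read off the sign of $w(1)$, using that the total number of negative values among $w(1),\dots,w(n)$ is even, which is the defining parity condition of $W(\bbD_n)$. This parity condition is exactly what couples the sign of $w(1)$ to $w(m)$ and to the parity of $m-(l+1)$. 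I expect this sign bookkeeping to be the main obstacle: coordinating the factor $(-1)^{m-(l+1)}$, the branch ($1$ or $-1$) carrying the value $\pm 1$, the boundary case $r=0$, and the evenness condition so that the outcome matches the convention of Proposition \ref{Prop_rigid_D_2} on the nose is the step that demands the most care and where sign errors are easiest to make.
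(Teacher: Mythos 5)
Your translation dictionary (kept entries of the diagram (\ref{eq_pattern_D_2}) correspond via Lemma \ref{Lem_basis_D_2} to the paths $p_\epsilon(i,j,l)\notin I(w)$, and a kept subset of a basis of $\Pi e_l$ induces a basis of $J(w)=\Pi e_l/I(w)e_l$) is exactly the argument the paper leaves implicit — it states the lemma as a restatement of Proposition \ref{Prop_rigid_D_2} with no proof at all — and your preliminary computations are correct: the unique descent $l\ge 2$ forces the two increasing runs with $w(1)+w(2)>0$, hence $r+1=\min_{k\in[1,l]}|w(k)|=|w(1)|$ and $c=\mathrm{sign}(w(1))$ whenever $r\ge 1$. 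The genuine gap is the identity you reduce everything to. Carry your own parity argument to the end: if $w(m)=\pm 1$, the value $\pm 1$ sits at position $m$ of the second run, so the negative entries among $w(l+1),\dots,w(n)$ number $m-(l+1)$ if $w(m)=1$ and $m-l$ if $w(m)=-1$, and the evenness condition of $W(\bbD_n)$ forces $c=\mathrm{sign}(w(1))=(-1)^{m-(l+1)}w(m)$. Hence the lemma's sign is $\epsilon=(-1)^{m-(l+1)}c=w(m)$, whereas Proposition \ref{Prop_rigid_D_2} prescribes $(-1)^{m-(l+1)}w(m)$; these agree only when $m-(l+1)$ is even (equivalently $w(m)=c$). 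A concrete witness: $w=(-3,5,-4,1,2)\in W(\bbD_5)$ is join-irreducible of type $l=2$ with $m=4$, $w(m)=1$, $r=2$, $c=-1$, so the lemma sets $\epsilon=+1$ while the proposition sets $\epsilon=-1$. So the equality you ``must show'' is simply false in general, and the matching ``on the nose'' you aim for is unavailable.

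Nor can the discrepancy be absorbed as harmless bookkeeping, because part (1) is sign-sensitive: by (\ref{eq_path_lin_comb}), $p_{-\epsilon}(i,j,l)+p_{\epsilon}(i,j,l)=p_\epsilon(|i|,j+|i|-1,l)$ for $i\le -2$, and the right-hand side does not depend on $\epsilon$. In the witness above the deleted entry $(i,j)=(-2,3)$ has companion $(2,4)$, which is kept since $w(5)=2$; thus $p(2,4,2)\notin I(w)$, so at most one of $p_{\pm 1}(-2,3,2)$ can lie in $I(w)$, i.e.\ the keep/delete criterion of part (1) can hold for at most one of the two sign conventions. Consequently one of the two printed statements (the lemma or the transcription of \cite[Theorem 6.12]{IRRT} in Proposition \ref{Prop_rigid_D_2}) carries a sign slip, and a proof of the lemma by pure substitution into Proposition \ref{Prop_rigid_D_2} cannot be completed in the cases $w(m)=-c$. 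Since all the later computations in the paper (for instance the repeated claims that $p_\epsilon(-2,j,l)$ factors through $\pm c$, which use $\epsilon=(-1)^{j-l}c$ when $m=j+1$) operate with the lemma's convention, a correct argument has to verify the lemma's $\epsilon$ directly against the conventions of the source theorem in \cite{IRRT}, rather than against Proposition \ref{Prop_rigid_D_2} as printed; your proposal is missing this reconciliation, and it is precisely where the proof as planned would fail.
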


To express $S(w)$, we define the following set for $k \ge 1$:
\begin{align*}
\Gamma_k(w):=\{ (i,j) \in \Gamma(w) \mid 
\min \{x \ge 1 \mid (i,j+x) \notin \Gamma(w) \} = k \}.
\end{align*}
Moreover, we extend the definition of the path $p_\epsilon(i,j,l)$ to
$\tilde{\Gamma}[l]:=\{ (i,j) \in \pm Q_0 \times \Z \mid i \le j \ge l \}$
by setting $p_\epsilon(i,j,l):=0$ if $j \ge n$, 
and define $w(k):=k$ if $k \ge n+1$.

Then, straightforward calculation yields the relation 
\begin{align}\label{eq_path_lin_comb}
p_{-\epsilon}(i,j,l)=-p_\epsilon(i,j,l)+p_\epsilon(|i|,j+|i|-1,l)
\end{align}
for $(i,j) \in \tilde{\Gamma}[l]$ with $i \le -2$.

\begin{Lem}\label{Lem_soc_D_2}
Let $w \in \jirr W$ be a join-irreducible element of type $l \ne \pm 1$.
Set $R,a,b,r,c$ as in Theorem \ref{Thm_brick_D},
and $\epsilon$ as in Lemma \ref{Lem_rigid_D_2}.
\begin{itemize}
\item[(1)]
Consider the endomorphisms
\begin{align*}
f_1:=(\cdot p_\epsilon(l,l+1,l)) \colon J(w) \to J(w) \quad \text{and} \quad 
f_2:=(\cdot p_\epsilon(-l,l,l)) \colon J(w) \to J(w).
\end{align*}
Then $S(w)=\Ker f_1 \cap \Ker f_2$ holds.
\item[(2)]
Let $(i,j) \in \Gamma(w)$. 
Then $p_{-\epsilon}(i,j,l) \in \Ker f_1$ holds if and only if
$(i,j) \in \Gamma_1(w)$.
\item[(3)]
The set $\{p_{-\epsilon}(i,j,l) \mid (i,j) \in \Gamma_1(w) \}$ 
induces a $K$-basis of $\Ker f_1$.
\item[(4)]
Set $\Lambda_1(w)
:=\{ (i,j) \in \Gamma_1(w) \mid a-1 \ge i \}$.
The set 
$\{p_{-\epsilon}(i,j,l) \mid (i,j) \in \Lambda_1(w) \}$ 
induces a $K$-basis of $S(w)$.
\item[(5)]
Assume $b \le -2$ and $r \ge 1$, and
let $(i,j) \in \Gamma_1(w)$ with $-2 \ge i \ge b+1$.
Then $p_{-\epsilon}(|i|,j+|i|-1,l) \notin \Ker f_1$ holds 
if and only if $|i| \le r$.
In this case, $(|i|,j+|i|-1)$ belongs to $\Gamma_2(w)$.
\item[(6)]
The submodule $\Ker f_1 \cap \Ker f_2$ has a basis formed by
\begin{itemize}
\item $p_\epsilon(i,j,l)$ for each 
$(i,j) \in \Lambda_1(w)$ with $i \ge -1$ and $-r-1 \ge i$; and
\item $p_{-\epsilon}(i,j,l)$ for each 
$(i,j) \in \Lambda_1(w)$ with $-2 \ge i \ge -r$.
\end{itemize}
\end{itemize}
\end{Lem}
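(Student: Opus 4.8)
The plan is to follow the template of Lemmas \ref{Lem_soc_A} and \ref{Lem_soc_D_1}, but now tracking two endomorphisms and both sign conventions $p_{\pm\epsilon}$ at once. Throughout I would compute directly in $\Pi$, reading off membership in $\Gamma(w)$ from Lemma \ref{Lem_rigid_D_2} and converting between the two conventions by the relation (\ref{eq_path_lin_comb}); recall that $p_\epsilon$ and $p_{-\epsilon}$ already agree for $i \ge -1$, so all the sign subtleties are confined to the cells with $i \le -2$.

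For part (1), I would identify $E := \End_\Pi(J(w))$ with a quotient of the loop algebra $e_l(\Pi/I(w))e_l$ at the vertex $l$. Since $J(w)$ is indecomposable, $E$ is local, and $S(w) = \soc_E J(w) = \{x \mid (\rad E)x = 0\}$ by Proposition \ref{Prop_jirrW_brick}. The identity $S(w) = \Ker f_1 \cap \Ker f_2$ is then equivalent to $\rad E = Ef_1 + Ef_2$ as a \emph{left} ideal: granting this, any $g = h_1 f_1 + h_2 f_2 \in \rad E$ satisfies $g(x) = h_1(f_1 x) + h_2(f_2 x)$, which vanishes exactly on $\Ker f_1 \cap \Ker f_2$, while conversely $f_1, f_2 \in \rad E$. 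The structural point is that, because $l \neq \pm 1$, the vertex $l$ carries two shortest loops in $\Pi$: one running out through $l+1$ and back, namely $p_\epsilon(l,l+1,l)$ giving $f_1$, and one running in toward the branch vertex $\pm 1$ and back with the characteristic $\bbD$-type sign split, namely $p_\epsilon(-l,l,l)$ giving $f_2$. These two loops generate the radical of the loop algebra, which is the analogue of the single-generator factorisation used tacitly in the proof of Lemma \ref{Lem_soc_A}(1).

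Parts (2) and (3) then parallel Lemma \ref{Lem_soc_A}(2),(3). A direct multiplication in $\Pi$ should give $f_1(p_{-\epsilon}(i,j,l)) = p_{-\epsilon}(i,j+1,l)$, where the $-\epsilon$ convention is forced precisely so that the sign introduced at the branch when the parity of $j$ changes is absorbed cleanly; by Lemma \ref{Lem_rigid_D_2} this is zero in $J(w)$ iff $(i,j+1) \notin \Gamma(w)$, i.e. iff $(i,j) \in \Gamma_1(w)$, proving (2). Since the unitriangular change of basis (\ref{eq_path_lin_comb}) shows that $\{p_{-\epsilon}(i,j,l) \mid (i,j) \in \Gamma(w)\}$ is again a $K$-basis of $J(w)$, the vectors indexed by $\Gamma_1(w)$ are independent and lie in $\Ker f_1$, while those indexed by $\Gamma(w) \setminus \Gamma_1(w)$ are sent by $f_1$ to the linearly independent family $\{p_{-\epsilon}(i,j+1,l)\}$; hence they contribute nothing to the kernel and (3) follows. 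For (4), I would intersect with $\Ker f_2$: computing $f_2$ on the basis of $\Ker f_1$ from (3), I expect $f_2(p_{-\epsilon}(i,j,l))$ to vanish exactly when $i \le a-1$ and to send the remaining vectors to an independent family, so that $\Ker f_1 \cap \Ker f_2$ is spanned by the $p_{-\epsilon}(i,j,l)$ with $(i,j) \in \Lambda_1(w)$, using $a = w(l)$ and the defining inequalities of $\Gamma(w)$.

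Part (5) is the local computation at the branch: for $b \le -2$, $r \ge 1$ and a cell $(i,j) \in \Gamma_1(w)$ with $-2 \ge i \ge b+1$, I would rewrite $p_{-\epsilon}(i,j,l)$ by (\ref{eq_path_lin_comb}) and test the correction term $p_\epsilon(|i|,j+|i|-1,l)$ against $\Ker f_1$ by the computation of (2); by the definition $r = \max\{k \ge 0 \mid [1,k] \subset \pm R\}$ together with Lemma \ref{Lem_rigid_D_2}, this term survives in $J(w)$ iff $|i| \le r$, and reading off the definition of $\Gamma_2(w)$ places $(|i|,j+|i|-1)$ in $\Gamma_2(w)$. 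Finally, for (6), I would assemble the basis by starting from the $\Lambda_1(w)$-indexed basis of (4): in the range $-r \le i \le -2$ the correction term lies in $\Gamma_2(w)$, hence is not in $\Ker f_1$, so the combination $p_{-\epsilon}$ must be retained, whereas for $i \le -r-1$ the correction term does lie in $\Ker f_1$ and is absorbed by the positive cells, allowing the switch to the cleaner $p_\epsilon$ (for $i \ge -1$ the switch is vacuous). I expect (1) and (4) to be the main obstacles: (1) because it forces one to pin down $\End_\Pi(J(w))$ and its radical near the branch vertex rather than rely on a single-generator factorisation, and (4) because it requires checking that $f_2$ acts diagonally on the $p_{-\epsilon}$-basis of $\Ker f_1$ with vanishing locus exactly $\Lambda_1(w)$ — the sign bookkeeping through the branch, via (\ref{eq_path_lin_comb}) and the $\pm\epsilon$ conventions, is where errors are most likely.
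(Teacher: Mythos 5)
Your route is essentially the paper's: (2)--(3) mimic Lemma \ref{Lem_soc_A} for a suitably twisted basis, (4) intersects with $\Ker f_2$ by direct computation on that basis, (5) tests the correction term of (\ref{eq_path_lin_comb}) against $f_1$, and (6) trades $p_{-\epsilon}$ for $p_\epsilon$ outside the range $-r \le i \le -2$; your elaboration of (1) via $\End_\Pi(J(w)) \cong e_l(\Pi/I(w))e_l$, locality, and $\rad E = Ef_1 + Ef_2$ is a reasonable filling-in of what the paper dismisses as clear. However, your key identity for (2) is wrong, and wrong in a way that would make (2) fail as you argue it. Right multiplication by the up-loop $p_\epsilon(l,l+1,l)$ shifts $j$ to $j+1$ and hence flips the parity $t=(-1)^{j-l+1}$ that governs which branch vertex the path threads; the correct identity, the one the paper uses, is $f_1(p_{-\epsilon}(i,j,l)) = p_{\epsilon}(i,j+1,l)$ --- the convention flips from $-\epsilon$ to $\epsilon$ rather than being ``absorbed cleanly''. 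This matters because $\Gamma(w)$ is defined through the $p_\epsilon$-paths in Lemma \ref{Lem_rigid_D_2}: with the correct identity, vanishing in $J(w)$ is exactly $p_\epsilon(i,j+1,l) \in I(w)$, i.e.\ $(i,j) \in \Gamma_1(w)$. With your identity, for $i \le -2$ one would have $f_1(p_{-\epsilon}(i,j,l)) = -p_\epsilon(i,j+1,l) + p_\epsilon(|i|,j+|i|,l)$ by (\ref{eq_path_lin_comb}), whose vanishing requires \emph{both} cells $(i,j+1)$ and $(|i|,j+|i|)$ to lie outside $\Gamma(w)$ --- a strictly stronger condition than $(i,j)\in\Gamma_1(w)$, and one that is incompatible with part (5), where precisely for $|i| \le r$ the cell $(|i|,j+|i|)$ \emph{does} lie in $\Gamma(w)$. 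So as written, your proofs of (2) and (3) do not go through; the fix is the sign-corrected identity, after which they coincide with the paper's.

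The second gap is (4), which you correctly flag as a main obstacle but then leave as an expectation. The vanishing locus $i \le a-1$ is the right answer, but $f_2$ does not act ``diagonally'' on the $p_{-\epsilon}$-basis in a uniform way; the paper's proof splits into three genuinely different cases. For $i \ge 2$ one computes $f_2(p_{-\epsilon}(i,j,l)) = p_\epsilon(-i,\,l+j-i,\,l)$ and converts its membership in $I(w)$ into the count $\#(R \cap [-i,i]) > i-l$, which is then shown equivalent to $i < a$ --- this counting step is the crux of (4) and is absent from your sketch. For $i = \pm 1$ the image factors through the branch vertex and is nonzero in $\Pi$ only when $i = \epsilon(-1)^j$, and even then it lands in $I(w)$ because $l \ge 2$; for $i \le -2$ it dies since the path already passes through $\pm 1$. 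Similarly, in (6) your ``absorbed by the positive cells'' glosses the needed check that when the correction term $p_\epsilon(|i|,j+|i|-1,l)$ is nonzero in $J(w)$ one has $|i| < a$, so that its cell actually lies in $\Lambda_1(w)$ and the base change is unimodular. None of this is unreachable from your setup, but as written the proposal establishes neither (2) nor (4).
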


\begin{proof}
The part (1) is clear.

The proofs of (2) and (3) are similar to Lemma \ref{Lem_soc_A}.
We remark that 
$f_1(p_{-\epsilon}(i,j,l))=p_\epsilon(i,j+1,l)$
holds in $\Pi$.

(4)
Let $(i,j) \in \Gamma_1(w)$. 
We show that $p_{-\epsilon}(i,j,l) \in \Ker f_2$ holds if and only if $i \le w(l)-1$.

We first assume that $i \ge 2$.
In this case,
$f_2(p_{-\epsilon}(i,j,l))=p_{-\epsilon}(i,j,l) p_\epsilon(-l,l,l)
=p_\epsilon(-i,l+j-i,l)$ hold.
Thus, $f_2(p_{-\epsilon}(i,j,l))=0$ in $J(w)$ holds if and only if
$p_\epsilon(-i,l+j-i,l) \in \Pi$ belongs to $I(w)$.
This is equivalent to $w(l+j-i+1)+1 > -i$ by Lemma \ref{Lem_rigid_D_2},
and also to $\#(R \cap [-n,-i-1]) < j-i+1$.

On the other hand, $(i,j) \in \Gamma_1(w)$ gives
$w(j+2)-1 \ge i \ge w(j+1)$, because $i \ge 2$.
This implies that 
$\#(R \cap [-n,i])=j+1-l$.

Therefore, $f_2(p_{-\epsilon}(i,j,l))=0$ in $J(w)$ holds if and only if
$\#(R \cap [-i,i]) > i-l$.
This condition is equivalent to that $\#(w([1,l]) \cap [-i,i]) < l$.
This exactly means that there exists some $k \in [1,l]$
such that $|w(k)|>i$, and it is equivalent to $a>i$. 

Now, the proof for $i \ge 2$ is complete.

Next, we assume that $i=\pm 1$.
We must show $p_{-\epsilon}(i,j,l) \in \Ker f$.
In this case, 
\begin{align*}
f_2(p_{-\epsilon}(i,j,l)) &= p_{-\epsilon}(i,j,l) p_\epsilon(-l,l,l)
= \alpha_i p_\epsilon(2,j,l) p_\epsilon(-l,l,l)
= \alpha_i p_\epsilon(-2,l+j-2,l) \\
&= \begin{cases}
p_\epsilon(i,l+j-1,l) & (i= \epsilon (-1)^j) \\
0                     & (i=-\epsilon (-1)^j)
\end{cases},
\end{align*}
since $p_\epsilon(-2,l+j-2,l)$ factors through $\epsilon (-1)^{j-1}$.

Thus, we may assume $i=\epsilon(-1)^j$.
First, $p_\epsilon(i,l+j-1,l) \in I(w)$ is equivalent to that 
($w(l+j) \ge 2$ or $w(l+j)=-i$) by Lemma \ref{Lem_rigid_D_2}.
On the other hand, $(i,j) \in \Gamma_1(w)$ implies that $w(j+2) \ge 2$ or $w(j+2)=-i$. 
Since $l \ge 2$, we have ($w(l+j) \ge 2$ or $w(l+j)=-i$),
and $p_\epsilon(i,l+j-1,l) \in I(w)$.

Consequently, $i=\pm 1$ implies that $p_{-\epsilon}(i,j,l) \in \Ker f_2$.

Finally, we assume that $i \le -2$.
Then $p_{-\epsilon}(i,j,l) \in \Ker f_2$ holds,
because the path $p_{-\epsilon}(i,j,l)$ has
$p_{-\epsilon}(1,j,l)$ or $p_{-\epsilon}(-1,j,l)$ in its ending.

Now, we have proved that 
$p_{-\epsilon}(i,j,l) \in \Ker f_2$ holds if and only if $i \le a-1$,
and obtained that 
$f_2(p_{-\epsilon}(i,j,l))=p_\epsilon(-i,l+j-i,l) \ne 0$ in $J(w)$ if
$(i,j) \in \Gamma_1(w)$ and $i \ge a$.

Thus, the set $\{ f_2(p_{-\epsilon}(i,j,l)) \mid 
(i,j) \in \Gamma_1(w) \setminus \Lambda_1(w) \}$ 
is linearly independent in $J(w)$,
so $\{ p_{-\epsilon}(i,j,l) \mid (i,j) \in \Lambda_1(w) \}$
generates $\Ker f_1 \cap \Ker f_2$.
This set is clearly linearly independent in $J(w)$.
Therefore, we obtain the assertion from (1).

(5)
Let $(i,j) \in \Gamma_1(w)$ with $-2 \ge i \ge b+1$.

For the first statement, 
it is easy to see that 
$f_1(p_{-\epsilon}(|i|,j+|i|-1,l))=p_\epsilon(|i|,j+|i|,l)$ in $\Pi$,
so $p_{-\epsilon}(|i|,j+|i|-1,l) \notin \Ker f_1$ precisely means
$p_\epsilon(|i|,j+|i|,l) \notin I(w)$ in $\Pi$.
Lemma \ref{Lem_rigid_D_2} yields that this holds 
if and only if $w(j+|i|+1) \le |i|$, because $|i| \ge 2$.
It is equivalent to $\#(R \cap [-n,|i|]) \ge j+|i|+1-l$.

On the other hand, $(i,j) \in \Gamma_1(w)$ gives
$w(j+2) \ge i=-|i| \ge w(j+1)+1$, because $i \le -2$.
This implies that $\#(R \cap [-n,-|i|-1])=j+1-l$.

Therefore, $f_1(p_{-\epsilon}(|i|,j+|i|-1,l)) \ne 0$ in $J(w)$ holds if and only if
$\#(R \cap [-|i|,|i|]) \ge |i|$.
This exactly means $[1,|i|] \subset \pm R$.
By the definition of the number $r$, it is equivalent to $|i| \le r$.
In this case, $\#(R \cap [-|i|,|i|]) = |i|$.

The first statement has been proved.

Next, we show the second statement, so we assume $|i| \le r$.
It suffices to prove $(|i|,j+|i|) \in \Gamma(w)$ and $(|i|,j+|i|+1) \notin \Gamma(w)$.
We already have $\#(R \cap [-|i|,|i|])=|i|$,
and by the argument above, this yields
$\#(R \cap [-n,|i|])=j+|i|+1-l$.
Thus, we have $w(j+|i|+1) \le |i|$ and $w(j+|i|+2)>|i|$.
Since $|i| \ge 2$, Lemma \ref{Lem_rigid_D_2} implies that 
$(|i|,j+|i|) \in \Gamma(w)$ and $(|i|,j+|i|+1) \notin \Gamma(w)$.
Thus, $(|i|,j+|i|-1)$ belongs to $\Gamma_2(w)$.

(6)
In (4),
$p_{-\epsilon}(i,j,l)=p_\epsilon(i,j,l)$ holds for
each $(i,j) \in \Lambda_1(w)$ with $i \ge -1$.

On the other hand, 
let $(i,j) \in \Lambda_1(w)$ with $i<-r$ and $i \le -2$.
Clearly, $p_{-\epsilon}(i,j,l) \in \Ker f_1$. 
By (5), we have $p_{-\epsilon}(|i|,j+|i|-1,l) \in \Ker f_1$.

If $p_{-\epsilon}(|i|,j+|i|-1,l) \ne 0$ in $J(w)$, then 
$\#(R \cap [-|i|,|i|]) \ge |i|-1$ follows from similar arguments 
to the proof of the first statement of (5). 
This implies $|i| < a$, since $l \ge 2$. 
We have $(|i|,j+|i|-1,l) \in \Lambda_1(w)$.
Thus, in the $K$-basis of $\Ker f_1 \cap \Ker f_2$ given in (4),
we can replace $p_{-\epsilon}(i,j,l)$ to $p_\epsilon(i,j,l)$
to obtain another $K$-basis of $S(w)$.

If $p_{-\epsilon}(|i|,j+|i|-1,l)=0$ in $J(w)$,
then $p_{-\epsilon}(i,j,l)=p_\epsilon(i,j,l)$ holds.

We repeat this procedure, and get that 
the elements in the statement form a $K$-basis of $\Ker f_1 \cap \Ker f_2$.
\end{proof}

The next assertion follows from the definition of $\Lambda_1(w)$.

\begin{Lem}\label{Lem_vertex_D_2}
Let $w \in \jirr W$ be a join-irreducible element of type $l \ne \pm 1$.
Then, there exists a bijection $\Lambda_1(w) \to V$ 
given by $(i,j) \mapsto i$.
\end{Lem}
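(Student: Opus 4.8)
The plan is to follow the template of the proofs of Lemmas \ref{Lem_vertex_A} and \ref{Lem_vertex_D_1}, using Lemma \ref{Lem_rigid_D_2}(1) to decide membership in $\Gamma(w)$ and exploiting that, since $w$ is join-irreducible of type $l$, the values $w(l+1)<w(l+2)<\cdots<w(n)$ increase, so that $v:=w(j+1)$ is monotone in $j$ for $j\in[l,n-1]$. I would split the argument into injectivity of $(i,j)\mapsto i$ on $\Lambda_1(w)$ and the identification of its image with $V$.

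For injectivity I would fix $i$ and show that $\{\,j\mid (i,j)\in\Gamma(w)\,\}$ is an interval, so that there is a unique $j$ with $(i,j)\in\Gamma_1(w)$, namely its right endpoint; this makes the first-coordinate map injective on $\Gamma_1(w)$, and a fortiori on $\Lambda_1(w)\subset\Gamma_1(w)$. The point is that each of the three thresholds in Lemma \ref{Lem_rigid_D_2}(1) is nondecreasing in $v=w(j+1)$, while $v$ increases with $j$. The only delicate case is $i=\pm1$, where the condition at $v=\pm1$ is disjunctive; this is harmless because the signed permutation $w$ cannot place both $1$ and $-1$ in $R=w([l+1,n])$, so no ``gap'' in the admissible $j$ can occur and the admissible set stays an interval.

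For the image I would observe that the bound $i\le a-1$ is built into the definition of $\Lambda_1(w)$, so only the lower end and the entries $\pm1$ remain to be controlled. Applying Lemma \ref{Lem_rigid_D_2}(1) to an arbitrary $(i,j)\in\Gamma(w)$ and using $w(j+1)\ge w(l+1)=b$, monotonicity of the thresholds gives a uniform lower bound governed by $b$: one gets $i\ge b$ when $b\ge2$, and $i\ge b+1$ when $b\le-2$. This matches Theorem \ref{Thm_brick_D} after the simplification that for $b\le-2$ one has $V=\{\,i\in\pm[1,n-1]\mid b+1\le i\le a-1\,\}$ irrespective of $c$, since $\{c,-c\}=\{1,-1\}$ there and both columns $\pm1$ do occur (each is admissible already at $v=b\le-2$). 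Surjectivity is then obtained by producing, for each $i\in V$, the maximal row $j$ with $(i,j)\in\Gamma(w)$ and checking $(i,j)\in\Gamma_1(w)$, exactly as in Lemma \ref{Lem_vertex_A}.

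I expect the main obstacle to be the sign bookkeeping in the case $b=\pm1$, where only one of the indices $\pm1$ survives in $\Lambda_1(w)$ and one must verify that it is precisely $c$. Here the $\bbD_n$ parity condition is essential: for instance $b=1$ forces $w(1)>0$, hence $c=1$, because $w(1)<w(2)<\cdots<w(l)$ together with $-w(1)<w(2)$ would otherwise leave $w(1)$ as the unique negative value among $w([1,n])$, contradicting evenness; the case $b=-1$ is symmetric and yields $c=-1$. This is exactly the combinatorics already isolated in parts (4)--(6) of Lemma \ref{Lem_soc_D_2}, so in practice I would quote those computations (in particular the identity $|\Lambda_1(w)|=\dim_K S(w)$) and keep the remaining verification parallel to the type-$\bbA$ argument.
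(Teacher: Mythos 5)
Your proposal is correct and takes essentially the same route as the paper's proof: a column-by-column analysis of $\Gamma(w)$ via the membership criterion of Lemma \ref{Lem_rigid_D_2}(1), using that $w(l+1)<\cdots<w(n)$ to see each column is an interval with a unique top element in $\Gamma_1(w)$, and then matching the set of first coordinates with $V$ — your parity argument forcing $c=b$ when $b=\pm1$, and the observation that both signs $\pm1$ occur when $b\le-2$, are exactly the verifications the paper compresses into ``the definition of $V$'' and the reference to Lemma \ref{Lem_soc_D_2}. The only point you gloss over (by deferring to the template of Lemma \ref{Lem_vertex_A}) is the nonemptiness of the column of $i\in V$ when $i>l$, which the paper records explicitly as the inequality $\max\{k\in[l+1,n]\mid w(k)<k\}-1\ge a-1$; this follows from the same counting as in type $\bbA_n$, so your argument goes through.
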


\begin{proof}
The well-definedness can be checked by Lemma \ref{Lem_soc_D_2}.

We clearly have $\max \{ k \in [l+1,n] \mid w(k) < k \}-1 \ge a-1$.
Then, Lemma \ref{Lem_rigid_D_2} and the definition of $V$ yield that, 
for any $i \in V$, there exists some $j$ such that $(i,j) \in \Gamma(w)$.
Thus, the definition of $\Lambda_1(w)$ and $i \le a-1$ imply that
there uniquely exists $j$ such that $(i,j) \in \Lambda_1(w)$.
This means that the map $\Lambda_1(w) \to V$ is bijective.
\end{proof}

Now, we show Theorem \ref{Thm_brick_D} in the case $l \ne \pm 1$.

\begin{proof}
By Lemma \ref{Lem_soc_D_2}, we can define a map $\rho \colon V \to Q_0$ as follows:
$\rho(i)$ is the unique element $j \in Q_0$ 
such that $(i,j) \in \Gamma_1(w)$.
Set $\ang{i}:=p(i,\rho(i),l)$ for each $i \in V$.
It suffices to show that $(\ang{i})_{i \in V}$ satisfies the properties (a), (b), and (c),
since the three properties are enough to define a $\Pi$-module.

First, $(\ang{i})_{i \in V}$ is a $K$-basis of $S(w)$ 
by Lemma \ref{Lem_vertex_D_2}, 
and $K\ang{i}$ is clearly a subspace of $e_i S(w)$ if $i \ge -1$; 
and of $e_{|i|} S(w)$ if $i \le -2$.
Thus, the property (a) has been obtained,
and the property (b) follows from (a).

In the rest, we fully use Lemma \ref{Lem_rigid_D_2}.

We begin the proof of (c)(i).
First, we assume $2 \in V_+$, and set $j:=\rho(2)$.
\begin{itemize}
\item If $2 \notin R$ and $r \ge 1$, then $w(j+1) = c$.
Thus, $(c,j) \in \Lambda_1(w)$ and $(-c,j) \notin \Gamma(w)$ follow.
\item If $2 \notin R$ and $r=0$, then $w(j+1) \le -2$.
Thus, $(c,j),(-c,j) \in \Lambda_1(w)$ follows.
\item If $2 \in R$, then $w(j+1)=2$.
Thus, $(c,j),(-c,j) \notin \Gamma(w)$ follows.
\end{itemize}
Therefore,
\begin{align*}
\alpha_1 \ang{2}=\alpha_1 p_\epsilon(2,j,l)=p_\epsilon(c,j,l)+p_\epsilon(-c,j,l) 
=\begin{cases}
\ang{c}       & (2 \notin R, \ r \ge 1) \\
\ang{c}+\ang{-c} & (2 \notin R, \ r=0) \\
0          & (2 \in R)
\end{cases}.
\end{align*}

Second, we assume $i \in V_+ \setminus \{ \max V_+ \}$ and $i \ge 2$, 
and set $j:=\rho(i+1)$.
Then,
\begin{align*}
\alpha_i \ang{i+1}=\alpha_i p_\epsilon(i+1,j,l)=p_\epsilon(i,j,l) 
=\begin{cases}
\ang{i} & (\text{if $i+1 \notin R$, since $(i,j) \in \Gamma_1(w)$)} \\
0 & (\text{if $i+1 \in R$, since $(i,j) \notin \Gamma(w)$)}
\end{cases}.
\end{align*}
Thus, we have the property (c)(i).

We begin the proof of (c)(ii).
First, let $c \in V_+ \setminus \{ \max V_+ \}$, and set $j:=\rho(c)$.
In this case, $w(j+1) \le 1$, $w(j+2) \ge 2$, and 
$\epsilon=(-1)^{j-l}c$ hold,
so the path $p_\epsilon(-2,j,l)$ factors through $-c$.
We observe the following properties.
\begin{itemize}
\item If $-2 \notin R$ and $r=0$, then $(-2,j) \in \Lambda_1(w)$; 
otherwise $(-2,j) \notin \Gamma(w)$. 
\item If $2 \in R$, then $(2,j+1) \in \Lambda_1(w)$; 
otherwise $(2,j+1) \notin \Gamma(w)$. 
\end{itemize}
Thus, we have
\begin{align*}
\beta_2 \ang{c} &=\beta_2 p_\epsilon(c,j,l)=p_{-\epsilon}(-2,j,l)
=-p_\epsilon(-2,j,l)+p_\epsilon(2,j+1,l)=\eta_c^- \ang{-2} + \eta_c^+ \ang{2}.
\end{align*}
Second, let $i \in V_+ \setminus \{ \max V_+ \}$ and $i \ge 2$, and set $j:=\rho(i)$.
Then,
\begin{align*}
\beta_{|i|+1} \ang{i}&=\beta_{|i|+1} p_\epsilon(i,j,l)=p_\epsilon(|i|+1,j+1,l) \\ 
&=\begin{cases}
0 & (\text{if $i+1 \notin R$, since $(|i|+1,j+1) \notin \Gamma(w)$)} \\
\ang{|i|+1} & (\text{if $i+1 \in R$, since $(|i|+1,j+1) \in \Gamma_1(w)$)}
\end{cases}.
\end{align*}
These observations imply the property (c)(ii).

We next consider the elements in $V_-$.
In order to observe the actions of the arrows to $\ang{-i}$ ($i \in [2,r]$),
we define sets $\Omega(w)$ and $\Lambda_2(w)$ as 
\begin{align*}
\Omega(w):=\{ (i,j) \in \Lambda_1(w) \mid i \in [-r,-2] \}, \quad
\Lambda_2(w):=
\{ (i,j) \in \Gamma_2(w) \mid i \in [2,r] \}.
\end{align*}
The element $\ang{-i}$ is equal to the path $p_{-\epsilon}(i,j,l)$ with $(i,j) \in \Omega(w)$,
but we want to deal with the paths of the form $p_\epsilon(i',j',l)$.
In the formula (\ref{eq_path_lin_comb}),
$p_{-\epsilon}(i,j,l)$ is a linear combination of
$p_\epsilon(i,j,l)$ and $p_\epsilon(|i|,j+|i|-1,l)$.
By Lemma \ref{Lem_soc_D_2} (5), 
$(|i|,j+|i|-1)$ belongs to $\Lambda_2(w)$.
Moreover, $\phi \colon \Omega(w) \ni (i,j) \mapsto (|i|,j+|i|-1) \in \Lambda_2(w)$
is a bijection.

In the figure for $J(w)$ in Example \ref{Ex_brick_D_2}, the squares 
with positive blue numbers are the elements of $\Lambda_2(w)$,
and that the squares with negative blue numbers are the elements of $\Omega(w)$.

Now, we begin the proof of (c)(iii).
We first assume $-c \in V_- \setminus \{ \min V_- \}$, and set $j:=\rho(-2)$.
\begin{itemize}
\item If $-2 \in R$ and $r \ge 2$, then 
$w(j+1) \le -3$, $w(j+2)=-2$, $w(j+3)=c$, $w(j+4) \ge 3$, and 
$\epsilon=(-1)^{j+2-l}c$ hold,
so the path $p_\epsilon(-2,j,l)$ factors through $-c$, 
and $(-c,j+1) \in \Lambda_1(w)$ follows.
Thus,
\begin{align*}
\alpha_1 \ang{-2} &= \alpha_1 p_{-\epsilon}(-2,j,l) 
= -\alpha_1 p_\epsilon(-2,j,l) + \alpha_1 p_\epsilon(2,j+1,l) \\
&= -p_\epsilon(c,j+1,l)+(p_\epsilon(c,j+1,l)+p_\epsilon(-c,j+1,l)) \\
&= p_\epsilon(-c,j+1,l) = \ang{-c}.
\end{align*}
\item If $-2 \in R$ and $r=0$, then 
$w(j+1) \le -3$, $w(j+2)=-2$, $w(j+3) \ge 3$, and 
$\epsilon=(-1)^{j+1-l}c$ hold,
so the path $p_\epsilon(-2,j,l)$ factors through $c$, 
and $(-c,j+1) \in \Lambda_1(w)$ follows.
Thus,
\begin{align*}
\alpha_1 \ang{-2} &= \alpha_1 p_\epsilon(-2,j,l) = p_\epsilon(-c,j+1,l) = \ang{-c}.
\end{align*}
\item If $-2 \notin R$ and $r \ge 2$, 
then $w(j+1) \le -3$, $w(j+2)=c$, $w(j+3)=2$, and 
$\epsilon=(-1)^{j+1-l}c$ hold,
so the path $p_\epsilon(-2,j,l)$ factors through $c$, 
and $(c,j+1) \in \Lambda_1(w)$ follows.
Thus,
\begin{align*}
\alpha_1 \ang{-2} &= \alpha_1 p_{-\epsilon}(-2,j,l) 
= -\alpha_1 p_\epsilon(-2,j,l) + \alpha_1 p_\epsilon(2,j+1,l) \\
&= -p_\epsilon(-c,j+1,l)+(p_\epsilon(c,j+1,l)+p_\epsilon(-c,j+1,l)) \\
&= p_\epsilon(c,j+1,l) = \ang{c}.
\end{align*}
\item If $-2 \notin R$ and $r=1$, 
then $w(j+1) \le -3$, $w(j+2)=c$, $w(j+3) \ge 3$, and 
$\epsilon=(-1)^{j+1-l}c$ hold,
so the path $p_\epsilon(-2,j,l)$ factors through $c$, 
and $(-c,j+1) \notin \Gamma(w)$ follows.
Thus,
\begin{align*}
\alpha_1 \ang{-2} = \alpha_1 p_\epsilon(-2,j,l) = p_\epsilon(-c,j+1,l) = 0.
\end{align*}
\item If $-2 \notin R$ and $r=0$,  
then $w(j+1) \le -3$, $w(j+2) \ge 2$, and 
$\epsilon=(-1)^{j-l}c$ hold,
so the path $p_\epsilon(-2,j,l)$ factors through $-c$, 
and $(c,j+1) \notin \Gamma(w)$ follows.
Thus,
\begin{align*}
\alpha_1 \ang{-2} &= \alpha_1 p_\epsilon(-2,j,l) = p_\epsilon(c,j+1,l)= 0.
\end{align*}
\end{itemize}
Second, we assume $i \in V_- \setminus \{ \min V_- \}$ and $|i| \ge 2$,
and take the unique $j$ such that $(-(|i|+1),j) \in \Lambda_1(w)$,
and observe the action of the arrow $\alpha_i$ for $\ang{-(|i|+1)} \in S(w)$.
\begin{itemize}
\item If $|i|<r$, then $(-(|i|+1),j) \in \Omega(w)$ and 
$\phi(-(|i|+1),j)=(|i|+1,j+|i|) \in \Lambda_2(w)$ hold, and
\begin{align*}
&\alpha_{|i|} \ang{-(|i|+1)} = \alpha_{|i|} p_{-\epsilon}(-(|i|+1),j,l) 
= -\alpha_{|i|} p_\epsilon(-(|i|+1),j,l) + \alpha_{|i|} p_\epsilon(|i|+1,j+|i|,l) \\
&\qquad = -p_\epsilon(-|i|,j+1,l)+p_\epsilon(|i|,j+|i|,l)\\
&\qquad =\begin{cases}
\ang{-|i|} & (\text{if $-(|i|+1) \in R$, since $(-|i|,j+1) \in \Omega(w)$}) \\
p_\epsilon(|i|,j+|i|,l) & (\text{if $-(|i|+1) \notin R$, 
since $(-|i|,j+1) \notin \Gamma(w)$})
\end{cases}\\
&\qquad =\begin{cases}
\ang{i}   & (\text{if $-(|i|+1) \in R$}) \\
\ang{-i} & (\text{if $-(|i|+1) \notin R$, since $|i|+1 \in R$ and 
$(|i|,j+|i|) \in \Lambda_1(w)$})
\end{cases}.
\end{align*}
\item If $|i| \ge r$, then
\begin{align*}
\alpha_i \ang{-(|i|+1)} &= \alpha_i p_\epsilon(-(|i|+1),j,l) =p_\epsilon(-|i|,j+1,l) \\
&=\begin{cases}
\ang{-|i|}=\ang{i} & (\text{if $-(|i|+1) \in R$, since $(-|i|,j+1) \in \Lambda_1(w)$}) \\
0            & (\text{if $-(|i|+1) \notin R$, since $(-|i|,j+1) \notin \Gamma(w)$}) \\
\end{cases}.
\end{align*}
\end{itemize}
These observations and the definition of $r$ tell us that (c)(iii) holds.

Finally, we would like to show the property (c)(iv).
First, we assume $-c \in V_-$, and set $j:=\rho(-c)$.
\begin{itemize}
\item If $r \ge 2$, then $w(j+1) \le -2$, $w(j+2)=c$, $w(j+3) \ge 2$, and 
$\epsilon=(-1)^{j+1-l}c$ hold,
so the path $p_\epsilon(-2,j,l)$ factors through $c$.
Thus,
\begin{align*}
\beta_2 \ang{-c} &=\beta_2 p_\epsilon(-c,j,l)
=-p_\epsilon(-2,j,l)+p_\epsilon(2,j+1,l) \\
&=\begin{cases}
p_\epsilon(2,j+1,l) & (\text{if $-2 \in R$, since $(-2,j) \notin \Gamma(w)$})\\
p_{-\epsilon}(-2,j,l) & (\text{if $-2 \notin R$}) \\
\end{cases}\\
&=\begin{cases}
\ang{2} & (\text{if $-2 \in R$, since $2 \notin R$ and $(2,j+1) \in \Lambda_1(w)$}) \\
\ang{-2} & (\text{if $-2 \notin R$, since $(-2,j) \in \Omega(w)$}) \\
\end{cases}.
\end{align*}
\item If $r=1$, the path $p_\epsilon(-2,j,l)$ factors through $c$ by the same reason.
Since $r=1$, we have $-2,2 \notin R$, so $(-2,j),(2,j+1) \in \Lambda_1(w)$.
Thus,
\begin{align*}
\beta_2 \ang{-c}&=\beta_2 p_\epsilon(-c,j,l) 
=-p_\epsilon(-2,j,l)+p_\epsilon(2,j+1,l)= -\ang{-2}+\ang{2}.
\end{align*}
\item If $r=0$, then $w(j+1) \le -2$, $w(j+2) \ge 2$, and 
$\epsilon=(-1)^{j-l}c$ hold,
so the path $p_\epsilon(-2,j,l)$ factors through $-c$.
Thus,
\begin{align*}
\beta_2 \ang{-c}&=\beta_2 p_\epsilon(-c,j,l)=p_\epsilon(-2,j,l)\\
&=\begin{cases}
0     & (\text{if $-2 \in R$, since $(-2,j) \notin \Gamma(w)$}) \\
\ang{-2} & (\text{if $-2 \notin R$, since $(-2,j) \in \Lambda_1(w)$})
\end{cases}.
\end{align*}
\end{itemize}
Second, we assume $i \in V_-$, $|i| \ge 2$, and set $j:=\rho(2)$.
We observe the action of the element $\beta_2$ for $\ang{-i} \in S(w)$.
\begin{itemize}
\item If $|i|<r$, then 
$(i,j) \in \Omega(w)$ and $\phi(i,j)=(|i|,j+|i|-1) \in \Lambda_2(w)$ hold, so 
\begin{align*}
&\beta_{|i|+1} \ang{i} = \beta_{|i|+1}p_{-\epsilon}(i,j,l)
= -\beta_{|i|+1}p_\epsilon(i,j,l)+\beta_{|i|+1}p_\epsilon(|i|,j+|i|-1,l) \\ 
&\qquad= -p_\epsilon(-(|i|+1),j,l)+p_\epsilon(|i|+1,j+|i|,l) \\
&\qquad= \begin{cases}
p_\epsilon(|i|+1,j+|i|,l) & (\text{if $-(|i|+1) \in R$, 
since $(-(|i|+1),j) \notin \Gamma(w)$}) \\
p_{-\epsilon}(-(|i|+1),j,l) & (\text{if $-(|i|+1) \notin R$}) \\
\end{cases} \\
&\qquad= \begin{cases}
\ang{|i|+1} & (\text{if $-(|i|+1) \in R$, 
since $|i|+1 \notin R$ and $(|i|+1,j+|i|) \in \Lambda_1(w)$}) \\
\ang{-(|i|+1)} & (\text{if $-(|i|+1) \notin R$, since $(-(|i|+1),j) \in \Omega(w)$}) \\
\end{cases}.
\end{align*}
\item If $|i|=r$, then 
$(i,j) \in \Omega(w)$ and $\phi(i,j)=(|i|,j+|i|-1) \in \Lambda_2(w)$ hold.
Since $|i|=r$, we have $-(|i|+1),|i|+1 \notin R$, 
so $(|i|+1,j+|i|), (-(|i|+1),j) \in \Lambda_1(w)$ hold.
Thus,
\begin{align*}
\beta_{|i|+1} \ang{i} &= \beta_{|i|+1} p_{-\epsilon}(i,j,l)
= -\beta_{|i|+1} p_\epsilon(i,j,l) + \beta_{|i|+1} p_\epsilon(|i|,j+|i|-1,l) \\
&= -p_\epsilon(-(|i|+1),j,l) + p_\epsilon(|i|+1,j+|i|,l)
=-\ang{-(|i|+1)}+\ang{|i|+1}.
\end{align*}
\item If $|i|>r$, then
\begin{align*}
\beta_{|i|+1} \ang{i} &= \beta_{|i|+1} p_\epsilon(i,j,l)=p_\epsilon(-(|i|+1),j,l)\\
&=\begin{cases}
0              & (\text{if $-(|i|+1) \in R$, since $(-(|i|+1),j) \notin \Gamma(w)$}) \\
\ang{-(|i|+1)} & (\text{if $-(|i|+1) \notin R$, 
since $(-(|i|+1),j) \in \Lambda_1(w)$})
\end{cases}.
\end{align*}
\end{itemize}
The property (c)(iv) follows from these observations and the definition of $r$.

Now, all the proof is complete.
\end{proof}

\section{Description of semibricks}\label{Sec_semibrick}

\subsection{Canonical join representations in Coxeter groups}\label{Subsec_can_join}

Let $\Delta$ be a Dynkin diagram $\bbA_n$ or $\bbD_n$,
and $\Pi$ and $W$ be the corresponding preprojective algebra and 
the Coxeter group, respectively.
We obtained a canonical bijection $S(\bullet) \colon W \to \sbrick \Pi$
in Proposition \ref{Prop_W_sbrick}.
The aim of this section is to give the explicit description of this map.
In the previous section, this aim has been achieved for the restricted bijection
$S(\bullet) \colon \jirr W \to \brick \Pi$.
To extend this to all elements in $W$, 
it is enough to determine the canonical join representations in $W$ for 
$\Delta=\bbA_n,\bbD_n$.

It would be difficult to prove that a set of join-irreducible elements
gives a canonical join representation of a given element in $W$ 
by directly checking the conditions in Definition \ref{Def_can_join}.
Fortunately, Reading \cite{Reading} has obtained a nice property 
characterizing canonical join representations in finite Coxeter groups.
To explain this, we prepare some notation.

Let $\Delta_0$ be the vertices set of $\Delta$.
Then, $W$ has the canonical generators $\{s_i \mid i \in \Delta_0 \}$.
For each $w \in W$, set $\des(w)$ and $\cov(w)$ 
as the set of \textit{descents} and the set of \textit{cover reflections} of $w$, respectively: that is,
\begin{align*}
\des(w):=\{i \in \Delta_0 \mid ws_i<w \}, \quad
\cov(w):=\{ws_iw^{-1} \mid i \in \des(w) \}.
\end{align*}
There is a natural bijection $\des(w) \to \cov(w)$
defined by $i \mapsto ws_iw^{-1}$.
Using the set $\cov(w)$,
we can write the canonical join representation of $w$ as follows.

\begin{Prop}\label{Prop_cover_ref}\cite[Theorem 10-3.9]{Reading}
Let $w \in W$.
For each $t \in \cov(w)$, 
the set $\{v \in W \mid v \le w, \ t \in \inv(v) \}$ 
has a unique minimal element $w_t$.
Moreover, $\bigvee_{t \in \cov(w)} w_t$ is the canonical join representation of $w$.
\end{Prop}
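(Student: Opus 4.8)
The plan is to reduce the statement to a lattice-theoretic fact about the finite semidistributive lattice $(W,\le)$ and then feed in the combinatorics of inversions. First I would translate the set appearing in the statement into the language of lower covers. For $i\in\des(w)$ the element $ws_i$ is a lower cover of $w$, and it is standard that $\inv(w)=\inv(ws_i)\sqcup\{t\}$ with $t=ws_iw^{-1}$; since the natural map $\des(w)\to\cov(w)$ is a bijection, the cover reflections of $w$ are exactly these $t$, one for each lower cover. For any $v\le w$ one has $\inv(v)\subseteq\inv(w)$, so $v\le ws_i$ is equivalent to $t\notin\inv(v)$; hence
\[
\{v\in W\mid v\le w,\ t\in\inv(v)\}=\{v\in W\mid v\le w,\ v\not\le ws_i\}.
\]
This reframes $w_t$ as the minimal element lying below $w$ but not below the lower cover $ws_i$.

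Next I would produce $w_t$ and record its properties using that $(W,\le)$ is join-semidistributive. Write $D:=\{v\le w\mid v\not\le ws_i\}$, which contains $w$. If $u_1,u_2\in D$, then $u_k\vee ws_i=w$ because $ws_i\lessdot w$ and $u_k\not\le ws_i$; the join-semidistributive law applied to $ws_i\vee u_1=ws_i\vee u_2=w$ gives $ws_i\vee(u_1\wedge u_2)=w$, so $u_1\wedge u_2\in D$. Thus $D$ is closed under meets and has a unique minimal element $w_t=\bigwedge D$. The same minimality shows $w_t$ is join-irreducible: if $w_t=a\vee b$ with $a,b<w_t$, then $a,b\notin D$, i.e. $a,b\le ws_i$, forcing $w_t\le ws_i$, a contradiction. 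Finally $\bigvee_{t\in\cov(w)}w_t=w$: each $w_t\le w$, and since $w_t\not\le ws_i$ the join $Y:=\bigvee_t w_t$ lies below no lower cover of $w$, so if $Y<w$ it would be dominated by a maximal element of $[Y,w)$, which is a lower cover, a contradiction; hence $Y=w$. This establishes property (a) of Definition \ref{Def_can_join}.

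It remains to see that $\{w_t\}_{t\in\cov(w)}$ is the canonical join representation, i.e. conditions (b) and (c). Here I would use that $W$ is semidistributive, so the canonical join representation $w=\bigvee C$ exists with $C\subseteq\jirr W$, and argue $C=\{w_t\}$. For $C\subseteq\{w_t\}$: given $j\in C$, irredundancy makes $w^{(j)}:=\bigvee(C\setminus\{j\})$ a proper lower bound with $j\vee w^{(j)}=w$; choosing a lower cover $ws_i\ge w^{(j)}$ shows $j\not\le ws_i$, so $w_t\le j$ for the corresponding $t$, and the minimality property (c) of $C$, applied against the representation obtained by replacing $j$ with $w_t$, upgrades this to $j=w_t$. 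For the reverse inclusion one checks that $t\mapsto w_t$ is injective—the top reflection added at the unique lower cover of the join-irreducible $w_t$ is exactly $t$, which recovers $t$ from $w_t$—and matches it with the bijection between the canonical joinands and the lower covers of $w$. I expect this last step to be the main obstacle: showing that the $w_t$ are \emph{precisely} the canonical joinands (equivalently, verifying irredundancy (b) and the minimality (c)) is exactly where the full force of semidistributivity is needed, and I would either invoke the general correspondence between canonical join representations and lower covers in finite semidistributive lattices or reprove it by running the join-semidistributive law as above.
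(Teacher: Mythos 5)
The paper itself gives no proof of this proposition --- it is quoted verbatim from \cite[Theorem 10-3.9]{Reading} --- so your attempt is necessarily a new argument rather than a variant of one in the text. Most of it is correct. The translation $\{v \le w,\ t \in \inv(v)\} = \{v \le w,\ v \not\le ws_i\}$ is right, since $\inv(ws_i)=\inv(w)\setminus\{t\}$ and the weak order is containment of inversion sets; the join-semidistributivity argument (if $u_1\vee ws_i=u_2\vee ws_i=w$ then $(u_1\wedge u_2)\vee ws_i=w$) correctly shows the set $D_t$ is meet-closed, hence has a minimum $w_t$; and your deductions that $w_t$ is join-irreducible, that the unique cover reflection of $w_t$ is $t$ itself (minimality forces $t\notin\inv(w_ts_d)$ for the unique descent $d$, so $t=w_ts_dw_t^{-1}$), hence that $t\mapsto w_t$ is injective with $\cov(w_t)=\{t\}$, and that $\bigvee_{t}w_t=w$, are all sound. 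Note that $\cov(w_t)=\{t\}$ is exactly the hypothesis the paper isolates in Proposition \ref{Prop_join_rep_suff}.

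The gap is in the last step. To show $C\subseteq\{w_t\}$ you apply condition (c) of Definition \ref{Def_can_join} to $V=(C\setminus\{j\})\cup\{w_t\}$, but (c) only compares against sets satisfying (a) and (b), and you have not verified (a) for $V$: from $w_t\not\le ws_i$ and $w^{(j)}\le ws_i$ you only know $w_t\vee w^{(j)}$ is not below the \emph{one} cover $ws_i$ you chose; it could still lie below a different lower cover of $w$, so $\bigvee V=w$ does not follow and the conclusion $j=w_t$ is unproved. (Incidentally, condition (c) for the family $\{w_t\}$ itself needs no such detour: if $V$ satisfies (a) and (b), then for each cover $ws_i$ some $v\in V$ has $v\not\le ws_i$, whence $v\in D_t$ and $w_t\le v$ by minimality.) What genuinely remains is irredundancy (b) --- for instance one would want $t_0\notin\inv(w_t)$ for $t\ne t_0$, which does \emph{not} follow from $\cov(w_t)=\{t\}$, since inversion sets contain more than cover reflections --- and at exactly this point you defer to ``the general correspondence between canonical join representations and lower covers in finite semidistributive lattices.'' That correspondence (Freese--Je\v{z}ek--Nation, or the lattice-theoretic chapters of the volume containing \cite{Reading}) is essentially the theorem being proved, specialized to the weak order; citing it is legitimate, and no less rigorous than the paper's own bare citation of Theorem 10-3.9, but it means your text as written does not contain a complete self-contained proof of the statement: the reduction and all auxiliary claims are established, while the decisive minimality/irredundancy step is either flawed (the replacement argument) or outsourced.
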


Hence, we have the following way to find canonical join representations.

\begin{Prop}\label{Prop_join_rep_suff}
Let $w \in W$.
Assume that, for each $d \in \des(w)$, 
there exists a join-irreducible element $w_d \in \jirr W$ satisfying
$w_d \le w$ and $\cov(w_d)=\{ ws_dw^{-1} \}$. 
Then $\bigvee_{d \in \des(w)} w_d$ is the canonical join representation of $w$.
\end{Prop}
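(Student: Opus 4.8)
The plan is to reduce the whole statement to Reading's description of canonical join representations in Proposition \ref{Prop_cover_ref}. For $d \in \des(w)$ write $t_d := ws_dw^{-1}$; under the natural bijection $\des(w) \to \cov(w)$, Proposition \ref{Prop_cover_ref} tells us that the canonical join representation of $w$ is $\bigvee_{d \in \des(w)} w_{t_d}$, where $w_{t_d}$ is the unique minimal element of $X_{t_d} := \{ v \in W \mid v \le w,\ t_d \in \inv(v) \}$. Hence it suffices to prove that the hypothesized element $w_d$ coincides with $w_{t_d}$ for each $d \in \des(w)$.

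The core of the argument is a minimality lemma about a join-irreducible $u \in \jirr W$ of type $i$, so that $\cov(u) = \{t\}$ with $t = us_iu^{-1}$: I claim such $u$ is a minimal element of $G_t := \{ v \in W \mid t \in \inv(v) \}$. First, $tu = us_i$ has length $l(u)-1 < l(u)$, so $t \in \inv(u)$ and $u \in G_t$. Next, $t(us_i) = u$ has length $l(us_i)+1 > l(us_i)$, so $t \notin \inv(us_i)$. Now suppose some $v \in G_t$ satisfied $v < u$. A saturated chain from $v$ up to $u$ would end in the unique maximal element of $\{y \in W \mid y < u\}$, which by join-irreducibility is $us_i$; thus $v \le us_i$, and therefore $\inv(v) \subseteq \inv(us_i)$. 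But then $t \notin \inv(v)$, contradicting $v \in G_t$. So $u$ is minimal in $G_t$.

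Applying the lemma with $u = w_d$, $i = d$, and $t = t_d$ (the hypotheses say exactly that $w_d \in \jirr W$ and $\cov(w_d) = \{t_d\}$), I get that $w_d$ is minimal in $G_{t_d}$. Since $w_d \le w$ by assumption and $t_d \in \inv(w_d)$, we have $w_d \in X_{t_d} \subseteq G_{t_d}$, so $w_d$ is a fortiori minimal in $X_{t_d}$. Because Proposition \ref{Prop_cover_ref} guarantees that $X_{t_d}$ has a \emph{unique} minimal element $w_{t_d}$, we have $w_{t_d} \le w_d$, and minimality of $w_d$ then forces $w_d = w_{t_d}$. Substituting back into Reading's formula yields $\bigvee_{d \in \des(w)} w_d = \bigvee_{d \in \des(w)} w_{t_d}$ as the canonical join representation of $w$.

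The step I expect to be the main obstacle is the minimality lemma, precisely the clause that a saturated chain from $v$ to $u$ terminates at the unique lower cover $us_i$: this is where join-irreducibility is genuinely used, via the characterization that the elements covered by $u$ in the weak order are exactly the $us_i$ with $i \in \des(u)$, together with the identity $v \le v' \Leftrightarrow \inv(v) \subseteq \inv(v')$; the remaining length computations are routine. One subtlety worth emphasizing is that the minimum of $G_t$ over all of $W$ need \emph{not} be unique in general (for instance, in type $\bbA_2$ the reflection $s_1s_2s_1$ has both $s_1s_2$ and $s_2s_1$ as minimal elements carrying it as an inversion), so both the hypothesis $w_d \le w$ and the uniqueness clause in Proposition \ref{Prop_cover_ref} are essential and cannot be dropped.
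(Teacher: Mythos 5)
Your proof is correct and takes essentially the same route as the paper's: both reduce to Proposition \ref{Prop_cover_ref} and establish minimality of $w_d$ in the set of elements carrying $t_d$ as an inversion, via the same two ingredients --- the length computation showing $t_d \notin \inv(w_d s_i)$, and the implication $v < w_d \Rightarrow v \le w_d s_i \Rightarrow \inv(v) \subseteq \inv(w_d s_i)$ coming from the unique lower cover of a join-irreducible element. If anything, your write-up is slightly more careful than the paper's, which tacitly writes the unique descent of $w_d$ as $d$ itself, whereas you correctly work with the type $i$ of $w_d$ (which in the applications need not equal $d$) and explicitly verify $t_d \in \inv(w_d)$ and the passage from minimality in $G_{t_d}$ to minimality in the constrained set.
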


\begin{proof}
Let $d \in \des(w)$ and set $t:=w s_d w^{-1} \in \cov(w)$.
By Proposition \ref{Prop_cover_ref}, it suffices to show that $w_d$ is a minimal element of 
$V:=\{v \in W \mid v \le w, \ t \in \inv(v) \}$.
We assume that $v \in V$ satisfies $v < w_d$ and deduce a contradiction.

Since $w_d s_d=t w_d$, we get 
$l(t \cdot w_d s_d)=l(t \cdot t w_d)=l(w_d)>l(w_d s_d)$.
Thus, $t \notin \inv(w_d s_d)$.

On the other hand, the inequality $v \le w_d s_d$ holds, 
since $w_d$ is a join-irreducible element with its unique descent $d$.
Thus, we have $\inv(v) \subset \inv(w_d s_d)$.
By assumption, $t$ belongs to $\inv(v)$,
so $t$ must be in $\inv(w_d s_d)$.

These two results contradict to each other.
Thus, there exists no $v \in V$ such that $v<w_d$.
This exactly means that $w_d$ is a minimal element of $V$.
\end{proof}

Before proceeding to the next subsection, 
we give an example of canonical join representations.
We recall that the \textit{Hasse quiver} of $W$ is defined as follows.
\begin{itemize}
\item The vertices are the elements of $W$.
\item For any $w,w' \in W$, we write an arrow $w \to w'$ if and only if 
$w > w'$ holds and there exists no $v \in W$ such that $w > v > w'$. 
\end{itemize}

\begin{Ex}
Let $\Delta=\bbA_3$.
Then, the Hasse quiver of $W$ is 
\begin{align*}
\begin{xy}
(  0,-36) *+{(1,2,3,4)}="1234",
( 24,-24) *+{(2,1,3,4)}="2134",
(  0,-24) *+{(1,3,2,4)}="1324",
(-24,-24) *+{(1,2,4,3)^{\textcolor{red}{*}}}="1243",
( 48,-12) *+{(2,3,1,4)}="2314",
( 24,-12) *+{(3,1,2,4)_{\textcolor{blue}{*}}}="3124",
(  0,-12) *+{(2,1,4,3)}="2143",
(-24,-12) *+{(1,3,4,2)}="1342",
(-48,-12) *+{(1,4,2,3)^{\textcolor{red}{*}}}="1423",
( 60,  0) *+{(2,3,4,1)}="2341",
( 36,  0) *+{(3,2,1,4)}="3214",
( 12,  0) *+{(3,1,4,2)_{\textcolor{blue}{*}}}="3142",
(-12,  0) *+{(2,4,1,3)}="2413",
(-36,  0) *+{(1,4,3,2)^{\textcolor{red}{*}}}="1432",
(-60,  0) *+{(4,1,2,3)^{\textcolor{red}{*}}}="4123",
( 48, 12) *+{(3,2,4,1)}="3241",
( 24, 12) *+{(2,4,3,1)}="2431",
(  0, 12) *+{(3,4,1,2)_{\textcolor{blue}{*}}}="3412",
(-24, 12) *+{(4,2,1,3)}="4213",
(-48, 12) *+{(4,1,3,2)^{\textcolor{red}{*}}}="4132",
( 24, 24) *+{(3,4,2,1)}="3421",
(  0, 24) *+{(4,2,3,1)}="4231",
(-24, 24) *+{(4,3,1,2)^{\textcolor{red}{*}}_{\textcolor{blue}{*}}}="4312",
(  0, 36) *+{(4,3,2,1)}="4321",
\ar "2134";"1234"
\ar "1324";"1234"
\ar "1243";"1234"
\ar "2314";"2134"
\ar "2143";"2134"
\ar "3124";"1324"
\ar "1342";"1324"
\ar "2143";"1243"
\ar "1423";"1243"
\ar "2341";"2314"
\ar "3214";"2314"
\ar "3214";"3124"
\ar "3142";"3124"
\ar "2413";"2143"
\ar "3142";"1342"
\ar "1432";"1342"
\ar "1432";"1423"
\ar "4123";"1423"
\ar "3241";"2341"
\ar "2431";"2341"
\ar "3241";"3214"
\ar "3412";"3142"
\ar "2431";"2413"
\ar "4213";"2413"
\ar "4132";"1432"
\ar "4213";"4123"
\ar "4132";"4123"
\ar "3421";"3241"
\ar "4231";"2431"
\ar "3421";"3412"
\ar "4312";"3412"
\ar "4231";"4213"
\ar "4312";"4132"
\ar "4321";"3421"
\ar "4321";"4231"
\ar "4321";"4312"
\end{xy}.
\end{align*}
We determine the canonical join representation of the element $w:=(4,3,1,2)$
from the Hasse quiver.
In this case, we have $\des(w)=\{ 1,2 \}$ and $\cov(w)=\{ (4 \quad 3), (3 \quad 1) \}$.
Thus, we consider the following sets:
\begin{itemize}
\item $\{v \in W \mid v \le w, \ (4 \quad 3) \in \inv(v) \}$,
whose elements are indicated by $^{\textcolor{red}{*}}$; and
\item $\{v \in W \mid v \le w, \ (3 \quad 1) \in \inv(v) \}$,
whose elements are indicated by $_{\textcolor{blue}{*}}$.
\end{itemize}
These sets have $(1,2,4,3)$ and $(3,1,2,4)$ as their unique minimum elements, respectively.
By Proposition \ref{Prop_cover_ref}, the canonical join representation of $w$ is
$(1,2,4,3) \vee (3,1,2,4)$.
We also remark 
that $\cov((1,2,4,3))=\{(4 \quad 3)\}$ and $\cov((3,1,2,4))=\{(3 \quad 1)\}$ hold.
\end{Ex}

\subsection{Type $\bbA_n$}

Let $\Delta=\bbA_n$.
For each element $w$ in $\jirr W$ of type $l$, we set
\begin{align*}
L(w) := w([1,l]), \quad R(w) := w([l+1,n+1]).
\end{align*} 
It is easy to see that the correspondence $w \mapsto R(w)$ is injective.

The following procedure gives the canonical join representation
of a given element of the Coxeter group $W$.
This coincides with \cite[Theorem 10-5.6]{Reading}.

\begin{Prop}\label{Prop_decompose_A}
Let $w \in W$, and set $a_d:=w(d)$, $b_d:=w(d+1)$ for each $d \in \des(w)$.
Then the canonical join representation of $w$ is $\bigvee_{d \in \des(w)} w_d$, 
where $w_d \in \jirr W$ is the unique join-irreducible element 
such that $R(w_d)$ coincides with $R_d$ defined as follows:
\begin{align*}
X_d:=w([d+1,n+1]), \quad
R_d:=([b_d,a_d-1] \cap X_d) \cup [a_d+1,n+1].
\end{align*}
\end{Prop}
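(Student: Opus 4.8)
The plan is to reduce everything to Proposition \ref{Prop_join_rep_suff}: for each descent $d \in \des(w)$ I will check that the element $w_d$ prescribed by $R(w_d) = R_d$ really is a join-irreducible element, that $w_d \le w$, and that $\cov(w_d) = \{ws_dw^{-1}\}$. Once these three facts are in hand, Proposition \ref{Prop_join_rep_suff} immediately gives that $\bigvee_{d \in \des(w)} w_d$ is the canonical join representation of $w$. First I would make $w_d$ explicit by describing its value set on the first block. Writing $L_d := [1,n+1] \setminus R_d$ and using that $v \in X_d$ is equivalent to $w^{-1}(v) > d$, one computes
\[
L_d = [1, b_d - 1] \cup \big([b_d, a_d-1] \setminus X_d\big) \cup \{a_d\}.
\]
Since a join-irreducible element of type $l$ is precisely a permutation increasing on $[1,l]$ and on $[l+1,n+1]$ with a single descent at $l$, the element $w_d$ is the permutation whose values on $[1,l_d]$ (with $l_d := |L_d|$) are the elements of $L_d$ in increasing order and whose values on $[l_d+1,n+1]$ are those of $R_d$ in increasing order.

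Next I would verify that $w_d$ is well defined and join-irreducible. Since $R_d \supseteq [a_d+1,n+1]$ and $a_d \notin R_d$, we have $L_d \subseteq [1,a_d]$ with $a_d \in L_d$, whence $\max L_d = a_d$; and since $b_d = w(d+1)$ gives $w^{-1}(b_d) = d+1 > d$, we get $b_d \in X_d \cap [b_d,a_d-1] \subseteq R_d$ while no element below $b_d$ lies in $R_d$, so $\min R_d = b_d$. Therefore $w_d(l_d) = a_d > b_d = w_d(l_d+1)$, so $w_d$ has a unique descent at $l_d$ and is join-irreducible; its uniqueness follows from the injectivity of $w \mapsto R(w)$. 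In particular this records that $w_d(l_d) = w(d)$ and $w_d(l_d+1) = w(d+1)$.

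The technical core is checking $w_d \le w$, i.e.\ $\inv(w_d) \subset \inv(w)$. As $w_d$ is increasing on each of its two blocks, its inversions are exactly the transpositions $(a \quad b)$ with $a \in L_d$, $b \in R_d$, and $a > b$. Given such a pair I would argue: from $a \in L_d \subseteq [1,a_d]$ we have $a \le a_d$, so $b \in [a_d+1,n+1]$ is impossible and hence $b \in [b_d,a_d-1] \cap X_d$, forcing $w^{-1}(b) > d$; and since $a > b \ge b_d$ we have $a \in \big([b_d,a_d-1] \setminus X_d\big) \cup \{a_d\}$, giving $w^{-1}(a) \le d$ in either case. Thus $w^{-1}(a) \le d < w^{-1}(b)$, which by the inversion formula in type $\bbA_n$ means $(a \quad b) \in \inv(w)$. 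This case analysis is the step I expect to be the main obstacle, since one must carefully translate membership in $X_d$ into the required position inequalities.

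Finally, for the cover-reflection condition, conjugating a transposition of positions by a permutation transposes the corresponding values, so $\cov(w_d) = \{w_d s_{l_d} w_d^{-1}\} = \{(w_d(l_d) \quad w_d(l_d+1))\} = \{(a_d \quad b_d)\}$, while $ws_dw^{-1} = (w(d) \quad w(d+1)) = (a_d \quad b_d)$. Hence $\cov(w_d) = \{ws_dw^{-1}\}$, and Proposition \ref{Prop_join_rep_suff} yields that $\bigvee_{d \in \des(w)} w_d$ is the canonical join representation of $w$, as claimed.
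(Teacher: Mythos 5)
Your proposal is correct and follows essentially the same route as the paper's proof: both reduce to Proposition \ref{Prop_join_rep_suff} by computing the left-block value set of $w_d$ (your $L_d = [1,b_d-1] \cup \bigl([b_d,a_d-1] \setminus X_d\bigr) \cup \{a_d\}$ equals the paper's $[1,b_d-1] \cup \bigl([b_d+1,a_d] \setminus X_d\bigr)$, since $b_d \in X_d$ and $a_d \notin X_d$), verifying $\inv(w_d) \subset \inv(w)$, and identifying the unique cover reflection of $w_d$ as $(a_d \quad b_d) = w s_d w^{-1}$. You merely spell out the inversion-containment case analysis that the paper dismisses as a straightforward check, so there is nothing to correct.
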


\begin{proof}
Let $d \in \des(w)$.
It is easy to see that there uniquely exists $w_d \in \jirr W$ 
with $R(w_d)=R_d$.
Then, $L(w_d)=[1,b_d-1] \cup ([b_d+1,a_d] \setminus X_d)$.
From this, we can straightforwardly check that $\inv(w_d) \subset \inv(w)$,
which is equivalent to $w_d \le w$.
Moreover, the unique cover reflection of $w_d$ is $(a_d \quad b_d)$,
and it is equal to $w s_d w^{-1}$.
Therefore, the assertion follows from Proposition \ref{Prop_join_rep_suff}.
\end{proof}

\begin{Ex}\label{Ex_decompose_A}
Let $n:=8$ and $w:=(4,9,3,6,2,8,5,1,7)$.
Then we have $\des(w)=\{ 2,4,6,7 \}$.
The canonical join representation of $w$ is 
$\bigvee_{d \in \des(w)} w_d$, where $w_d$ is given as follows for each $d \in \des(w)$.
\begin{center}
\begin{tabular}{c|cc|cc}
$d$ & $a_d$ & $b_d$ & $R(w_d)$ & $w_d$ \\
\hline
2 & 9 & 3 & $\{3,5,6,7,8\}$ & $(1,2,4,9,3,5,6,7,8)$ \\
4 & 6 & 2 & $\{2,5,7,8,9\}$ & $(1,3,4,6,2,5,7,8,9)$ \\
6 & 8 & 5 & $\{5,7,9\}$     & $(1,2,3,4,6,8,5,7,9)$ \\
7 & 5 & 1 & $\{1,6,7,8,9\}$ & $(2,3,4,5,1,6,7,8,9)$ \\
\end{tabular}
\end{center}
\end{Ex}

Combining Corollary \ref{Cor_brick_abbr_A} and Proposition \ref{Prop_decompose_A},
we can obtain the semibrick $S(w)$ directly.

\begin{Thm}\label{Thm_sbrick_A}
Let $w \in W$.
Then, the semibrick $S(w)$ is $\bigoplus_{d \in \des(w)}S_d$,
where $S_d$ is the brick whose abbreviated description  
as in Corollary \ref{Cor_brick_abbr_A} is given as follows.
\begin{itemize}
\item Set $R_d$ as in Proposition \ref{Prop_decompose_A},
and $a_d:=w(d)$, $b_d:=w(d+1)$, $V_d := [b_d, a_d-1]$.  
\item The brick $S_d$ has a $K$-basis $(\ang{i}_d)_{i \in V_d}$,
where $\ang{i}_d$ belongs to $e_iS_d$.
\item For each $i \in V_d$, place a symbol $i$ denoting 
the $K$-vector subspace $K\ang{i}_d$.
\item For each $i \in V_d \setminus \{\max V_d\}$,
we write exactly one arrow between $i$ and $i+1$,
and the orientation is $i \to i+1$ if $i+1 \in R_d$ and 
$i \leftarrow i+1$ if $i+1 \notin R_d$.
\end{itemize}
\end{Thm}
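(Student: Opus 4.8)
This theorem is a direct synthesis of two previously established results, so the proof will be short and essentially bookkeeping. Let me sketch the plan.

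The plan is to combine Proposition 2.6 (Prop_W_sbrick), which gives the bijection $S(\bullet)\colon W\to\sbrick\Pi$, Corollary 2.8 (Cor_decompose_Coxeter), which tells us that the canonical join representation $w=\bigvee_{i}w_i$ produces exactly the brick decomposition $S(w)=\bigoplus_i S(w_i)$, the explicit canonical join representation of Proposition 3.11 (Prop_decompose_A), and the combinatorial description of each brick $S(w_d)$ from Corollary 3.7 (Cor_brick_abbr_A). First I would invoke Corollary 2.8: since $S(w)$ decomposes as the direct sum of the bricks attached to the join-irreducible summands of the canonical join representation of $w$, it suffices to identify those join-irreducible elements and describe their bricks. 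Proposition 3.11 identifies them precisely: the canonical join representation is $\bigvee_{d\in\des(w)}w_d$, where $w_d$ is the unique join-irreducible element with $R(w_d)=R_d$. Hence $S(w)=\bigoplus_{d\in\des(w)}S(w_d)$, and I set $S_d:=S(w_d)$.

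Next I would apply Corollary 3.7 to each $w_d$ individually. The point is to check that the data $(R_d, a_d, b_d, V_d)$ appearing in the statement of Theorem 3.14 matches the data $(R, a, b, V)$ that Corollary 3.7 assigns to the join-irreducible element $w_d$. By definition $w_d$ has a unique descent, say at position $l_d$; I would verify that $w_d(l_d)=a_d$ and $w_d(l_d+1)=b_d$, so that the quantities $a:=w_d(l_d)$ and $b:=w_d(l_d+1)$ in Corollary 3.7 indeed equal the $a_d,b_d$ of the theorem, whence $V=[b_d,a_d-1]=V_d$. Similarly the set $R:=w_d([l_d+1,n+1])$ in Corollary 3.7 is by construction equal to $R(w_d)=R_d$. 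Once these identifications are in place, Corollary 3.7 gives verbatim the four bullet points describing $S_d$: the basis $(\ang{i}_d)_{i\in V_d}$ with $\ang{i}_d\in e_iS_d$, the symbols, and the arrow orientations determined by membership in $R_d$.

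The main obstacle, such as it is, lies in the bookkeeping of the second step: confirming that the abstract descent position $l_d$ of $w_d$ produces $a_d=w_d(l_d)$ and $b_d=w_d(l_d+1)$ equal to $w(d)$ and $w(d+1)$. This is exactly the content implicit in the proof of Proposition 3.11, where $L(w_d)=[1,b_d-1]\cup([b_d+1,a_d]\setminus X_d)$ and $R(w_d)=R_d$ are computed and the unique cover reflection of $w_d$ is shown to be $(a_d\quad b_d)$; from that cover reflection one reads off that $w_d$ sends its descent position to $a_d$ and the next position to $b_d$. I would cite Proposition 3.11 for this and avoid recomputing it. With the matching of the two parameter sets secured, the theorem follows immediately by assembling the summands, and no further calculation is required.
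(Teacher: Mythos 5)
Your proposal is correct and follows essentially the same route as the paper, whose proof is exactly this assembly: take the join-irreducible elements $w_d$ from Proposition \ref{Prop_decompose_A} and check that the description of $S(w_d)$ in Corollary \ref{Cor_brick_abbr_A} matches the statement, the parameter matching ($w_d(l_d)=a_d$, $w_d(l_d+1)=b_d$, $R(w_d)=R_d$, hence $V=V_d$) being the only verification needed. Your extra care in reading $a_d,b_d$ off the cover reflection $(a_d \quad b_d)$ of $w_d$, and in passing from the canonical join representation back to the direct-sum decomposition $S(w)=\bigoplus_d S(w_d)$ via Corollary \ref{Cor_decompose_Coxeter} together with the uniqueness of canonical join representations, makes explicit what the paper leaves to the reader.
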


\begin{proof}
For each $d \in \des(w)$, let $w_d$ be the join-irreducible element
in the canonical join representation given in Proposition \ref{Prop_decompose_A}.
Then, we can check that the abbreviated description of $S(w_d)$ 
in Corollary \ref{Cor_brick_abbr_A} coincides with the statement.  
\end{proof}

In Theorem \ref{Thm_sbrick_A},
we remark that $R_d$ can be replaced by $R_d \cap V_d=[b_d,a_d-1] \cap X_d$.

\begin{Ex}
Let $n:=8$ and $w:=(4,9,3,6,2,8,5,1,7)$ as in Example \ref{Ex_decompose_A}.
Then the semibrick $S(w)$ is the direct sum of the following bricks:
\begin{align*}
S_2 &= \phantom{1 \rightarrow 2 \rightarrow {}} 
3 \leftarrow 4 \rightarrow 5 \rightarrow 6 \rightarrow 7 \rightarrow 8, \\
S_4 &= \phantom{1 \rightarrow {}}
2 \leftarrow 3 \leftarrow 4 \rightarrow 5
\phantom{{} \leftarrow 6 \leftarrow 7 \leftarrow 8}, \\
S_6 &= \phantom{1 \rightarrow 2 \rightarrow 3 \rightarrow 4 \rightarrow{}}
5 \leftarrow 6 \rightarrow 7
\phantom{{} \leftarrow 8}, \\
S_7 &= 1 \leftarrow 2 \leftarrow 3 \leftarrow 4
\phantom{{} \leftarrow 5 \leftarrow 6 \leftarrow 7 \leftarrow 8}. 
\end{align*}
\end{Ex}

\subsection{Type $\bbD_n$}

Let $\Delta=\bbD_n$.
For each element $w$ in $\jirr W$ of type $l$, we set
\begin{align*}
L(w) := \{ |w(k)| \mid k \in [1,|l|] \},  \quad
R(w) := w([|l|+1,n]).
\end{align*} 
As in the case of type $\bbA_n$, 
it is easy to see that the correspondence $w \mapsto R(w)$ is injective.

The canonical join representations 
of the elements of the Coxeter group $W$ are given by the following procedure. 

\begin{Prop}\label{Prop_decompose_D}
Let $w \in W$, and set $a_d:=w(d)$, $b_d:=w(|d|+1)$,
$X_d:=w([|d|+1,n])$ for each $d \in \des(w)$.
Then the canonical join representation of $w$ is $\bigvee_{d \in \des(w)} w_d$, 
where $w_d \in \jirr W$ is the unique join-irreducible element 
such that $R(w_d)$ coincides with $R_d$ defined as follows.
\begin{itemize}
\item[(A)]
If $a_d+b_d<0$ and $w([1,|d|]) \subset \pm[a_d,n]$, then
\begin{align*}
R_d 
%&:= \{-a_d\} \cup (\pm[1,|a_d|-1] \cap X_d) \cup ([|a_d|+1,-b_d-1] \setminus (-X_d)) 
%\cup [-b_d+1,n] \\
&= \begin{cases}
\{-a_d\} \cup (\pm[1,a_d-1] \cap X_d) \cup ([a_d+1,-b_d-1] \setminus (-X_d)) 
\cup [-b_d+1,n] & (a_d>0) \\
([-a_d,-b_d-1] \setminus (-X_d)) \cup [-b_d+1,n] & (a_d<0)
\end{cases}.
\end{align*}
\item[(B)]
Otherwise, 
\begin{align*}
R_d 
%&:= ([b_d,a_d-1] \cap X_d) \cup ([a_d+1,-b_d-1] \setminus (-X_d)) \cup 
%[\max\{a_d,-b_d\}+1,n] \\
&= \begin{cases}
([b_d,a_d-1] \cap X_d) \cup [a_d+1,n] & (a_d+b_d>0) \\
([b_d,a_d-1] \cap X_d) \cup ([a_d+1,-b_d-1] \setminus (-X_d)) \cup [-b_d+1,n] & (a_d+b_d<0)
\end{cases}
\end{align*}
\end{itemize}
\end{Prop}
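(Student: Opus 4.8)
The plan is to verify the hypotheses of Proposition \ref{Prop_join_rep_suff}, exactly as in the proof of Proposition \ref{Prop_decompose_A} for type $\bbA_n$: for each $d \in \des(w)$ I must produce a join-irreducible element $w_d \in \jirr W$ with $w_d \le w$ and $\cov(w_d) = \{ws_dw^{-1}\}$, and then Proposition \ref{Prop_join_rep_suff} guarantees that $\bigvee_{d \in \des(w)} w_d$ is the canonical join representation. Since the correspondence $w_d \mapsto R(w_d)$ is injective on $\jirr W$, and a join-irreducible element of type $l$ is recovered from $R(w_d)$ (the $n-|l|$ values in positions $|l|+1,\dots,n$, listed increasingly, together with the complementary increasing first block read off from $L(w_d)$), the first task is to check that each set $R_d$ in the statement really is of the form $R(w_d)$ for a genuine signed permutation with an even number of sign changes and a single descent. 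So I would first compute the complementary block $L(w_d)$ from $R_d$ in each case, confirm that the resulting sequence is increasing before and after one distinguished position, and thereby identify both the element $w_d$ and its type $l$.

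With $w_d$ in hand, the second step is to verify $w_d \le w$, which by Subsection \ref{Subsec_Coxeter} is equivalent to $\inv(w_d) \subseteq \inv(w)$. I would carry this out using the explicit description $\inv(v) = \{(-a\;-b)(a\;b) \mid a>|b|,\ v^{-1}(a)<v^{-1}(b)\}$, comparing the relative order in which $w_d$ and $w$ place each relevant pair of values; the intersections with $X_d = w([|d|+1,n])$ and the deletions of $-X_d$ appearing in the formulas for $R_d$ are precisely engineered so that every inversion forced by $R_d$ already occurs in $w$. The third step is to compute $\cov(w_d)$: a join-irreducible element of type $l$ has exactly one cover reflection, namely the one swapping $w_d(l) \leftrightarrow w_d(|l|+1)$ together with their negatives, and I would check that this equals $ws_dw^{-1}$, the reflection swapping $a_d \leftrightarrow b_d$ and $-a_d \leftrightarrow -b_d$. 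Concretely this reduces to verifying that $\min R_d$ and the largest value of the first block recover the pair $\{a_d,b_d\}$ with the correct signs in every case.

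The main obstacle is the case analysis, and in particular case (A), which has no analogue in type $\bbA_n$. When $a_d+b_d<0$ and $w([1,|d|]) \subset \pm[a_d,n]$, the element $w_d$ must route its unique descent across the fork at the vertices $\pm 1$, which is why $-a_d$ is inserted into $R_d$ and the sign bookkeeping (the even-sign-change constraint, and the replacement of members of $X_d$ by their negatives) becomes delicate; I expect the bulk of the work to lie in confirming, case by case on the signs of $a_d$ and $b_d$, that $R_d$ yields a legitimate join-irreducible element whose unique cover reflection is still $ws_dw^{-1}$ and whose inversion set remains contained in $\inv(w)$. Once these verifications are completed in cases (A) and (B), Proposition \ref{Prop_join_rep_suff} yields the canonical join representation at once.
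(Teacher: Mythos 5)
Your proposal follows essentially the same route as the paper: the paper likewise reduces everything to Proposition \ref{Prop_join_rep_suff}, identifies $w_d$ by writing down the complementary set $L(w_d)$ explicitly in each of the cases (A) and (B), and then verifies $w_d \le w$ via $\inv(w_d) \subset \inv(w)$ and $\cov(w_d)=\{w s_d w^{-1}\}$ by the same sign-sensitive case analysis you outline (the paper states these verifications tersely, deferring the computations exactly where you predict the bulk of the work lies). Your observation that in case (A) the reflection $w s_d w^{-1}$ is realized in $w_d$ through the pair $(-b_d,-a_d)$ rather than $(a_d,b_d)$ is the correct resolution of the type-$\bbD_n$ subtlety.
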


\begin{proof}
The proof is similar to the one for type $\bbA_n$.
In this case, the set $L(w_d)$ is given as follows.
\begin{itemize}
\item[(A)]
If $a_d+b_d<0$ and $w([1,|d|]) \subset \pm [a_d,n]$, then
\begin{align*}
L(w_d)=
\begin{cases}
[a_d+1,-b_d] \cap (-X_d) & (a_d>0) \\
[1,-a_d-1] \cup ([-a_d+1,-b_d] \cap (-X_d)) & (a_d<0) \\
\end{cases}.
\end{align*}
\item[(B)]
Otherwise, 
\begin{align*}
L(w_d)=
\begin{cases}
[1,b_d-1] \cup ([b_d+1,a_d] \setminus X_d) & (b_d > 0) \\
([1,-b_d-1] \setminus (\pm X_d)) \cup ([-b_d+1,a_d] \setminus X_d) 
& (b_d < 0, \ a_d+b_d>0) \\
[1,a_d] \setminus (\pm X_d) & (a_d+b_d<0)
\end{cases}.
\end{align*}
\end{itemize}
By using these, we can check $w_d \le w$ and $\cov(w_d)=\{w s_d w^{-1} \}$.
\end{proof}

In the rest, the symbols (A) and (B) mean
the conditions (A) and (B) in Proposition \ref{Prop_decompose_D}, respectively.

\begin{Ex}\label{Ex_decompose_D}
Let $n:=9$ and $w:=(5,3,-7,4,-6,-8,9,-1,2)$.
Then we have $\des(w)=\{ 1,2,4,5,7 \}$.
The canonical join representation of $w$ is 
$\bigvee_{d \in \des(w)} w_d$, where $w_d$ is given as follows for each $d \in \des(w)$.
\begin{center}
\begin{tabular}{c|ccc|cc}
$d$ & $a_d$ & $b_d$ & (A) or (B) & $R(w_d)$ & $w_d$ \\
\hline
1 &  $5$ &  $3$ & (B) & $\{3,4,6,7,8,9\}$     & 
$(\phantom{-}1,\phantom{-}2,\phantom{-}5,\phantom{-}3,\phantom{-}4,\phantom{-}6,
\phantom{-}7,\phantom{-}8,\phantom{-}9)$ \\
2 &  $3$ & $-7$ & (A) & $\{-3,-1,2,4,5,8,9\}$ & 
$(\phantom{-}6,\phantom{-}7,-3,-1,\phantom{-}2,\phantom{-}4,\phantom{-}5,\phantom{-}8,
\phantom{-}9)$ \\
4 &  $4$ & $-6$ & (B) & $\{-6,-1,2,5,7,8,9\}$ & 
$(\phantom{-}3,\phantom{-}4,-6,-1,\phantom{-}2,\phantom{-}5,\phantom{-}7,\phantom{-}8,
\phantom{-}9)$ \\
5 & $-6$ & $-8$ & (A) & $\{6,7,9\}$           & 
$(\phantom{-}1,\phantom{-}2,\phantom{-}3,\phantom{-}4,\phantom{-}5,\phantom{-}8,
\phantom{-}6,\phantom{-}7,\phantom{-}9)$ \\
7 &  $9$ & $-1$ & (B) & $\{-1,2\}$            & 
$(-3,\phantom{-}4,\phantom{-}5,\phantom{-}6,\phantom{-}7,\phantom{-}8,\phantom{-}9,
-1,\phantom{-}2)$ 
\end{tabular}
\end{center}
\end{Ex}

Now, by combining Corollary \ref{Cor_brick_abbr_D} and Proposition \ref{Prop_decompose_D}, 
we can obtain the semibrick $S(w)$ from $w \in W$ directly.
We need to define a few notations: for integers $a>b$ and $c=\pm 1$, we set
\begin{align*}
(V_-(a,b,c),V_+(a,b,c)):=
\begin{cases}
(\emptyset, [b,a-1]) & (b \ge 2) \\
(\emptyset, \{c\} \cup [2,a-1]) & (b = \pm 1) \\
([b+1,-2] \cup \{-c\}, \{c\} \cup [2,a-1]) & (b \le -2)
\end{cases}.
\end{align*}

\begin{Thm}\label{Thm_sbrick_D}
Let $w \in W$.
Then, the semibrick $S(w)$ is $\bigoplus_{d \in \des(w)}S_d$,
where $S_d$ is the brick whose abbreviated description  
as in Corollary \ref{Cor_brick_abbr_A} is given as follows.
\begin{itemize}
\item Set $R_d$ as in Proposition \ref{Prop_decompose_D}, and $a_d:=w(d)$, $b_d:=w(d+1)$, 
\begin{align*}
& r_d:=\max \{ k \ge 0 \mid [1,k] \subset \pm R_d \}, \quad
c_d:=\begin{cases}
w^{-1}(|w(1)|)& (r_d \ge 1) \\
1 & (r_d=0)
\end{cases}, \\
& ((V_-)_d,(V_+)_d) :=
\begin{cases}
(V_-(-b,-a,c),V_+(-b,-a,c)) & (\textup{(A)}) \\
(V_-(a,b,c),V_+(a,b,c)) & (\textup{(B)}) \\
\end{cases}, \quad
V_d := (V_+)_d \amalg (V_-)_d.
\end{align*}
\item The brick $S_d$ has a $K$-basis $(\ang{i}_d)_{i \in V_d}$,
where $\ang{i}_d$ belongs to $e_iS_d$ if $i \ge -1$,
and $e_{|i|}S_d$ if $i \le -2$.
\item For each $i \in V_d$, place a symbol $i$ denoting 
the $K$-vector subspace $K\ang{i}_d$.
\item We write the following arrows.
\begin{itemize}
\item[(i)]
For each $i \in (V_d)_+ \setminus \{\max (V_d)_+\}$,
draw an arrow $i \to |i|+1$ if $|i|+1 \in R_d$; and $i \leftarrow |i|+1$ otherwise.
\item[(ii)]
For each $i \in (V_d)_- \setminus \{\min (V_d)_-\}$,
draw an arrow $i \leftarrow -(|i|+1)$ if $-(|i|+1) \in R_d$; 
and $i \to -(|i|+1)$ otherwise.
\item[(iii)]
If $r_d \ge 1$, then for each $i \in (V_d)_-$ with $|i| \le r_d$,
draw an arrow $-i \leftarrow -(|i|+1)$ if $|i|+1 \in R_d$;
and $i \to |i|+1$ otherwise.
\item[(iv)]
If $r_d=0$, then
draw an arrow $-c \leftarrow 2$ if $2 \notin R_d$, 
and an arrow $c \to -2$ if $-2 \notin R_d$.
\end{itemize}
\end{itemize}
\end{Thm}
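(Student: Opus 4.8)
The plan is to reduce Theorem \ref{Thm_sbrick_D} to the two results already established in the excerpt: Proposition \ref{Prop_decompose_D}, which determines the canonical join representation $w = \bigvee_{d \in \des(w)} w_d$ together with the explicit data $R(w_d) = R_d$, and Corollary \ref{Cor_brick_abbr_D}, which describes the abbreviated brick $S(w_d)$ for a join-irreducible element. Since $S(\bullet)\colon W \to \sbrick\Pi$ is the bijection of Proposition \ref{Prop_W_sbrick}, and Corollary \ref{Cor_decompose_Coxeter} tells us that the canonical join representation $w = \bigvee_{d \in \des(w)} w_d$ yields the brick decomposition $S(w) = \bigoplus_{d \in \des(w)} S(w_d)$, it suffices to verify that the brick $S_d$ described in the statement of this theorem is exactly $S(w_d)$ as given by Corollary \ref{Cor_brick_abbr_D}, applied to the join-irreducible element $w_d$.

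So the entire proof is a matching computation. First I would fix $d \in \des(w)$ and unwind the invariants that Corollary \ref{Cor_brick_abbr_D} attaches to $w_d$: its type $l_d$ (which by Proposition \ref{Prop_decompose_D} is the unique descent of $w_d$), the set $R := R(w_d) = R_d$, the values $a := w_d(l_d)$ and $b := w_d(|l_d|+1)$, the number $r$, and the index $c$. I would then compare these with the quantities $R_d, r_d, c_d, a_d, b_d$ appearing in the present statement. The key point is that $a_d = w(d)$, $b_d = w(|d|+1)$ are the values \emph{for $w$}, not directly for $w_d$; so I must read off from the formulas for $L(w_d)$ in the proof of Proposition \ref{Prop_decompose_D} what the actual first-row values $w_d(l_d)$ and $w_d(|l_d|+1)$ are. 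This is precisely where the case split (A)/(B) enters, and where the substitution $(a,b) \mapsto (-b_d, -a_d)$ in case (A) comes from: in case (A) the join-irreducible element $w_d$ has its relevant corner governed by $-b_d$ and $-a_d$ rather than by $a_d$ and $b_d$, which is why $(V_-)_d, (V_+)_d$ are defined using $V_\pm(-b,-a,c)$ there and $V_\pm(a,b,c)$ in case (B).

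Concretely, the main steps are: (1) confirm $R(w_d) = R_d$ and hence that $r$ (for $w_d$) equals $r_d$ and $c$ equals $c_d$, which is immediate since $r$ and $c$ depend only on $R$ and on $|w_d(1)|$; the latter must be checked to agree with $|w(1)|$, using the explicit description of $L(w_d)$. (2) Verify in case (B) that $w_d(l_d) = a_d$ and $w_d(|l_d|+1) = b_d$, so that the vertex sets $(V_-, V_+)$ of Theorem \ref{Thm_brick_D} for $w_d$ coincide with $(V_-(a_d,b_d,c_d), V_+(a_d,b_d,c_d))$. (3) Verify in case (A) that instead $w_d(l_d) = -b_d$ and $w_d(|l_d|+1) = -a_d$, yielding the substituted vertex sets. (4) Once the vertex sets and the data $R_d, r_d, c_d$ match, the arrow rules (i)--(iv) of the present theorem are literally the arrow rules (i)--(iv) of Corollary \ref{Cor_brick_abbr_D} with $R$ replaced by $R_d$, so they agree verbatim; the only subtlety is rule (iv), where I must check that the phrasings ``$2 \notin R_d$'' and ``$c \leftarrow 2$ exists in (i)'' describe the same arrows, which follows from rule (i) since $2 \notin R_d$ is exactly the condition producing the arrow $c \leftarrow 2$.

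The main obstacle will be step (3), the case (A) bookkeeping: here the identification of the first-row values of $w_d$ requires carefully reading the formula for $L(w_d)$ in case (A) of Proposition \ref{Prop_decompose_D}, distinguishing the subcases $a_d > 0$ and $a_d < 0$, and checking that in both the corner values of $w_d$ are $-b_d$ and $-a_d$ (with the correct sign so that $w_d(l_d) > w_d(|l_d|+1)$, i.e. $-b_d > -a_d$, which holds because $a_d > b_d$). I also expect to need the hypotheses of case (A), namely $a_d + b_d < 0$ and $w([1,|d|]) \subset \pm[a_d, n]$, precisely to guarantee that the corner of $w_d$ sits where the formulas predict. Everything else is a direct substitution of the already-proven combinatorial descriptions, so no genuinely new computation is required beyond this careful alignment of the parameters.
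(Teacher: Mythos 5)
Your skeleton is exactly the paper's proof, which in its entirety reads: apply Corollary \ref{Cor_brick_abbr_D} to the element $w_d \in \jirr W$ of Proposition \ref{Prop_decompose_D} for each $d \in \des(w)$. Your steps (2)--(4) --- in particular reading the corner values of $w_d$ off the formulas for $L(w_d)$, so that $(w_d(l_d),w_d(|l_d|+1))=(a_d,b_d)$ in case (B) and $(-b_d,-a_d)$ in case (A) --- are precisely the ``we can check'' content the paper suppresses, and they are correct as outlined; the only cosmetic point is that passing from the canonical join representation to $S(w)=\bigoplus_{d}S(w_d)$ uses Corollary \ref{Cor_decompose_Coxeter} in the reverse direction, which is legitimate because canonical join representations are unique.

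However, step (1) as you state it would fail, and this is a genuine gap. The parameter $c$ of Theorem \ref{Thm_brick_D} is a sign in $\{\pm 1\}$, and it is not a function of $R$ together with $|w_d(1)|$; nor does $|w_d(1)|$ agree with $|w(1)|$: in Example \ref{Ex_decompose_D} with $d=2$ one has $w_d(1)=6$ while $w(1)=5$. Worse, no first-entry datum can work: for that same $w_d=(6,7,-3,-1,2,4,5,8,9)$ the brick $S_2$ in the paper's closing example has its $\{\pm 1\}$-vertex at $-1$, i.e.\ $c=-1$, although $w_d(1)=6>0$ and $w(1)=5>0$. What the paper's examples consistently use is $c=\mathrm{sign}(w_d^{-1}(1))$, i.e.\ $c=+1$ or $-1$ according as $1$ or $-1$ occurs among the entries $w_d(1),\ldots,w_d(n)$; note that the printed formula $w^{-1}(|w(1)|)$, read literally, equals $\mathrm{sign}(w(1))$ and even contradicts the paper's own $S_7$, where $w(1)=5>0$ but the vertex is $-1$. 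So the verification your step (1) actually requires is that, whenever $r_d\ge 1$, $\mathrm{sign}(w_d^{-1}(1))=\mathrm{sign}(w^{-1}(1))$, which must be extracted from the construction of $R_d$ out of $X_d=w([|d|+1,n])$ (membership of $\pm 1$ in $R_d$ traces back to membership in $X_d$, hence to the entries of $w$). Until this is restated and checked, the identification of the vertex sets in all cases with $b_d\le 1$ --- and hence of the bricks $S_d$ themselves --- is not established.
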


\begin{proof}
Apply Corollary \ref{Cor_brick_abbr_D} 
to the element $w_d \in \jirr W$ defined in Proposition \ref{Prop_decompose_D}
for each $d \in \des(w)$.
\end{proof}

\begin{Ex}
Let $n:=9$ and $w:=(5,3,-7,4,-6,8,9,-1,2)$ as in Example \ref{Ex_decompose_D}.
Then the semibrick $S(w)$ is the direct sum of the following bricks:
\begin{align*}
S_1&= \begin{xy}
(  0, 0) *+{\phantom{-1}},
( 24, 0) *+{ 3} = "3+",
( 36, 0) *+{ 4} = "4+",
( 84, 0) *+{\phantom{8}},
\ar "3+";"4+"
\end{xy}, \\
S_2&= \begin{xy}
(  0, 5) *+{ 1} = "1-",
( 12, 5) *+{-2} = "2-",
(  0,-5) *+{-1} = "1+",
( 12,-5) *+{ 2} = "2+",
( 24,-5) *+{ 3} = "3+",
( 36,-5) *+{ 4} = "4+",
( 48,-5) *+{ 5} = "5+",
( 60,-5) *+{ 6} = "6+",
( 84, 0) *+{\phantom{8}},
\ar "1+";"2+"
\ar "3+";"2+"
\ar "3+";"4+"
\ar "4+";"5+"
\ar "6+";"5+"
\ar "1-";"2-" \ar "2-";"1+"
              \ar "2-";"3+"
\end{xy}, \\
S_4&=\begin{xy}
(  0, 6) *+{ 1} = "1-",
( 12, 6) *+{-2} = "2-",
( 24, 6) *+{-3} = "3-",
( 36, 6) *+{-4} = "4-",
( 48, 6) *+{-5} = "5-",
(  0,-6) *+{-1} = "1+",
( 12,-6) *+{ 2} = "2+",
( 24,-6) *+{ 3} = "3+",
( 84, 0) *+{\phantom{8}},
\ar "1+";"2+"
\ar "3+";"2+"
\ar "1-";"2-" \ar "2-";"1+"
\ar "2-";"3-" \ar "2-";"3+"
\ar "3-";"4-"
\ar "4-";"5-"
\end{xy}, \\
S_5&=\begin{xy}
(  0, 0) *+{\phantom{-1}},
( 60, 0) *+{ 6} = "6+",
( 72, 0) *+{ 7} = "7+",
( 84, 0) *+{\phantom{8}},
\ar "6+";"7+"
\end{xy}, \\
S_7&=\begin{xy}
(  0, 0) *+{-1} = "1+",
( 12, 0) *+{ 2} = "2+",
( 24, 0) *+{ 3} = "3+",
( 36, 0) *+{ 4} = "4+",
( 48, 0) *+{ 5} = "5+",
( 60, 0) *+{ 6} = "6+",
( 72, 0) *+{ 7} = "7+",
( 84, 0) *+{ 8} = "8+",
\ar "1+";"2+"
\ar "3+";"2+"
\ar "4+";"3+"
\ar "5+";"4+"
\ar "6+";"5+"
\ar "7+";"6+"
\ar "8+";"7+"
\end{xy}.
\end{align*}
\end{Ex}

\appendix
\section{Example: The bricks over the preprojective algebra of type $\bbD_5$}

In this section, 
we give the list of bricks over the preprojective algebra of type $\bbD_5$.

For the preparation,
we first define two notions denoted by $\sigma(w)$ and $\chi(w)$
associated to each join-irreducible element $w \in \jirr W$ 
in the Coxeter group $W=W(\bbD_n)$ of type $\bbD_n$
(it is not needed to assume $n=5$ here),
and then list all the join-irreducible elements and the corresponding bricks 
by using these notions in the case $n=5$.

First, we define $\sigma(w)$.
Recall that we have defined the integers $a,b,r$ 
in Subsection \ref{subsec_brick_D} for $w$.
By using these integers,
we define $\sigma(w)$ of $w$ as the triple $(a,b,r') \in \Z^3$, where  
\begin{align*}
r':=\begin{cases}
0 & (b \ge -1) \\
\min\{r,|b|-1\} & (b \le -2)
\end{cases},
\end{align*}
and call $\sigma(w)$ the \textit{shape} of $w$.
For any $\sigma \in \Z^3$,
we write $(\jirr W)_\sigma \subset \jirr W$ for the subset of elements in $\jirr W$
whose shapes are $\sigma$.
It is easy to see that $(\jirr W)_\sigma \ne \emptyset$
if and only if $\sigma$ is a triple $(a,b,r')$ satisfying one of the following conditions
(a), (b), (c):
\begin{itemize}
\item[(a)]
$2 \le a \le n$, $-1 \le b < a$, $b \ne 0$, $r'=0$; or
\item[(b)]
$2 \le a \le n$, $-a < b \le -2$, $0 \le r' \le |b|-1$; or
\item[(c)]
$2 \le a \le n$, $-n \le b < -a$, $0 \le r' \le |a|-2$.
\end{itemize}

Next, we define the other notion $\chi(w)$ by using $R$ defined in 
Subsection \ref{subsec_brick_D} for $w$.
We set $\chi(w)$ as the sequence 
$(x(1),x(2),\ldots,x(n)) \in \{0,1,2\}^n$ whose terms are given by
\begin{align*}
x(i):=\begin{cases}
0 & (-j,j \notin R) \\
1 & (-j \in R) \\
2 & (j \in R)
\end{cases}.
\end{align*}
We have a map $\chi \colon \jirr W \to \{0,1,2\}^n$, which is clearly injective.
For $\sigma=(a,b,r')$ satisfying the condition above, we can straightforwardly check
$\chi((\jirr W)_\sigma)=\prod_{i=1}^n X_i$, where $X_i$ is defined as follows
in each of the three cases (a), (b), and (c):
\begin{itemize}
\item[(a)]
\begin{tabular}{|c||c|c|c|c|c|}
\hline
$i$ & $i<|b|$ & $i=|b|$ & $|b|<i<a$ & $i=a$ & $i>a$ \\
\hline
$X_i$ & $\{0\}$ & $\{1\}$ if $b=-1$; $\{2\}$ otherwise & $\{0,2\}$ & $\{0\}$ & $\{2\}$ \\
\hline
\end{tabular},
\item[(b)]
\begin{tabular}{|c||c|c|c|c|c|c|c|}
\hline
$i$ & $i \le r'$ & $i=r'+1 \ne |b|$ & $r'+1<i<|b|$ & $i=|b|$ & $|b|<i<a$ & $i=a$ & $i>a$ \\
\hline
$X_i$ & $\{1,2\}$ & $\{0\}$ & $\{0,1,2\}$ & $\{1\}$ & $\{0,2\}$ & $\{0\}$ & $\{2\}$ \\
\hline
\end{tabular},
\item[(c)]
\begin{tabular}{|c||c|c|c|c|c|c|c|}
\hline
$i$ & $i \le r'$ & $i=r'+1$ & $r'+1<i<a$ & $i=a$ & $a<i<|b|$ & $i=|b|$ & $i>a$ \\
\hline
$X_i$ & $\{1,2\}$ & $\{0\}$ & $\{0,1,2\}$ & $\{0\}$ & $\{1,2\}$ & $\{1\}$ & $\{2\}$ \\
\hline
\end{tabular}.
\end{itemize}
%
%
%\begin{align*}
%X_i &:= \begin{cases}
%\{0\} & (i<|b|) \\
%\{1\} & (i=|b|, \ b=-1) \\
%\{2\} & (i=|b|, \ b \ge 1) \\
%\{0,2\} & (|b|<i<a) \\
%\{0\} & (i=a) \\ 
%\{2\} & (i>a) 
%\end{cases} \quad \text{if $b \ge -1$,} \\
%X_i &:= \begin{cases}
%\{1,2\} & (i \le r') \\
%\{0\} & (i=r'+1 \ne |b|) \\
%\{0,1,2\} & (r'+1<i<|b|) \\
%\{1\} & (i=|b|) \\
%\{0,2\} & (|b|<i<a) \\
%\{0\} & (i=a) \\
%\{2\} & (i>a)
%\end{cases} \quad \text{if $b \le -2$ and $a>|b|$,} \\ 
%X_i &:= \begin{cases}
%\{1,2\} & (i \le r') \\
%\{0\} & (i=r'+1) \\
%\{0,1,2\} & (r'+1<i<a) \\
%\{0\} & (i=a) \\
%\{1,2\} & (a<i<|b|) \\
%\{1\} & (i=b) \\
%\{2\} & (i>a)
%\end{cases} \quad \text{if $b \le -2$ and $a<|b|$,}
%\end{align*}
Therefore, by setting $x:=\max\{a,|b|\}$ and $y:=\min\{a,|b|\}$, we have
\begin{align*}
\#(\jirr W)_\sigma=
\begin{cases}
2^{x-y-1} & (b \ge -1) \\
2^{r'} \cdot 3^{\max\{y-r'-2,0\}} \cdot 2^{x-y-1} & (b \le -2) 
\end{cases}.
\end{align*}

From now on, we consider $\bbD_5$, so let $n=5$.
For $\sigma$ satisfying the condition above,
the following lists show all the elements $w$ in $(\jirr W)_\sigma$ 
and the corresponding bricks $S(w)$ over the preprojective algebra $\Pi$ of type $\bbD_5$.
The elements in $(\jirr W)_\sigma$ are arranged so that
$w$ comes before $w'$ if and only if $\chi(w) < \chi(w')$ in the 
lexicographical order in $\{0,1,2\}^n$,
and each $w$ is shortly denoted by a string $e_1e_2 \cdots e_n$,
where $e_i:=w(i)$ if $w(i)>0$; $e_i:=\underline{w(i)}$ if $w(i)<0$.
For example, $\underline{1}2\underline{5}34$ means $(-1,2,-5,3,4)$.
The join-irreducible elements and the bricks are explicitly described as follows by 
Corollary \ref{Cor_brick_abbr_D}:

\begin{itemize}

\item $\sigma=(2,-5,0)$ (4 elements): {\footnotesize \begin{align*}
S(\underline{1}2\underline{5}\underline{4}\underline{3}) &= \begin{xy} (0,-3) *+{1} = "1+", (0,3) *+{-1} = "1-", (10,3) *+{-2} = "2-", (20,3) *+{-3} = "3-", (30,3) *+{-4} = "4-", \ar "1-";"2-" \ar "3-";"2-" \ar "4-";"3-" \ar "1+";"2-" \end{xy},
& S(12\underline{5}\underline{3}4) &= \begin{xy} (0,-3) *+{1} = "1+", (0,3) *+{-1} = "1-", (10,3) *+{-2} = "2-", (20,3) *+{-3} = "3-", (30,3) *+{-4} = "4-", \ar "1-";"2-" \ar "3-";"2-" \ar "3-";"4-" \ar "1+";"2-" \end{xy},
\\ S(12\underline{5}\underline{4}3) &= \begin{xy} (0,-3) *+{1} = "1+", (0,3) *+{-1} = "1-", (10,3) *+{-2} = "2-", (20,3) *+{-3} = "3-", (30,3) *+{-4} = "4-", \ar "1-";"2-" \ar "2-";"3-" \ar "4-";"3-" \ar "1+";"2-" \end{xy},
& S(\underline{1}2\underline{5}34) &= \begin{xy} (0,-3) *+{1} = "1+", (0,3) *+{-1} = "1-", (10,3) *+{-2} = "2-", (20,3) *+{-3} = "3-", (30,3) *+{-4} = "4-", \ar "1-";"2-" \ar "2-";"3-" \ar "3-";"4-" \ar "1+";"2-" \end{xy};
\end{align*} }
\item $\sigma=(2,-4,0)$ (2 elements): {\footnotesize \begin{align*}
S(12\underline{4}\underline{3}5) &= \begin{xy} (0,-3) *+{1} = "1+", (0,3) *+{-1} = "1-", (10,3) *+{-2} = "2-", (20,3) *+{-3} = "3-", \ar "1-";"2-" \ar "3-";"2-" \ar "1+";"2-" \end{xy},
& S(\underline{1}2\underline{4}35) &= \begin{xy} (0,-3) *+{1} = "1+", (0,3) *+{-1} = "1-", (10,3) *+{-2} = "2-", (20,3) *+{-3} = "3-", \ar "1-";"2-" \ar "2-";"3-" \ar "1+";"2-" \end{xy};
\end{align*} }
\item $\sigma=(2,-3,0)$ (1 element): {\footnotesize \begin{align*}
S(\underline{1}2\underline{3}45) &= \begin{xy} (0,-3) *+{1} = "1+", (0,3) *+{-1} = "1-", (10,3) *+{-2} = "2-", \ar "1-";"2-" \ar "1+";"2-" \end{xy};
\end{align*} }
\item $\sigma=(2,-1,0)$ (1 element): {\footnotesize \begin{align*}
S(\underline{2}\underline{1}345) &= \begin{xy} (0,0) *+{-1} = "1+", \end{xy};
\end{align*} }
\item $\sigma=(2,1,0)$ (1 element): {\footnotesize \begin{align*}
S(21345) &= \begin{xy} (0,0) *+{1} = "1+", \end{xy};
\end{align*} }
\item $\sigma=(3,-5,0)$ (6 elements): {\footnotesize \begin{align*}
S(123\underline{5}\underline{4}) &= \begin{xy} (0,-3) *+{1} = "1+", (10,-3) *+{2} = "2+", (0,3) *+{-1} = "1-", (10,3) *+{-2} = "2-", (20,3) *+{-3} = "3-", (30,3) *+{-4} = "4-", \ar "2+";"1+" \ar "1-";"2-" \ar "2-";"3-" \ar "4-";"3-" \ar "2+";"1-" \ar "1+";"2-" \end{xy},
& S(\underline{1}23\underline{5}4) &= \begin{xy} (0,-3) *+{1} = "1+", (10,-3) *+{2} = "2+", (0,3) *+{-1} = "1-", (10,3) *+{-2} = "2-", (20,3) *+{-3} = "3-", (30,3) *+{-4} = "4-", \ar "2+";"1+" \ar "1-";"2-" \ar "2-";"3-" \ar "3-";"4-" \ar "2+";"1-" \ar "1+";"2-" \end{xy},
\\ S(\underline{1}3\underline{5}\underline{4}\underline{2}) &= \begin{xy} (0,-3) *+{1} = "1+", (10,-3) *+{2} = "2+", (0,3) *+{-1} = "1-", (10,3) *+{-2} = "2-", (20,3) *+{-3} = "3-", (30,3) *+{-4} = "4-", \ar "2+";"1+" \ar "2-";"1-" \ar "2-";"3-" \ar "4-";"3-" \ar "2+";"1-" \end{xy},
& S(13\underline{5}\underline{2}4) &= \begin{xy} (0,-3) *+{1} = "1+", (10,-3) *+{2} = "2+", (0,3) *+{-1} = "1-", (10,3) *+{-2} = "2-", (20,3) *+{-3} = "3-", (30,3) *+{-4} = "4-", \ar "2+";"1+" \ar "2-";"1-" \ar "2-";"3-" \ar "3-";"4-" \ar "2+";"1-" \end{xy},
\\ S(13\underline{5}\underline{4}2) &= \begin{xy} (0,-3) *+{1} = "1+", (10,-3) *+{2} = "2+", (0,3) *+{-1} = "1-", (10,3) *+{-2} = "2-", (20,3) *+{-3} = "3-", (30,3) *+{-4} = "4-", \ar "1+";"2+" \ar "1-";"2-" \ar "2-";"3-" \ar "4-";"3-" \ar "1+";"2-" \end{xy},
& S(\underline{1}3\underline{5}24) &= \begin{xy} (0,-3) *+{1} = "1+", (10,-3) *+{2} = "2+", (0,3) *+{-1} = "1-", (10,3) *+{-2} = "2-", (20,3) *+{-3} = "3-", (30,3) *+{-4} = "4-", \ar "1+";"2+" \ar "1-";"2-" \ar "2-";"3-" \ar "3-";"4-" \ar "1+";"2-" \end{xy};
\end{align*} }
\item $\sigma=(3,-5,1)$ (4 elements): {\footnotesize \begin{align*}
S(\underline{2}3\underline{5}\underline{4}\underline{1}) &= \begin{xy} (0,-3) *+{-1} = "1+", (10,-3) *+{2} = "2+", (0,3) *+{1} = "1-", (10,3) *+{-2} = "2-", (20,3) *+{-3} = "3-", (30,3) *+{-4} = "4-", \ar "2+";"1+" \ar "1-";"2-" \ar "2-";"3-" \ar "4-";"3-" \ar "1-";"2+" \end{xy},
& S(23\underline{5}\underline{1}4) &= \begin{xy} (0,-3) *+{-1} = "1+", (10,-3) *+{2} = "2+", (0,3) *+{1} = "1-", (10,3) *+{-2} = "2-", (20,3) *+{-3} = "3-", (30,3) *+{-4} = "4-", \ar "2+";"1+" \ar "1-";"2-" \ar "2-";"3-" \ar "3-";"4-" \ar "1-";"2+" \end{xy},
\\ S(23\underline{5}\underline{4}1) &= \begin{xy} (0,-3) *+{1} = "1+", (10,-3) *+{2} = "2+", (0,3) *+{-1} = "1-", (10,3) *+{-2} = "2-", (20,3) *+{-3} = "3-", (30,3) *+{-4} = "4-", \ar "2+";"1+" \ar "1-";"2-" \ar "2-";"3-" \ar "4-";"3-" \ar "1-";"2+" \end{xy},
& S(\underline{2}3\underline{5}14) &= \begin{xy} (0,-3) *+{1} = "1+", (10,-3) *+{2} = "2+", (0,3) *+{-1} = "1-", (10,3) *+{-2} = "2-", (20,3) *+{-3} = "3-", (30,3) *+{-4} = "4-", \ar "2+";"1+" \ar "1-";"2-" \ar "2-";"3-" \ar "3-";"4-" \ar "1-";"2+" \end{xy};
\end{align*} }
\item $\sigma=(3,-4,0)$ (3 elements): {\footnotesize \begin{align*}
S(\underline{1}23\underline{4}5) &= \begin{xy} (0,-3) *+{1} = "1+", (10,-3) *+{2} = "2+", (0,3) *+{-1} = "1-", (10,3) *+{-2} = "2-", (20,3) *+{-3} = "3-", \ar "2+";"1+" \ar "1-";"2-" \ar "2-";"3-" \ar "2+";"1-" \ar "1+";"2-" \end{xy},
& S(13\underline{4}\underline{2}5) &= \begin{xy} (0,-3) *+{1} = "1+", (10,-3) *+{2} = "2+", (0,3) *+{-1} = "1-", (10,3) *+{-2} = "2-", (20,3) *+{-3} = "3-", \ar "2+";"1+" \ar "2-";"1-" \ar "2-";"3-" \ar "2+";"1-" \end{xy},
\\ S(\underline{1}3\underline{4}25) &= \begin{xy} (0,-3) *+{1} = "1+", (10,-3) *+{2} = "2+", (0,3) *+{-1} = "1-", (10,3) *+{-2} = "2-", (20,3) *+{-3} = "3-", \ar "1+";"2+" \ar "1-";"2-" \ar "2-";"3-" \ar "1+";"2-" \end{xy};
\end{align*} }
\item $\sigma=(3,-4,1)$ (2 elements): {\footnotesize \begin{align*}
S(23\underline{4}\underline{1}5) &= \begin{xy} (0,-3) *+{-1} = "1+", (10,-3) *+{2} = "2+", (0,3) *+{1} = "1-", (10,3) *+{-2} = "2-", (20,3) *+{-3} = "3-", \ar "2+";"1+" \ar "1-";"2-" \ar "2-";"3-" \ar "1-";"2+" \end{xy},
& S(\underline{2}3\underline{4}15) &= \begin{xy} (0,-3) *+{1} = "1+", (10,-3) *+{2} = "2+", (0,3) *+{-1} = "1-", (10,3) *+{-2} = "2-", (20,3) *+{-3} = "3-", \ar "2+";"1+" \ar "1-";"2-" \ar "2-";"3-" \ar "1-";"2+" \end{xy};
\end{align*} }
\item $\sigma=(3,-2,0)$ (1 element): {\footnotesize \begin{align*}
S(\underline{1}3\underline{2}45) &= \begin{xy} (0,-3) *+{1} = "1+", (10,-3) *+{2} = "2+", (0,3) *+{-1} = "1-", \ar "2+";"1+" \ar "2+";"1-" \end{xy};
\end{align*} }
\item $\sigma=(3,-2,1)$ (2 elements): {\footnotesize \begin{align*}
S(3\underline{2}\underline{1}45) &= \begin{xy} (0,-3) *+{-1} = "1+", (10,-3) *+{2} = "2+", (0,3) *+{1} = "1-", \ar "2+";"1+" \ar "1-";"2+" \end{xy},
& S(\underline{3}\underline{2}145) &= \begin{xy} (0,-3) *+{1} = "1+", (10,-3) *+{2} = "2+", (0,3) *+{-1} = "1-", \ar "2+";"1+" \ar "1-";"2+" \end{xy};
\end{align*} }
\item $\sigma=(3,-1,0)$ (2 elements): {\footnotesize \begin{align*}
S(\underline{2}3\underline{1}45) &= \begin{xy} (0,0) *+{-1} = "1+", (10,0) *+{2} = "2+", \ar "2+";"1+" \end{xy},
& S(\underline{3}\underline{1}245) &= \begin{xy} (0,0) *+{-1} = "1+", (10,0) *+{2} = "2+", \ar "1+";"2+" \end{xy};
\end{align*} }
\item $\sigma=(3,1,0)$ (2 elements): {\footnotesize \begin{align*}
S(23145) &= \begin{xy} (0,0) *+{1} = "1+", (10,0) *+{2} = "2+", \ar "2+";"1+" \end{xy},
& S(31245) &= \begin{xy} (0,0) *+{1} = "1+", (10,0) *+{2} = "2+", \ar "1+";"2+" \end{xy};
\end{align*} }
\item $\sigma=(3,2,0)$ (1 element): {\footnotesize \begin{align*}
S(13245) &= \begin{xy} (10,0) *+{2} = "2+", \end{xy};
\end{align*} }
\item $\sigma=(4,-5,0)$ (9 elements): {\footnotesize \begin{align*}
S(\underline{1}234\underline{5}) &= \begin{xy} (0,-3) *+{1} = "1+", (10,-3) *+{2} = "2+", (20,-3) *+{3} = "3+", (0,3) *+{-1} = "1-", (10,3) *+{-2} = "2-", (20,3) *+{-3} = "3-", (30,3) *+{-4} = "4-", \ar "2+";"1+" \ar "3+";"2+" \ar "1-";"2-" \ar "2-";"3-" \ar "3-";"4-" \ar "2+";"1-" \ar "1+";"2-" \end{xy},
& S(124\underline{5}\underline{3}) &= \begin{xy} (0,-3) *+{1} = "1+", (10,-3) *+{2} = "2+", (20,-3) *+{3} = "3+", (0,3) *+{-1} = "1-", (10,3) *+{-2} = "2-", (20,3) *+{-3} = "3-", (30,3) *+{-4} = "4-", \ar "2+";"1+" \ar "3+";"2+" \ar "1-";"2-" \ar "3-";"2-" \ar "3-";"4-" \ar "2+";"1-" \ar "1+";"2-" \end{xy},
\\ S(\underline{1}24\underline{5}3) &= \begin{xy} (0,-3) *+{1} = "1+", (10,-3) *+{2} = "2+", (20,-3) *+{3} = "3+", (0,3) *+{-1} = "1-", (10,3) *+{-2} = "2-", (20,3) *+{-3} = "3-", (30,3) *+{-4} = "4-", \ar "2+";"1+" \ar "2+";"3+" \ar "1-";"2-" \ar "2-";"3-" \ar "3-";"4-" \ar "2+";"1-" \ar "1+";"2-" \end{xy},
& S(134\underline{5}\underline{2}) &= \begin{xy} (0,-3) *+{1} = "1+", (10,-3) *+{2} = "2+", (20,-3) *+{3} = "3+", (0,3) *+{-1} = "1-", (10,3) *+{-2} = "2-", (20,3) *+{-3} = "3-", (30,3) *+{-4} = "4-", \ar "2+";"1+" \ar "3+";"2+" \ar "2-";"1-" \ar "2-";"3-" \ar "3-";"4-" \ar "2+";"1-" \end{xy},
\\ S(\underline{1}4\underline{5}\underline{3}\underline{2}) &= \begin{xy} (0,-3) *+{1} = "1+", (10,-3) *+{2} = "2+", (20,-3) *+{3} = "3+", (0,3) *+{-1} = "1-", (10,3) *+{-2} = "2-", (20,3) *+{-3} = "3-", (30,3) *+{-4} = "4-", \ar "2+";"1+" \ar "3+";"2+" \ar "2-";"1-" \ar "3-";"2-" \ar "3-";"4-" \ar "2+";"1-" \end{xy},
& S(14\underline{5}\underline{2}3) &= \begin{xy} (0,-3) *+{1} = "1+", (10,-3) *+{2} = "2+", (20,-3) *+{3} = "3+", (0,3) *+{-1} = "1-", (10,3) *+{-2} = "2-", (20,3) *+{-3} = "3-", (30,3) *+{-4} = "4-", \ar "2+";"1+" \ar "2+";"3+" \ar "2-";"1-" \ar "2-";"3-" \ar "3-";"4-" \ar "2+";"1-" \end{xy},
\\ S(\underline{1}34\underline{5}2) &= \begin{xy} (0,-3) *+{1} = "1+", (10,-3) *+{2} = "2+", (20,-3) *+{3} = "3+", (0,3) *+{-1} = "1-", (10,3) *+{-2} = "2-", (20,3) *+{-3} = "3-", (30,3) *+{-4} = "4-", \ar "1+";"2+" \ar "3+";"2+" \ar "1-";"2-" \ar "2-";"3-" \ar "3-";"4-" \ar "1+";"2-" \end{xy},
& S(14\underline{5}\underline{3}2) &= \begin{xy} (0,-3) *+{1} = "1+", (10,-3) *+{2} = "2+", (20,-3) *+{3} = "3+", (0,3) *+{-1} = "1-", (10,3) *+{-2} = "2-", (20,3) *+{-3} = "3-", (30,3) *+{-4} = "4-", \ar "1+";"2+" \ar "3+";"2+" \ar "1-";"2-" \ar "3-";"2-" \ar "3-";"4-" \ar "1+";"2-" \end{xy},
\\ S(\underline{1}4\underline{5}23) &= \begin{xy} (0,-3) *+{1} = "1+", (10,-3) *+{2} = "2+", (20,-3) *+{3} = "3+", (0,3) *+{-1} = "1-", (10,3) *+{-2} = "2-", (20,3) *+{-3} = "3-", (30,3) *+{-4} = "4-", \ar "1+";"2+" \ar "2+";"3+" \ar "1-";"2-" \ar "2-";"3-" \ar "3-";"4-" \ar "1+";"2-" \end{xy};
\end{align*} }
\item $\sigma=(4,-5,1)$ (6 elements): {\footnotesize \begin{align*}
S(234\underline{5}\underline{1}) &= \begin{xy} (0,-3) *+{-1} = "1+", (10,-3) *+{2} = "2+", (20,-3) *+{3} = "3+", (0,3) *+{1} = "1-", (10,3) *+{-2} = "2-", (20,3) *+{-3} = "3-", (30,3) *+{-4} = "4-", \ar "2+";"1+" \ar "3+";"2+" \ar "1-";"2-" \ar "2-";"3-" \ar "3-";"4-" \ar "1-";"2+" \end{xy},
& S(\underline{2}4\underline{5}\underline{3}\underline{1}) &= \begin{xy} (0,-3) *+{-1} = "1+", (10,-3) *+{2} = "2+", (20,-3) *+{3} = "3+", (0,3) *+{1} = "1-", (10,3) *+{-2} = "2-", (20,3) *+{-3} = "3-", (30,3) *+{-4} = "4-", \ar "2+";"1+" \ar "3+";"2+" \ar "1-";"2-" \ar "3-";"2-" \ar "3-";"4-" \ar "1-";"2+" \end{xy},
\\ S(24\underline{5}\underline{1}3) &= \begin{xy} (0,-3) *+{-1} = "1+", (10,-3) *+{2} = "2+", (20,-3) *+{3} = "3+", (0,3) *+{1} = "1-", (10,3) *+{-2} = "2-", (20,3) *+{-3} = "3-", (30,3) *+{-4} = "4-", \ar "2+";"1+" \ar "2+";"3+" \ar "1-";"2-" \ar "2-";"3-" \ar "3-";"4-" \ar "1-";"2+" \end{xy},
& S(\underline{2}34\underline{5}1) &= \begin{xy} (0,-3) *+{1} = "1+", (10,-3) *+{2} = "2+", (20,-3) *+{3} = "3+", (0,3) *+{-1} = "1-", (10,3) *+{-2} = "2-", (20,3) *+{-3} = "3-", (30,3) *+{-4} = "4-", \ar "2+";"1+" \ar "3+";"2+" \ar "1-";"2-" \ar "2-";"3-" \ar "3-";"4-" \ar "1-";"2+" \end{xy},
\\ S(24\underline{5}\underline{3}1) &= \begin{xy} (0,-3) *+{1} = "1+", (10,-3) *+{2} = "2+", (20,-3) *+{3} = "3+", (0,3) *+{-1} = "1-", (10,3) *+{-2} = "2-", (20,3) *+{-3} = "3-", (30,3) *+{-4} = "4-", \ar "2+";"1+" \ar "3+";"2+" \ar "1-";"2-" \ar "3-";"2-" \ar "3-";"4-" \ar "1-";"2+" \end{xy},
& S(\underline{2}4\underline{5}13) &= \begin{xy} (0,-3) *+{1} = "1+", (10,-3) *+{2} = "2+", (20,-3) *+{3} = "3+", (0,3) *+{-1} = "1-", (10,3) *+{-2} = "2-", (20,3) *+{-3} = "3-", (30,3) *+{-4} = "4-", \ar "2+";"1+" \ar "2+";"3+" \ar "1-";"2-" \ar "2-";"3-" \ar "3-";"4-" \ar "1-";"2+" \end{xy};
\end{align*} }
\item $\sigma=(4,-5,2)$ (4 elements): {\footnotesize \begin{align*}
S(\underline{3}4\underline{5}\underline{2}\underline{1}) &= \begin{xy} (0,-3) *+{-1} = "1+", (10,-3) *+{2} = "2+", (20,-3) *+{3} = "3+", (0,3) *+{1} = "1-", (10,3) *+{-2} = "2-", (20,3) *+{-3} = "3-", (30,3) *+{-4} = "4-", \ar "2+";"1+" \ar "3+";"2+" \ar "2-";"1-" \ar "2-";"3-" \ar "3-";"4-" \ar "1-";"2+" \ar "2-";"3+" \end{xy},
& S(34\underline{5}\underline{1}2) &= \begin{xy} (0,-3) *+{-1} = "1+", (10,-3) *+{2} = "2+", (20,-3) *+{3} = "3+", (0,3) *+{1} = "1-", (10,3) *+{-2} = "2-", (20,3) *+{-3} = "3-", (30,3) *+{-4} = "4-", \ar "1+";"2+" \ar "3+";"2+" \ar "1-";"2-" \ar "2-";"3-" \ar "3-";"4-" \ar "2-";"1+" \ar "2-";"3+" \end{xy},
\\ S(34\underline{5}\underline{2}1) &= \begin{xy} (0,-3) *+{1} = "1+", (10,-3) *+{2} = "2+", (20,-3) *+{3} = "3+", (0,3) *+{-1} = "1-", (10,3) *+{-2} = "2-", (20,3) *+{-3} = "3-", (30,3) *+{-4} = "4-", \ar "2+";"1+" \ar "3+";"2+" \ar "2-";"1-" \ar "2-";"3-" \ar "3-";"4-" \ar "1-";"2+" \ar "2-";"3+" \end{xy},
& S(\underline{3}4\underline{5}12) &= \begin{xy} (0,-3) *+{1} = "1+", (10,-3) *+{2} = "2+", (20,-3) *+{3} = "3+", (0,3) *+{-1} = "1-", (10,3) *+{-2} = "2-", (20,3) *+{-3} = "3-", (30,3) *+{-4} = "4-", \ar "1+";"2+" \ar "3+";"2+" \ar "1-";"2-" \ar "2-";"3-" \ar "3-";"4-" \ar "2-";"1+" \ar "2-";"3+" \end{xy};
\end{align*} }
\item $\sigma=(4,-3,0)$ (3 elements): {\footnotesize \begin{align*}
S(\underline{1}24\underline{3}5) &= \begin{xy} (0,-3) *+{1} = "1+", (10,-3) *+{2} = "2+", (20,-3) *+{3} = "3+", (0,3) *+{-1} = "1-", (10,3) *+{-2} = "2-", \ar "2+";"1+" \ar "3+";"2+" \ar "1-";"2-" \ar "2+";"1-" \ar "1+";"2-" \end{xy},
& S(14\underline{3}\underline{2}5) &= \begin{xy} (0,-3) *+{1} = "1+", (10,-3) *+{2} = "2+", (20,-3) *+{3} = "3+", (0,3) *+{-1} = "1-", (10,3) *+{-2} = "2-", \ar "2+";"1+" \ar "3+";"2+" \ar "2-";"1-" \ar "2+";"1-" \end{xy},
\\ S(\underline{1}4\underline{3}25) &= \begin{xy} (0,-3) *+{1} = "1+", (10,-3) *+{2} = "2+", (20,-3) *+{3} = "3+", (0,3) *+{-1} = "1-", (10,3) *+{-2} = "2-", \ar "1+";"2+" \ar "3+";"2+" \ar "1-";"2-" \ar "1+";"2-" \end{xy};
\end{align*} }
\item $\sigma=(4,-3,1)$ (2 elements): {\footnotesize \begin{align*}
S(24\underline{3}\underline{1}5) &= \begin{xy} (0,-3) *+{-1} = "1+", (10,-3) *+{2} = "2+", (20,-3) *+{3} = "3+", (0,3) *+{1} = "1-", (10,3) *+{-2} = "2-", \ar "2+";"1+" \ar "3+";"2+" \ar "1-";"2-" \ar "1-";"2+" \end{xy},
& S(\underline{2}4\underline{3}15) &= \begin{xy} (0,-3) *+{1} = "1+", (10,-3) *+{2} = "2+", (20,-3) *+{3} = "3+", (0,3) *+{-1} = "1-", (10,3) *+{-2} = "2-", \ar "2+";"1+" \ar "3+";"2+" \ar "1-";"2-" \ar "1-";"2+" \end{xy};
\end{align*} }
\item $\sigma=(4,-3,2)$ (4 elements): {\footnotesize \begin{align*}
S(\underline{4}\underline{3}\underline{2}\underline{1}5) &= \begin{xy} (0,-3) *+{-1} = "1+", (10,-3) *+{2} = "2+", (20,-3) *+{3} = "3+", (0,3) *+{1} = "1-", (10,3) *+{-2} = "2-", \ar "2+";"1+" \ar "3+";"2+" \ar "2-";"1-" \ar "1-";"2+" \ar "2-";"3+" \end{xy},
& S(4\underline{3}\underline{1}25) &= \begin{xy} (0,-3) *+{-1} = "1+", (10,-3) *+{2} = "2+", (20,-3) *+{3} = "3+", (0,3) *+{1} = "1-", (10,3) *+{-2} = "2-", \ar "1+";"2+" \ar "3+";"2+" \ar "1-";"2-" \ar "2-";"1+" \ar "2-";"3+" \end{xy},
\\ S(4\underline{3}\underline{2}15) &= \begin{xy} (0,-3) *+{1} = "1+", (10,-3) *+{2} = "2+", (20,-3) *+{3} = "3+", (0,3) *+{-1} = "1-", (10,3) *+{-2} = "2-", \ar "2+";"1+" \ar "3+";"2+" \ar "2-";"1-" \ar "1-";"2+" \ar "2-";"3+" \end{xy},
& S(\underline{4}\underline{3}125) &= \begin{xy} (0,-3) *+{1} = "1+", (10,-3) *+{2} = "2+", (20,-3) *+{3} = "3+", (0,3) *+{-1} = "1-", (10,3) *+{-2} = "2-", \ar "1+";"2+" \ar "3+";"2+" \ar "1-";"2-" \ar "2-";"1+" \ar "2-";"3+" \end{xy};
\end{align*} }
\item $\sigma=(4,-2,0)$ (2 elements): {\footnotesize \begin{align*}
S(\underline{1}34\underline{2}5) &= \begin{xy} (0,-3) *+{1} = "1+", (10,-3) *+{2} = "2+", (20,-3) *+{3} = "3+", (0,3) *+{-1} = "1-", \ar "2+";"1+" \ar "3+";"2+" \ar "2+";"1-" \end{xy},
& S(\underline{1}4\underline{2}35) &= \begin{xy} (0,-3) *+{1} = "1+", (10,-3) *+{2} = "2+", (20,-3) *+{3} = "3+", (0,3) *+{-1} = "1-", \ar "2+";"1+" \ar "2+";"3+" \ar "2+";"1-" \end{xy};
\end{align*} }
\item $\sigma=(4,-2,1)$ (4 elements): {\footnotesize \begin{align*}
S(34\underline{2}\underline{1}5) &= \begin{xy} (0,-3) *+{-1} = "1+", (10,-3) *+{2} = "2+", (20,-3) *+{3} = "3+", (0,3) *+{1} = "1-", \ar "2+";"1+" \ar "3+";"2+" \ar "1-";"2+" \end{xy},
& S(4\underline{2}\underline{1}35) &= \begin{xy} (0,-3) *+{-1} = "1+", (10,-3) *+{2} = "2+", (20,-3) *+{3} = "3+", (0,3) *+{1} = "1-", \ar "2+";"1+" \ar "2+";"3+" \ar "1-";"2+" \end{xy},
\\ S(\underline{3}4\underline{2}15) &= \begin{xy} (0,-3) *+{1} = "1+", (10,-3) *+{2} = "2+", (20,-3) *+{3} = "3+", (0,3) *+{-1} = "1-", \ar "2+";"1+" \ar "3+";"2+" \ar "1-";"2+" \end{xy},
& S(\underline{4}\underline{2}135) &= \begin{xy} (0,-3) *+{1} = "1+", (10,-3) *+{2} = "2+", (20,-3) *+{3} = "3+", (0,3) *+{-1} = "1-", \ar "2+";"1+" \ar "2+";"3+" \ar "1-";"2+" \end{xy};
\end{align*} }
\item $\sigma=(4,-1,0)$ (4 elements): {\footnotesize \begin{align*}
S(\underline{2}34\underline{1}5) &= \begin{xy} (0,0) *+{-1} = "1+", (10,0) *+{2} = "2+", (20,0) *+{3} = "3+", \ar "2+";"1+" \ar "3+";"2+" \end{xy},
& S(\underline{2}4\underline{1}35) &= \begin{xy} (0,0) *+{-1} = "1+", (10,0) *+{2} = "2+", (20,0) *+{3} = "3+", \ar "2+";"1+" \ar "2+";"3+" \end{xy},
\\ S(\underline{3}4\underline{1}25) &= \begin{xy} (0,0) *+{-1} = "1+", (10,0) *+{2} = "2+", (20,0) *+{3} = "3+", \ar "1+";"2+" \ar "3+";"2+" \end{xy},
& S(\underline{4}\underline{1}235) &= \begin{xy} (0,0) *+{-1} = "1+", (10,0) *+{2} = "2+", (20,0) *+{3} = "3+", \ar "1+";"2+" \ar "2+";"3+" \end{xy};
\end{align*} }
\item $\sigma=(4,1,0)$ (4 elements): {\footnotesize \begin{align*}
S(23415) &= \begin{xy} (0,0) *+{1} = "1+", (10,0) *+{2} = "2+", (20,0) *+{3} = "3+", \ar "2+";"1+" \ar "3+";"2+" \end{xy},
& S(24135) &= \begin{xy} (0,0) *+{1} = "1+", (10,0) *+{2} = "2+", (20,0) *+{3} = "3+", \ar "2+";"1+" \ar "2+";"3+" \end{xy},
\\ S(34125) &= \begin{xy} (0,0) *+{1} = "1+", (10,0) *+{2} = "2+", (20,0) *+{3} = "3+", \ar "1+";"2+" \ar "3+";"2+" \end{xy},
& S(41235) &= \begin{xy} (0,0) *+{1} = "1+", (10,0) *+{2} = "2+", (20,0) *+{3} = "3+", \ar "1+";"2+" \ar "2+";"3+" \end{xy};
\end{align*} }
\item $\sigma=(4,2,0)$ (2 elements): {\footnotesize \begin{align*}
S(13425) &= \begin{xy} (10,0) *+{2} = "2+", (20,0) *+{3} = "3+", \ar "3+";"2+" \end{xy},
& S(14235) &= \begin{xy} (10,0) *+{2} = "2+", (20,0) *+{3} = "3+", \ar "2+";"3+" \end{xy};
\end{align*} }
\item $\sigma=(4,3,0)$ (1 element): {\footnotesize \begin{align*}
S(12435) &= \begin{xy} (20,0) *+{3} = "3+", \end{xy};
\end{align*} }
\item $\sigma=(5,-4,0)$ (9 elements): {\footnotesize \begin{align*}
S(\underline{1}235\underline{4}) &= \begin{xy} (0,-3) *+{1} = "1+", (10,-3) *+{2} = "2+", (20,-3) *+{3} = "3+", (30,-3) *+{4} = "4+", (0,3) *+{-1} = "1-", (10,3) *+{-2} = "2-", (20,3) *+{-3} = "3-", \ar "2+";"1+" \ar "3+";"2+" \ar "4+";"3+" \ar "1-";"2-" \ar "2-";"3-" \ar "2+";"1-" \ar "1+";"2-" \end{xy},
& S(125\underline{4}\underline{3}) &= \begin{xy} (0,-3) *+{1} = "1+", (10,-3) *+{2} = "2+", (20,-3) *+{3} = "3+", (30,-3) *+{4} = "4+", (0,3) *+{-1} = "1-", (10,3) *+{-2} = "2-", (20,3) *+{-3} = "3-", \ar "2+";"1+" \ar "3+";"2+" \ar "4+";"3+" \ar "1-";"2-" \ar "3-";"2-" \ar "2+";"1-" \ar "1+";"2-" \end{xy},
\\ S(\underline{1}25\underline{4}3) &= \begin{xy} (0,-3) *+{1} = "1+", (10,-3) *+{2} = "2+", (20,-3) *+{3} = "3+", (30,-3) *+{4} = "4+", (0,3) *+{-1} = "1-", (10,3) *+{-2} = "2-", (20,3) *+{-3} = "3-", \ar "2+";"1+" \ar "2+";"3+" \ar "4+";"3+" \ar "1-";"2-" \ar "2-";"3-" \ar "2+";"1-" \ar "1+";"2-" \end{xy},
& S(135\underline{4}\underline{2}) &= \begin{xy} (0,-3) *+{1} = "1+", (10,-3) *+{2} = "2+", (20,-3) *+{3} = "3+", (30,-3) *+{4} = "4+", (0,3) *+{-1} = "1-", (10,3) *+{-2} = "2-", (20,3) *+{-3} = "3-", \ar "2+";"1+" \ar "3+";"2+" \ar "4+";"3+" \ar "2-";"1-" \ar "2-";"3-" \ar "2+";"1-" \end{xy},
\\ S(\underline{1}5\underline{4}\underline{3}\underline{2}) &= \begin{xy} (0,-3) *+{1} = "1+", (10,-3) *+{2} = "2+", (20,-3) *+{3} = "3+", (30,-3) *+{4} = "4+", (0,3) *+{-1} = "1-", (10,3) *+{-2} = "2-", (20,3) *+{-3} = "3-", \ar "2+";"1+" \ar "3+";"2+" \ar "4+";"3+" \ar "2-";"1-" \ar "3-";"2-" \ar "2+";"1-" \end{xy},
& S(15\underline{4}\underline{2}3) &= \begin{xy} (0,-3) *+{1} = "1+", (10,-3) *+{2} = "2+", (20,-3) *+{3} = "3+", (30,-3) *+{4} = "4+", (0,3) *+{-1} = "1-", (10,3) *+{-2} = "2-", (20,3) *+{-3} = "3-", \ar "2+";"1+" \ar "2+";"3+" \ar "4+";"3+" \ar "2-";"1-" \ar "2-";"3-" \ar "2+";"1-" \end{xy},
\\ S(\underline{1}35\underline{4}2) &= \begin{xy} (0,-3) *+{1} = "1+", (10,-3) *+{2} = "2+", (20,-3) *+{3} = "3+", (30,-3) *+{4} = "4+", (0,3) *+{-1} = "1-", (10,3) *+{-2} = "2-", (20,3) *+{-3} = "3-", \ar "1+";"2+" \ar "3+";"2+" \ar "4+";"3+" \ar "1-";"2-" \ar "2-";"3-" \ar "1+";"2-" \end{xy},
& S(15\underline{4}\underline{3}2) &= \begin{xy} (0,-3) *+{1} = "1+", (10,-3) *+{2} = "2+", (20,-3) *+{3} = "3+", (30,-3) *+{4} = "4+", (0,3) *+{-1} = "1-", (10,3) *+{-2} = "2-", (20,3) *+{-3} = "3-", \ar "1+";"2+" \ar "3+";"2+" \ar "4+";"3+" \ar "1-";"2-" \ar "3-";"2-" \ar "1+";"2-" \end{xy},
\\ S(\underline{1}5\underline{4}23) &= \begin{xy} (0,-3) *+{1} = "1+", (10,-3) *+{2} = "2+", (20,-3) *+{3} = "3+", (30,-3) *+{4} = "4+", (0,3) *+{-1} = "1-", (10,3) *+{-2} = "2-", (20,3) *+{-3} = "3-", \ar "1+";"2+" \ar "2+";"3+" \ar "4+";"3+" \ar "1-";"2-" \ar "2-";"3-" \ar "1+";"2-" \end{xy};
\end{align*} }
\item $\sigma=(5,-4,1)$ (6 elements): {\footnotesize \begin{align*}
S(235\underline{4}\underline{1}) &= \begin{xy} (0,-3) *+{-1} = "1+", (10,-3) *+{2} = "2+", (20,-3) *+{3} = "3+", (30,-3) *+{4} = "4+", (0,3) *+{1} = "1-", (10,3) *+{-2} = "2-", (20,3) *+{-3} = "3-", \ar "2+";"1+" \ar "3+";"2+" \ar "4+";"3+" \ar "1-";"2-" \ar "2-";"3-" \ar "1-";"2+" \end{xy},
& S(\underline{2}5\underline{4}\underline{3}\underline{1}) &= \begin{xy} (0,-3) *+{-1} = "1+", (10,-3) *+{2} = "2+", (20,-3) *+{3} = "3+", (30,-3) *+{4} = "4+", (0,3) *+{1} = "1-", (10,3) *+{-2} = "2-", (20,3) *+{-3} = "3-", \ar "2+";"1+" \ar "3+";"2+" \ar "4+";"3+" \ar "1-";"2-" \ar "3-";"2-" \ar "1-";"2+" \end{xy},
\\ S(25\underline{4}\underline{1}3) &= \begin{xy} (0,-3) *+{-1} = "1+", (10,-3) *+{2} = "2+", (20,-3) *+{3} = "3+", (30,-3) *+{4} = "4+", (0,3) *+{1} = "1-", (10,3) *+{-2} = "2-", (20,3) *+{-3} = "3-", \ar "2+";"1+" \ar "2+";"3+" \ar "4+";"3+" \ar "1-";"2-" \ar "2-";"3-" \ar "1-";"2+" \end{xy},
& S(\underline{2}35\underline{4}1) &= \begin{xy} (0,-3) *+{1} = "1+", (10,-3) *+{2} = "2+", (20,-3) *+{3} = "3+", (30,-3) *+{4} = "4+", (0,3) *+{-1} = "1-", (10,3) *+{-2} = "2-", (20,3) *+{-3} = "3-", \ar "2+";"1+" \ar "3+";"2+" \ar "4+";"3+" \ar "1-";"2-" \ar "2-";"3-" \ar "1-";"2+" \end{xy},
\\ S(25\underline{4}\underline{3}1) &= \begin{xy} (0,-3) *+{1} = "1+", (10,-3) *+{2} = "2+", (20,-3) *+{3} = "3+", (30,-3) *+{4} = "4+", (0,3) *+{-1} = "1-", (10,3) *+{-2} = "2-", (20,3) *+{-3} = "3-", \ar "2+";"1+" \ar "3+";"2+" \ar "4+";"3+" \ar "1-";"2-" \ar "3-";"2-" \ar "1-";"2+" \end{xy},
& S(\underline{2}5\underline{4}13) &= \begin{xy} (0,-3) *+{1} = "1+", (10,-3) *+{2} = "2+", (20,-3) *+{3} = "3+", (30,-3) *+{4} = "4+", (0,3) *+{-1} = "1-", (10,3) *+{-2} = "2-", (20,3) *+{-3} = "3-", \ar "2+";"1+" \ar "2+";"3+" \ar "4+";"3+" \ar "1-";"2-" \ar "2-";"3-" \ar "1-";"2+" \end{xy};
\end{align*} }
\item $\sigma=(5,-4,2)$ (4 elements): {\footnotesize \begin{align*}
S(\underline{3}5\underline{4}\underline{2}\underline{1}) &= \begin{xy} (0,-3) *+{-1} = "1+", (10,-3) *+{2} = "2+", (20,-3) *+{3} = "3+", (30,-3) *+{4} = "4+", (0,3) *+{1} = "1-", (10,3) *+{-2} = "2-", (20,3) *+{-3} = "3-", \ar "2+";"1+" \ar "3+";"2+" \ar "4+";"3+" \ar "2-";"1-" \ar "2-";"3-" \ar "1-";"2+" \ar "2-";"3+" \end{xy},
& S(35\underline{4}\underline{1}2) &= \begin{xy} (0,-3) *+{-1} = "1+", (10,-3) *+{2} = "2+", (20,-3) *+{3} = "3+", (30,-3) *+{4} = "4+", (0,3) *+{1} = "1-", (10,3) *+{-2} = "2-", (20,3) *+{-3} = "3-", \ar "1+";"2+" \ar "3+";"2+" \ar "4+";"3+" \ar "1-";"2-" \ar "2-";"3-" \ar "2-";"1+" \ar "2-";"3+" \end{xy},
\\ S(35\underline{4}\underline{2}1) &= \begin{xy} (0,-3) *+{1} = "1+", (10,-3) *+{2} = "2+", (20,-3) *+{3} = "3+", (30,-3) *+{4} = "4+", (0,3) *+{-1} = "1-", (10,3) *+{-2} = "2-", (20,3) *+{-3} = "3-", \ar "2+";"1+" \ar "3+";"2+" \ar "4+";"3+" \ar "2-";"1-" \ar "2-";"3-" \ar "1-";"2+" \ar "2-";"3+" \end{xy},
& S(\underline{3}5\underline{4}12) &= \begin{xy} (0,-3) *+{1} = "1+", (10,-3) *+{2} = "2+", (20,-3) *+{3} = "3+", (30,-3) *+{4} = "4+", (0,3) *+{-1} = "1-", (10,3) *+{-2} = "2-", (20,3) *+{-3} = "3-", \ar "1+";"2+" \ar "3+";"2+" \ar "4+";"3+" \ar "1-";"2-" \ar "2-";"3-" \ar "2-";"1+" \ar "2-";"3+" \end{xy};
\end{align*} }
\item $\sigma=(5,-4,3)$ (8 elements): {\footnotesize \begin{align*}
S(5\underline{4}\underline{3}\underline{2}\underline{1}) &= \begin{xy} (0,-3) *+{-1} = "1+", (10,-3) *+{2} = "2+", (20,-3) *+{3} = "3+", (30,-3) *+{4} = "4+", (0,3) *+{1} = "1-", (10,3) *+{-2} = "2-", (20,3) *+{-3} = "3-", \ar "2+";"1+" \ar "3+";"2+" \ar "4+";"3+" \ar "2-";"1-" \ar "3-";"2-" \ar "1-";"2+" \ar "2-";"3+" \ar "3-";"4+" \end{xy},
& S(\underline{5}\underline{4}\underline{2}\underline{1}3) &= \begin{xy} (0,-3) *+{-1} = "1+", (10,-3) *+{2} = "2+", (20,-3) *+{3} = "3+", (30,-3) *+{4} = "4+", (0,3) *+{1} = "1-", (10,3) *+{-2} = "2-", (20,3) *+{-3} = "3-", \ar "2+";"1+" \ar "2+";"3+" \ar "4+";"3+" \ar "2-";"1-" \ar "2-";"3-" \ar "1-";"2+" \ar "3-";"2+" \ar "3-";"4+" \end{xy},
\\ S(\underline{5}\underline{4}\underline{3}\underline{1}2) &= \begin{xy} (0,-3) *+{-1} = "1+", (10,-3) *+{2} = "2+", (20,-3) *+{3} = "3+", (30,-3) *+{4} = "4+", (0,3) *+{1} = "1-", (10,3) *+{-2} = "2-", (20,3) *+{-3} = "3-", \ar "1+";"2+" \ar "3+";"2+" \ar "4+";"3+" \ar "1-";"2-" \ar "3-";"2-" \ar "2-";"1+" \ar "2-";"3+" \ar "3-";"4+" \end{xy},
& S(5\underline{4}\underline{1}23) &= \begin{xy} (0,-3) *+{-1} = "1+", (10,-3) *+{2} = "2+", (20,-3) *+{3} = "3+", (30,-3) *+{4} = "4+", (0,3) *+{1} = "1-", (10,3) *+{-2} = "2-", (20,3) *+{-3} = "3-", \ar "1+";"2+" \ar "2+";"3+" \ar "4+";"3+" \ar "1-";"2-" \ar "2-";"3-" \ar "2-";"1+" \ar "3-";"2+" \ar "3-";"4+" \end{xy},
\\ S(\underline{5}\underline{4}\underline{3}\underline{2}1) &= \begin{xy} (0,-3) *+{1} = "1+", (10,-3) *+{2} = "2+", (20,-3) *+{3} = "3+", (30,-3) *+{4} = "4+", (0,3) *+{-1} = "1-", (10,3) *+{-2} = "2-", (20,3) *+{-3} = "3-", \ar "2+";"1+" \ar "3+";"2+" \ar "4+";"3+" \ar "2-";"1-" \ar "3-";"2-" \ar "1-";"2+" \ar "2-";"3+" \ar "3-";"4+" \end{xy},
& S(5\underline{4}\underline{2}13) &= \begin{xy} (0,-3) *+{1} = "1+", (10,-3) *+{2} = "2+", (20,-3) *+{3} = "3+", (30,-3) *+{4} = "4+", (0,3) *+{-1} = "1-", (10,3) *+{-2} = "2-", (20,3) *+{-3} = "3-", \ar "2+";"1+" \ar "2+";"3+" \ar "4+";"3+" \ar "2-";"1-" \ar "2-";"3-" \ar "1-";"2+" \ar "3-";"2+" \ar "3-";"4+" \end{xy},
\\ S(5\underline{4}\underline{3}12) &= \begin{xy} (0,-3) *+{1} = "1+", (10,-3) *+{2} = "2+", (20,-3) *+{3} = "3+", (30,-3) *+{4} = "4+", (0,3) *+{-1} = "1-", (10,3) *+{-2} = "2-", (20,3) *+{-3} = "3-", \ar "1+";"2+" \ar "3+";"2+" \ar "4+";"3+" \ar "1-";"2-" \ar "3-";"2-" \ar "2-";"1+" \ar "2-";"3+" \ar "3-";"4+" \end{xy},
& S(\underline{5}\underline{4}123) &= \begin{xy} (0,-3) *+{1} = "1+", (10,-3) *+{2} = "2+", (20,-3) *+{3} = "3+", (30,-3) *+{4} = "4+", (0,3) *+{-1} = "1-", (10,3) *+{-2} = "2-", (20,3) *+{-3} = "3-", \ar "1+";"2+" \ar "2+";"3+" \ar "4+";"3+" \ar "1-";"2-" \ar "2-";"3-" \ar "2-";"1+" \ar "3-";"2+" \ar "3-";"4+" \end{xy};
\end{align*} }
\item $\sigma=(5,-3,0)$ (6 elements): {\footnotesize \begin{align*}
S(\underline{1}245\underline{3}) &= \begin{xy} (0,-3) *+{1} = "1+", (10,-3) *+{2} = "2+", (20,-3) *+{3} = "3+", (30,-3) *+{4} = "4+", (0,3) *+{-1} = "1-", (10,3) *+{-2} = "2-", \ar "2+";"1+" \ar "3+";"2+" \ar "4+";"3+" \ar "1-";"2-" \ar "2+";"1-" \ar "1+";"2-" \end{xy},
& S(\underline{1}25\underline{3}4) &= \begin{xy} (0,-3) *+{1} = "1+", (10,-3) *+{2} = "2+", (20,-3) *+{3} = "3+", (30,-3) *+{4} = "4+", (0,3) *+{-1} = "1-", (10,3) *+{-2} = "2-", \ar "2+";"1+" \ar "3+";"2+" \ar "3+";"4+" \ar "1-";"2-" \ar "2+";"1-" \ar "1+";"2-" \end{xy},
\\ S(145\underline{3}\underline{2}) &= \begin{xy} (0,-3) *+{1} = "1+", (10,-3) *+{2} = "2+", (20,-3) *+{3} = "3+", (30,-3) *+{4} = "4+", (0,3) *+{-1} = "1-", (10,3) *+{-2} = "2-", \ar "2+";"1+" \ar "3+";"2+" \ar "4+";"3+" \ar "2-";"1-" \ar "2+";"1-" \end{xy},
& S(15\underline{3}\underline{2}4) &= \begin{xy} (0,-3) *+{1} = "1+", (10,-3) *+{2} = "2+", (20,-3) *+{3} = "3+", (30,-3) *+{4} = "4+", (0,3) *+{-1} = "1-", (10,3) *+{-2} = "2-", \ar "2+";"1+" \ar "3+";"2+" \ar "3+";"4+" \ar "2-";"1-" \ar "2+";"1-" \end{xy},
\\ S(\underline{1}45\underline{3}2) &= \begin{xy} (0,-3) *+{1} = "1+", (10,-3) *+{2} = "2+", (20,-3) *+{3} = "3+", (30,-3) *+{4} = "4+", (0,3) *+{-1} = "1-", (10,3) *+{-2} = "2-", \ar "1+";"2+" \ar "3+";"2+" \ar "4+";"3+" \ar "1-";"2-" \ar "1+";"2-" \end{xy},
& S(\underline{1}5\underline{3}24) &= \begin{xy} (0,-3) *+{1} = "1+", (10,-3) *+{2} = "2+", (20,-3) *+{3} = "3+", (30,-3) *+{4} = "4+", (0,3) *+{-1} = "1-", (10,3) *+{-2} = "2-", \ar "1+";"2+" \ar "3+";"2+" \ar "3+";"4+" \ar "1-";"2-" \ar "1+";"2-" \end{xy};
\end{align*} }
\item $\sigma=(5,-3,1)$ (4 elements): {\footnotesize \begin{align*}
S(245\underline{3}\underline{1}) &= \begin{xy} (0,-3) *+{-1} = "1+", (10,-3) *+{2} = "2+", (20,-3) *+{3} = "3+", (30,-3) *+{4} = "4+", (0,3) *+{1} = "1-", (10,3) *+{-2} = "2-", \ar "2+";"1+" \ar "3+";"2+" \ar "4+";"3+" \ar "1-";"2-" \ar "1-";"2+" \end{xy},
& S(25\underline{3}\underline{1}4) &= \begin{xy} (0,-3) *+{-1} = "1+", (10,-3) *+{2} = "2+", (20,-3) *+{3} = "3+", (30,-3) *+{4} = "4+", (0,3) *+{1} = "1-", (10,3) *+{-2} = "2-", \ar "2+";"1+" \ar "3+";"2+" \ar "3+";"4+" \ar "1-";"2-" \ar "1-";"2+" \end{xy},
\\ S(\underline{2}45\underline{3}1) &= \begin{xy} (0,-3) *+{1} = "1+", (10,-3) *+{2} = "2+", (20,-3) *+{3} = "3+", (30,-3) *+{4} = "4+", (0,3) *+{-1} = "1-", (10,3) *+{-2} = "2-", \ar "2+";"1+" \ar "3+";"2+" \ar "4+";"3+" \ar "1-";"2-" \ar "1-";"2+" \end{xy},
& S(\underline{2}5\underline{3}14) &= \begin{xy} (0,-3) *+{1} = "1+", (10,-3) *+{2} = "2+", (20,-3) *+{3} = "3+", (30,-3) *+{4} = "4+", (0,3) *+{-1} = "1-", (10,3) *+{-2} = "2-", \ar "2+";"1+" \ar "3+";"2+" \ar "3+";"4+" \ar "1-";"2-" \ar "1-";"2+" \end{xy};
\end{align*} }
\item $\sigma=(5,-3,2)$ (8 elements): {\footnotesize \begin{align*}
S(\underline{4}5\underline{3}\underline{2}\underline{1}) &= \begin{xy} (0,-3) *+{-1} = "1+", (10,-3) *+{2} = "2+", (20,-3) *+{3} = "3+", (30,-3) *+{4} = "4+", (0,3) *+{1} = "1-", (10,3) *+{-2} = "2-", \ar "2+";"1+" \ar "3+";"2+" \ar "4+";"3+" \ar "2-";"1-" \ar "1-";"2+" \ar "2-";"3+" \end{xy},
& S(\underline{5}\underline{3}\underline{2}\underline{1}4) &= \begin{xy} (0,-3) *+{-1} = "1+", (10,-3) *+{2} = "2+", (20,-3) *+{3} = "3+", (30,-3) *+{4} = "4+", (0,3) *+{1} = "1-", (10,3) *+{-2} = "2-", \ar "2+";"1+" \ar "3+";"2+" \ar "3+";"4+" \ar "2-";"1-" \ar "1-";"2+" \ar "2-";"3+" \end{xy},
\\ S(45\underline{3}\underline{1}2) &= \begin{xy} (0,-3) *+{-1} = "1+", (10,-3) *+{2} = "2+", (20,-3) *+{3} = "3+", (30,-3) *+{4} = "4+", (0,3) *+{1} = "1-", (10,3) *+{-2} = "2-", \ar "1+";"2+" \ar "3+";"2+" \ar "4+";"3+" \ar "1-";"2-" \ar "2-";"1+" \ar "2-";"3+" \end{xy},
& S(5\underline{3}\underline{1}24) &= \begin{xy} (0,-3) *+{-1} = "1+", (10,-3) *+{2} = "2+", (20,-3) *+{3} = "3+", (30,-3) *+{4} = "4+", (0,3) *+{1} = "1-", (10,3) *+{-2} = "2-", \ar "1+";"2+" \ar "3+";"2+" \ar "3+";"4+" \ar "1-";"2-" \ar "2-";"1+" \ar "2-";"3+" \end{xy},
\\ S(45\underline{3}\underline{2}1) &= \begin{xy} (0,-3) *+{1} = "1+", (10,-3) *+{2} = "2+", (20,-3) *+{3} = "3+", (30,-3) *+{4} = "4+", (0,3) *+{-1} = "1-", (10,3) *+{-2} = "2-", \ar "2+";"1+" \ar "3+";"2+" \ar "4+";"3+" \ar "2-";"1-" \ar "1-";"2+" \ar "2-";"3+" \end{xy},
& S(5\underline{3}\underline{2}14) &= \begin{xy} (0,-3) *+{1} = "1+", (10,-3) *+{2} = "2+", (20,-3) *+{3} = "3+", (30,-3) *+{4} = "4+", (0,3) *+{-1} = "1-", (10,3) *+{-2} = "2-", \ar "2+";"1+" \ar "3+";"2+" \ar "3+";"4+" \ar "2-";"1-" \ar "1-";"2+" \ar "2-";"3+" \end{xy},
\\ S(\underline{4}5\underline{3}12) &= \begin{xy} (0,-3) *+{1} = "1+", (10,-3) *+{2} = "2+", (20,-3) *+{3} = "3+", (30,-3) *+{4} = "4+", (0,3) *+{-1} = "1-", (10,3) *+{-2} = "2-", \ar "1+";"2+" \ar "3+";"2+" \ar "4+";"3+" \ar "1-";"2-" \ar "2-";"1+" \ar "2-";"3+" \end{xy},
& S(\underline{5}\underline{3}124) &= \begin{xy} (0,-3) *+{1} = "1+", (10,-3) *+{2} = "2+", (20,-3) *+{3} = "3+", (30,-3) *+{4} = "4+", (0,3) *+{-1} = "1-", (10,3) *+{-2} = "2-", \ar "1+";"2+" \ar "3+";"2+" \ar "3+";"4+" \ar "1-";"2-" \ar "2-";"1+" \ar "2-";"3+" \end{xy};
\end{align*} }
\item $\sigma=(5,-2,0)$ (4 elements): {\footnotesize \begin{align*}
S(\underline{1}345\underline{2}) &= \begin{xy} (0,-3) *+{1} = "1+", (10,-3) *+{2} = "2+", (20,-3) *+{3} = "3+", (30,-3) *+{4} = "4+", (0,3) *+{-1} = "1-", \ar "2+";"1+" \ar "3+";"2+" \ar "4+";"3+" \ar "2+";"1-" \end{xy},
& S(\underline{1}35\underline{2}4) &= \begin{xy} (0,-3) *+{1} = "1+", (10,-3) *+{2} = "2+", (20,-3) *+{3} = "3+", (30,-3) *+{4} = "4+", (0,3) *+{-1} = "1-", \ar "2+";"1+" \ar "3+";"2+" \ar "3+";"4+" \ar "2+";"1-" \end{xy},
\\ S(\underline{1}45\underline{2}3) &= \begin{xy} (0,-3) *+{1} = "1+", (10,-3) *+{2} = "2+", (20,-3) *+{3} = "3+", (30,-3) *+{4} = "4+", (0,3) *+{-1} = "1-", \ar "2+";"1+" \ar "2+";"3+" \ar "4+";"3+" \ar "2+";"1-" \end{xy},
& S(\underline{1}5\underline{2}34) &= \begin{xy} (0,-3) *+{1} = "1+", (10,-3) *+{2} = "2+", (20,-3) *+{3} = "3+", (30,-3) *+{4} = "4+", (0,3) *+{-1} = "1-", \ar "2+";"1+" \ar "2+";"3+" \ar "3+";"4+" \ar "2+";"1-" \end{xy};
\end{align*} }
\item $\sigma=(5,-2,1)$ (8 elements): {\footnotesize \begin{align*}
S(345\underline{2}\underline{1}) &= \begin{xy} (0,-3) *+{-1} = "1+", (10,-3) *+{2} = "2+", (20,-3) *+{3} = "3+", (30,-3) *+{4} = "4+", (0,3) *+{1} = "1-", \ar "2+";"1+" \ar "3+";"2+" \ar "4+";"3+" \ar "1-";"2+" \end{xy},
& S(35\underline{2}\underline{1}4) &= \begin{xy} (0,-3) *+{-1} = "1+", (10,-3) *+{2} = "2+", (20,-3) *+{3} = "3+", (30,-3) *+{4} = "4+", (0,3) *+{1} = "1-", \ar "2+";"1+" \ar "3+";"2+" \ar "3+";"4+" \ar "1-";"2+" \end{xy},
\\ S(45\underline{2}\underline{1}3) &= \begin{xy} (0,-3) *+{-1} = "1+", (10,-3) *+{2} = "2+", (20,-3) *+{3} = "3+", (30,-3) *+{4} = "4+", (0,3) *+{1} = "1-", \ar "2+";"1+" \ar "2+";"3+" \ar "4+";"3+" \ar "1-";"2+" \end{xy},
& S(5\underline{2}\underline{1}34) &= \begin{xy} (0,-3) *+{-1} = "1+", (10,-3) *+{2} = "2+", (20,-3) *+{3} = "3+", (30,-3) *+{4} = "4+", (0,3) *+{1} = "1-", \ar "2+";"1+" \ar "2+";"3+" \ar "3+";"4+" \ar "1-";"2+" \end{xy},
\\ S(\underline{3}45\underline{2}1) &= \begin{xy} (0,-3) *+{1} = "1+", (10,-3) *+{2} = "2+", (20,-3) *+{3} = "3+", (30,-3) *+{4} = "4+", (0,3) *+{-1} = "1-", \ar "2+";"1+" \ar "3+";"2+" \ar "4+";"3+" \ar "1-";"2+" \end{xy},
& S(\underline{3}5\underline{2}14) &= \begin{xy} (0,-3) *+{1} = "1+", (10,-3) *+{2} = "2+", (20,-3) *+{3} = "3+", (30,-3) *+{4} = "4+", (0,3) *+{-1} = "1-", \ar "2+";"1+" \ar "3+";"2+" \ar "3+";"4+" \ar "1-";"2+" \end{xy},
\\ S(\underline{4}5\underline{2}13) &= \begin{xy} (0,-3) *+{1} = "1+", (10,-3) *+{2} = "2+", (20,-3) *+{3} = "3+", (30,-3) *+{4} = "4+", (0,3) *+{-1} = "1-", \ar "2+";"1+" \ar "2+";"3+" \ar "4+";"3+" \ar "1-";"2+" \end{xy},
& S(\underline{5}\underline{2}134) &= \begin{xy} (0,-3) *+{1} = "1+", (10,-3) *+{2} = "2+", (20,-3) *+{3} = "3+", (30,-3) *+{4} = "4+", (0,3) *+{-1} = "1-", \ar "2+";"1+" \ar "2+";"3+" \ar "3+";"4+" \ar "1-";"2+" \end{xy};
\end{align*} }
\item $\sigma=(5,-1,0)$ (8 elements): {\footnotesize \begin{align*}
S(\underline{2}345\underline{1}) &= \begin{xy} (0,0) *+{-1} = "1+", (10,0) *+{2} = "2+", (20,0) *+{3} = "3+", (30,0) *+{4} = "4+", \ar "2+";"1+" \ar "3+";"2+" \ar "4+";"3+" \end{xy},
& S(\underline{2}35\underline{1}4) &= \begin{xy} (0,0) *+{-1} = "1+", (10,0) *+{2} = "2+", (20,0) *+{3} = "3+", (30,0) *+{4} = "4+", \ar "2+";"1+" \ar "3+";"2+" \ar "3+";"4+" \end{xy},
\\ S(\underline{2}45\underline{1}3) &= \begin{xy} (0,0) *+{-1} = "1+", (10,0) *+{2} = "2+", (20,0) *+{3} = "3+", (30,0) *+{4} = "4+", \ar "2+";"1+" \ar "2+";"3+" \ar "4+";"3+" \end{xy},
& S(\underline{2}5\underline{1}34) &= \begin{xy} (0,0) *+{-1} = "1+", (10,0) *+{2} = "2+", (20,0) *+{3} = "3+", (30,0) *+{4} = "4+", \ar "2+";"1+" \ar "2+";"3+" \ar "3+";"4+" \end{xy},
\\ S(\underline{3}45\underline{1}2) &= \begin{xy} (0,0) *+{-1} = "1+", (10,0) *+{2} = "2+", (20,0) *+{3} = "3+", (30,0) *+{4} = "4+", \ar "1+";"2+" \ar "3+";"2+" \ar "4+";"3+" \end{xy},
& S(\underline{3}5\underline{1}24) &= \begin{xy} (0,0) *+{-1} = "1+", (10,0) *+{2} = "2+", (20,0) *+{3} = "3+", (30,0) *+{4} = "4+", \ar "1+";"2+" \ar "3+";"2+" \ar "3+";"4+" \end{xy},
\\ S(\underline{4}5\underline{1}23) &= \begin{xy} (0,0) *+{-1} = "1+", (10,0) *+{2} = "2+", (20,0) *+{3} = "3+", (30,0) *+{4} = "4+", \ar "1+";"2+" \ar "2+";"3+" \ar "4+";"3+" \end{xy},
& S(\underline{5}\underline{1}234) &= \begin{xy} (0,0) *+{-1} = "1+", (10,0) *+{2} = "2+", (20,0) *+{3} = "3+", (30,0) *+{4} = "4+", \ar "1+";"2+" \ar "2+";"3+" \ar "3+";"4+" \end{xy};
\end{align*} }
\item $\sigma=(5,1,0)$ (8 elements): {\footnotesize \begin{align*}
S(23451) &= \begin{xy} (0,0) *+{1} = "1+", (10,0) *+{2} = "2+", (20,0) *+{3} = "3+", (30,0) *+{4} = "4+", \ar "2+";"1+" \ar "3+";"2+" \ar "4+";"3+" \end{xy},
& S(23514) &= \begin{xy} (0,0) *+{1} = "1+", (10,0) *+{2} = "2+", (20,0) *+{3} = "3+", (30,0) *+{4} = "4+", \ar "2+";"1+" \ar "3+";"2+" \ar "3+";"4+" \end{xy},
\\ S(24513) &= \begin{xy} (0,0) *+{1} = "1+", (10,0) *+{2} = "2+", (20,0) *+{3} = "3+", (30,0) *+{4} = "4+", \ar "2+";"1+" \ar "2+";"3+" \ar "4+";"3+" \end{xy},
& S(25134) &= \begin{xy} (0,0) *+{1} = "1+", (10,0) *+{2} = "2+", (20,0) *+{3} = "3+", (30,0) *+{4} = "4+", \ar "2+";"1+" \ar "2+";"3+" \ar "3+";"4+" \end{xy},
\\ S(34512) &= \begin{xy} (0,0) *+{1} = "1+", (10,0) *+{2} = "2+", (20,0) *+{3} = "3+", (30,0) *+{4} = "4+", \ar "1+";"2+" \ar "3+";"2+" \ar "4+";"3+" \end{xy},
& S(35124) &= \begin{xy} (0,0) *+{1} = "1+", (10,0) *+{2} = "2+", (20,0) *+{3} = "3+", (30,0) *+{4} = "4+", \ar "1+";"2+" \ar "3+";"2+" \ar "3+";"4+" \end{xy},
\\ S(45123) &= \begin{xy} (0,0) *+{1} = "1+", (10,0) *+{2} = "2+", (20,0) *+{3} = "3+", (30,0) *+{4} = "4+", \ar "1+";"2+" \ar "2+";"3+" \ar "4+";"3+" \end{xy},
& S(51234) &= \begin{xy} (0,0) *+{1} = "1+", (10,0) *+{2} = "2+", (20,0) *+{3} = "3+", (30,0) *+{4} = "4+", \ar "1+";"2+" \ar "2+";"3+" \ar "3+";"4+" \end{xy};
\end{align*} }
\item $\sigma=(5,2,0)$ (4 elements): {\footnotesize \begin{align*}
S(13452) &= \begin{xy} (10,0) *+{2} = "2+", (20,0) *+{3} = "3+", (30,0) *+{4} = "4+", \ar "3+";"2+" \ar "4+";"3+" \end{xy},
& S(13524) &= \begin{xy} (10,0) *+{2} = "2+", (20,0) *+{3} = "3+", (30,0) *+{4} = "4+", \ar "3+";"2+" \ar "3+";"4+" \end{xy},
\\ S(14523) &= \begin{xy} (10,0) *+{2} = "2+", (20,0) *+{3} = "3+", (30,0) *+{4} = "4+", \ar "2+";"3+" \ar "4+";"3+" \end{xy},
& S(15234) &= \begin{xy} (10,0) *+{2} = "2+", (20,0) *+{3} = "3+", (30,0) *+{4} = "4+", \ar "2+";"3+" \ar "3+";"4+" \end{xy};
\end{align*} }
\item $\sigma=(5,3,0)$ (2 elements): {\footnotesize \begin{align*}
S(12453) &= \begin{xy} (20,0) *+{3} = "3+", (30,0) *+{4} = "4+", \ar "4+";"3+" \end{xy},
& S(12534) &= \begin{xy} (20,0) *+{3} = "3+", (30,0) *+{4} = "4+", \ar "3+";"4+" \end{xy};
\end{align*} }
\item $\sigma=(5,4,0)$ (1 element): {\footnotesize \begin{align*}
S(12354) &= \begin{xy} (30,0) *+{4} = "4+", \end{xy}.
\end{align*} }

\end{itemize}

\section*{Funding}

The author is a Research Fellow of Japan Society for the Promotion of Science (JSPS).
This work was supported by Japan Society for the Promotion of Science KAKENHI JP16J02249.

\section*{Acknowledgement}

The author thanks to his supervisor Osamu Iyama for kind instructions.
He also thanks to Laurent Demonet, Ryoichi Kase, and 
Yoshihisa Saito for giving me valuable information.

\end{document}